\theoremstyle{plain}
\newtheorem{introthm}{Theorem}
\newtheorem{thm}{Theorem}
\newtheorem{lem}{Lemma}[section]
\newtheorem{prop}[lem]{Proposition}
\newtheorem{cor}[lem]{Corollary}
\theoremstyle{definition}
\newtheorem{definition}[lem]{Definition}
\theoremstyle{remark}
\newtheorem{rem}{Remark}[section]
\numberwithin{equation}{section}
\newcommand{\C}{\mathbb{C}}
\newcommand{\R}{\mathbb{R}}
\newcommand{\Z}{\mathbb{Z}}
\newcommand{\D}{\mathcal{D}}
\newcommand{\N}{\mathbb{N}}
\newcommand{\eps}{\varepsilon}
\newcommand{\norm}[1]{\left\Vert#1\right\Vert}
\let\Im=\Imag
\let\Re=\Real
\DeclareMathOperator{\SL}{SL}
\DeclareMathOperator{\supp}{supp}
\DeclareMathOperator{\Tr}{Tr}
\def\Ddots{\mathinner{\mkern1mu\raise\p@
\vbox{\kern7\p@\hbox{.}}\mkern2mu
\raise4\p@\hbox{.}\mkern2mu\raise7\p@\hbox{.}\mkern1mu}}
\newcommand{\eklm}[1]{\left\langle #1 \right\rangle}
\renewcommand{\d}{\,d}
\newcommand{\M}{{\mathcal M}}
\renewcommand{\epsilon}{\vararepsilon}
\newcommand{\bdm}{\begin{displaymath}}
\newcommand{\edm}{\end{displaymath}}
\newcommand{\bq}{\begin{equation}}
\newcommand{\eq}{\end{equation}}
\newcommand{\bqn}{\begin{equation*}}
\newcommand{\eqn}{\end{equation*}}
\newcommand{\CT}{{C^{\infty}_c}}
\renewcommand{\L}{{\mathbb L}}
\newcommand{\SO}{\mathrm{SO}}
\newcommand{\Ad}{\mathrm{Ad}}
\newcommand{\id}{\mathrm{id}\,}
\renewcommand{\Im}{\mathrm{Im}\,}
\renewcommand{\Re}{\mathrm{Re}\,}
\definecolor{darkblue}{rgb}{0,0,0.4}
\newcommand{\set}[2]{\left\lbrace #1 \,\middle|\, #2 \right\rbrace}
\newcommand{\trprod}{{\mathbb P}(V)\overset{\pitchfork}{\times}{\mathbb P}(V^*)}
\newcommand{\transverse}{\pitchfork}
\newcommand{\bbL}{\mathbb{L}}
\newcommand{\bbP}{\mathbb{P}}
\newcommand{\bbH}{\mathbb{H}}
\newcommand{\bbS}{\mathbb{S}}
\newcommand{\pscal}[2]{\left\langle  #1 \middle| #2\right\rangle}
\newcommand{\map}[4]{\left\lbrace \begin{array}{ccc} #1 & \to & #2 \\ #3 & \mapsto & #4 \end{array} \right.}
\renewcommand{\tilde}{\widetilde}
\newcommand{\cK}{\mathcal{K}}
\newcommand{\cC}{\mathcal{C}}
\newcommand{\cH}{\mathcal{H}}
\newcommand{\cG}{\mathcal{G}}
\newcommand{\cM}{\mathcal{M}}
\newcommand{\cN}{\mathcal{N}}
\author[B.~Delarue]{Benjamin Delarue}
\email{bdelarue@math.upb.de}
\address{Universit\"at Paderborn, Warburger Str.~100, 33098 Paderborn, Germany}
\author[D.~Monclair]{Daniel Monclair}
\email{daniel.monclair@universite-paris-saclay.fr}
\address{Université Paris-Saclay, CNRS, Laboratoire de mathématiques d’Orsay, 91405, Orsay, France}
\author[A.~Monclair]{Andrew Sanders}
\email{andrew.sanders.2@bc.edu}
\address{Boston College, Visiting Scholar, 02467, Chestnut Hill, Massachusetts, United States}
\title[Locally homogeneous Axiom A flows I]{Locally homogeneous Axiom A flows I: projective Anosov subgroups and exponential mixing}
\begin{document}
\begin{abstract}
By constructing a non-empty domain of discontinuity in a suitable homogeneous space, we prove that every torsion-free projective Anosov subgroup is the monodromy group of a locally homogeneous contact Axiom A dynamical system with a unique basic hyperbolic set on which the flow is conjugate to the refraction flow of Sambarino.  Under the assumption of irreducibility, we utilize the work of Stoyanov to establish spectral estimates for the associated complex Ruelle transfer operators, and by way of corollary: exponential mixing, exponentially decaying error term in the prime orbit theorem, and a spectral gap for the Ruelle zeta function.  With no irreducibility assumption, results of Dyatlov-Guillarmou imply the global meromorphic continuation of zeta functions with smooth weights, as well as the existence of a discrete spectrum of Ruelle-Pollicott resonances and (co)-resonant states.  We apply our results to space-like geodesic flows for the convex cocompact pseudo-Riemannian manifolds of Danciger-Gu\'{e}ritaud-Kassel, and the Benoist-Hilbert geodesic flow for strictly convex real projective manifolds. 
\end{abstract}

\maketitle


\section{Introduction}

Given a torsion-free uniform lattice $\Gamma<\mathrm{Isom}_{0}(\mathbb{H}^{n})\simeq \mathrm{SO}_{0}(n,1),$ the discrete group action $\Gamma \curvearrowright T^{1}\mathbb{H}^{n}$ is free, properly discontinuous and cocompact, and the geodesic flow
$$
\varphi^{t}: T^{1}(\Gamma\backslash \mathbb{H}^{n}) \rightarrow T^{1}(\Gamma\backslash \mathbb{H}^{n})
$$
on the unit tangent bundle of  the closed hyperbolic manifold $\Gamma\backslash \mathbb{H}^{n}$   is the Anosov flow par excellence: it mixes exponentially for all Gibbs equilibrium measures, and in particular for the measure of maximal entropy which is equal to the Liouville measure induced by the canonical Riemannian contact structure on $T^{1}(\Gamma\backslash \mathbb{H}^{n})$.

From a dynamical systems perspective, the most natural weakening of the uniform lattice property for $\Gamma<\mathrm{SO}_{0}(n,1)$ is to ask that $(T^{1}(\Gamma\backslash \mathbb{H}^{n}),\varphi^t)$ is a (not necessarily compact) Axiom A system in the sense of Smale \cite{smale67}.  It is well known that the class of ``Axiom A" subgroups $\Gamma<\mathrm{SO}_{0}(n,1)$ is equivalent to the class of convex-cocompact subgroups. They are characterized by the existence of a compact and geodesically convex submanifold (potentially with non-empty boundary) $C_{\Gamma}\subset \Gamma\backslash \mathbb{H}^{n}$ such that the inclusion is a homotopy equivalence.  Every convex-cocompact $\Gamma<\mathrm{SO}_{0}(n,1)$ is Gromov hyperbolic and there is a unique basic hyperbolic set
$\mathcal{K}_{\Gamma}\subset T^{1}(\Gamma\backslash \mathbb{H}^{n})$ of the geodesic flow satisfying $\pi(\mathcal{K}_{\Gamma})\subset C_{\Gamma}$ where $\pi: T^{1}(\Gamma\backslash \mathbb{H}^{n})\rightarrow \Gamma\backslash \mathbb{H}^{n}$ is the natural projection.

The above situation is a dynamical manifestation of negative sectional curvature, and therefore replacing $\mathrm{SO}_{0}(n,1)$ by another semisimple Lie group $G$ of real rank one causes no essential difficulties.  Meanwhile, Lie groups $G$ of real rank greater than one present a conundrum: the Riemannian symmetric space $\mathbb{X}$ for which $G\simeq \mathrm{Isom}_{0}(\mathbb{X})$ is only non-positively curved. It gets worse from here: the $G$-action on the unit tangent bundle $T^{1}\mathbb{X}$ is no longer transitive, and while every $G$-orbit is preserved by the geodesic flow, the flow has dynamically trivial central stable leaves of dimension greater than one manifested by the existence of maximal flats $F\subset \mathbb{X}$ of dimension greater than one.  

The final nail in the coffin of the naive geometro-dynamical 
extensions of convex compactness to higher rank was hammered by the rigidity result of Kleiner-Leeb \cite{KL06} and Quint \cite{Quint05}:  the only convex-cocompact Zariski dense discrete subgroups $\Gamma<G$ of a simple real algebraic group of rank at least two are the uniform lattices, none of which are Gromov hyperbolic, and therefore even the coarse approach  to negative curvature pioneered by Gromov \cite{Gr} was left without a place to get started.  Moreover, the direct geometric techniques invented by Thurston \cite{THU22} (e.g.\ convex hulls and pleated surfaces) which were so fruitful in rank one were equally left without analogue for higher rank $G.$  

This situation has changed dramatically in the previous two decades.  The revolution started with the article \cite{labourie} of Labourie who showed that the limit set focused approach of Gromov-Thurston can be carried out in higher rank.  This work produced the first systematic treatment of rank one behavior in higher rank, with many hints provided by Benoist's prior theory of strictly convex divisible domains \cite{BEN08} in real projective space.  In fact, Labourie's original approach was primarily dynamical and defined an \emph{Anosov} subgroup $\Gamma<G$ through the existence of (rank one) hyperbolic dynamics in an appropriate bundle (with connection) $E_{\Gamma}\rightarrow T^{1}\mathcal{N}_{\Gamma}$ where $\mathcal{N}_{\Gamma}$ is a closed Riemannian manifold of negative sectional curvature with $\pi_{1}(\mathcal{N}_{\Gamma})\simeq \Gamma.$  

This breakthrough has led to an elaborate geometric theory of discrete subgroups $\Gamma<G$ of semisimple Lie groups.  The sustained work of a myriad of mathematicians has produced multiple distinct approaches to Labourie's theory of Anosov homomorphisms, putting the (coarse) geometric  elements largely on par with the classical theory of convex-cocompact subgroups. To name a few, the coarse-geometric viewpoint was developed in a series of self-contained articles by Kapovich-Leeb-Porti \cite{KLP17, KLP18, KLP2}.  Meanwhile Sambarino, utilizing the methods and works of Benoist \cite{BEN97, BEN00}, Ledrappier \cite{LED95}, and Quint \cite{QUI02C, QUI02D}, led the advancement of the dynamical study (see the survey paper \cite{SAM24}), in particular obtaining many quantitative counting results for periods associated to Anosov subgroups \cite{SAM14}.

Despite this spirited activity, some foundational goals concerning hyperbolic dynamical systems, in particular spectral estimates for Ruelle transfer operators and exponential mixing, have so far been out of reach.  This is largely due to the low regularity (fractal in general) of the available models for the Gromov geodesic flow \cite{Gr}, \cite{Mineyev2005}.  The relevance of the Gromov geodesic flow to the theory of Anosov subgroups was first elucidated by the foundational work of Guichard-Wienhard \cite{GW12}, followed by Sambarino's \cite{SAM14, SAM24} explicit construction of models with prescribed periods; adhering to \cite{SAM24}, the model of interest in this article is called the \emph{refraction flow}.

By way of example regarding low regularity, it has been known since Benoist's result \cite{BEN04} that the Hilbert geodesic flow on the unit tangent bundle $T^{1}\mathcal{N}$ of a closed, strictly convex real projective manifold $\mathcal{N}$ is a $C^{1, \alpha}$-topologically transitive Anosov flow, but spectral estimates for Ruelle transfer operators and exponential mixing have remained elusive. Influenced by the ideas of Chernov \cite{CHE98}, Dolgopyat's groundbreaking discovery \cite{DOL98} of the coercive relationship between  the transverse regularity of the (un)stable foliations and questions of exponential mixing indicates the challenges posed by the low regularity of the Benoist-Hilbert geodesic flow, and Sambarino's refraction flows in general.

This article constitutes the first in a series where we develop techniques to resolve these issues around regularity by embedding Sambarino's refraction flows as basic hyperbolic sets in real analytic locally homogeneous contact Axiom A systems.  It is important to note that these Axiom A systems are almost never compact, with the exception being uniform lattices $\Gamma<\mathrm{SL}_{2}(\mathbb{R})$ in the projective Anosov case.

Fix a finite-dimensional real vector space $V$ of dimension $d>1$.  Let $\mathbb{L}\subset \mathbb{P}\left(V\times V^{*}\right)$ denote the open submanifold given by the projectivization of the affine quadric hypersurface $\{(v, \alpha)\in V\times V^{*}\ | \ \alpha(v)=1\}.$ The group $\mathrm{SL}(V)$ acts diagonally on $\mathbb{P}\left(V\times V^{*}\right),$ preserves $\mathbb{L},$ and commutes with the flow $\phi^{t}: \mathbb{L}\rightarrow \mathbb{L}$ induced by $\phi^{t}(v, \alpha)=(e^{t}v, e^{-t}\alpha).$ Moreover, this flow is the Reeb flow of an $\mathrm{SL}(V)$-invariant contact one-form on $\mathbb{L}$ descended from the tautological Liouville one-form on $T^{\ast}V=V\times V^{*}.$  The $\mathrm{SL}(V)$-action on $\mathbb{L}$ is transitive and the tangent bundle of $\mathbb{L}$ admits an   $\mathrm{SL}(V)\times \{d\phi^t\}_{t\in\R}$-equivariant real analytic splitting 
\bq
T_{[v:\alpha]}\mathbb{L}\simeq \mathrm{ker}(\alpha)\oplus E^{0} \oplus \mathrm{ker}(\iota_{v}) \label{eq:splittingofTL}
\eq
where $E^{0}$ is the central flow direction and $\iota_{v}: V^{*}\rightarrow \mathbb{R}$ is the contraction with $v\in V.$  
\begin{introthm}\label{THM A}
Suppose $\Gamma<\mathrm{SL}(V)$ is a torsion-free projective Anosov subgroup.  
\begin{enumerate}
\item \label{DOD} There exists a non-empty $\Gamma\times \{\phi^{t}\}_{t\in \R}$-invariant open subset $\widetilde{\mathcal{M}_{\Gamma}}\subset \mathbb{L}$ upon which $\Gamma$ acts freely and properly discontinuously.  
Therefore, the quotient 
$$
\mathcal{M}_{\Gamma}:=\Gamma\backslash \widetilde{\mathcal{M}_{\Gamma}}
$$
is a real analytic locally homogeneous  $\left(\mathrm{SL}(V), \mathbb{L}\right)$-manifold, endowed with a real analytic complete flow $\phi^t:\cM_\Gamma\to\cM_\Gamma$ preserving a real analytic contact one-form for which $\phi^t$ is the Reeb flow.

\item \label{THM A: AxiomA} The flow $\phi^t:\cM_\Gamma\to\cM_\Gamma$ is Axiom A in the sense of Smale and preserves a unique basic hyperbolic set $$\mathcal{K}_{\Gamma}\subset \mathcal{M}_{\Gamma}.$$

\item \label{real analytic fol} The real analytic splitting \eqref{eq:splittingofTL} descends to a real analytic splitting
$$
T\mathcal{M}_{\Gamma}=E^{\mathrm{u}}\oplus E^{0} \oplus E^{\mathrm{s}}
$$
whose restriction to the basic hyperbolic set $\mathcal{K}_{\Gamma}\subset \mathcal{M}_{\Gamma}$ is the splitting into unstable-central-stable directions guaranteed by hyperbolicity.  In particular, each of the four stable, unstable, central stable, and central unstable foliations along $\mathcal{K}_{\Gamma}$ are the restriction of global real analytic foliations on $\mathcal{M}_{\Gamma}.$  
 \end{enumerate}
\end{introthm}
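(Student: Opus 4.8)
The plan is to manufacture $\widetilde{\mathcal M_\Gamma}$ directly from the Anosov boundary data. Let $\xi\colon\partial\Gamma\to\mathbb P(V)$ and $\xi^*\colon\partial\Gamma\to\mathbb P(V^*)$ be the limit maps of the projective Anosov subgroup $\Gamma$; they are continuous, equivariant, nested ($\xi(x)\subset\ker\xi^*(x)$) and transverse ($\xi(x)\oplus\ker\xi^*(y)=V$ for $x\neq y$). I would set
\[
\widetilde{\mathcal M_\Gamma}\;:=\;\mathbb L\;\setminus\;\bigcup_{x\in\partial\Gamma}\bigl\{\,[v:\alpha]\in\mathbb L\ :\ v\in\ker\xi^*(x)\ \text{and}\ \alpha(\xi(x))=0\,\bigr\},
\]
the complement of a product-type ``thickening'' of the limit curve $x\mapsto[\xi(x):\xi^*(x)]\subset\partial_\infty\mathbb L=\{\alpha(v)=0\}$. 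Since $\partial\Gamma$ is compact and $\xi,\xi^*$ continuous, the removed set is closed, and it is invariant under $\Gamma$ (equivariance) and under $\phi^t$ (which preserves $\ker\xi^*(x)$ and $\xi(x)^{\perp}$), so $\widetilde{\mathcal M_\Gamma}$ is open and $\Gamma\times\{\phi^t\}$-invariant. For non-emptiness I would show $\widetilde{\mathcal M_\Gamma}$ contains the flow-saturated set $\widetilde{\mathcal K_\Gamma}:=\{[v:\alpha]\in\mathbb L:[v]\in\xi(\partial\Gamma),\ [\alpha]\in\xi^*(\partial\Gamma)\}$: if $[v]=\xi(y)$, $[\alpha]=\xi^*(z)$ then non-incidence forces $y\neq z$ by transversality, the slice at any $x\neq y$ is avoided because $v\notin\ker\xi^*(x)$ (transversality), and the slice at $x=y$ is avoided because $\alpha(\xi(y))=\xi^*(z)(\xi(y))\neq0$ (transversality again, $z\neq y$). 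As $\widetilde{\mathcal K_\Gamma}$ contains the $\phi^t$-orbit through the attracting line of $\rho(\gamma)$ paired with the attracting hyperplane of $\rho(\gamma^{-1})$ for any loxodromic $\gamma$, it is non-empty.

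The heart of the matter is proper discontinuity, and this is where I expect the main obstacle to lie: one must verify that this comparatively thin thickening is nonetheless large enough. Given $\gamma_n\to\infty$ and $[v:\alpha]\notin\widetilde{\mathcal M_\Gamma}^c$, write the Cartan decomposition $\gamma_n=k_na_nl_n$, pass to a subsequence with $k_n\to k$, $l_n\to l$, $\gamma_n\to x_+$ and $\gamma_n^{-1}\to x_-$ in $\partial\Gamma$, so that $ke_1$ spans $\xi(x_+)$, $l^{-1}e_d$ spans $\xi(x_-)$ and $l^{-1}\langle e_2,\dots,e_d\rangle=\ker\xi^*(x_-)$. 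The projective Anosov hypothesis forces \emph{both} spectral gaps $\mu_1(\gamma_n)-\mu_2(\gamma_n)\to\infty$ and $\mu_{d-1}(\gamma_n)-\mu_d(\gamma_n)\to\infty$, which insulates the $e_1$- and $e_d$-coordinates from the uncontrolled middle singular values. Setting $v'=lv$, $\alpha'=l^{-T}\alpha$, the assumption ``$[v:\alpha]$ is not in the slice at $x_-$'' says precisely $v'_1\neq0$ or $\alpha'_d\neq0$. In the first case the dominant scale of $a_nv'$ is $e^{\mu_1}$: renormalising the pair $(a_nv',a_n^{-T}\alpha')$ by the larger of $e^{\mu_1}$ and the dominant scale of $a_n^{-T}\alpha'$, the vector coordinate converges into $\langle e_1\rangle$ (all other coordinates decay by the first gap) while the $e_1$-component of the limiting covector is $O(e^{-2\mu_1})\to0$; hence the limit pairs to zero and $\gamma_n[v:\alpha]\to\partial_\infty\mathbb L$, i.e. it leaves every compact subset of $\mathbb L$. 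The case $\alpha'_d\neq0$ is symmetric. This argument genuinely uses the ambient homogeneous space $\mathbb L$ together with its flow direction, not the product $\mathbb P(V)\times\mathbb P(V^*)$ (on which, e.g., a lattice in $\mathrm{SL}_2(\mathbb R)$ does not act properly). Freeness follows because biproximality of every non-trivial $\gamma$ restricts its fixed points in $\mathbb L$ to $[f_i:f_i^*]$ with $\lambda_i(\rho(\gamma))=\pm1$ and necessarily $1<i<d$, and such a point lies in the slice at $x_+=\gamma^+$. Then $\mathcal M_\Gamma$ is a real analytic locally homogeneous $(\mathrm{SL}(V),\mathbb L)$-manifold; $\phi^t$ descends since it commutes with $\Gamma$, is complete since it is complete on $\mathbb L$, and preserves the descent of the tautological Liouville one-form on $T^*V=V\times V^*$ restricted to $\{\alpha(v)=1\}$, which one checks directly is an $\mathrm{SL}(V)$-invariant contact form whose Reeb field generates $\phi^t$; everything is algebraic, hence real analytic.

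For (2) and (3) put $\mathcal K_\Gamma:=\Gamma\backslash\widetilde{\mathcal K_\Gamma}$, which is closed and flow-invariant by the paragraph above. Since $\phi^t$ acts freely and properly on $\widetilde{\mathcal K_\Gamma}$ with quotient $\{(\xi(x),\xi^*(y)):x\neq y\}\cong\partial\Gamma^{(2)}$, one identifies $\mathcal K_\Gamma$ with the Gromov geodesic flow space $\Gamma\backslash(\partial\Gamma^{(2)}\times\mathbb R)$ — compact — and the induced flow with the refraction flow of Sambarino, by computing that the $\mathbb R$-cocycle recorded by the $\Gamma$-action is cohomologous to the first fundamental weight of the Cartan projection of $\rho$. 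Next, the splitting \eqref{eq:splittingofTL} descends (it is $\mathrm{SL}(V)$-equivariant, hence $\Gamma$-equivariant), is real analytic, is $d\phi^t$-invariant, and has $E^0$ the flow direction; on $\{\alpha(v)=1\}$ one has $d\phi^t=e^t$ on $\ker\alpha$ and $e^{-t}$ on $\ker\iota_v$ in the natural coordinates, so compactness of $\mathcal K_\Gamma$ upgrades this to uniform exponential expansion/contraction, whence $\mathcal K_\Gamma$ is a hyperbolic set; moreover points of a common $\ker\alpha$- (resp. $\ker\iota_v$-) leaf are exponentially backward- (resp. forward-) asymptotic, so these leaves are the (un)stable manifolds along $\mathcal K_\Gamma$, and the sub-bundles $E^{\mathrm u}$, $E^{\mathrm s}$, $E^{\mathrm u}\oplus E^0$, $E^{\mathrm s}\oplus E^0$ integrate to the asserted global real analytic foliations, their leaves in $\mathbb L$ being the intersections with the projective subspaces $\{[v:\alpha_0]\}$, $\{[v_0:\alpha]\}$ and affine slices thereof. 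Finally one shows $\Omega(\phi)=\mathcal K_\Gamma$: loxodromic conjugacy classes yield periodic orbits that are dense in $\mathcal K_\Gamma$ (density of $\{(\gamma^+,\gamma^-)\}$ in $\partial\Gamma^{(2)}$), so $\mathcal K_\Gamma\subseteq\Omega(\phi)$; conversely a recurrence sequence at a non-wandering point $\Gamma[v:\alpha]$ forces $[\rho(\gamma_n)^{-1}e^{t_n}v]$ to converge into $\xi(\partial\Gamma)$ and dually for $\alpha$, so $[v]\in\xi(\partial\Gamma)$, $[\alpha]\in\xi^*(\partial\Gamma)$, i.e. $\Gamma[v:\alpha]\in\mathcal K_\Gamma$. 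Thus $\phi^t$ is Axiom A with non-wandering set the single hyperbolic basic set $\mathcal K_\Gamma$ — uniqueness of the basic set following from topological transitivity of the refraction flow — and the splitting \eqref{eq:splittingofTL} restricts on $\mathcal K_\Gamma$ to the unstable–central–stable splitting by uniqueness of the hyperbolic splitting.
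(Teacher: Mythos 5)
Your domain is exactly the paper's: the set you remove is the complement of the condition ``for every $x\in\partial_\infty\Gamma$, $[v]\pitchfork\xi^*(x)$ or $\xi(x)\pitchfork[\alpha]$'', so $\widetilde{\cM}_\Gamma=p^{-1}(\Omega_\Gamma)$, and your contradiction mechanism for properness (invariance of the pairing $\alpha(v)$ forces escaping orbits into the boundary $\{\alpha(v)=0\}$ of $\bbL$) is the same as the paper's. Your route to the divergence $\norm{(\gamma_n v,\gamma_n\cdot\alpha)}\to\infty$ is genuinely different, though: you use the Cartan decomposition, the singular-value gaps, and convergence of the Cartan attractor/repelling hyperplane to $\xi(x_+)$ and $\ker\xi^*(x_-)$, whereas the paper works with eigenvalue data at the attracting/repelling fixed points of the group elements (Lemmas \ref{lem hyp groups convergence limit points}, \ref{lem top eigenvalue diverges}, \ref{lem prop criterion}), which is why it needs the normalization trick of Lemma \ref{lem distinct limit points} to force distinct boundary limits; your singular-value version avoids that, since $\mu_1(\gamma_n)\to\infty$ holds for any unbounded sequence. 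Two things you should still write out: proper discontinuity requires the escape estimate for sequences $[v_k:\alpha_k]\to[v:\alpha]$ in the domain, not just for a fixed point (your estimates do extend after fixing representatives converging in $V\times V^*$), and the convergence of Cartan attractors to the limit maps is a standard but nontrivial input that must be quoted or proved.

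The genuine gap is the hyperbolicity of $\cK_\Gamma$ in part (2). You assert that since $d\phi^t$ acts by $e^{t}$ on $\ker\alpha$ and $e^{-t}$ on $\ker\iota_v$ in the natural linear coordinates, ``compactness of $\cK_\Gamma$ upgrades this to uniform exponential expansion/contraction''. This is exactly the trap the paper's remark after its hyperbolicity lemma is devoted to: the identity \eqref{eqn remark not trivial 1} is an identity for the constant Euclidean norm on $V\times V^*$, which is not $\Gamma$-invariant and hence does not descend to a bundle norm on $\cM_\Gamma$; the usual compactness comparison of norms requires both norms to be defined over the compact quotient set, while $\widetilde{\cK}_\Gamma$ itself is noncompact. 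What must be controlled is the ratio \eqref{eqn remark not trivial 2}, and at a periodic orbit corresponding to $\gamma$ this ratio over one period equals $e^{\lambda_1(\gamma)}\norm{\gamma^{-1}w}/\norm{w}$ for $w\in\ker\alpha=\ker\xi^*(\gamma_-)$, whose growth is governed by $\lambda_1(\gamma)-\lambda_2(\gamma)$ rather than by the naive $e^{t}$; uniform hyperbolicity is therefore not a formal consequence of compactness but an input equivalent to the Anosov property. The paper closes this by identifying $E^{\mathrm u}|_{\cK_\Gamma}\simeq\Gamma\backslash\sigma_+^*T\bbP(V)$ and $E^{\mathrm s}|_{\cK_\Gamma}\simeq\Gamma\backslash\sigma_-^*T\bbP(V^*)$, on which $d\phi^t$ acts as the translation flows $\psi^t_\pm$, and invoking the Guichard--Wienhard definition of an Anosov subgroup (uniform dilation/contraction of these bundle flows over the compact refraction flow space); your argument needs this step, and the same issue affects your claim that points on a common $\ker\alpha$-leaf are exponentially backward asymptotic in $\cM_\Gamma$. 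The remainder of your outline for (2)--(3) (Hopf parametrization and cocompactness via Sambarino, $\mathcal{NW}(\phi^t)=\cK_\Gamma$ from recurrence forcing limit-set membership, density of periodic orbits from density of poles, transitivity, descent of the real analytic splitting) matches the paper's proof.
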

 In Theorem \ref{THM A}.(\ref{DOD}), a locally homogeneous $(\mathrm{SL}(V), \mathbb{L})$-manifold is a real analytic manifold with an $\mathbb{L}$-valued real analytic atlas whose coordinate changes are locally given by the restriction of elements of $\mathrm{SL}(V).$

By a judicious use of representation theory, Guichard-Wienhard showed \cite[Prop. 4.3, Rem.~4.12]{GW12} that every Anosov subgroup $\Gamma<G$ of a semisimple Lie group $G$ may be made projective Anosov after post-composition with a linear representation $G\rightarrow \mathrm{SL}(W)$ for some finite-dimensional real vector space $W$.  Therefore, Theorem \ref{THM A} establishes that all torsion-free Anosov subgroups are monodromy groups of (generally non-compact) Axiom A dynamical systems. By Selberg's Lemma, the torsion-free property can always be achieved by passing to a finite index subgroup, which preserves the Anosov property \cite[Cor.~1.3]{GW12}.

A choice of norm on $V$ induces a real analytic trivialization of the principal $\R$-bundle 
$$
\mathbb{L}\rightarrow \mathbb{P}(V)\overset{\pitchfork}\times \mathbb{P}(V^{\ast})
$$
over the space $\mathbb{P}(V)\overset{\pitchfork}\times \mathbb{P}(V^{\ast})$ of all transverse line/hyperplane pairs $([v], [\alpha])\in \mathbb{P}(V)\times \mathbb{P}(V^{\ast}),$ i.e. those satisfying $\alpha(v)\neq 0$.  With respect to this (or any) trivialization, the $\mathrm{SL}(V)$-action becomes a real analytic cocycle 
$$
\mathscr{H}: \mathrm{SL}(V)\times \mathbb{P}(V)\overset{\pitchfork}\times \mathbb{P}(V^{\ast})\rightarrow \mathbb{R}
$$ 
prescribing the $\mathrm{SL}(V)$-action in the choice of trivialization.  This cocycle was initially utilized in the study of discrete group actions by Quint \cite{QUI02C, QUI02D}, and after the work \cite{SAM14} appears throughout the literature on Anosov subgroups/homomorphisms.

A projective Anosov subgroup $\Gamma<\SL(V)$ comes with a pair of $\Gamma$-equivariant limit maps $\xi:\partial_{\infty}\Gamma\to\mathbb{P}(V)$, $\xi^\ast:\partial_{\infty}\Gamma\to\mathbb{P}(V^\ast)$ defined on the Gromov boundary $\partial_{\infty}\Gamma$. Denoting the complement of the diagonal in $\partial_{\infty}\Gamma\times \partial_{\infty}\Gamma$ by $\partial_{\infty}\Gamma^{(2)}$, Sambarino (see \cite{SAM14, SAM24}) observed that the product map
$$
\xi\times\xi^{*}:\partial_{\infty}\Gamma^{(2)}\longrightarrow \mathbb{P}(V)\overset{\pitchfork}\times \mathbb{P}(V^{\ast})
$$ 
can be post-composed by the cocycle $\mathscr{H}$ to obtain a 
H\"{o}lder cocycle 
$$
\mathscr{H}_{\Gamma}: \Gamma\times \partial_{\infty}\Gamma^{(2)}\rightarrow \mathbb{R}.
$$
Upon leveraging the results of Ledrappier \cite{LED95}, Sambarino proved that the induced $\Gamma$-action on 
$\partial_{\infty}\Gamma^{(2)}\times \mathbb{R}$ is properly discontinuous and cocompact. He denoted the quotient by this action  $\chi_{\Gamma}$  and called the induced flow on $\chi_{\Gamma}$ the \emph{refraction flow}  \cite{SAM24}.

Our second result establishes that the refraction flow space $\chi_{\Gamma}$ appears as the basic hyperbolic set of the real analytic contact Axiom A system from Theorem \ref{THM A}.
\begin{introthm}\label{SAMHYP}
Suppose $\Gamma<\mathrm{SL}(V)$ is a torsion-free projective Anosov subgroup.  In the setting of Theorem \ref{THM A}, there is a canonical bi-H\"{o}lder continuous homeomorphism
$$
F: \chi_{\Gamma} \xlongrightarrow{\simeq}  \mathcal{K}_{\Gamma}
$$
such that $F\circ  \phi_{S}^{t} = \phi^{t}\circ F$ where $\phi_{S}^{t}: \chi_{\Gamma}\rightarrow \chi_{\Gamma}$ is Sambarino's refraction flow.
\end{introthm}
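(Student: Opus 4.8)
The plan is to construct $F$ directly as a map between two concrete models of the same dynamical object, using the fact that both $\chi_\Gamma$ and $\cK_\Gamma$ arise as quotients of subsets of $\partial_\infty\Gamma^{(2)}\times\R$. First I would identify the basic hyperbolic set $\widetilde{\cK_\Gamma}\subset\widetilde{\cM_\Gamma}\subset\bbL$ explicitly: by Theorem \ref{THM A}.(\ref{THM A: AxiomA}) and standard Axiom A theory, $\widetilde{\cK_\Gamma}$ is the set of points whose forward and backward $\phi^t$-orbits stay in a compact subset of $\cM_\Gamma$, and its lift should be exactly the set of $[v:\alpha]\in\bbL$ with $[v]\in\xi(\partial_\infty\Gamma)$ and $[\alpha]\in\xi^\ast(\partial_\infty\Gamma)$ corresponding to \emph{distinct} boundary points, i.e. the image of the injective real analytic map $\partial_\infty\Gamma^{(2)}\times\R\to\bbL$ sending $(\eta^+,\eta^-,s)$ to the unique point over the transverse pair $(\xi(\eta^+),\xi^\ast(\eta^-))\in\bbP(V)\transverse\times\bbP(V^\ast)$ whose $\R$-coordinate, in the norm-trivialization $\bbL\to\bbP(V)\transverse\times\bbP(V^\ast)$, equals $s$. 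One then checks that $\Gamma$ acts on this subset through the cocycle $\mathscr{H}_\Gamma$ in the $\R$-factor and through the action on $\partial_\infty\Gamma^{(2)}$ in the base, because the trivialization cocycle $\mathscr{H}$ restricted along $\xi\times\xi^\ast$ is by definition $\mathscr{H}_\Gamma$; this is where the choice of norm in the trivialization matters, but different norms yield $\Gamma$-equivariantly conjugate models so the quotient is well defined up to the relevant equivalence.

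With this identification in hand, $F$ is induced by the identity on $\partial_\infty\Gamma^{(2)}\times\R$: the $\Gamma$-action defining $\chi_\Gamma$ (via $\mathscr{H}_\Gamma$, as in Sambarino) is literally the same action as the one by which $\Gamma$ acts on the lifted hyperbolic set, so the map descends to a bijection $F:\chi_\Gamma\to\cK_\Gamma$. Equivariance of the flows is immediate: $\phi^t$ on $\bbL$ translates the $\R$-coordinate of the trivialization by $t$ (the flow $\phi^t(v,\alpha)=(e^tv,e^{-t}\alpha)$ fixes the transverse pair and acts by $e^t$ on the pairing, hence by $+t$ on the logarithmic coordinate), which is exactly Sambarino's refraction flow $\phi_S^t$ on $\chi_\Gamma$; so $F\circ\phi_S^t=\phi^t\circ F$ holds before passing to the quotient. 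The bi-Hölder regularity of $F$ follows from the regularity of the limit maps: $\xi$ and $\xi^\ast$ are Hölder (a standard consequence of the projective Anosov property), the norm-trivialization of $\bbL\to\bbP(V)\transverse\times\bbP(V^\ast)$ is real analytic, and the natural metric on $\chi_\Gamma$ is built from the Gromov metric on $\partial_\infty\Gamma$ via a Hölder cocycle; chasing these through, both $F$ and $F^{-1}$ are Hölder on compact sets, and since both spaces are compact this gives global bi-Hölder continuity. Finally, continuity and properness of this parametrization, together with compactness of $\cK_\Gamma$, upgrade the bijection to a homeomorphism.

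The main obstacle I anticipate is not the formal identification but verifying that the lift of $\cK_\Gamma$ is \emph{precisely} the image of $\partial_\infty\Gamma^{(2)}\times\R$ and nothing more — that is, that no extra recurrent points sneak into the basic hyperbolic set beyond those coming from transverse pairs of limit points. This requires knowing the structure of $\widetilde{\cM_\Gamma}$ from the proof of Theorem \ref{THM A}: one must show that a point $[v:\alpha]$ with nonrecurrent orbit (either $[v]\notin\xi(\partial_\infty\Gamma)$ or $[\alpha]\notin\xi^\ast(\partial_\infty\Gamma)$, or the two boundary points coinciding) escapes every compact set under the flow in forward or backward time, while transverse limit-point pairs stay in a compact piece of $\cM_\Gamma$ because of cocompactness of the $\Gamma$-action on $\partial_\infty\Gamma^{(2)}\times\R$. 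The uniqueness clause in Theorem \ref{THM A}.(\ref{THM A: AxiomA}) does most of this work for us, so the remaining task is to match the nonwandering set of $\phi^t$ with the image of Sambarino's quotient, which is a direct consequence of the explicit geometry of $\bbL$ and the limit maps rather than any delicate dynamical estimate.
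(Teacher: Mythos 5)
Your proposal is correct and follows essentially the same route as the paper: the paper's proof uses the norm-induced Hopf parametrization $\cH:\bbL\to\trprod\times\R$ (under which $\phi^t$ becomes translation and the $\SL(V)$-action becomes the HBI cocycle), defines the lifted basic set as $p^{-1}(\sigma(\partial_\infty\Gamma^{(2)}))$, and lets $F$ be induced by $\cH^{-1}\circ(\sigma\times\mathrm{id})$, exactly as you describe. The point you flag as the main obstacle — that the dynamically defined basic/nonwandering/trapped set is precisely the quotient of the transverse limit pairs — is indeed handled in the paper by a separate argument (its Theorem on coinciding dynamical subsets, via the convergence dynamics of Proposition \ref{prop conv action}), which is the content behind the ``setting of Theorem \ref{THM A}'' you invoke.
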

It is notable that through the work of Sambarino (e.g. \cite{SAM14}, \cite{BCLS}), the $\mathrm{SL}(V)$-equivariant affine line bundle $\mathbb{L}$ and basic hyperbolic set $\chi_{\Gamma}\simeq\mathcal{K}_{\Gamma}$ have already been central objects in the study of Anosov subgroups/homomorphisms for over a decade, but no domain of discontinuity for the $\Gamma$-action on $\mathbb{L}$ containing the lift $\widetilde{\mathcal{K}}_{\Gamma}\subset \mathbb{L}$ of the basic set $\mathcal{K}_{\Gamma}$ had previously been found.  

We now begin our demonstration of what is possible when one realizes a Smale flow on a compact metric space (e.g.\ the refraction flow, c.f.\ \cite{POL87}) as a basic hyperbolic set in a (real analytic) Axiom A system.  It is here that the high regularity of the associated foliations in Theorem \ref{THM A}.(\ref{real analytic fol}) are of utmost importance.

Theorem \ref{THM A} allows Stoyanov's \cite{St11} remarkable extension of the techniques of Dolgopyat \cite{DOL98} to be brought to bear in studying Ruelle transfer operators and exponential mixing for the refraction flow $\phi_{S}^{t}: \chi_{\Gamma}\rightarrow \chi_{\Gamma}$.  We view the following result as a \emph{proof of concept} that Theorem \ref{THM A} opens the door to a deeper analysis of the dynamics of the refraction flow associated to projective Anosov subgroups.
\begin{introthm}\label{EXP MIX}
Suppose $\Gamma<\mathrm{SL}(V)$ is a torsion-free irreducible projective Anosov subgroup.  Let $\mathcal{M}_{\Gamma}$ be the Axiom A dynamical system provided by Theorem \ref{THM A} with unique basic hyperbolic set $\mathcal{K}_{\Gamma}\subset \mathcal{M}_{\Gamma}.$  

For every $0<\alpha<1$ and $U\in C^{\alpha}(\mathcal{K}_{\Gamma}, \mathbb{R}),$ there exist $c_{\alpha}, C_{\alpha}>0,$ depending only on $U$ and $\alpha,$ such that for every $F,G\in C^{\alpha}(\mathcal{K}_{\Gamma}, \mathbb{R})$ the correlations decay exponentially:
\begin{multline*}
\forall\,t\in \R: \left\lvert \int_{z\in \mathcal{K}_{\Gamma}} F(z)\cdot G(\phi^{t}(z)) \ d\mu_{U}(z) - \int_{z\in \mathcal{K}_{\Gamma}} F(z) \ d\mu_{U}(z)\int_{z\in \mathcal{K}_{\Gamma}} G(z)\ d\mu_{U}(z)\right\rvert \\
\leq C_{\alpha}e^{-c_{\alpha} |t|}\lVert F\rVert_{\alpha}\lVert G\rVert_{\alpha}.
\end{multline*}
Above, the measure of integration $d\mu_{U}$ is induced from the unique Gibbs equilibrium state $\mu_{U}$ of maximal topological pressure for the H\"{o}lder potential $U.$  In particular, the flow $\phi^{t}: \mathcal{K}_{\Gamma}\rightarrow \mathcal{K}_{\Gamma}$ is exponentially mixing for all Gibbs states and H\"{o}lder observables, and therefore applying Theorem \ref{SAMHYP}, the refraction flow $\phi_{S}^{t}: \chi_{\Gamma}\rightarrow \chi_{\Gamma}$ is exponentially mixing for all Gibbs states and H\"{o}lder observables.
\end{introthm}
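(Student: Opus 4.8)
The plan is to deduce Theorem~\ref{EXP MIX} from Stoyanov's spectral estimates for Ruelle transfer operators \cite{St11}, using Theorem~\ref{THM A} to supply the regularity hypotheses that Stoyanov requires. Stoyanov's theorem applies to an Axiom A flow restricted to a basic hyperbolic set $\mathcal{K}$ which is \emph{contact} (or more generally satisfies a non-integrability/Lipschitz Local Non-Integrability Condition) and whose (un)stable \emph{holonomy maps are uniformly Lipschitz} (his ``regularity of local stable/unstable holonomies''); under irreducibility plus the contact form supplied in Theorem~\ref{THM A}.(\ref{DOD}), these are exactly the conditions we need to check. The conclusion of Stoyanov's work is that the complex Ruelle transfer operators $\mathcal{L}_{U - (s_0 + a + ib)\tau}$ acting on a suitable anisotropic/H\"{o}lder space satisfy norm bounds of the form $\|\mathcal{L}^{m}\| \le C\rho^{m}$ with $\rho < 1$ uniformly for $|b|$ large and $|a|$ small, which is precisely a spectral gap for the transfer operator family; exponential decay of correlations for all Gibbs states and H\"older observables then follows by the by-now-standard Pollicott--Ruelle argument (writing the correlation function as a contour integral of the resolvent of the transfer operator, shifting the contour past the line $\Re s = s_0$ using the spectral gap, and picking up only exponentially-decaying contributions).

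The key steps, in order. First, I would record that by Theorem~\ref{THM A}.(\ref{THM A: AxiomA}) the flow $\phi^{t}$ on $\mathcal{M}_\Gamma$ is Axiom A with a unique basic hyperbolic set $\mathcal{K}_\Gamma$, and that by Theorem~\ref{THM A}.(\ref{DOD}) it preserves a real analytic contact one-form with $\phi^t$ as its Reeb flow; this gives the contact (hence non-integrability) hypothesis on $\mathcal{K}_\Gamma$. Second — and this is where Theorem~\ref{THM A}.(\ref{real analytic fol}) is essential — I would invoke the real analytic global splitting $T\mathcal{M}_\Gamma = E^{\mathrm u}\oplus E^0 \oplus E^{\mathrm s}$ and the resulting real analytic stable/unstable/central-stable/central-unstable foliations on all of $\mathcal{M}_\Gamma$, restricting to the hyperbolic splitting along $\mathcal{K}_\Gamma$. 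Since the foliations extend to smooth (indeed real analytic) foliations of an open neighborhood, the holonomy maps between local transversals are smooth, hence uniformly bi-Lipschitz on the compact set $\mathcal{K}_\Gamma$; this verifies Stoyanov's regularity-of-holonomies hypothesis, which is exactly the condition that fails for the fractal Gromov/refraction models and is the whole point of passing to $\mathcal{M}_\Gamma$. Third, I would check that $\mathcal{K}_\Gamma$ with the restricted flow falls into the class of flows Stoyanov treats: topological mixing (or at least, topological transitivity — one reduces to the mixing case as usual, and topological mixing for the refraction flow follows from Theorem~\ref{SAMHYP} together with known properties of Sambarino's flow, or from irreducibility and the structure of $\mathcal{K}_\Gamma$ as the non-wandering set), existence of Markov sections (Bowen), and the irreducibility assumption on $\Gamma$ feeding into the non-lattice/non-integrability alternative that rules out the obstruction to a spectral gap. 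Fourth, I would apply \cite[main theorem]{St11} to get the uniform norm bound on the iterated transfer operators for the potential $U$, and finally translate this into the stated correlation estimate: identify $\mu_U$ with the Gibbs state of maximal pressure for $U$ via the thermodynamic formalism on $\mathcal{K}_\Gamma$ (Bowen--Ruelle), write the correlation integral $\int F\cdot (G\circ\phi^t)\,d\mu_U$ symbolically through the Markov coding as a sum over the transfer operator acting on lifted observables, apply the Paley--Wiener/contour-shift argument, and absorb the constants; the dependence of $c_\alpha, C_\alpha$ on $U$ and $\alpha$ only (not on $F,G$) is automatic from the operator-norm bound being independent of $F,G$. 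The transfer of the conclusion from $\mathcal{K}_\Gamma$ to $\chi_\Gamma$ is immediate from the bi-H\"older conjugacy $F$ of Theorem~\ref{SAMHYP}, since exponential mixing for H\"older observables and Gibbs states is invariant under bi-H\"older orbit-equivalence (the Gibbs state and the H\"older class both pull back, up to a time change which is itself H\"older).

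The main obstacle is the careful verification that the geometric data produced by Theorem~\ref{THM A} matches, line by line, the precise hypothesis list of Stoyanov's paper \cite{St11} — in particular that the \emph{uniform} Lipschitz (or $C^1$) regularity of the local holonomy maps along $\mathcal{K}_\Gamma$ in the sense Stoyanov needs is genuinely delivered by the \emph{global} real analytic foliations, and that the contact form gives the correct Local Non-Integrability Condition rather than merely pointwise non-integrability. One must also be somewhat careful about non-compactness of $\mathcal{M}_\Gamma$: Stoyanov's framework is for basic sets, which are compact, so everything should be run on the compact set $\mathcal{K}_\Gamma$ (and a compact neighborhood of it on which the foliations restrict nicely), but one should make sure no step implicitly uses compactness of the ambient manifold. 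A secondary technical point is the reduction from topological transitivity to topological mixing for the refraction flow, and the identification of the obstruction-to-gap condition with reducibility of $\Gamma$ — this is where the irreducibility hypothesis is consumed, and one should check it is used correctly (e.g.\ via the fact that for irreducible $\Gamma$ the first Betti-type obstruction / the lattice case in Stoyanov's dichotomy does not arise, because the length spectrum of the refraction flow is non-arithmetic). None of these is expected to be a serious difficulty given Theorems~\ref{THM A} and \ref{SAMHYP}, but they are the steps requiring genuine care rather than citation.
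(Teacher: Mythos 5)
Your overall architecture (verify Stoyanov's hypotheses using Theorem~\ref{THM A}, then quote \cite{St11} and pass to $\chi_\Gamma$ via Theorem~\ref{SAMHYP}) is the paper's, but there is a genuine gap: Stoyanov's theorem has \emph{three} geometro-dynamical hypotheses, and you only address two. The one you omit entirely --- \emph{regular distortion along unstable manifolds} over $\mathcal{K}_\Gamma$ (Definition~\ref{def: URDU}), an Ahlfors--David-type self-similarity estimate comparing $\mathrm{diam}\bigl(\mathcal{K}_\Gamma\cap B^{\mathrm{u}}_T(x,\varepsilon)\bigr)$ with $\mathrm{diam}\bigl(\mathcal{K}_\Gamma\cap B^{\mathrm{u}}_T(x,\delta)\bigr)$ uniformly in $T$ --- is a statement about the fractal trace of $\mathcal{K}_\Gamma$ on unstable leaves and does \emph{not} follow from the real analyticity of the foliations, the contact form, or uniform Lipschitz holonomies. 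Verifying it is the bulk of the paper's work in Section~\ref{ExponentialMixing}: one replaces $\mathcal{K}_\Gamma\cap W^{\mathrm{u}}(x)$ by the infinitesimal unstable limit set $\Lambda^{\mathrm{u}}(x)\subset E^{\mathrm{u}}_x$ using the totally geodesic pseudo-Riemannian unstable exponential map (which linearizes $\phi^t$), compares Bowen balls with their linearized versions, and adapts Stoyanov's pinching-case arguments from \cite{St13}, using that $\mathrm{Span}(\Lambda^{\mathrm{u}})$ is a continuous subbundle (Lemmas~\ref{lem diameters of linearised Bowen balls}--\ref{lem:regular distortion for infinitesimal limit set}, Corollary~\ref{cor regular distortion along unstable manifolds}). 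Without this ingredient the appeal to \cite{St11} in your step four is unjustified.

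Second, your treatment of non-integrability and of where irreducibility is consumed is off. The contact structure alone does \emph{not} give Stoyanov's LNIC: the condition requires the comparison point $y$ to lie in $\mathcal{K}_\Gamma\cap W^{\mathrm{s}}_\varepsilon(z)$ and the unstable displacements $u$ to lie in $\Lambda^{\mathrm{u}}(x)$, i.e.\ the lower bound on the time-separation function $\Delta$ must be achieved with data constrained to the fractal limit set (the paper remarks explicitly that dropping the requirement $y\in\mathcal{K}_\Gamma$ would make the condition a triviality of the contact structure). Irreducibility enters exactly here, not in any arithmeticity/lattice or topological-mixing dichotomy as you suggest: one needs, for every unit unstable vector $w$ at a point of $\widetilde{\mathcal{K}}_\Gamma$, a functional $\beta$ with $[\beta]\in\Lambda^*_\Gamma$, close to $\alpha$, and with $|\beta(u)|\gtrsim|\beta(w)|\neq 0$ uniformly on a cone of directions (Lemma~\ref{lem irreducibility argument}, which fails for Barbot subgroups); this feeds into the proof of the (S)LNIC in Lemma~\ref{lem LNIC} and thence into Theorem~\ref{contracting ruelle}. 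Your holonomy-regularity step and the final transfer to $\chi_\Gamma$ are fine (and the mixing statement is then quoted directly from \cite[Cor.~1.5]{St11} rather than re-derived by a contour shift), but as written the proposal is missing the regular distortion verification and misplaces the role of irreducibility, both of which are essential for the hypotheses of \cite{St11} to hold.
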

The existence of Gibbs states for the refraction flow is originally due to Sambarino (see \cite{SAM24}), though in light of Theorems \ref{THM A} and \ref{SAMHYP} it is also an immediate consequence of the results of Bowen-Ruelle \cite{BR75}.

\subsection{Further consequences}

In this section we will explain some additional dynamical consequences of Theorem \ref{THM A}. The Axiom A system $(\mathcal{M}_{\Gamma},\phi^t)$ associated  to a non-trivial torsion-free projective Anosov subgroup $\Gamma<\mathrm{SL}(V)$ by Theorem \ref{THM A} carries a period function
\begin{align}\label{periods notation}
\lambda_{1}: \Gamma\rightarrow [0, +\infty)
\end{align}
which is conjugacy invariant and positive, i.e. $\lambda_{1}(\gamma)>0$ for all non-trivial $e\neq \gamma\in \Gamma.$  As suggested by \eqref{periods notation}, $\lambda_{1}(\gamma)$ is equal to the natural logarithm of the spectral radius of the unimodular linear map $\gamma\in \mathrm{SL}(V).$  The associated Ruelle zeta function is formally defined by the Euler product
\begin{align}\label{Euler product intro}
\zeta_{\Gamma}(s)=\prod_{[\gamma]\in \left[\Gamma\right]_{\mathrm{prim}}} \left(1-e^{-s\lambda_{1}([\gamma])}\right)^{-1},
\end{align}
where $\left[\Gamma\right]_{\mathrm{prim}}$ is the set of non-trivial conjugacy classes of primitive elements in the hyperbolic group $\Gamma.$  The Euler product \eqref{Euler product intro} converges for $\Re(s)>h_{\mathrm{top}}(\Gamma)$, where $h_{\mathrm{top}}(\Gamma)$ denotes the topological entropy of the flow $\phi^{t}: \mathcal{K}_{\Gamma}\rightarrow \mathcal{K}_{\Gamma}$ restricted to the unique basic hyperbolic set $\mathcal{K}_{\Gamma}\subset \mathcal{M}_{\Gamma}.$  Note that $h_{\mathrm{top}}(\Gamma)>0$ if and only if $\Gamma$ is non-abelian.

In light of Theorem \ref{THM A}, the resolution of Smale's conjecture due to Dyatlov-Guillarmou \cite{DG16, DG18} and Borns-Weil-Shen \cite{BWS21} immediately implies the global meromorphic continuation of $\zeta_{\Gamma}.$ Moreover, provided $\Gamma<\SL(V)$ is irreducible, in Theorem \ref{contracting ruelle} we adapt the spectral estimates on Ruelle transfer operators achieved by Stoyanov \cite{St11}, which when combined with the work Pollicott-Sharp \cite{PS98} (see also Dolgopyat-Pollicott \cite{DP98}) implies a zero-free strip to the left of the simple pole $h_{\mathrm{top}}(\Gamma)\in \C$:
\begin{introthm}\label{zeta fcn}
Suppose $\Gamma<\mathrm{SL}(V)$ is a non-trivial torsion-free projective Anosov subgroup.  Then the associated Ruelle zeta function $\zeta_{\Gamma}(s)$ admits a meromorphic continuation to all $s\in \mathbb{C}$ with a simple pole at $s=h_{\mathrm{top}}(\Gamma).$

If $\Gamma<\mathrm{SL}(V)$ is irreducible, then $\zeta_{\Gamma}$ has a zero-free spectral gap: there exists $\varepsilon>0$ such that $\zeta_{\Gamma}$ is holomorphic and nowhere vanishing in the strip $h_{\mathrm{top}}(\Gamma)-\varepsilon<\mathrm{Re}(s)<h_{\mathrm{top}}(\Gamma).$ 
\end{introthm}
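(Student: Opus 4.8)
We outline the proof. By Theorems~\ref{THM A} and \ref{SAMHYP}, $\zeta_\Gamma$ is the dynamical zeta function of the real analytic contact Axiom~A flow $(\phi^t,\mathcal{M}_\Gamma)$ on its basic set $\mathcal{K}_\Gamma$, with primitive periods $\{\lambda_1([\gamma])\}_{[\gamma]\in[\Gamma]_{\mathrm{prim}}}$, and its reciprocal is the Ruelle zeta function; Markov sections of $\mathcal{K}_\Gamma$ (due to Bowen and Ratner) give a symbolic model $(\Sigma,\sigma,\tau)$ through whose Ruelle transfer operators $\mathcal{L}_{-s\tau}$ the function $\zeta_\Gamma$ factors. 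The global meromorphic continuation to $s\in\mathbb{C}$ then follows verbatim from the resolution of Smale's conjecture --- Dyatlov--Guillarmou \cite{DG16,DG18} with its Axiom~A/open-system refinement by Borns-Weil--Shen \cite{BWS21} --- upon writing $\zeta_\Gamma^{\pm1}$ as a finite alternating product of dynamical zeta functions twisted by $\Lambda^{\bullet}(E^{\mathrm{u}})^{\ast}$ along $\mathcal{K}_\Gamma$, each the ratio of two entire Fredholm determinants with poles and zeros at Pollicott--Ruelle resonances. For the simple pole: for non-elementary $\Gamma$ the action $\Gamma\curvearrowright\partial_\infty\Gamma$ is minimal, hence $(\sigma,\tau)$ is topologically transitive, so by the Ruelle--Perron--Frobenius theorem the leading eigenvalue of $\mathcal{L}_{-s\tau}$ is simple and equals $1$ transversally at $s=h_{\mathrm{top}}(\Gamma)$, where the pressure $P(-s\tau)$ vanishes; this is the claimed simple pole. (When $\Gamma$ is virtually cyclic, $\mathcal{K}_\Gamma$ is a finite union of periodic orbits and the pole structure is explicit.)

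Now assume $\Gamma<\SL(V)$ irreducible. The plan is to verify the hypotheses of Stoyanov's spectral estimates for Ruelle transfer operators of Axiom~A flows \cite{St11} --- recorded here as Theorem~\ref{contracting ruelle} --- for the symbolic model above, and to feed the resulting contraction estimates into the mechanism of Pollicott--Sharp \cite{PS98} (cf.\ Dolgopyat--Pollicott \cite{DP98}), which converts them into a zero-free and pole-free strip $h_{\mathrm{top}}(\Gamma)-\varepsilon<\mathrm{Re}(s)<h_{\mathrm{top}}(\Gamma)$ for $\zeta_\Gamma$. Stoyanov's regularity requirements --- uniformly Lipschitz, indeed $C^1$, local stable and unstable holonomies, with controlled distortion along (un)stable leaves --- are automatic here by Theorem~\ref{THM A}.(\ref{real analytic fol}), because the stable, unstable, center-stable and center-unstable foliations along $\mathcal{K}_\Gamma$ are restrictions of \emph{global real analytic} foliations of $\mathcal{M}_\Gamma$. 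What remains is the non-integrability input: a uniform local non-integrability condition (LNIC), that the temporal distance function between nearby local stable and unstable pieces inside $\mathcal{K}_\Gamma$ has non-degenerate differential on a positive-measure set, with bounds uniform over symbolic cylinders. Two facts feed this. First, $\phi^t$ is the Reeb flow of the contact form $\alpha$ and \eqref{eq:splittingofTL} realizes $E^{\mathrm{u}}\oplus E^{\mathrm{s}}=\ker\alpha$ as a contact, hence maximally non-integrable, distribution, so $d\alpha$ is non-degenerate there and directly governs the temporal distance function. Second --- and this is where irreducibility enters --- irreducibility forces the linear span of the limit set $\xi(\partial_\infty\Gamma)$, a $\Gamma$-invariant subspace of $V$, to equal $V$; equivalently $\xi(\partial_\infty\Gamma)$ lies in no proper projective subspace. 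This spanning property is used to upgrade the pointwise contact non-degeneracy to a genuinely uniform LNIC along the fractal set $\mathcal{K}_\Gamma$, the degenerate scenarios being confined to the reducible case, where $\xi(\partial_\infty\Gamma)$ can be trapped in some proper $\mathbb{P}(W)$.

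The step I expect to be the main obstacle is exactly this uniform LNIC for the refraction flow: converting the spanning of $\xi(\partial_\infty\Gamma)$ into quantitative, cylinder-uniform oscillation of the temporal distance function on $\mathcal{K}_\Gamma$, and checking that Stoyanov's framework --- written for basic sets of Axiom~A flows on possibly non-compact manifolds --- genuinely applies to our Markov family and analytic foliations. Two ancillary ingredients must also be supplied: non-arithmeticity of the $\lambda_1$-length spectrum of an irreducible $\Gamma$, so that topological weak-mixing excludes Pollicott--Ruelle resonances on $\mathrm{Re}(s)=h_{\mathrm{top}}(\Gamma)$ other than $s=h_{\mathrm{top}}(\Gamma)$ and --- by a perturbation argument --- throughout a thin strip over the compact frequency window $|\mathrm{Im}(s)|\le T_0$ not covered by Stoyanov's large-frequency estimates; and the already-established meromorphic continuation, used to globalize. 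Combining these yields the desired $\varepsilon>0$.
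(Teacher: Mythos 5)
Your overall architecture matches the paper's: the global meromorphic continuation and the simple pole come from applying Dyatlov--Guillarmou \cite{DG16,DG18} and Borns-Weil--Shen \cite{BWS21} to the Axiom A system of Theorem \ref{THM A} (this is Theorem \ref{thm:thmzetasmooth}), and the spectral gap comes from verifying Stoyanov's hypotheses so that the transfer-operator estimates of Theorem \ref{contracting ruelle} hold, then running the Dolgopyat--Pollicott/Pollicott--Sharp mechanism as in Theorem \ref{RH}; irreducibility indeed enters only through a spanning property of the limit set in the non-integrability verification (Lemma \ref{lem irreducibility argument} -- the paper spans the \emph{dual} limit set in $V^\ast$ rather than $\xi(\partial_\infty\Gamma)$ in $V$, but the idea is the one you describe).

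There is, however, a genuine gap: you declare Stoyanov's regularity and distortion requirements ``automatic'' from the real analyticity of the foliations in Theorem \ref{THM A}.(\ref{real analytic fol}). That is correct for the uniform Lipschitz property of the stable holonomies (they are restrictions of smooth maps, cf.\ \eqref{eqn formula stable holonomy}), but it is false for the third hypothesis, \emph{regular distortion along unstable manifolds} (Definition \ref{def: URDU}). This is not a smoothness statement about the leaves: it is a quantitative self-similarity statement about the fractal intersections $\cK_\Gamma\cap B^{\mathrm{u}}_T(x,\delta)$, comparing diameters of $\cK_\Gamma$ inside Bowen balls at two scales, uniformly in $x\in\cK_\Gamma$ and $T>0$, and no amount of analyticity of $W^{\mathrm{u}}$ yields it. The paper devotes an entire subsection to this point, adapting Stoyanov \cite{St13}: one linearizes via the totally geodesic unstable leaves and the unstable exponential map \eqref{eq:unstableexp}, replaces $\cK_\Gamma\cap W^{\mathrm{u}}(x)$ by the infinitesimal unstable limit set $\Lambda^{\mathrm{u}}(x)$, shows that its span forms a continuous subbundle (Corollary \ref{cor span of infinitesimal limit set forms a vector bundle}, using that $\Gamma$ is non-elementary), and proves the two-scale diameter comparison there (Lemma \ref{lem:regular distortion for infinitesimal limit set} and Corollaries \ref{cor local regular distortion}, \ref{cor regular distortion along unstable manifolds}). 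Your proposal offers no substitute for this step, so as written the appeal to \cite{St11} is not justified. A secondary divergence: for the compact frequency window you invoke non-arithmeticity of the length spectrum plus a perturbation argument, an input you do not establish; the paper avoids it by first proving exponential mixing (Theorem \ref{thm: cor exp decay}), deducing via Pollicott's Paley--Wiener argument that $\zeta_\Gamma$ is analytic in a strip apart from the simple pole (Theorem \ref{zeta potential strip}), and then disposing of the finitely many possible zeros in the compact rectangle by shrinking $\varepsilon$.
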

We actually obtain more general results for weighted zeta functions, see the more detailed Theorems \ref{zeta potential strip}, \ref{thm:thmzetasmooth} and \ref{RH} in Section \ref{sec:Resonances}.

A result similar to Theorem \ref{zeta fcn} appeared recently in the article  \cite{PS24} by  Pollicott-Sharp, where they prove the meromorphic continuation of $\zeta_{\Gamma}$ for all projective Anosov surface groups $\Gamma\simeq \pi_{1}(S);$ the space $S$ is a closed, connected, orientable topological surface of genus at least two.  Under the stronger condition of $(1,1,2)$-hyperconvexity (\cite[Def.~6.1]{PSW21}), which by results of Pozzetti-Sambarino-Wienhard \cite{PSW21} implies that the limit set is $C^{1},$ Pollicott-Sharp prove that $\zeta_{\Gamma}$ is analytic and zero-free in a strip to the left of $h_{\mathrm{top}}(\Gamma)$.  In our result, we require no hypothesis on $\Gamma$ other than being projective Anosov and torsion-free (and irreducible for the spectral gap), and therefore no hypothesis on the regularity of the limit set.  But, there is still daylight between our results and those in \cite{PS24} since $(1,1,2)$-hyperconvex representations are not necessarily irreducible (\cite[Sec.~9]{PSW21}).
It is important to note that both results ultimately rely on an analysis of Ruelle transfer operators and the thermodynamic formalism.

To clarify further the nature of the hypotheses in \cite{PS24}, the $(1,1,2)$-hyperconvexity condition is satisfied by Hitchin surface subgroups $\Gamma<\mathrm{SL}(d, \mathbb{R})$ acting on $\mathbb{RP}^{d-1}$ \cite{PSW21}, but does not hold for general projective Anosov surface subgroups.  Non-trivial Zariski dense counterexamples are provided by embedding $\mathrm{SO}_{0}(3,1)<\mathrm{SL}(4, \mathbb{R})$ and deforming a strictly quasi-Fuchsian surface group $\Gamma<\mathrm{SO}_{0}(3,1)$ into $\mathrm{SL}(4, \mathbb{R}).$  If $\Gamma$ is not the fundamental group of a closed Riemannian manifold of negative sectional curvature, then $\partial_{\infty}\Gamma$ is often totally disconnected (see \cite{BOW98}), the refraction flow is fractal in nature, and the power of the Axiom A approach is on full display.

Next, we deduce the strengthening of the prime orbit theorem of Sambarino \cite{SAM14} which now comes with the desirable exponentially decaying error term.  Sambarino's result is more general than the irreducible projective Anosov case, but does not yield the exponential error term.  Alternatively, equipped with Theorem \ref{THM A} the result of Parry-Pollicott \cite{PP83} can be applied to obtain a different proof of Sambarino's result in the projective Anosov case, again without the exponential error term.

Given a projective Anosov subgroup $\Gamma< \mathrm{SL}(V),$ define the orbit counting function
$$
N_{\Gamma}(t)=\# \set{[\gamma]\in \left[\Gamma\right]_{\mathrm{prim}}}{ \mathrm{P}_{\Gamma}([\gamma])\leq t}.
$$
The offset Logarithmic integral is defined via
\begin{align}\label{eq:offsetintegral}
\mathrm{Li}(t)=\int_{x=2}^{t} \frac{dx}{\log{x}}
\end{align}
and satisfies $\mathrm{Li}(t)\sim \frac{t}{\log{t}}$ as $t\rightarrow +\infty.$  
\begin{introthm}\label{orbit counting}
Suppose $\Gamma< \mathrm{SL}(V)$ is a non-trivial irreducible projective Anosov subgroup.  Then there exists $0<c<h_{\mathrm{top}}(\Gamma)$ such that
$$
N_{\Gamma}(t)=\mathrm{Li}\left(e^{h_{\mathrm{top}}(\Gamma)t} \right)\left(1+ O\big(e^{-(h_{\mathrm{top}}(\Gamma)-c)t}\big)\right).
$$

\end{introthm}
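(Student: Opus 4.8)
The plan is to deduce Theorem~\ref{orbit counting} from the analytic properties of the Ruelle zeta function recorded in Theorem~\ref{zeta fcn} by an effective Tauberian argument in the style of Pollicott--Sharp \cite{PS98} (compare \cite{DP98, PP83}). First I would reduce the statement to counting periodic orbits of the flow: by Theorem~\ref{SAMHYP} together with the construction underlying Theorem~\ref{THM A}, the assignment sending a class $[\gamma]\in[\Gamma]_{\mathrm{prim}}$ to its associated closed orbit is a bijection onto the set of primitive periodic orbits of $\phi^t\colon\mathcal K_\Gamma\to\mathcal K_\Gamma$, carrying $\mathrm P_\Gamma([\gamma])=\lambda_1([\gamma])$ to the period of that orbit. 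Hence $N_\Gamma(t)$ equals the number of primitive periodic orbits of period at most $t$. Moreover the contracting estimates for the complex Ruelle transfer operators (Theorem~\ref{contracting ruelle}), which underlie Theorem~\ref{zeta fcn}, in particular force the period spectrum not to lie in a discrete subgroup of $\R$, so the counting function carries no oscillatory term at leading order.

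Next, expanding the Euler product \eqref{Euler product intro} gives, for $\Re(s)>h_{\mathrm{top}}(\Gamma)$,
\[
-\frac{\zeta_\Gamma'}{\zeta_\Gamma}(s)=\sum_{[\gamma]\in[\Gamma]_{\mathrm{prim}}}\sum_{k\ge 1}\lambda_1([\gamma])\,e^{-sk\lambda_1([\gamma])}=\sum_{\tau}\Lambda(\tau)\,e^{-s\ell(\tau)},
\]
where $\tau$ runs over all periodic orbits of $\phi^t|_{\mathcal K_\Gamma}$, $\ell(\tau)$ denotes the period of $\tau$, and $\Lambda(\tau)$ the period of the primitive orbit underneath $\tau$. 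By Theorem~\ref{zeta fcn}, $\zeta_\Gamma$ is holomorphic and nowhere vanishing on a strip $h_{\mathrm{top}}(\Gamma)-\varepsilon<\Re(s)<h_{\mathrm{top}}(\Gamma)$ and has a simple pole at $s=h_{\mathrm{top}}(\Gamma)$, so $-\zeta_\Gamma'/\zeta_\Gamma$ continues holomorphically to this strip together with its right-hand boundary line punctured at $s=h_{\mathrm{top}}(\Gamma)$, where it has a simple pole of residue $1$. Revisiting Stoyanov's Dolgopyat-type estimates for the complex transfer operators that enter Theorem~\ref{contracting ruelle}, one also obtains polynomial-in-$|\Im s|$ bounds for $\zeta_\Gamma'/\zeta_\Gamma$ on vertical segments inside this region.

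Armed with these inputs, a Perron-type contour integral applied to $-\zeta_\Gamma'/\zeta_\Gamma$ — shifting the contour into the zero-free strip, controlling the shifted integral by the vertical growth bounds, and working first with a smoothed counting function which is then desmoothed — yields, for some $\delta>0$,
\[
\psi_0(x):=\sum_{\ell(\tau)\le x}\Lambda(\tau)=\frac{e^{h_{\mathrm{top}}(\Gamma)x}}{h_{\mathrm{top}}(\Gamma)}+O\!\left(e^{(h_{\mathrm{top}}(\Gamma)-\delta)x}\right).
\]
Discarding the orbits $\tau=\tau_0^{k}$ with $k\ge 2$, whose total contribution is $O\!\left(x\,e^{h_{\mathrm{top}}(\Gamma)x/2}\right)$, gives the same asymptotic for $\theta(x):=\sum_{\tau\ \mathrm{primitive},\ \ell(\tau)\le x}\ell(\tau)$, and a Riemann--Stieltjes partial summation $N_\Gamma(x)=\int_{x_0}^{x}t^{-1}\,d\theta(t)$ followed by an integration by parts, using $\mathrm{Li}(e^{h_{\mathrm{top}}(\Gamma)x})\sim e^{h_{\mathrm{top}}(\Gamma)x}/(h_{\mathrm{top}}(\Gamma)x)$, converts this into the claimed estimate with a suitable $0<c<h_{\mathrm{top}}(\Gamma)$.

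The \emph{main obstacle} is the second half of the second step. A purely qualitative zero-free strip of the kind stated in Theorem~\ref{zeta fcn} yields only the leading asymptotic $N_\Gamma(t)\sim\mathrm{Li}(e^{h_{\mathrm{top}}(\Gamma)t})$ via a Wiener--Ikehara argument; extracting the exponentially decaying error term requires uniform polynomial control of $\zeta_\Gamma'/\zeta_\Gamma$ on vertical lines throughout the strip, which has to be traced back through the Ruelle transfer operator estimates of Theorem~\ref{contracting ruelle} rather than read off from the statement of Theorem~\ref{zeta fcn}. Once this is in place, the remaining steps are the standard thermodynamic-formalism bookkeeping for a compact basic hyperbolic set, and the non-compactness of $\mathcal M_\Gamma$ plays no role, since every periodic orbit lies in the compact set $\mathcal K_\Gamma$.
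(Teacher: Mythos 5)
Your proposal is correct and follows essentially the same route as the paper: the paper proves Theorem~\ref{orbit counting} (as Theorem~\ref{thm:orbitcounting}) simply by citing Stoyanov's Corollary~1.4 as a consequence of the eventually contracting transfer operator estimates of Theorem~\ref{contracting ruelle} together with the zero-free strip of Theorem~\ref{RH}, and your sketch just unpacks the standard Pollicott--Sharp/Dolgopyat Tauberian argument that underlies that citation. You also correctly identify the one genuine subtlety --- that the exponential error term requires polynomial-in-$|\Im s|$ bounds on $\zeta_\Gamma'/\zeta_\Gamma$ in the strip, traced back through the transfer operator estimates rather than read off the qualitative statement of Theorem~\ref{zeta fcn} --- which is exactly where the cited results of Stoyanov and Pollicott--Sharp do the work.
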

This result is obtained by Pollicott-Sharp \cite{PS24} for $(1,1,2)$-hyperconvex projective Anosov surface groups $\Gamma$. 

\subsection{Dynamical resonances, (co-)resonant states and flow-invariant distributions}

Let $\Gamma<\mathrm{SL}(V)$ be non-trivial, torsion-free and projective Anosov and $(\mathcal{M}_{\Gamma},\phi^t)$ the Axiom A dynamical system provided by Theorem \ref{THM A} with basic set $\mathcal K_\Gamma$. The  infinitesimal generator
$$
X_{x}=\frac{d}{dt}\phi^{t}(x)|_{t=0}\in T_{x}\mathcal{M}_{\Gamma}
$$
of the flow $\phi^t$ comes with an interesting spectral theory. 
We have no intention in this article to dive into the technical apparatus of microlocal and functional analysis required to describe the spectral properties of $X$: a detailed analysis will appear elsewhere. Recall, the first order differential operator $X$ acts on distributions through its action on compactly supported smooth test functions $U\in C_{c}^{\infty}(\mathcal{M}_{\Gamma}, \C)$. The following preliminary result will be elucidated and stated in a more precise and detailed form as Theorems \ref{Resonant states precise}, \ref{thm:resbounds} and \ref{Resonant states precise2} in Section \ref{sec:dynres}.
\begin{introthm}\label{Resonant states}For each smooth potential $U\in \CT(\mathcal{M}_{\Gamma}, \C)$ supported in a sufficiently tight open neighborhood around the basic hyperbolic set $\mathcal K_\Gamma$, there is an infinite discrete spectrum  $\mathcal{R}^{\mathbf X}_{\Gamma}\subset \C$ of \emph{Ruelle-Pollicott resonances}  associated to the operator $\mathbf X:=-X+U.$ When $U=0$,  $h_{\mathrm{top}}(\Gamma)\in \mathcal{R}^{\mathbf X}_{\Gamma}$ is the unique leading resonance (i.e., the resonance with largest real part).  

In addition, for every resonance $\lambda_0\in \mathcal{R}^{\mathbf X}_{\Gamma}$, there exist pairs of non-zero distributions $u$ (respectively $v$) called \emph{resonant states} (respectively \emph{coresonant states}) defined in a relatively compact open neighborhood of $\mathcal{K}_{\Gamma}$ and satisfying $\mathbf X u=\lambda_0 u$, $\mathbf X^\ast v=\overline{\lambda}_0 v$ where $\mathbf X^\ast:=X+\overline U$. Their product $u\cdot \overline{v}$ is a well-defined $\phi^t$-invariant distribution satisfying $\mathrm{supp}(u\cdot \overline{v})\subset \mathcal{K}_{\Gamma}$.  

 For $h_{\mathrm{top}}(\Gamma)\in \mathcal{R}_{\Gamma}^{\mathbf{X}}$ with $U=0$, there is a unique such distribution $u\cdot \overline{v}$ up to scaling, given by the ergodic Radon measure associated to the Bowen-Margulis measure of maximal entropy.
\end{introthm}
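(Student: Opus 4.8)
The plan is to regard the Axiom A system $(\mathcal M_\Gamma,\phi^t)$ of Theorem~\ref{THM A} as an open hyperbolic system with hyperbolic trapped set $\mathcal K_\Gamma$ and to run the microlocal construction of Ruelle--Pollicott resonances for such systems due to Dyatlov--Guillarmou \cite{DG16,DG18}; the identification of the leading resonance and of the Bowen--Margulis state then comes from the thermodynamic formalism of Bowen--Ruelle \cite{BR75} via a Markov coding of $\phi^t|_{\mathcal K_\Gamma}$. Concretely, one first fixes a small open isolating neighborhood $\mathcal V$ of $\mathcal K_\Gamma$ in $\mathcal M_\Gamma$ on which, by Theorem~\ref{THM A}, $\phi^t$ is a real analytic flow whose maximal invariant set is $\mathcal K_\Gamma$; let $\Gamma_\pm$ be the forward and backward trapped sets in $\mathcal V$, so that the isolating property gives $\Gamma_+\cap\Gamma_-=\mathcal K_\Gamma$, and require $\supp U\subset\mathcal V$. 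Feeding $\mathbf X=-X+U$ localized to $\mathcal V$ into the anisotropic-Sobolev-space and radial-estimate machinery of \cite{DG16,DG18} produces Hilbert spaces on which $\mathbf X-\lambda$ is Fredholm of index zero, a resolvent $(\mathbf X-\lambda)^{-1}\colon C^\infty_c(\mathcal V)\to\mathcal D'(\mathcal V)$ holomorphic for $\Re\lambda\gg0$ and extending meromorphically to $\C$ with finite-rank poles; the pole set is by definition the discrete spectrum $\mathcal R^{\mathbf X}_\Gamma$, and its infiniteness is automatic (already a single closed orbit contributes infinitely many resonances, and the resonance counting function obeys the standard Weyl-type lower bounds).

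For the resonant and coresonant states, at a pole $\lambda_0$ the resonant states $u$ are the elements of the range of the (generalized) residue of $(\mathbf X-\lambda)^{-1}$ at $\lambda_0$; the construction of \cite{DG16,DG18} shows they are distributions on $\mathcal V$ with $\supp u\subset\Gamma_-$ and $\WF(u)$ contained in the set of covectors annihilating the weak-unstable distribution $E^{0}\oplus E^{\mathrm u}$ from Theorem~\ref{THM A}.(\ref{real analytic fol}). Applying the same argument to the transpose operator $\mathbf X^{\ast}=X+\overline U$ yields coresonant states $v$ with $\supp v\subset\Gamma_+$ and $\WF(v)$ contained in the set of covectors annihilating $E^{0}\oplus E^{\mathrm s}$. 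These two cones meet only along the zero section of $T^\ast\mathcal M_\Gamma$ (their intersection annihilates $(E^{0}\oplus E^{\mathrm u})+(E^{0}\oplus E^{\mathrm s})=T\mathcal M_\Gamma$), so Hörmander's criterion makes the product $u\cdot\overline v$ a well-defined distribution with $\supp(u\cdot\overline v)\subset\Gamma_-\cap\Gamma_+=\mathcal K_\Gamma$; and the Leibniz rule gives $X(u\cdot\overline v)=(Xu)\,\overline v+u\,(X\overline v)=(U-\lambda_0)\,u\cdot\overline v+u\cdot(\lambda_0-U)\overline v=0$, so $u\cdot\overline v$ is $\phi^t$-invariant.

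For the last part, put $U=0$. A Markov section for $\phi^t|_{\mathcal K_\Gamma}$ (Bowen, Ratner; cf.\ \cite{BR75,PP83}) realizes the resonances in a half-plane $\{\Re\lambda>-\varepsilon\}$ as eigenvalues of Ruelle transfer operators and so matches $\mathcal R^{\mathbf X}_\Gamma$ near the critical line with the poles of the zeta function $\zeta_\Gamma$ of \eqref{Euler product intro}; the variational principle places the first such pole at $s=h_{\mathrm{top}}(\Gamma)$, while the Perron--Frobenius structure of the transfer operator at the critical parameter --- together with topological mixing of $\phi^t|_{\mathcal K_\Gamma}$, which holds by the non-arithmeticity of the length spectrum of $\Gamma$ --- makes $h_{\mathrm{top}}(\Gamma)$ a simple pole and the unique resonance of maximal real part. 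Simplicity forces the leading resonant and coresonant spaces to be one-dimensional with no Jordan block, and the leading $u_0$, $v_0$ are then the distributions obtained by pairing the positive leading transfer eigenfunctions with the conditional measures of the Bowen--Margulis measure along $E^{\mathrm u}$, resp.\ $E^{\mathrm s}$; their product is the Bowen--Margulis measure $m_{\mathrm{BM}}$, which by \cite{BR75} is the unique $\phi^t$-invariant probability of maximal entropy on $\mathcal K_\Gamma$, and every flow-invariant distribution in the one-dimensional leading residue is a scalar multiple of it. Transporting through Theorem~\ref{SAMHYP} then gives the corresponding statements for Sambarino's refraction flow on $\chi_\Gamma$.

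I expect the main obstacle to be the interface between the two descriptions --- the anisotropic/microlocal resolvent of $-X+U$ near $\mathcal K_\Gamma$ on the one hand, and the transfer-operator and thermodynamic-formalism picture on the other. Setting up the anisotropic spaces over the non-compact $\mathcal M_\Gamma$ is routine once one localizes to $\mathcal V$, since Theorem~\ref{THM A} supplies exactly the Axiom A hypotheses required, and the wavefront-set and Leibniz bookkeeping is standard; the delicate points are the positivity and simplicity of the leading resonance in the absence of any irreducibility hypothesis, which must be extracted from the symbolic model together with topological mixing of the refraction flow, and the verification that the anisotropic duality pairing producing $u_0\cdot\overline{v_0}$ is compatible with the measure-theoretic product of the Bowen--Margulis conditionals, so that the resulting $\phi^t$-invariant distribution is genuinely $m_{\mathrm{BM}}$.
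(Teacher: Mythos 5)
Your overall skeleton is the same as the paper's: localize to an isolating neighborhood of $\mathcal{K}_\Gamma$, feed $\mathbf X=-X+U$ into the Dyatlov--Guillarmou open-system machinery \cite{DG16,DG18} to get a meromorphically continued resolvent with finite-rank poles, and then use the wavefront-set containments of resonant/coresonant states together with H\"ormander's product criterion and the Leibniz rule to produce the flow-invariant distribution $u\cdot\overline v$ supported in $\Gamma_+\cap\Gamma_-=\mathcal{K}_\Gamma$; this is exactly the content of Theorems \ref{Resonant states precise} and \ref{Resonant states precise2}. One technical point you wave off as ``routine'' is handled explicitly in the paper: DG16's convexity hypothesis (A3) forces one to work in a Conley--Easton isolating block \cite{CE71} and to modify the generator near its boundary, and one must then check that the resonance set and multiplicities are independent of the block and of the modification; this is how $\mathcal{R}^{\mathbf X}_\Gamma$ becomes intrinsic.

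There are two genuine gaps. First, infinitude of $\mathcal{R}^{\mathbf X}_\Gamma$ is not ``automatic'': there is no general principle by which the resonances of an invariant sub-orbit embed into the resonance set of the full system, and lower bounds for the resonance counting function of Axiom A flows are not classical Weyl asymptotics but a recent theorem of Jin--Tao, which is precisely what the paper invokes (Theorem \ref{thm:resbounds}, \cite{JT23}); your stated mechanism does not prove the claim, though the conclusion is salvageable by that citation. Second, for the leading resonance and the Bowen--Margulis identification you propose to match the microlocal resonances of the scalar operator with transfer-operator eigenvalues and poles of $\zeta_\Gamma$ via a Markov coding; besides the symbolic-versus-microlocal interface you yourself flag as unresolved, the flat trace of the scalar resolvent carries the weight $1/\lvert\det(I-\mathcal P_\gamma)\rvert$, so scalar resonances do not line up naively with poles of $\zeta_\Gamma$ at $h_{\mathrm{top}}(\Gamma)$ --- the paper is careful to attribute zeta zeros/poles to resonances of lifts of $X$ to form bundles, and it obtains the identification of the leading invariant Ruelle distribution with the Gibbs (Bowen--Margulis) measure not by your Perron--Frobenius argument but by citing the folklore statement in \cite[p.~6]{DG16} together with the recent proof of Humbert \cite{TH24}. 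Your appeal to non-arithmeticity of the length spectrum for topological mixing is likewise an extra unproved input that the paper's route does not need. So the first two thirds of your proposal track the paper; the final third substitutes an unfinished (and, as written, incorrectly weighted) symbolic argument for what the paper settles by citation.
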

The Ruelle-Pollicott resonances are also called dynamical or classical resonances.
The Bowen-Margulis measure was constructed by Sambarino \cite{SAM14}. For resonances $\lambda_{0}\neq h_{\mathrm{top}}(\Gamma)$ and $\mathrm{dim}(\mathcal{M}_{\Gamma})>3$, the construction of the $\phi^{t}$-invariant distributions $u\cdot \overline{v}$ (and the invariant Ruelle distributions $\mathcal T_{\lambda_0}$ introduced in the more precise Theorem \ref{Resonant states precise2}) have not appeared elsewhere, as they depend on our construction of the Axiom A system $\mathcal{M}_{\Gamma}$ from Theorem \ref{THM A}.

\subsection{Applications to Finsler and pseudo-Riemannian geodesic flows}
In the final examples Section \ref{Examples}, we apply our Axiom A framework in two specific settings: $\bbH^{p,q}$-convex cocompact subgroups $\Gamma<\mathrm{SO}(p,q+1)$ developed by Danciger-Gu\'{e}ritaud-Kassel \cite{dgk18}, and strictly convex divisible domains developed by Benoist \cite{BEN08}.  In particular, the space-like geodesic flow for convex cocompact $\mathbb{H}^{p,q}$-manifolds and the Benoist-Hilbert geodesic flow for strictly convex $\mathbb{RP}^{d}$-manifolds are shown to be exponentially mixing.

Suppose $\Gamma< \mathrm{SL}(V)$ divides a strictly convex divisible domain $\mathcal{C}_{\Gamma}\subset \mathbb{P}(V)$ and 
$$
\mathcal{N}_{\Gamma}=\Gamma\backslash \mathcal{C}_{\Gamma}
$$
is the associated closed strictly convex real projective manifold. The Hilbert metric (see Section \ref{Examples}) induces a $C^{1, \alpha}$-norm on $T\mathcal{N}_{\Gamma}\setminus \mathcal{Z}$ where $\mathcal{Z}\subset T\mathcal{N}_{\Gamma}$ denotes the zero section.  Let
$$
\mathbb{S}\mathcal{N}_{\Gamma}:=(T\mathcal{N}_{\Gamma}\setminus \mathcal{Z})/\mathbb{R}_{+}^{\times}
$$
denote the conformal sphere bundle equipped with the Benoist-Hilbert geodesic flow
$$
\phi_{BH}^{t}: \mathbb{S}\mathcal{N}_{\Gamma}\rightarrow \mathbb{S}\mathcal{N}_{\Gamma}.
$$
As proved by Benoist \cite{BEN04}, the Benoist-Hilbert geodesic flow is a $C^{1, \alpha}$ topologically transitive Anosov flow which is $C^{2}$ if and only if there is an isometry $\mathcal{C}_{\Gamma}\simeq \mathbb{H}^{d}.$  
Among other results, we prove the following in Theorem \ref{THM BHG} in Section \ref{Examples}.

\begin{introthm}\label{introthm:BenoistHilbert}
The Benoist-Hilbert geodesic flow $\phi_{BH}^{t}: \mathbb{S}\mathcal{N}_{\Gamma}\rightarrow \mathbb{S}\mathcal{N}_{\Gamma}$ mixes exponentially for all H\"{o}lder observables with respect to every Gibbs equilibrium state with H\"{o}lder potential.
\end{introthm}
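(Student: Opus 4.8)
The plan is to realize the Benoist--Hilbert flow, up to a constant time-rescaling and a bi-Hölder orbit equivalence, as the unique basic hyperbolic set of an Axiom A system produced by Theorem~\ref{THM A}, and then to quote the exponential-mixing estimate of Theorem~\ref{EXP MIX}. Throughout, write $\mu_1(\gamma)\ge\cdots\ge\mu_d(\gamma)$ for the moduli of the eigenvalues of $\gamma\in\mathrm{SL}(V)$. Recall first the relevant structure: a group $\Gamma<\mathrm{SL}(V)$ dividing a strictly convex domain $\mathcal{C}_\Gamma\subset\mathbb{P}(V)$ is word-hyperbolic and projective Anosov in the sense of the introduction (with $\partial_\infty\Gamma\cong\partial\mathcal{C}_\Gamma$, and limit maps $\xi,\xi^\ast$ sending a boundary point to itself and to its unique supporting hyperplane), and after passing to a finite-index subgroup --- which only replaces $\mathcal{N}_\Gamma$ by a finite cover, hence does not affect the statement --- may be taken torsion-free. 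By Benoist's geometrization of divisible convex sets \cite{BEN08}, either $\mathcal{C}_\Gamma$ is an ellipsoid, in which case the Hilbert metric is the Beltrami--Klein model of a hyperbolic metric and $\mathcal{N}_\Gamma$ is a closed hyperbolic manifold, or $\Gamma$ is Zariski dense in $\mathrm{SL}(V)$. The cross-ratio formula for the Hilbert metric shows that the period of $\phi_{BH}^t$ along the closed orbit attached to $e\neq\gamma\in\Gamma$ equals $\ell_H(\gamma)=\tfrac12\log\!\big(\mu_1(\gamma)/\mu_d(\gamma)\big)$, and strict convexity provides a $\Gamma$-equivariant homeomorphism $\mathbb{S}\mathcal{N}_\Gamma\cong\Gamma\backslash(\partial_\infty\Gamma^{(2)}\times\mathbb{R})$ conjugating $\phi_{BH}^t$ to the flow attached to the Hilbert Busemann cocycle, which is Hölder by the $C^{1,\alpha}$-regularity of $\partial\mathcal{C}_\Gamma$ \cite{BEN04}; thus $\phi_{BH}^t$ is a Hölder-cocycle suspension flow in the sense of Ledrappier--Sambarino, namely Sambarino's refraction flow for the Hilbert length functional.

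The second step is to realize this refraction flow via Theorem~\ref{THM A}. In the ellipsoid case, $\Gamma<\mathrm{SO}_0(1,d-1)\hookrightarrow\mathrm{SL}(V)$ is irreducible and projective Anosov, and the refraction flow $\phi_S^t$ of Theorem~\ref{SAMHYP} attached to $\Gamma<\mathrm{SL}(V)$ has periods $\log\mu_1(\gamma)=\ell_H(\gamma)$, so $\phi_{BH}^t$ and $\phi_S^t$ have equal period functions. In the Zariski-dense case I instead pass to the adjoint representation $\mathrm{Ad}\colon\mathrm{SL}(V)\to\mathrm{SL}(\mathfrak{sl}(V))$, whose highest weight $\varepsilon_1-\varepsilon_d$ pairs non-trivially with exactly the coroots $\alpha_1^\vee,\alpha_{d-1}^\vee$ --- equivalently, $\sigma_1(\mathrm{Ad}\,\gamma)/\sigma_2(\mathrm{Ad}\,\gamma)=\min\!\big(\sigma_1(\gamma)/\sigma_2(\gamma),\,\sigma_{d-1}(\gamma)/\sigma_d(\gamma)\big)$ in terms of singular values --- so by the Guichard--Wienhard functoriality of the Anosov property \cite{GW12}, $\mathrm{Ad}\circ\Gamma<\mathrm{SL}(\mathfrak{sl}(V))$ is again projective Anosov; it is torsion-free because $\ker\mathrm{Ad}$ is the finite center of $\mathrm{SL}(V)$, and irreducible because $\Gamma$ is Zariski dense in $\mathrm{SL}(V)$ and the adjoint representation of the simple Lie algebra $\mathfrak{sl}(V)$ is irreducible. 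The refraction flow $\phi_S^t$ attached to $\mathrm{Ad}\circ\Gamma$ then has periods $\log\!\big(\mu_1(\gamma)/\mu_d(\gamma)\big)=2\ell_H(\gamma)$, since the eigenvalue moduli of $\mathrm{Ad}(\gamma)$ are the ratios $\mu_i(\gamma)/\mu_j(\gamma)$.

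Finally, in both cases $\phi_{BH}^t$ (resp.\ $\phi_{BH}^{2t}$) and $\phi_S^t$ are Hölder-cocycle suspensions over the same base with equal period functions, so by Livšic rigidity for Hölder cocycles (Ledrappier \cite{LED95}, as used throughout Sambarino's work \cite{SAM14}) the underlying cocycles are cohomologous and the two flows are conjugate by a bi-Hölder homeomorphism that is a shift in the flow direction. Composing with the bi-Hölder conjugacy $\chi_\Gamma\xrightarrow{\simeq}\mathcal{K}_\Gamma$ of Theorem~\ref{SAMHYP} and applying Theorem~\ref{EXP MIX} to the relevant irreducible torsion-free projective Anosov subgroup yields exponential decay of correlations for all Hölder observables against every Gibbs equilibrium state with Hölder potential on $\mathcal{K}_\Gamma$; since Hölder observables and Gibbs states for Hölder potentials are matched under a bi-Hölder conjugacy and a constant time-rescaling preserves exponential mixing, the conclusion transfers to $\phi_{BH}^t$ on $\mathbb{S}\mathcal{N}_\Gamma$. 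The main obstacle is exactly this last identification: one must verify that the Hilbert/period cocycle is genuinely Hölder --- which is where the $C^{1,\alpha}$-regularity of $\partial\mathcal{C}_\Gamma$ is essential --- and, more delicately, that passing to the adjoint representation is legitimate, i.e.\ that the refraction flow of $\mathrm{Ad}\circ\Gamma$ is, up to cohomology of cocycles, a constant reparametrization of $\phi_{BH}^t$; confirming that ``Gibbs state with Hölder potential'' is correctly matched across the reparametrization and the bi-Hölder conjugacy, although routine thermodynamic formalism, likewise requires some care.
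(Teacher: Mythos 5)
Your proposal is correct in outline and shares the paper's overall skeleton: pass to the adjoint representation $\mathrm{Ad}\colon\SL(V)\to\SL(\mathfrak{sl}(V))$ so that the closed $\phi_{BH}$-orbit of $\gamma$ has period $\tfrac12\lambda_1(\Ad\gamma)$, check that $\Ad(\Gamma)$ is torsion-free, projective Anosov and (via Benoist's dichotomy, in the Zariski-dense case) irreducible, and then transfer Theorem~\ref{EXP MIX} through a bi-H\"older conjugacy with the basic set $\cK_{\Ad(\Gamma)}$ together with the constant rescaling $t\mapsto 2t$; this is exactly what the paper does in Theorem~\ref{THM BHG}. Where you genuinely differ is the middle step, the construction of the conjugacy. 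You obtain it abstractly: both $\phi_{BH}$ (rescaled) and the refraction flow of $\Ad(\Gamma)$ are viewed as suspensions of H\"older cocycles over $\Gamma\curvearrowright\partial_\infty\Gamma^{(2)}$ with equal period functions, and Ledrappier--Liv\v{s}ic rigidity gives cohomologous cocycles, hence H\"older conjugate flows. The paper instead constructs an explicit $\Gamma$-equivariant map $\Psi\colon\bbS\cC_\Gamma\to\tilde\cK_{\Ad(\Gamma)}$ built from $\xi,\xi^\ast$ and the tensor description $\xi_{\Ad}=[v\otimes\alpha]$, verifies $\Psi\circ\phi_{BH}^t=\phi^{2t}\circ\Psi$ by a direct cross-ratio computation (Lemma~\ref{lem embedding of Benoist-Hilbert flow}), and reads off that $\Psi$ is H\"older with \emph{Lipschitz} inverse, using that $\cK_{\Ad(\Gamma)}$ is a Lipschitz submanifold (Lemma~\ref{lem basic set Ad(Benoist subgroup) is Lipschitz}). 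Your route buys a computation-free argument, but it leans on two nontrivial inputs you only gesture at: that the Hopf parametrization of $\bbS\cC_\Gamma$ by $\partial_\infty\Gamma^{(2)}\times\R$ is bi-H\"older (not merely continuous), which is where the $C^{1,\alpha}$ regularity of $\partial\cC_\Gamma$ and the Anosov property must actually be used, and a Liv\v{s}ic/Ledrappier theorem in the required generality (Ledrappier's statement concerns fundamental groups of closed negatively curved manifolds; for Benoist groups one should instead run Liv\v{s}ic on the $C^{1+\alpha}$ Anosov flow $\phi_{BH}$ itself or on the metric Anosov refraction flow, which is available in the literature but needs to be cited precisely). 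The paper's explicit map sidesteps both points and moreover yields the Lipschitz inverse, which is not needed for mixing but is used for the regularity bookkeeping. Your handling of the ellipsoid case (apply Theorem~\ref{EXP MIX} directly to $\Gamma<\SO_0(d-1,1)\subset\SL(V)$, whose refraction periods already equal the Hilbert lengths) is fine and in fact more explicit than the paper's remark that $\phi_{BH}$ is then the hyperbolic geodesic flow; and your final matching of H\"older observables and Gibbs states across a bi-H\"older conjugacy and a constant time change is the same (routine) step the paper performs.
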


The Benoist-Hilbert geodesic flow has so far been out of reach of quantitative mixing results because of its low regularity (c.f.\ Liverani \cite{Li04} establishing exponential mixing for $C^4$ Anosov flows). Our proof trades the real analytic manifold $\mathbb{S}\mathcal{N}_{\Gamma}$ and the $C^{1,\alpha}$ flow $\phi^t_\mathrm{BH}$ for a Lipschitz submanifold of a real analytic contact Axiom A system whose stable and unstable foliations are the restriction of global real analytic foliations.

As opposed to Riemannian and Finsler geodesic flows, the differentiable dynamics of geodesic flows in pseudo-Riemannian geometry has received comparatively little attention, especially space-like geodesic flows whose physical meaning in relativistic models of the universe is far more uncertain than the trajectories of photons and massive particles described by the light-like and time-like geodesic flows. As a selection of  the relevant mathematical literature concerning Lorentzian and pseudo-Riemannian geometrical and dynamical studies in the context of projective Anosov subgroups we mention:  Mess \cite{mess2007} on the moduli space of globally hyperbolic maximally compact  Anti-de-Sitter (AdS) space-times,  Barbot-M\'{e}rigot \cite{barbot-merigot} on Anosov subgroups  and AdS space-times, Danciger-Gu\'{e}ritaud-Kassel \cite{dgk18,dgk18a} on $\bbH^{p,q}$-convex cocompactness, and Glorieux \cite{GLO17}, Glorieux-Monclair \cite{GM21} on pseudo-Riemannian critical exponents and Hausdorff dimension.

Concerning the pseudo-Riemannian symmetric space $\mathbb{H}^{p,q}$, Danciger-Gu\'{e}ritaud-Kassel \cite{dgk18} defined convex-cocompact subgroups $\Gamma<\SO(p,q+1)$ and constructed corresponding non-empty domains of proper discontinuity $\Omega_{\Gamma}^{\mathrm{DGK}}\subset \mathbb{H}^{p,q}.$ The space-like geodesic flow
$$
\phi^{t}: T^{1}\left(\Gamma\backslash \Omega_{\Gamma}^{\mathrm{DGK}}\right)\rightarrow T^{1}\left(\Gamma\backslash \Omega_{\Gamma}^{\mathrm{DGK}}\right)
$$
is incomplete in general, while the Axiom A system $\mathcal{M}_{\Gamma}$ we construct in Theorem \ref{THM A} satisfies $T^{1}(\Gamma\backslash\Omega_{\Gamma}^{\mathrm{DGK}})\subset \mathcal{M}_{\Gamma}$ and contains the saturation of $T^{1}(\Gamma\backslash\Omega_{\Gamma}^{\mathrm{DGK}})$ with respect to the foliation of $\mathcal{M}_{\Gamma}$ by flow lines.  The following is a sample of what we prove in Theorem \ref{THM CCHPQ} in Section \ref{Examples}.
\begin{introthm}\label{introthm:pseudoRiem}
The restriction of the (possibly incomplete) space-like geodesic flow 
$$
\phi^{t}: T^{1}\left(\Gamma\backslash \Omega_{\Gamma}^{\mathrm{DGK}}\right)\rightarrow T^{1}\left(\Gamma\backslash \Omega_{\Gamma}^{\mathrm{DGK}}\right)
$$
to the basic hyperbolic set $\mathcal{K}_{\Gamma}\subset T^{1}\left(\Gamma\backslash \Omega_{\Gamma}^{\mathrm{DGK}}\right)$ mixes exponentially for all H\"{o}lder observables with respect to every Gibbs equilibrium state with H\"{o}lder potential.
\end{introthm}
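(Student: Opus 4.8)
The plan is to realize the space-like geodesic flow of a convex cocompact $\mathbb{H}^{p,q}$-manifold as (a piece of) the Reeb flow $\phi^t$ on the Axiom A manifold $\mathcal{M}_\Gamma$ supplied by Theorem \ref{THM A}, and then to quote Theorem \ref{EXP MIX}. Set $V=\mathbb{R}^{p+q+1}$ with its symmetric bilinear form of signature $(p,q+1)$, so that $\mathrm{SO}(p,q+1)\subset\mathrm{SL}(V)$; passing to the identity component and, by Selberg's lemma, to a torsion-free finite-index subgroup changes nothing. By the work of Danciger--Gu\'eritaud--Kassel \cite{dgk18}, an $\mathbb{H}^{p,q}$-convex cocompact $\Gamma<\mathrm{SO}(p,q+1)$ is projective Anosov, with limit maps $\xi\colon\partial_{\infty}\Gamma\to\partial\mathbb{H}^{p,q}\subset\mathbb{P}(V)$ and $\xi^\ast\colon\partial_{\infty}\Gamma\to\mathbb{P}(V^\ast)$, the latter being $\xi$ followed by the polarity $[\ell]\mapsto[\ell^\perp]$ of the form. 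Thus Theorem \ref{THM A} applies and produces $(\mathcal{M}_\Gamma,\phi^t)$ with its unique basic hyperbolic set $\mathcal{K}_\Gamma$.

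Next I would build the comparison map. A space-like geodesic of $\mathbb{H}^{p,q}$ is the projectivization of (the negative part of) a $2$-plane $P\subset V$ of signature $(1,1)$; the two isotropic lines $\ell^+,\ell^-\subset P$ satisfy $\langle\ell^+,\ell^-\rangle\neq 0$, so that $(\ell^+,(\ell^-)^\perp)$ is a transverse line/hyperplane pair. Concretely, writing a unit space-like tangent vector as $([x],y)$ with $\langle x,x\rangle=-1$, $\langle y,y\rangle=1$, $\langle x,y\rangle=0$, the assignment
\[
\Psi\colon ([x],y)\ \longmapsto\ \big[\,\big(x+y,\ -\tfrac12\langle x-y,\cdot\rangle\big)\,\big]\in\mathbb{L}
\]
is a well-defined, injective, real analytic, $\Gamma$-equivariant map from the (space-like) unit tangent bundle of $\mathbb{H}^{p,q}$ onto a real analytic submanifold of $\mathbb{L}$, and a direct computation using $\gamma(s)=[\cosh s\,x+\sinh s\,y]$ shows $\Psi\circ g^s=\phi^s\circ\Psi$, where $g^t$ is the (arc-length) space-like geodesic flow; $\Psi$ also intertwines the canonical contact structures. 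Restricting $\Psi$ to $T^1$ of the $\Gamma$-invariant DGK domain $\widetilde{\Omega^{\mathrm{DGK}}_\Gamma}\subset\mathbb{H}^{p,q}$ and comparing the two constructions --- DGK's domain of proper discontinuity on the one hand, and the domain of discontinuity $\widetilde{\mathcal{M}_\Gamma}\subset\mathbb{L}$ built in Theorem \ref{THM A} on the other --- one checks that $\Psi\big(T^1\widetilde{\Omega^{\mathrm{DGK}}_\Gamma}\big)\subset\widetilde{\mathcal{M}_\Gamma}$; passing to $\Gamma$-quotients yields the real analytic embedding $T^1(\Gamma\backslash\Omega^{\mathrm{DGK}}_\Gamma)\hookrightarrow\mathcal{M}_\Gamma$ intertwining $g^t$ with $\phi^t$. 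Under this embedding the space-like geodesics with both endpoints in the limit set --- which lie in $\Gamma\backslash\Omega^{\mathrm{DGK}}_\Gamma$ since they lie in the DGK convex core, and which form precisely the set $\mathcal{K}_\Gamma$ by Theorem \ref{SAMHYP} --- coincide with the basic hyperbolic set $\mathcal{K}_\Gamma$ of $\mathcal{M}_\Gamma$. In other words the basic hyperbolic set of the space-like geodesic flow \emph{is} $(\mathcal{K}_\Gamma,\phi^t)$ itself.

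Then I would reduce to the irreducible case so that Theorem \ref{EXP MIX} applies (assuming, as we may, that $\Gamma$ is non-elementary). If $\Gamma<\mathrm{SL}(V)$ is reducible, then, using that $\mathbb{H}^{p,q}$-convex cocompactness is inherited by the $\Gamma$-invariant subspace spanned by the limit set, on which the form restricts non-degenerately, one replaces $V$ by that subspace --- obtaining an $\mathbb{H}^{p',q'}$-convex cocompact group with the same limit set, hence the same $(\mathcal{K}_\Gamma,\phi^t)$ --- and iterates until the action is irreducible. Now $\Gamma$ is a torsion-free irreducible projective Anosov subgroup, so Theorem \ref{EXP MIX} gives exponential decay of correlations for $\phi^t\vert_{\mathcal{K}_\Gamma}$ against every Gibbs equilibrium state $\mu_U$ with H\"older potential $U$ and all H\"older observables. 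By the previous paragraph this flow is exactly the space-like geodesic flow restricted to its basic hyperbolic set, and since topological pressure and equilibrium states are invariants of the topological flow, the theorem follows.

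The main obstacle is the middle step, specifically the verification that $\Psi\big(T^1\widetilde{\Omega^{\mathrm{DGK}}_\Gamma}\big)\subset\widetilde{\mathcal{M}_\Gamma}$: this requires reconciling the DGK domain of proper discontinuity in $\mathbb{H}^{p,q}$ with the (rather differently constructed) domain of discontinuity in $\mathbb{L}$ underlying Theorem \ref{THM A}, and checking that the basic hyperbolic set lies in the overlap. A secondary delicate point is the irreducibility reduction: one must ensure the $\Gamma$-invariant subspace passed to can be taken non-degenerate, so that the smaller group is again $\mathbb{H}^{p',q'}$-convex cocompact, and that this does not disturb $\mathcal{K}_\Gamma$ --- both of which should follow from the structure theory of $\mathbb{H}^{p,q}$-convex cocompact groups in \cite{dgk18}.
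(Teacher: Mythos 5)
Your core strategy is the paper's: up to the duality $v\mapsto\langle v,\cdot\rangle_{p,q+1}$, your map $\Psi$ is exactly the paper's embedding $\Phi^{p,q+1}_\partial([x:v])=[x+v:x-v]$ of $T^1\mathbb{H}^{p,q}$ into the flow space, and the paper likewise proves $\widetilde{\mathcal{K}}_\Gamma\subset\Phi^{p,q+1}_\partial\big(T^1\Omega_\Gamma^{\mathrm{DGK}}\big)\subset\widetilde{\mathcal{M}}_{\Gamma,\partial}$ (Lemma~\ref{sl con}) and then quotes the exponential mixing theorem. The step you flag as ``the main obstacle'' is in fact a two-line check, not a deep reconciliation of the two domains: since $\Omega_\Gamma^{\mathrm{DGK}}$ is contained in the set of points forming negative triples with all pairs of limit points, a point $[x]\in\Omega_\Gamma^{\mathrm{DGK}}$ and a limit point $[w]\in\Lambda_\Gamma$ lie in the closure of a common hyperbolic plane, where a timelike line cannot be orthogonal to an isotropic one; hence $\langle x,w\rangle_{p,q+1}\neq 0$, so $\langle x+v,w\rangle_{p,q+1}\neq0$ or $\langle x-v,w\rangle_{p,q+1}\neq0$, which (using $\xi^*=\Phi_{p,q+1}\circ\xi$ from Lemma~\ref{lem limit set of projective Anosov subgroup of SO(p,q+1)}) is precisely the defining condition of $\widetilde{\mathcal{M}}_\Gamma$. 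Your first inclusion (unit vectors over the convex hull of $\Lambda_\Gamma$ lie in $T^1\Omega_\Gamma^{\mathrm{DGK}}$ by convexity, and these are exactly $\widetilde{\mathcal{K}}_\Gamma$) matches the paper.

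The genuine gap is your irreducibility reduction, a step the paper does not take and which does not work as described. The paper's precise statement (Theorem~\ref{THM CCHPQ}) asserts exponential mixing only for irreducible $\Gamma$, because Theorem~\ref{EXP MIX} needs irreducibility through Lemma~\ref{lem irreducibility argument}; the introductory statement is a ``sample'' that suppresses this hypothesis. Your proposed fix --- replace $V$ by $\mathrm{Span}(\Lambda_\Gamma)$ and ``iterate until the action is irreducible'' --- fails on several counts: (i) once the limit set spans the ambient space the action can still be reducible, and any remaining invariant subspace contains no limit point (by minimality of the boundary action), so there is nothing to iterate on and no way to pass to it without losing the limit maps and the flow; (ii) nondegeneracy of the restriction of $\langle\cdot,\cdot\rangle_{p,q+1}$ to $\mathrm{Span}(\Lambda_\Gamma)$ is asserted, not proved; (iii) even granting (i)--(ii), one must verify that the restricted group is again $\mathbb{H}^{p',q'}$-convex cocompact and that the smaller flow space reproduces the same basic set and flow, which you only gesture at. As written, your argument proves the statement only under the irreducibility hypothesis --- i.e.\ exactly what the paper proves --- and the extra reduction should either be removed or replaced by a genuine argument for the reducible case.
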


\subsection{On the proof of Theorems \ref{THM A} and \ref{SAMHYP}: domains of discontinuity}

Having presented our main results, it is worthwhile to examine how Theorem \ref{THM A} fits into the framework of the ongoing study of proper actions of projective Anosov subgroups $\Gamma<\SL(V)$.  

In \cite{GW12}, Guichard-Wienhard constructed discontinuity domains for actions on projective space $\bbP(V)$, and subsequently on other flag manifolds by an appropriate use of representation theory. This was succeeded by the work of Kapovich-Leeb-Porti \cite{KLP18} giving an intrinsic treatment of the $\Gamma$-action on flag manifolds, which utilized the combinatorial structure of Bruhat-Schubert cells.  The Kapovich-Leeb-Porti ideas were further abstracted away by Carvajales-Stecker \cite{CS23} who constructed examples of open $\Gamma$-invariant domains of proper discontinuity in a broad class of homogeneous spaces which includes the homogeneous space $\mathbb{L}$-appearing in Theorem \ref{THM A}. However, the domain they obtain is not equal to $\widetilde{\mathcal{M}}_{\Gamma}$, and in particular the quotient of their domain does not contain the basic hyperbolic set $\mathcal{K}_{\Gamma}\subset \mathcal{M}_{\Gamma}$ which is essential for all dynamical applications.

Theorem \ref{SAMHYP} is directly deduced from Theorem \ref{THM A} using the Hopf parametrization of the space $\mathbb L$, see Section \ref{sec:dynamicsonquotient}.

\subsection{On the proof of Theorems \ref{EXP MIX}, \ref{zeta fcn} and \ref{orbit counting}: Stoyanov's conditions}\label{methods of proof}
Next, we comment further on the logic of how our aforementioned dynamical results follow from the work of Stoyanov \cite{St11}.  As Stoyanov makes clear, the main result of \cite{St11} is strictly stronger than any of the applications we have listed in this introduction.  To wit, Stoyanov establishes quantitative spectral estimates for complex Ruelle transfer operators with respect to a particular Markov coding of an Axiom A flow satisfying some subtle geometro-dynamical hypotheses. That Theorems \ref{EXP MIX}, \ref{zeta fcn} and \ref{orbit counting}  follow from sufficiently strong quantitative spectral estimates on Ruelle transfer operators was already understood by Dolgopyat \cite{DOL98} and Dolgopyat-Pollicott \cite{DP98}, and appears in the earlier work of Parry-Pollicott \cite{PP83} and Pollicott \cite{POL85}.

The central technical work to achieve Theorems \ref{EXP MIX}, \ref{zeta fcn} and \ref{orbit counting} consists in verifying three geometro-dynamical hypotheses used in the work of  Stoyanov \cite{St11}.  Two of these conditions are now standard in dynamical systems and concern uniform Lipschitz regularity of the local holonomy maps along stable laminations and a quantitative non-integrability for the joint stable and unstable foliations.

The third condition of Stoyanov is called  regular distortion along unstable manifolds over the basic hyperbolic set $\mathcal{K}_{\Gamma}\subset \mathcal{M}_{\Gamma}$ from Theorem \ref{THM A}, and is reminiscent of the well-known Ahlfors-David regularity condition \cite{DAV88, DS93} in the analysis of metric spaces (e.g.\ fractals).    The discovery of an appropriate notion of quantitative self-similarity of the (often fractal) set $\mathcal{K}_{\Gamma}$ so that the appropriate Ruelle transfer operators submit to a Dolgopyat type analysis is a key innovation of Stoyanov \cite{St11}. In particular, we adapt arguments of Stoyanov \cite{St13} to verify the  regular distortion condition in the context of Theorem \ref{THM A}.

To summarize, the exponential mixing, zeta function spectral gap, and prime orbit asymptotics in Theorems \ref{EXP MIX}, \ref{zeta fcn}, \ref{orbit counting} are ultimately consequences of symbolic dynamics and the thermodynamic formalism, and are derived as corollaries of Stoyanov's spectral estimates for complex Ruelle transfer operators.  Meanwhile, the global meromorphic continuation of the zeta function in Theorem \ref{zeta fcn} and existence of (co)-resonant states in Theorem \ref{Resonant states} is a consequence of the work of Dyatlov-Guillarmou \cite{DG16, DG18} and Borns-Weil-Shen \cite{BWS21} which uses microlocal analysis and completely avoids the thermodynamic formalism.

\subsection{Beyond projective Anosov}

This paper is the first in a series where we construct and analyze Axiom A systems associated to Anosov subgroups, so that the powerful techniques of contemporary hyperbolic dynamical systems can be brought to bear in their study.  In particular, a forthcoming paper \cite{DMSII} will establish the analogues of Theorems \ref{THM A} and \ref{SAMHYP} for any Anosov subgroup $\Gamma<G$ of a semisimple Lie group $G$ without relying on representation theoretic arguments to reduce to the projective Anosov case.  Lest one think this is a question of taste, we make some arguments to the contrary below.

The central point is that for general Anosov $\Gamma<G,$ the refraction flow $\phi_{S}^{t}: \chi_{\Gamma}\rightarrow \chi_{\Gamma}$ belongs to a continuous family parameterized by an open convex cone in some vector space, so the appropriate flow space replacing $\bbL$ cannot be constructed using representation theory since integrality conditions for weights of finite-dimensional representations prohibit one from deforming the construction. Instead, we will have to dive deeper into the structure of semisimple Lie groups in order to establish bridges between Lie theoretic and differential geometric constructions of homogeneous spaces, so that the dynamical techniques presented in the current paper can be applied in this broader context.

Once it is established that every refraction flow space is embedded as a basic hyperbolic set in a real analytic Axiom A system, the dynamical consequences (e.g.\ exponential mixing, continuation of zeta functions, orbit counting, and Ruelle-Pollicott resonances) become open to attack via the identical strategy employed in this paper.

\subsection{Roadmap}

In the preliminary Section \ref{Background} we discuss background material concerning hyperbolic groups, projective Anosov subgroups, and Axiom A flows.  Readers familiar with any (or all) of these subjects should just briefly skim this section for the relevant definitions and notation.  In Section \ref{AnosovImpliesAxiomA}, we discuss the geometry of the $\mathrm{SL}(V)$-equivariant flow space $\mathbb{L}$ and
prove Theorems \ref{THM A} and \ref{SAMHYP}.  Section \ref{ExponentialMixing} turns to the dynamical consequences of Theorem \ref{THM A}, and in particular contains the technical arguments verifying the conditions of Stoyanov \cite{St11}.    We include an abbreviated account of Ruelle transfer operators and the precise technical estimates implied by the verification of Stoyanov's conditions, and prove Theorem \ref{EXP MIX}. The detailed and more general versions of Theorems \ref{zeta fcn}, \ref{orbit counting} and \ref{Resonant states} are proved in Section \ref{sec:Resonances}. The final examples Section \ref{Examples} contains an exposition of strictly convex real projective manifolds and convex cocompact pseudo-Riemannian manifolds with the detailed versions of Theorems \ref{BHG RES} and \ref{THM CCHPQ}, in particular explaining the exponential mixing of the Benoist-Hilbert geodesic flow and space-like geodesic flow in each setting.

\subsection*{Acknowledgements} B.~D.\ has received funding from the Deutsche Forschungsgemeinschaft (German Research Foundation, DFG) through the Priority Program (SPP) 2026 ``Geometry at Infinity''. B.~D. thanks Tobias Weich for his  suggestion to apply tools from the study of smooth hyperbolic (multi-)flows and microlocal methods in the field of Anosov representations. D.~M.\  was partially supported by the ANR JCJC grant GAPR (ANR-22-CE40-0001, Geometry and Analysis in the Pseudo-Riemannian setting). D.~M.\ is grateful to Yves Benoist, Carlos Matheus and Andrés Sambarino for encouraging conversations about this work.  A.~S.\ is grateful to Thi Dang and Beatrice Pozzetti for their encouragement and expertise provided during the early development of the ideas that resulted in this collaboration.  A.~S.\ also thanks Nicolas Tholozan for pointing out a previous misunderstanding of the properness criterion of Benoist-Kobayashi and some additional important comments regarding proper actions.  Finally, A.~S.\ is thankful to Martin Bridgeman and the mathematics department at Boston College, 
where he is currently an (unaffiliated) visiting scholar, for providing wonderful working conditions and interactions during the later stages of this work.

\section{Preliminaries and Background}\label{Background}

\subsection*{Notation} Let $V$ be a finite-dimensional real vector space of dimension $d>0$. Throughout this paper, we use the notation $[v]\in \bbP(V)$ for the projection of a non-zero vector $v\in V$. We denote by $\pi$ the projection map
\[\pi:\map{V\setminus\{0\}}{\bbP(V)}{v}{\left[v\right].} \]
Occasionally, we also denote other ``canonical'' projections by $\pi$ if the meaning is clear from the context.
 In the case of a product $V_1\times V_2$, we  denote by $[v_1:v_2]\in\bbP(V_1\times V_2)$ the projection of $(v_1,v_2)\in V_1\times V_2\setminus\{(0,0)\}$. We identify $\bbP(V)$ with the Grassmannian manifold $\cG_1(V)$, identifying $[v]\in\bbP(V)$ with $\R\cdot v\in\cG_1(V)$ for a non-zero vector $v\in V$. In the case of the dual space $V^*$, we will regularly use the natural identification between $\bbP(V^*)$ and the Grassmannian manifold $\cG_{d-1}(V)$ of hyperplanes, identifying $[\alpha]\in \bbP(V^*)$ with $\ker\alpha\in \cG_{d-1}(V)$ for a non-zero linear form $\alpha\in V^*$.

Two elements $[v]\in \mathbb{P}(V)$,  $[\alpha]\in \mathbb{P}(V^\ast)$ are \emph{transverse}, written $[v]\pitchfork [\alpha]$, if $\alpha(v)\neq 0$. In the language of Grassmannians, transversality $\ell\pitchfork H$ simply means that $\ell\not\subset H$. We then consider the open subset of transverse pairs in $\mathbb{P}(V)\times \mathbb{P}(V^{\ast})$
$$
\mathbb{P}(V)\overset{\pitchfork}{\times} \mathbb{P}(V^{\ast}):=\{(\ell, H)\in \mathbb{P}(V)\times \mathbb{P}(V^{\ast}) \ | \ \ell\pitchfork H \}.
$$

For $g\in \SL(V)$, we use the notation $g\cdot \alpha:=\alpha\circ g^{-1}$ for the left action of $\SL(V)$ on $V^*$. Finally, given a vector $v\in V$, we  denote by $\iota_v\in (V^*)^*$ the evaluation form $\alpha\mapsto \alpha(v)$.

\subsection{Dynamics}\label{sec:dynamics}

Let $\M$ be a metrizable topological space and $\phi^t:\M\to \M$ a continuous flow defined for all $t\in \R$ which has no fixed points. 

\begin{definition} \label{def - non-wandering, periodic, trapped}~
\begin{itemize}
\item The \emph{non-wandering set} ${\mathcal{NW}}(\phi^t)$ of the flow  $\phi^t$ is the set of all  points $x\in \M$ for which there are  sequences $x_N\to x$ in $\M$ and $t_N\to +\infty$ in $\R$ such that $\phi^{t_N}(x_N)\to x$. 
\item The \emph{trapped set} ${\mathcal T}(\phi^t)$ of the flow  $\phi^t$  is the set of all points $x\in \M$ whose $\phi^t$-orbits are relatively compact in $\M$.
\item The set $\mathcal{P}(\phi^t)$ of \emph{periodic points}  of the flow  $\phi^t$  consists of all points $x\in \M$ for which there exists $T>0$ with $\phi^T(x)=x$.
\end{itemize}
\end{definition}

\begin{definition}Let $\mathcal K\subset \M$ be a compact $\phi^t$-invariant set and $E$ a continuous vector bundle over $\mathcal K$  equipped with a continuous flow $\phi_E^t:E\to E$ lifting $\phi^t$ over $\mathcal K$. Then $\phi_E^t$ is \emph{uniformly contracting} (resp.\ \emph{expanding}) on $E$ if for some (hence any) continuous bundle norm $\norm{\cdot}$ on $E$ there are constants $C,c>0$ such that for all $p\in \mathcal K$ and all $v\in E_p$ one has
\bqn
\norm{\phi^t_E(v)}_{\phi^t(p)}\leq Ce^{-c|t|}\norm{v}_p
\eqn
for all $t\geq 0$ (resp.\ $t\leq 0$).
\end{definition}
For the remainder of Section \ref{sec:dynamics}, suppose now that $\M$ is a smooth manifold and $\phi^t$ is a smooth flow with generating vector field $X:\M\to T\M$. Note that the latter is nowhere vanishing since we assume that $\phi^t$ has no fixed points.

\begin{definition} \label{def:hyperbolicset}A compact $\phi^t$-invariant set $\mathcal K\subset \M$ is called \emph{hyperbolic} for the flow $\phi^t$ if the restriction of the tangent bundle $T\M$ to $\mathcal K$ admits a Whitney sum decomposition
\bq
T\M|_{\mathcal K}=E^0\oplus E^{\mathrm{s}}\oplus E^{\mathrm{u}}\label{eq:bundlesplittingstableunstableneutral}
\eq
where $E^0_p=\R X(p)$ for all $p\in \mathcal K$ and $E^{\mathrm{s}}$, $E^{\mathrm{u}}$ are $d\phi^t$-invariant continuous subbundles such that $d\phi^t$ is uniformly contracting (resp.\ expanding) on $E^{\mathrm{s}}$ (resp.\ $E^{\mathrm{u}}$).
\end{definition}

\begin{definition}[{c.f.~\cite[§II.5 (5.1)]{smale67}}]The flow $\phi^t$ is an \emph{Axiom A} flow if the non-wandering set ${\mathcal{NW}}(\phi^t)$ is compact and hyperbolic and coincides with the closure  in $\M$ of the set of periodic points $\mathcal{P}(\phi^t)$.
\end{definition}
Note that the manifold $\mathcal M$ itself is not assumed to be compact.
\begin{definition}A compact $\phi^t$-invariant set $\mathcal K\subset \M$ is \emph{locally maximal} for the flow $\phi^t$ if there is a neighborhood $\mathcal U\subset \M$ of $\mathcal K$ such that
\[
\mathcal K=\bigcap_{t\in \R}\phi^t(\mathcal U).
\]
\end{definition}

\begin{definition}\label{def:basicset}A hyperbolic set $\mathcal K\subset \M$ for the flow $\phi^t$  is \emph{basic} if it is locally maximal for $\phi^t$, the flow $\phi^t|_{\mathcal K}$ is topologically transitive (i.e., $\mathcal K$ contains a dense $\phi^t$-orbit), and $\mathcal K$ is the closure in $\M$ of the set of periodic points of $\phi^t|_{\mathcal K}$.
\end{definition}
Smale originally demanded in \cite[§II.5]{smale67} that a basic set should not consist of a single closed orbit, while other standard references such as Ruelle-Bowen's \cite{BR75} do not impose this condition. We follow the convention of the latter and do allow basic sets to be  single closed orbits.
\begin{prop}[Spectral decomposition, {\cite[§II.5 Thm.~ 5.2]{smale67}, c.f.\ \cite[Res.~3.9]{Bowen}}]If $\phi^t$ is an Axiom A flow, then its non-wandering set is a finite disjoint union of basic hyperbolic sets.
\end{prop}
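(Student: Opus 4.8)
The plan is to run Smale's spectral decomposition argument \cite{smale67} in the form adapted to flows (cf.\ Bowen \cite{Bowen}). Write $\Omega := {\mathcal{NW}}(\phi^t)$; by the Axiom A hypothesis $\Omega$ is compact, hyperbolic, and equal to the closure of the set $\mathcal{P}(\phi^t)$ of periodic points. Everything hinges on first endowing $\Omega$ with a local product structure, and then cutting it along a heteroclinic equivalence relation on periodic orbits.

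\emph{Step 1 (local product structure).} First I would apply the stable manifold theorem for the hyperbolic set $\Omega$ to obtain, for some uniform $\varepsilon>0$, strong stable and strong unstable manifolds $\mathcal W^{\mathrm{ss}}_\varepsilon(x)$, $\mathcal W^{\mathrm{uu}}_\varepsilon(x)$ tangent to $E^{\mathrm s}(x)$, $E^{\mathrm u}(x)$ and varying continuously with $x\in\Omega$, together with the center-unstable plaques $\mathcal W^{\mathrm{cu}}_\varepsilon(x):=\bigcup_{|s|<\varepsilon}\phi^s(\mathcal W^{\mathrm{uu}}_\varepsilon(x))$. For $x,y\in\Omega$ sufficiently close, $\mathcal W^{\mathrm{ss}}_\varepsilon(x)\cap\mathcal W^{\mathrm{cu}}_\varepsilon(y)$ is a single point, the \emph{bracket} $[x,y]$. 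The crucial point, which is exactly where the hypothesis $\Omega=\overline{\mathcal{P}(\phi^t)}$ is used, is that $[x,y]\in\Omega$: approximate $x,y$ by periodic points $p,q\in\Omega$ (closing lemma), form a periodic $\eta$-pseudo-orbit by alternating long arcs of $\mathcal O(p)$ and $\mathcal O(q)$, shadow it by a genuine orbit which — being periodic by expansivity of $\Omega$ — automatically lies in ${\mathcal{NW}}(\phi^t)=\Omega$ and passes arbitrarily close to $[x,y]$; since $\Omega$ is closed, $[x,y]\in\Omega$. This gives $\Omega$ a local product structure, hence the shadowing/specification property on $\Omega$.

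\emph{Step 2 (heteroclinic classes and the decomposition).} On the periodic orbits contained in $\Omega$ I would declare $\mathcal O\sim\mathcal O'$ if the weak unstable set of $\mathcal O$ meets the weak stable set of $\mathcal O'$ and conversely. Reflexivity and symmetry are immediate; transitivity follows from the inclination ($\lambda$-)lemma together with Step 1, which lets one concatenate a connection $\mathcal O_1\to\mathcal O_2$ with one $\mathcal O_2\to\mathcal O_3$. Let $\Omega_1,\Omega_2,\dots$ be the closures of the equivalence classes. I would then verify: (i) each $\Omega_i$ is relatively open in $\Omega$, since any point of $\Omega$ near a periodic point $p$ of a class can, via Step 1 and the closing lemma, be joined to $\mathcal O(p)$ by heteroclinic brackets lying in $\Omega$ and so lies in that class's closure; (ii) $\bigcup_i\Omega_i=\overline{\mathcal{P}(\phi^t)}=\Omega$ with $\Omega$ compact, so only finitely many $\Omega_i$ are non-empty, and being clopen in $\Omega$ they are pairwise disjoint; (iii) each $\Omega_i$ is compact and $\phi^t$-invariant, hyperbolic with the restriction of the splitting $E^0\oplus E^{\mathrm s}\oplus E^{\mathrm u}$, locally maximal by the local product structure, and by construction the closure of the periodic points it contains; (iv) each $\Omega_i$ is topologically transitive — given two relatively open subsets $A,B$, choose periodic orbits in $A$ and $B$ lying in the common class, use the heteroclinic connections plus shadowing to build a single orbit visiting a neighborhood of each, and pass to a dense orbit by a Baire-category argument over a countable basis of $\Omega_i$. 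This is precisely Definition \ref{def:basicset}, so $\Omega=\Omega_1\sqcup\dots\sqcup\Omega_m$ is the desired decomposition.

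\emph{Main obstacle.} The technical heart is Step 1: showing the bracket of two non-wandering points is again non-wandering, i.e.\ that $\Omega$ carries a local product structure. Once that is in hand, together with the closing/shadowing lemma and the inclination lemma, Step 2 is essentially bookkeeping; the one genuinely flow-specific subtlety (absent for diffeomorphisms) is the small time-reparametrization built into the bracket via the center direction $E^0$, which must be tracked consistently throughout the transitivity and local-maximality arguments.
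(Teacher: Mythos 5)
The paper gives no proof of this proposition: it is quoted verbatim from Smale \cite[\S II.5 Thm.~5.2]{smale67} (cf.\ Bowen \cite{Bowen}), and your outline is exactly the classical argument of those references — closing/shadowing plus uniform transversality of $E^{\mathrm s}$ and $E^{\mathrm{cu}}$ to put a local product structure on ${\mathcal{NW}}(\phi^t)$, then closures of heteroclinic equivalence classes of periodic orbits, which are clopen in the non-wandering set and hence finite in number. So your proposal is consistent with the paper's treatment; just note that for transitivity of the heteroclinic relation the safe route (absent any strong-transversality hypothesis) is shadowing through the local, automatically transverse brackets rather than a global inclination-lemma argument.
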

\begin{definition}If $\phi^t$ is an Axiom A flow, we say that $\phi^t$ is \emph{simple} if its non-wandering set is a basic hyperbolic set, i.e., its spectral decomposition consists of only one set.
\end{definition}

\subsection{Hyperbolic groups}

Let $\Gamma$ be a finitely generated group which is hyperbolic \cite{Gr}; other common synonymous terms are \emph{word-hyperbolic} or \emph{Gromov-hyperbolic}. We denote by $\partial_\infty\Gamma$ its Gromov boundary.  The following records some essential properties of $\Gamma$.

\begin{prop}[{\cite{Gr,GdH90}}]\label{prop hyperbolic groups elementary properties}
\begin{enumerate}[label=(\arabic*),ref=\emph{(\arabic*)}]
\item There is a metrisable and compact topology on $\Gamma\cup\partial_\infty\Gamma$  for which $\Gamma$ is open, discrete and dense.
\item \label{propenumi hyp groups continuous action} The left action $\Gamma\curvearrowright\Gamma\cup\partial_\infty\Gamma$ is continuous.
\item \label{propenumi hyp groups minimal action} All $\Gamma$-orbits in $\partial_\infty\Gamma$ are dense.
\item \label{propenumi hyp groups boundary fixed points of elements} Every infinite order element $\gamma\in \Gamma$ has two distinct fixed points $\gamma_+=\lim_{n\to +\infty}\gamma^n\in\partial_\infty\Gamma$ and $\gamma_-=\lim_{n\to -\infty}\gamma^n\in\partial_\infty\Gamma$.
\item \label{propenumi hyp groups density poles} The set of \emph{poles}, i.e. pairs $(\gamma_+,\gamma_-)\in\partial_\infty\Gamma^2$ for some infinite order element $\gamma\in\Gamma$ is dense in $\partial_\infty\Gamma^2$.
\item \label{propenumi hyp groups dense orbits product} The diagonal action $\Gamma\curvearrowright\partial_\infty\Gamma\times\partial_\infty\Gamma$ is topologically transitive.
\item \label{propenumi hyp groups convergence action} If $\gamma_k\to \gamma_+\in\partial_\infty\Gamma$ and  $\gamma_k^{-1}\to \gamma_-\in\partial_\infty\Gamma$, then $\gamma_k\cdot t\to \gamma^+$ for any $t\in \Gamma\cup\partial_\infty\Gamma\setminus\{\gamma^-\}$, uniformly on compact subsets.
\end{enumerate}
\end{prop}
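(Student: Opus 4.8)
The plan is to treat this proposition as the classical package of facts about the Gromov compactification $\bar\Gamma:=\Gamma\cup\partial_\infty\Gamma$, and to verify each item following Ghys--de la Harpe \cite{GdH90} and Gromov \cite{Gr}. First I would fix a finite symmetric generating set and the resulting word metric $|\cdot|$ on $\Gamma$, with hyperbolicity constant $\delta$, and realize $\partial_\infty\Gamma$ as the set of equivalence classes of sequences $(x_n)$ in $\Gamma$ with $(x_m\mid x_n)_e\to\infty$ --- where $(\cdot\mid\cdot)_e$ is the Gromov product based at the identity and two sequences are identified when their cross Gromov products diverge --- topologizing $\bar\Gamma$ by taking the ``shadows'' $\{y\in\bar\Gamma:(y\mid\xi)_e>R\}$ (read off representing sequences) as a neighborhood basis of each $\xi\in\partial_\infty\Gamma$. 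With this topology $\Gamma$ is open, discrete and dense in $\bar\Gamma$ by construction, which is part~(1); compactness would follow from a diagonal extraction using local finiteness of the Cayley graph (every sequence in $\bar\Gamma$ admits a subsequence that is either eventually constant in $\Gamma$ or has pairwise Gromov products tending to infinity), and metrizability from a visual metric $d_{\eps}$ on $\partial_\infty\Gamma$ with $d_{\eps}(\xi,\eta)\asymp e^{-\eps(\xi\mid\eta)_e}$ for $\eps>0$ small. For~(2) I would use that left translation by $g\in\Gamma$ is an isometry of the Cayley graph, hence changes based Gromov products by at most $2|g|$; it therefore maps convergent sequences to convergent sequences, extends to a self-homeomorphism of $\bar\Gamma$, and joint continuity of $\Gamma\times\bar\Gamma\to\bar\Gamma$ is automatic since $\Gamma$ is discrete.

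The dynamical heart of the statement is (4) together with (7), from which the remaining items follow. For~(4) the key input is that an infinite cyclic subgroup of a hyperbolic group is undistorted, $|\gamma^n|\asymp|n|$, so $n\mapsto\gamma^n$ is a bi-infinite quasigeodesic; its two ends then define distinct points $\gamma_+=\lim_{n\to+\infty}\gamma^n$ and $\gamma_-=\lim_{n\to-\infty}\gamma^n$ of $\partial_\infty\Gamma$. For~(7), the convergence-group property, the crucial step is the elementary identity
\[
(\gamma_k t\mid\gamma_k e)_e \;=\; |\gamma_k|\;-\;(\gamma_k^{-1}e\mid t)_e\;+\;O(\delta),\qquad t\in\bar\Gamma,
\]
obtained by expanding the Gromov products into word lengths and using that $\gamma_k$ is an isometry (the error term accounts for the $2\delta$-ambiguity when $t$ is a boundary point, interpreted through a representing sequence). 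Under the hypotheses of~(7) the first term tends to $+\infty$ because $\gamma_k e\to\gamma_+$ forces $|\gamma_k|\to\infty$, while the second term stays bounded above --- uniformly for $t$ ranging over a compact subset $K\subset\bar\Gamma\setminus\{\gamma_-\}$, since $\gamma_k^{-1}e\to\gamma_-$ and $K$ is disjoint from some shadow of $\gamma_-$ on which $(\gamma_-\mid\cdot)_e$ is large. Hence $(\gamma_k t\mid\gamma_k e)_e\to+\infty$ uniformly on $K$, which is exactly the assertion that $\gamma_k\cdot t\to\gamma_+$ uniformly on compacta.

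Finally I would deduce (3), (5) and (6) by ping-pong from (4) and (7), working under the tacit assumption that $\Gamma$ is non-elementary (for finite or two-ended $\Gamma$ these items are vacuous or degenerate). For~(5): given distinct $\xi,\eta\in\partial_\infty\Gamma$, one first produces a sequence $g_k$ with $g_k\to\xi$ and $g_k^{-1}\to\eta$ by moving the basepoint close to a quasigeodesic joining $\xi$ to $\eta$; by~(7) a sufficiently high power $g_k^{N_k}$ exhibits north--south dynamics with attracting and repelling points near $\xi$ and $\eta$, and a closing argument (replacing $g_k^{N_k}$ by a nearby group element realized by a long geodesic word) yields an infinite-order element whose fixed-point pair is as close as desired to $(\xi,\eta)$, giving density of poles. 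Then~(3) follows since the set of attracting fixed points of infinite-order elements is $\Gamma$-invariant and, by density of poles together with~(7), dense in $\partial_\infty\Gamma$; and~(6) follows because each pole $(\gamma_+,\gamma_-)$ has a product neighborhood carried into any prescribed product neighborhood of itself by a high power of $\gamma$, so the diagonal action on $\partial_\infty\Gamma\times\partial_\infty\Gamma$ is topologically transitive. I expect the only real obstacle to be bookkeeping: tracking the $\delta$-inexactness in the Gromov-product inequalities that underlie the displayed identity and the closing argument; once that identity is in hand the rest is routine ping-pong.
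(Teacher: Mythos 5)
The paper itself offers no argument for this proposition: it is quoted as a standard package of facts with a citation to Gromov and Ghys--de la Harpe, so the only meaningful comparison is with the classical proofs in those references. Your treatment of (1), (2), (4) and (7) follows exactly that standard route and is essentially correct; in particular the identity $(\gamma_k t\mid\gamma_k e)_e=|\gamma_k|-(\gamma_k^{-1}e\mid t)_e+O(\delta)$ is the usual convergence-action computation (for $t\in\Gamma$ it is even exact), and your uniformity argument on compact subsets of $(\Gamma\cup\partial_\infty\Gamma)\setminus\{\gamma_-\}$ is sound.

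The genuine gaps are in your deductions of (5) and, above all, (6). For (5), elements $g_k$ chosen near a geodesic from $\eta$ to $\xi$ satisfy $g_k\to\xi$, but nothing forces $g_k^{-1}\to\eta$ (in a free group the inverses of the prefixes of the infinite word representing $\xi$ converge to a point depending on $\xi$ alone); the real content is the closing/pigeonhole argument producing an element whose quasi-axis fellow-travels a long segment of the geodesic $(\eta,\xi)$, which you only name. For (6), the statement you invoke is false as written and, even corrected, insufficient: a high power of $\gamma$ does \emph{not} carry a product neighborhood of $(\gamma_+,\gamma_-)$ into a prescribed product neighborhood of itself, since it contracts the first factor but expands the second; and local dynamics at a single pole together with density of poles only yields recurrence at poles (which is trivial, poles being fixed pairs), not topological transitivity. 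The standard argument needs a connecting step with \emph{two} hyperbolic elements with disjoint fixed sets: choose $a$ with $(a_-,a_+)\in U_1\times U_2$ and $b$ with $(b_-,b_+)\in V_1\times V_2$ (possible by (5) since $\partial_\infty\Gamma$ is perfect), pick $v_1\in V_1\setminus\{b_+\}$, and check that for suitable large $n$ and then large $m$ the element $g=b^na^m$ maps the point $\left(a^{-m}b^{-n}v_1,\,a_+\right)\in U_1\times U_2$ to $\left(v_1,\,b^na_+\right)\in V_1\times V_2$. Finally, passing from density of attracting fixed points to minimality in (3) needs the small extra remark for the exceptional point $\xi=\gamma_-$ (first move it by some $h\in\Gamma$, using that a non-elementary hyperbolic group fixes no boundary point); you are right, though, that (3), (5), (6) tacitly assume $\Gamma$ non-elementary, which is how the paper uses them.
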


\begin{lem} \label{lem hyp groups convergence limit points}
Let $\gamma_k\in \Gamma$ be a sequence of infinite order elements such that $\gamma_k\to \gamma_+\in \partial_\infty\Gamma$ and $\gamma_k^{-1}\to \gamma_-\in \partial_\infty\Gamma$ with $\gamma_+\neq\gamma_-$. Then the sequence of attracting fixed points $\gamma_{k,+}=\lim_{n\to+\infty}\gamma_k^n\in\partial_\infty\Gamma$ converges to $\gamma_+$.
\end{lem}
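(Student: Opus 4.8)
The plan is to argue by contradiction, using the convergence dynamics recorded in Proposition~\ref{prop hyperbolic groups elementary properties} together with one additional uniformity statement about the iterates of the $\gamma_k$. Fix a metric $d$ inducing the topology of the compact space $\Gamma\cup\partial_\infty\Gamma$.

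The crucial preliminary step is a \emph{uniform source--sink estimate}: for every $x\in\partial_\infty\Gamma$ with $x\neq\gamma_-$, I claim $\sup_{n\geq 1}d(\gamma_k^n\cdot x,\gamma_+)\to 0$ as $k\to\infty$. This is not quite Proposition~\ref{prop hyperbolic groups elementary properties}\ref{propenumi hyp groups convergence action}, which only controls a single application of $\gamma_k$; to get all iterates simultaneously I would run a trapping-neighborhood argument. Given $\varepsilon>0$, choose $r\in(0,\varepsilon)$ small enough that the closed ball $\bar B:=\{y:d(y,\gamma_+)\leq r\}$ does not contain $\gamma_-$; being closed in a compact space, $\bar B$ is compact, and $\bar B\cup\{x\}$ is then a compact subset of $\Gamma\cup\partial_\infty\Gamma$ disjoint from $\gamma_-$. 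Hence Proposition~\ref{prop hyperbolic groups elementary properties}\ref{propenumi hyp groups convergence action} gives $\gamma_k\cdot(\bar B\cup\{x\})\subset\{y:d(y,\gamma_+)<r\}\subset\bar B$ for all large $k$, and iterating this inclusion shows $\gamma_k^n\cdot x\in\{y:d(y,\gamma_+)<r\}$ for every $n\geq 1$ once $k$ is large, which is the claim.

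I would then recall that applying Proposition~\ref{prop hyperbolic groups elementary properties}\ref{propenumi hyp groups boundary fixed points of elements} and \ref{propenumi hyp groups convergence action} to the sequence $(\gamma_k^n)_{n\in\N}$, for each fixed $k$, shows that the single element $\gamma_k$ has north--south dynamics on $\partial_\infty\Gamma$: one has $\gamma_k^n\cdot x\to\gamma_{k,+}$ as $n\to\infty$ for every $x$ distinct from the repelling fixed point $\gamma_{k,-}=\lim_{n\to-\infty}\gamma_k^n$. Now assume toward a contradiction that $\gamma_{k,+}\not\to\gamma_+$. Since $\partial_\infty\Gamma^2$ is compact, after passing to a subsequence we may assume $\gamma_{k,+}\to\eta$ with $\eta\neq\gamma_+$ and $\gamma_{k,-}\to\zeta$ for some $\zeta\in\partial_\infty\Gamma$. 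Suppose first that $\partial_\infty\Gamma$ is infinite, and pick a point $x\in\partial_\infty\Gamma\setminus\{\gamma_-,\zeta\}$. Then $x\neq\gamma_{k,-}$ for all large $k$, so north--south dynamics provides $n_k\geq 1$ with $d(\gamma_k^{n_k}\cdot x,\gamma_{k,+})<1/k$; consequently $\gamma_k^{n_k}\cdot x\to\eta$ as $k\to\infty$. But $x\neq\gamma_-$, so the uniform source--sink estimate forces $\gamma_k^{n_k}\cdot x\to\gamma_+$. Hence $\eta=\gamma_+$, a contradiction. The only remaining case is that $\partial_\infty\Gamma$ consists of exactly two points, i.e.\ $\Gamma$ is virtually infinite cyclic; there $\gamma_k\to\gamma_+$ forces $\gamma_{k,+}=\gamma_+$ for all large $k$ by an elementary direct inspection (reducing to $\Z$ or the infinite dihedral group via a finite-kernel quotient), so there is nothing to prove.

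The step I expect to be the genuine obstacle is the uniform source--sink estimate: the attracting fixed point $\gamma_{k,+}$ is approached only along iterates $\gamma_k^{n_k}$ with $n_k\to\infty$ in an uncontrolled fashion, so knowing that one application of $\gamma_k$ drags a fixed compact set near $\gamma_+$ is not formally enough --- one must verify that the \emph{entire forward orbit} of such a compact set stays near $\gamma_+$, which is precisely what the trapping-neighborhood iteration supplies. Everything else amounts to bookkeeping with the convergence-group properties of Proposition~\ref{prop hyperbolic groups elementary properties}.
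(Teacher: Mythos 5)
Your proof is correct and rests on the same key idea as the paper's: by Proposition \ref{prop hyperbolic groups elementary properties}\ref{propenumi hyp groups convergence action}, a small compact neighbourhood of $\gamma_+$ avoiding $\gamma_-$ is mapped into itself by $\gamma_k$ for large $k$, and iterating this trapping inclusion pins the attracting fixed point $\gamma_{k,+}$ near $\gamma_+$. The paper concludes directly from $\gamma_k\cdot U\subset U$ that $\gamma_{k,+}\in U$, whereas you wrap the same mechanism in a contradiction argument with subsequences, north--south dynamics and a separate virtually cyclic case; this is only extra bookkeeping, not a different route.
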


\begin{proof}
Let $U\subset \partial_\infty\Gamma$ be an open subset containing $\gamma_+$ whose closure does not contain $\gamma_-$. Then for $k$ large enough one has $\gamma_k\cdot U\subset U$ by Proposition \ref{prop hyperbolic groups elementary properties}\ref{propenumi hyp groups convergence action}, thus $\gamma_{k,+}\in U$.
\end{proof}

\begin{lem}\label{lem distinct limit points}
Let $\delta_N\in \Gamma$ be an unbounded sequence of infinite order elements. There exist an increasing sequence $N_k\in \N$, an element $f\in \Gamma$ and distinct points $\gamma_+,\gamma_-\in\partial_\infty\Gamma$ such that the sequence $\gamma_k=f\delta_{N_k}$ satisfies $\lim \gamma_k=\gamma_+$ and $\lim \gamma_k^{-1}=\gamma_-$.
\end{lem}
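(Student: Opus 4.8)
The plan is to build $\gamma_\pm$ as boundary limits after two successive adjustments of the sequence $\delta_N$. First I pass to a subsequence so that $\delta_N$ and $\delta_N^{-1}$ both converge in the compactification $\Gamma\cup\partial_\infty\Gamma$; since these two limits may well coincide, I then repair the degeneracy by left-translating the entire sequence by a fixed $f\in\Gamma$ chosen so that the forward limit is moved off the backward limit. The one point requiring care is that the backward limit of the translated sequence must be computed using the convergence-group property (Proposition \ref{prop hyperbolic groups elementary properties}\ref{propenumi hyp groups convergence action}) rather than by naive continuity, because right multiplication by $f^{-1}$ does not extend continuously to $\partial_\infty\Gamma$.

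For the first step, recall from Proposition \ref{prop hyperbolic groups elementary properties} that $\Gamma\cup\partial_\infty\Gamma$ is compact and metrisable with $\Gamma$ discrete, so every subsequential limit of the unbounded sequence $\delta_N$ lies in $\partial_\infty\Gamma$ (a limit in $\Gamma$ would force the sequence to be eventually constant there). Passing to a subsequence (still written $\delta_{N_k}$) we may assume $\delta_{N_k}\to\eta_+$, and after a further subsequence $\delta_{N_k}^{-1}\to\eta_-$, with $\eta_\pm\in\partial_\infty\Gamma$. In general $\eta_+=\eta_-$ is possible — e.g.\ the sequence $a^{N}ba^{-N}$ in a free group has both limits equal to the end $a^{+\infty}$ — so we cannot simply take $f=e$. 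To choose $f$: since $\Gamma$ contains an element of infinite order, Proposition \ref{prop hyperbolic groups elementary properties}\ref{propenumi hyp groups boundary fixed points of elements} gives $\#\partial_\infty\Gamma\geq 2$, and by Proposition \ref{prop hyperbolic groups elementary properties}\ref{propenumi hyp groups minimal action} the orbit $\Gamma\cdot\eta_+$ is dense in this Hausdorff space, hence contains at least two points; we pick $f\in\Gamma$ with $f\cdot\eta_+\neq\eta_-$.

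Now set $\gamma_k:=f\delta_{N_k}$. Left translation by $f$ is a homeomorphism of $\Gamma\cup\partial_\infty\Gamma$ by Proposition \ref{prop hyperbolic groups elementary properties}\ref{propenumi hyp groups continuous action}, so $\gamma_k\to f\cdot\eta_+=:\gamma_+\in\partial_\infty\Gamma$. For the inverses, write $\gamma_k^{-1}=\delta_{N_k}^{-1}f^{-1}=\beta_k\cdot f^{-1}$ with $\beta_k:=\delta_{N_k}^{-1}$, where $\beta_k\cdot f^{-1}$ denotes the action of $\beta_k$ on the point $f^{-1}$ of $\Gamma\subset\Gamma\cup\partial_\infty\Gamma$, and apply Proposition \ref{prop hyperbolic groups elementary properties}\ref{propenumi hyp groups convergence action} to the sequence $\beta_k$: since $\beta_k\to\eta_-$ and $\beta_k^{-1}=\delta_{N_k}\to\eta_+$, one gets $\beta_k\cdot t\to\eta_-$ for every $t\in(\Gamma\cup\partial_\infty\Gamma)\setminus\{\eta_+\}$; taking $t=f^{-1}$, which is a point of $\Gamma$ and so distinct from $\eta_+$, yields $\gamma_k^{-1}\to\eta_-=:\gamma_-$. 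By the choice of $f$ we have $\gamma_+=f\cdot\eta_+\neq\eta_-=\gamma_-$, so $\gamma_+,\gamma_-$ are the required pair of distinct points of $\partial_\infty\Gamma$.

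There is no deep obstacle here; the delicate point, as indicated above, is that the limit of $\delta_{N_k}^{-1}f^{-1}$ must be read off from the convergence dynamics in Proposition \ref{prop hyperbolic groups elementary properties}\ref{propenumi hyp groups convergence action}, not from continuity of the (left) action. As a minor aside, if one prefers not to invoke density of orbits when $\Gamma$ is elementary, one observes that a virtually cyclic $\Gamma$ has two-ended boundary and that an unbounded sequence of infinite order elements there automatically satisfies $\eta_+\neq\eta_-$, so $f=e$ already works in that case.
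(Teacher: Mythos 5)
Your proof is correct and follows essentially the same route as the paper's: pass to a subsequence with boundary limits $\eta_\pm$, left-translate by a suitable $f$, get the forward limit from continuity of the action and the backward limit $\delta_{N_k}^{-1}f^{-1}\to\eta_-$ from the convergence property of Proposition \ref{prop hyperbolic groups elementary properties}\ref{propenumi hyp groups convergence action}. The only (cosmetic) difference is how $f$ is produced — you use density of the orbit $\Gamma\cdot\eta_+$ with the virtually cyclic case handled separately, while the paper splits into the cases $\delta_+\neq\delta_-$ and $\delta_+=\delta_-$ and picks $f$ not fixing $\delta_-$, with the same caveat for virtually cyclic $\Gamma$.
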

\begin{proof}
First consider a subsequence $\delta_{N_k}$ that has limit points $\delta_+=\lim \delta_{N_k}\in \partial_\infty\Gamma$ and $\delta_-=\lim \delta_{N_k}^{-1}\in\partial_\infty\Gamma$. If $\delta_+\neq \delta_-$, we can just choose $f=e$. If $\delta_+=\delta_-$, consider an element $f\in \Gamma$ such that $f\cdot \delta_-\neq \delta_-$ (such an element always exists unless $\Gamma$ is virtually cyclic, in which case we necessarily have $\delta_+\neq\delta_-$ so we can take $f=e$), so that $\gamma_k=f\delta_{N_k}$ satisfies $\lim \gamma_k= f\cdot \delta_-$ (Proposition \ref{prop hyperbolic groups elementary properties} \ref{propenumi hyp groups continuous action}) and $\lim \gamma_k^{-1}=\delta_-$ (Proposition \ref{prop hyperbolic groups elementary properties} \ref{propenumi hyp groups convergence action}).
\end{proof}

The following definition generalizes the concept of the geodesic flow on the unit tangent bundle of a compact negatively curved Riemannian manifold, motivated by the fact that the fundamental group of such a manifold is hyperbolic. 
\begin{definition}[{\cite[Sec.~8.3]{Gr}, \cite[Thm.~60]{Mineyev2005}}] \label{def - geodesic flow gromov hyperbolic group}
A \emph{Gromov-geodesic flow} of $\Gamma$ is a proper hyperbolic metric space $\widehat{\Gamma}$ endowed with a fixed-point free flow $(\Phi^t)_{t\in\R}$, an isometric involution $\iota$, and an isometric $\Gamma$-action with the following properties:
\begin{enumerate}
\item The $\Gamma$-action commutes with $\iota$ and $\Phi^t$.
\item The involution $\iota$ anti-commutes with $\Phi^t$, i.e., $\iota\circ \Phi^t=\Phi^{-t}\circ \iota$.
\item The orbit maps $\Gamma\to \widehat{\Gamma}$ are quasi-isometries. In particular, the $\Gamma$-action on $\widehat{\Gamma}$ is properly discontinuous and cocompact, and there is a $\Gamma$-equivariant homeomorphism $\partial_\infty \widehat{\Gamma}\cong\partial_\infty\Gamma$. The latter is canonical in the sense that it is independent of the choice of the $\Gamma$-orbit.
\item The orbit maps $\R\to \widehat{\Gamma}$ of the flow $\Phi^t$ are quasi-isometric embeddings.
\item The $\Gamma$-equivariant map  $x\mapsto (x_-:=\lim_{t\to -\infty}\Phi^t(x),x_+:=\lim_{t\to +\infty}\Phi^{-t}(x))$ induces a homeomorphism from $\widehat{\Gamma}/\R$ to $\partial_\infty\Gamma^{(2)}:=\partial_\infty\Gamma\times \partial_\infty\Gamma\setminus \mathrm{diag}$. In particular, there is a (non-canonical) $\Gamma$-equivariant homeomorphism $\widehat \Gamma\cong \partial_\infty \Gamma^{(2)}\times\R$.
\end{enumerate}
\end{definition}
By \cite[Thm.~ 8.3.C]{Gr} there exists a Gromov-geodesic flow of $\Gamma$, unique up to $\Gamma$-equivariant involution-preserving quasi-isometries mapping flow orbits to flow orbits. 

%
%
%

\subsection{Anosov subgroups}
Let $V$ be a finite-dimensional real vector space of dimension $d>1$.

\begin{definition} Let $g\in \SL(V)$. We denote by
\[ \lambda_1(g)\geq\lambda_2(g)\geq\cdots\geq\lambda_{d}(g)\]
the logarithms of the moduli of the (complex) eigenvalues of $g$ (with repetitions). 
\end{definition}

Note that any $g\in \SL(V)$ satisfies $\sum_{i=1}^{d}\lambda_i(g)=0$, in particular $\lambda_1(g)\geq0$. 

\begin{definition} \label{def projective Anosov subgroup} A finitely generated subgroup $\Gamma<\SL(V)$ is called \emph{projective Anosov} if it is Gromov-hyperbolic and there are constants $c,c'>0$ such that
\[ \lambda_1(\gamma)-\lambda_2(\gamma)\geq c\vert\gamma\vert_\infty-c' \]

for any $\gamma\in \Gamma$.
\end{definition}

Recall that the stable length is defined as $\vert\gamma\vert_\infty=\lim_{n\to+\infty}\frac{\vert\gamma^n\vert}{n}$, where $|\cdot|$ is the word length with respect to some finite generating set of $\Gamma$, the choice of which is irrelevant to the above condition. Definition \ref{def projective Anosov subgroup} is taken from \cite{KasselPotrie}, in which it is shown to be equivalent to the original definition of a projective Anosov subgroup \cite{labourie,GW12}, in particular it implies the following:
\begin{prop} \label{prop boundary maps}
If $\Gamma<\SL(V)$ is projective Anosov, there exists a unique pair of $\Gamma$-equivariant maps $\xi:\partial_\infty\Gamma\to \bbP(V)$ and $\xi^*:\partial_\infty\Gamma\to \bbP(V^*)$ such that:
\begin{enumerate}[label=(\arabic*),ref=\emph{(\arabic*)}]
\item \label{propenumi boundary maps equivariant homeomorphisms} The map $\xi$ (resp. $\xi^*$) is a bi-Hölder homeomorphism onto its image.
\item \label{propenumi boundary maps transversality} For $t,s\in \partial_\infty\Gamma$, one has $\xi(t)\subset\xi^*(s)$ if and only if $t=s$.
\item \label{propenumi boundary maps elements} For any $\gamma\in \Gamma$ of infinite order, $\lambda_1(\gamma)>0$ and $e^{\lambda_1(\gamma)}$ is the modulus of a unique eigenvalue of $\gamma$. This eigenvalue is real and simple, and the corresponding eigenspace of $\gamma$ is $\xi(\gamma_+)$. The eigenspace  of the inverse transpose $(\alpha\mapsto \gamma\cdot\alpha)\in\SL(V^*)$ for the eigenvalue of modulus $e^{-\lambda_1(\gamma)}$ is $\xi^*(\gamma_-)$.
\end{enumerate}
\end{prop}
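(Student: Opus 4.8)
The statement is essentially classical: Proposition \ref{prop boundary maps} is the content of \cite{KasselPotrie} combined with the original characterizations of \cite{labourie,GW12}, so the ``proof'' amounts to recalling how the limit maps are produced from the singular-value gap condition. The plan is to (i) define the maps on the dense set of poles by pure linear algebra, (ii) extend them continuously to all of $\partial_\infty\Gamma$ using density of poles together with the exponential gap furnished by Definition \ref{def projective Anosov subgroup}, (iii) deduce transversality, injectivity, and the (bi-)Hölder property, and (iv) read off the eigenvalue description \ref{propenumi boundary maps elements} directly from the construction.

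\emph{Poles and continuous extension.} Let $e\neq\gamma\in\Gamma$ have infinite order. Since $\Gamma$ is hyperbolic, $\gamma$ is undistorted, so $|\gamma|_\infty>0$; applying the Anosov inequality to $\gamma^n$ and dividing by $n$ yields $\lambda_1(\gamma)-\lambda_2(\gamma)\geq c|\gamma|_\infty>0$. Hence $e^{\lambda_1(\gamma)}$ is the modulus of a unique, necessarily real and simple, eigenvalue of $\gamma$; set $\xi(\gamma_+)$ to be its eigenline and, dually, $\xi^*(\gamma_-)$ to be the eigenline in $\bbP(V^*)$ of $\alpha\mapsto\gamma\cdot\alpha$ for the eigenvalue of modulus $e^{-\lambda_1(\gamma)}$, i.e.\ the $\gamma$-invariant hyperplane complementary to $\xi(\gamma_+)$; applying this to $\gamma^{-1}$ defines $\xi(\gamma_-)$ and $\xi^*(\gamma_+)$. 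These assignments are $\Gamma$-equivariant because $(\delta\gamma\delta^{-1})_\pm=\delta\cdot\gamma_\pm$ and the eigenspaces transform accordingly. To extend them, fix a Gromov-geodesic flow $\widehat\Gamma$ as in Definition \ref{def - geodesic flow gromov hyperbolic group} (or work with quasi-geodesic rays in the Cayley graph) and, for $w\in\Gamma$, write a Cartan decomposition $w=k_wa_wl_w$ with top singular direction $U_1(w)=k_w\cdot[e_1]\in\bbP(V)$. The equivalent singular-value form of the Anosov condition \cite{KasselPotrie} gives $\mu_1(w)-\mu_2(w)\geq c|w|-c'$; a standard estimate then shows that along any quasi-geodesic ray $(w_n)$ toward $t\in\partial_\infty\Gamma$ the directions $U_1(w_n)$ are Cauchy with limit depending only on $t$ (two rays toward $t$ stay at bounded distance, and $U_1$ moves by $O\big(e^{-(c|w|-c')}\big)$ under bounded perturbations of $w$). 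Define $\xi(t)$ to be this limit and $\xi^*(t)$ by the dual construction. Taking $w_n=\gamma^n$ and comparing singular values with eigenvalues ($\|\gamma^n\|^{1/n}\to e^{\lambda_1(\gamma)}$, with the top singular line aligning exponentially with the top eigenline) shows $\xi,\xi^*$ extend the pole definitions, which in particular proves \ref{propenumi boundary maps elements}; equivariance passes to the limit, and comparing the exponential rate with a visual metric on $\partial_\infty\Gamma$ upgrades continuity to a Hölder modulus.

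\emph{Transversality, injectivity, Hölder inverse, uniqueness.} For a pole one has $\xi(\gamma_+)\not\subset\ker\xi^*(\gamma_-)$ and $\xi(\gamma_+)\subset\ker\xi^*(\gamma_+)$ by the linear algebra above, which is \ref{propenumi boundary maps transversality} on poles. For general $t\neq s$, realize $(t,s)$ as the endpoints of a bi-infinite geodesic in $\widehat\Gamma$ and push it by elements $\gamma_n\in\Gamma$ translating along it: then $\mu_1(\gamma_n)-\mu_2(\gamma_n)\to\infty$ and $\mu_{d-1}(\gamma_n)-\mu_d(\gamma_n)\to\infty$, the top singular line of $\gamma_n$ converges to $\xi(t)$, the bottom singular hyperplane to $\ker\xi^*(s)$, and a Cartan-decomposition bound forces these limits to remain transverse, giving $\xi(t)\not\subset\ker\xi^*(s)$; combined with \ref{propenumi boundary maps transversality} on the diagonal (by continuity from diagonal poles) this proves \ref{propenumi boundary maps transversality} in full, and in particular $\xi$ and $\xi^*$ are injective. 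Since $\partial_\infty\Gamma$ is compact and metrizable (Proposition \ref{prop hyperbolic groups elementary properties}), both are homeomorphisms onto their images, and a uniform lower bound on $\sin\angle(\xi(t),\ker\xi^*(s))$ in terms of the distance from $t$ to $s$ (again from the gap) makes the inverses Hölder, establishing \ref{propenumi boundary maps equivariant homeomorphisms}. Uniqueness: any continuous $\Gamma$-equivariant pair satisfying the north–south dynamics dictated by \ref{propenumi boundary maps elements} must agree with $\xi,\xi^*$ on poles, hence everywhere by density (Proposition \ref{prop hyperbolic groups elementary properties}\ref{propenumi hyp groups density poles}).

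\emph{Main obstacle.} The linear algebra at poles and the formal equivariance are routine; the essential work is the quantitative control --- proving that $U_1(w)$ is Cauchy with an exponential rate and extracting from the gap a uniform lower bound on the transversality angle --- since this is exactly where the Anosov inequality enters and where injectivity and Hölder regularity come from. This is precisely the input imported from \cite{KasselPotrie} (and ultimately \cite{labourie,GW12}).
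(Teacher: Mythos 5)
The paper does not actually prove Proposition \ref{prop boundary maps}: it is imported wholesale from \cite{KasselPotrie}, where Definition \ref{def projective Anosov subgroup} is shown equivalent to the original Anosov definition of \cite{labourie,GW12}, for which the limit maps and properties (1)--(3) are part of the package. So there is no internal argument to compare against; your sketch reconstructs the standard construction from that literature (eigenlines at poles, extension via top singular directions along quasi-geodesics using the singular-value form of the gap, dynamics-preservation by comparing singular values with eigenvalues), and it delegates the same key quantitative input --- the passage from the eigenvalue gap to the singular-value gap and the resulting uniform estimates --- to \cite{KasselPotrie}, exactly as the paper does. In that sense your route is the intended one, just unpacked one level further.

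Two points deserve correction or sharpening. First, in the transversality step for general $t\neq s$ you ``push [the geodesic] by elements $\gamma_n\in\Gamma$ translating along it''; such elements exist only when $(t,s)$ is a pole pair, which is precisely the case you cannot restrict to, since transversality is an open condition and cannot be propagated from the dense set of poles by continuity (unlike the closed containment $\xi(t)\subset\ker\xi^*(t)$, where your density argument is fine). The correct version of your argument takes group elements $g_n$ lying along (tracking) a bi-infinite quasi-geodesic from $s$ to $t$ and uses that the associated matrix sequence is dominated, so that the limits of the top singular line (toward $t$) and of the bottom singular hyperplane (toward $s$) are \emph{uniformly} transverse; this uniform transversality is the nontrivial dominated-splitting estimate of \cite{KasselPotrie} (building on Bochi--Potrie--Sambarino), not a routine ``Cartan-decomposition bound'', so it should be cited rather than asserted. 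Second, your claim that the pole computation gives \ref{propenumi boundary maps transversality} ``on poles'' only covers pairs $(\gamma_+,\gamma_-)$ and $(\gamma_+,\gamma_+)$ coming from a single element; mixed pairs $(\gamma_+,\delta_-)$ already require the general argument above. With these repairs (or simply with the citation the paper itself relies on), the proposal is correct in outline; the remaining assertions (Cauchy estimate for $U_1$ along rays, Hölder modulus against a visual metric, the angle lower bound giving the Hölder inverse via $\xi(s)\subset\ker\xi^*(s)$, and uniqueness from (3) on the dense set of poles plus continuity) are the standard steps and are stated accurately.
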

\begin{definition} Let $\Gamma<\SL(V)$ be projective Anosov. The maps $\xi,\xi^*$ in Proposition \ref{prop boundary maps} are called the \emph{limit maps} of $\Gamma$. The \emph{limit set} of $\Gamma$ is $\Lambda_\Gamma:=\xi(\partial_\infty\Gamma)\subset \bbP(V)$, the \emph{dual limit set} is $\Lambda_\Gamma^*:=\xi^*(\partial_\infty\Gamma)\subset \bbP(V^*)$.
\end{definition}

\begin{lem} \label{lem top eigenvalue diverges} 
Let $\Gamma<\SL(V)$ be a torsion-free projective Anosov subgroup. If $\gamma_k\in\Gamma$ is a sequence with distinct boundary limits $\gamma_+=\lim\gamma_k\in \partial_\infty\Gamma$ and $\gamma_-=\lim\gamma_k^{-1}\in\partial_\infty\Gamma$, then $\lambda_1(\gamma_k)\to \infty$.
\end{lem}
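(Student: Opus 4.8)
The plan is to argue by contradiction and to extract, from a sequence along which $\lambda_1$ stays bounded, a subsequence whose dynamics on $\bbP(V)$ and $\bbP(V^*)$ contradicts the transversality property of the limit maps (Proposition \ref{prop boundary maps}\ref{propenumi boundary maps transversality}). So suppose $\lambda_1(\gamma_k)$ does not tend to infinity; after passing to a subsequence we may assume $\lambda_1(\gamma_k)\leq R$ for all $k$. Since $\Gamma$ is discrete in $\SL(V)$ and torsion-free, $\gamma_k$ must be an unbounded sequence (the boundary limits $\gamma_\pm$ exist and are distinct, so infinitely many $\gamma_k$ are distinct infinite-order elements, and a bounded sequence in a discrete set is finite). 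Now the singular values of $\gamma_k$: by the projective Anosov property, $\lambda_1(\gamma_k)-\lambda_2(\gamma_k)\geq c|\gamma_k|_\infty - c'$, but this controls the \emph{stable} length, not the word length, so I first replace $\gamma_k$ if necessary using Lemma \ref{lem distinct limit points} (multiplying on the left by a fixed $f\in\Gamma$) to arrange that the $\gamma_k$ themselves have distinct boundary limits and, more importantly, to reduce to a situation where $|\gamma_k|\to\infty$ forces a gap in the singular value decomposition. Actually the cleaner route: the Cartan projection $\mu_1(\gamma_k)-\mu_2(\gamma_k)\to\infty$ holds for any projective Anosov group along any unbounded sequence (this is part of the equivalent characterizations of projective Anosov; the Cartan and Lyapunov gap conditions are linked via \cite{KasselPotrie}), so the attracting fixed line data degenerates.

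The key steps, in order: (1) Reduce to a subsequence with $\lambda_1(\gamma_k)$ bounded and $\gamma_k$ unbounded with $\gamma_k\to\gamma_+$, $\gamma_k^{-1}\to\gamma_-$, $\gamma_+\neq\gamma_-$. (2) Use the equivariance of the limit maps and Proposition \ref{prop hyperbolic groups elementary properties}\ref{propenumi hyp groups convergence action}: for any $t\in\partial_\infty\Gamma\setminus\{\gamma_-\}$ we have $\gamma_k\cdot t\to\gamma_+$, hence $\gamma_k\cdot\xi(t)=\xi(\gamma_k\cdot t)\to\xi(\gamma_+)$ in $\bbP(V)$, and dually $\gamma_k\cdot\xi^*(s)\to\xi^*(\gamma_+)$ for $s\neq\gamma_-$. (3) Estimate: the operator norm of $\gamma_k$ on $V$ is $e^{\mu_1(\gamma_k)}$ and of $\gamma_k^{-1}$ is $e^{\mu_1(\gamma_k^{-1})}=e^{-\mu_d(\gamma_k)}$; the gap $\mu_1(\gamma_k)-\mu_2(\gamma_k)\to\infty$ means that, after normalizing, $\gamma_k$ (as a linear map) converges to a rank-one endomorphism whose image is a line $\ell_\infty$ and whose kernel is a hyperplane $H_\infty$, with $\ell_\infty\not\subset H_\infty$. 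By step (2), $\ell_\infty=\xi(\gamma_+)$ and $H_\infty=\ker$ of the dual attracting form $=\xi^*(\gamma_-)$ (here one identifies the kernel hyperplane using the dual convergence and that the only excluded point is $\gamma_-$). (4) Now compare with the eigenvalue picture: since $\lambda_1(\gamma_k)\leq R$ is bounded while $\mu_1(\gamma_k)-\mu_2(\gamma_k)\to\infty$ forces $\mu_1(\gamma_k)\to\infty$ (as $\sum\mu_i=0$ and only the top singular value can be large once there's a growing gap at the top — more carefully, $\mu_1(\gamma_k)\geq\frac12(\mu_1-\mu_2)\to\infty$ using $\mu_1\geq\mu_2\geq\cdots$ and $\sum=0$), we get $\mu_1(\gamma_k)\to\infty$ but $\lambda_1(\gamma_k)$ bounded; the discrepancy $\mu_1(\gamma_k)-\lambda_1(\gamma_k)$ measures the failure of $\xi(\gamma_+)$ to be an expanding eigenline and this is precisely where transversality of $\xi(\gamma_{k,+})$ and $\xi^*(\gamma_{k,-})$ together with Lemma \ref{lem hyp groups convergence limit points} is used to derive the contradiction: $\gamma_{k,+}\to\gamma_+$ and $\gamma_{k,-}\to\gamma_-$, so $\xi(\gamma_{k,+})$ stays uniformly transverse to $\xi^*(\gamma_{k,-})$ (both converge to transverse limits), which by Proposition \ref{prop boundary maps}\ref{propenumi boundary maps elements} forces $\lambda_1(\gamma_k)$ to be comparable to $\mu_1(\gamma_k)$ up to a bounded error — contradicting boundedness of $\lambda_1(\gamma_k)$.

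I expect the main obstacle to be step (4): cleanly converting the singular-value gap and the convergence $\gamma_{k,+}\to\gamma_+$, $\gamma_{k,-}\to\gamma_-$ into a quantitative lower bound $\lambda_1(\gamma_k)\geq\mu_1(\gamma_k)-O(1)$. The mechanism is that $e^{\lambda_1(\gamma_k)}$ is the spectral radius, and when the attracting eigenline $\xi(\gamma_{k,+})$ and repelling hyperplane $\xi^*(\gamma_{k,-})$ stay uniformly transverse (which holds because both converge to the transverse pair $(\xi(\gamma_+),\xi^*(\gamma_-))$, these being transverse by Proposition \ref{prop boundary maps}\ref{propenumi boundary maps transversality} applied to the distinct points $\gamma_+\neq\gamma_-$), one can restrict $\gamma_k$ to an invariant complement and read off that the top eigenvalue modulus is bounded below by the top singular value times a uniform constant. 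Equivalently, writing $\gamma_k = k_k a_k l_k$ (Cartan) one uses that the attracting flag of $\gamma_k$ is uniformly transverse to the repelling flag to deduce the Cartan and Jordan projections differ by a bounded amount along the subsequence — a standard but slightly technical fact (see \cite{BEN97, BEN00}) which here can be made self-contained because everything is happening in the top exterior power and reduces to a $2\times2$-type estimate. Once $\mu_1(\gamma_k)\to\infty$ and $\lambda_1(\gamma_k)\geq\mu_1(\gamma_k)-O(1)$ are both in hand, $\lambda_1(\gamma_k)\to\infty$, contradicting our assumption, and the lemma is proved.
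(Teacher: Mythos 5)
Your overall strategy (argue by contradiction via Cartan projections: get $\mu_1(\gamma_k)-\mu_2(\gamma_k)\to\infty$, hence $\mu_1(\gamma_k)\to\infty$, then compare Jordan and Cartan projections) is a route experts do use, but the pivotal step (4) rests on a principle that is false for general matrices, and this is a genuine gap. Uniform transversality of the attracting eigenline and the repelling invariant hyperplane does \emph{not} give $\lambda_1(g)\geq\mu_1(g)-O(1)$: the norm of the restriction of $g$ to the invariant hyperplane is not controlled by its spectral radius, so ``restricting to an invariant complement and reading off that the top eigenvalue modulus is bounded below by the top singular value'' fails. Concretely, for $g_N=\mathrm{diag}(4)\oplus\bigl(\begin{smallmatrix}1/2 & N\\ 0 & 1/2\end{smallmatrix}\bigr)\in\SL(3,\R)$ the attracting line $[e_1]$ and the invariant repelling hyperplane $\mathrm{span}(e_2,e_3)=\ker e_1^\ast$ are orthogonal for every $N$ and $\lambda_1(g_N)=\log 4$ is constant, yet $\mu_1(g_N)\to\infty$. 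The ``standard fact'' you invoke from \cite{BEN97,BEN00} has a strictly stronger hypothesis than transversality of the eigen-configuration: Benoist's comparison of Jordan and Cartan projections requires quantified $(r,\varepsilon)$-proximality, i.e.\ that $\gamma_k$ contracts the complement of a definite neighbourhood of the \emph{Cartan} repelling hyperplane into a small ball containing the attracting fixed point. Producing that uniform contraction is exactly the missing work: it needs the singular value gap \emph{together with} convergence of the Cartan attractors/repellers of $\gamma_k$ (not of the eigen-data) to the transverse pair $(\xi(\gamma_+),\xi^\ast(\gamma_-))$, which you do not establish and which the paper does not provide in the quantitative form you need (Proposition \ref{prop conv action} gives only projective convergence, uniform on compacta, with no norm estimates). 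You also cannot shortcut via Lemma \ref{lem prop criterion} applied to the attracting eigenvectors, since its proof uses the very lemma under discussion.

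For comparison, the paper's proof is entirely soft and avoids all linear-algebra estimates: by \cite[Prop.~2.8]{BCLS} there are only finitely many conjugacy classes with $\lambda_1\leq R$, and by \cite[Prop.~2.39]{GMT} a single conjugacy class contains only finitely many elements whose two fixed points are $\eta$-separated in $\partial_\infty\Gamma$; since $\gamma_{k,+}\to\gamma_+$ and $\gamma_{k,-}\to\gamma_-$ with $\gamma_+\neq\gamma_-$ (Lemma \ref{lem hyp groups convergence limit points}), boundedness of $\lambda_1(\gamma_k)$ would confine the unbounded sequence $\gamma_k$ to a finite set, a contradiction. If you want to keep your route, you must import and verify the KAK/quantitative-proximality machinery for Anosov subgroups (divergence of the Cartan gap, convergence of Cartan attractors to the limit maps, and Benoist's $(r,\varepsilon)$-proximality estimate) rather than argue from the fixed-point configuration alone.
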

\begin{proof}
For any $R>0$, there are finitely many conjugacy classes $[\gamma]$ in $\Gamma$ with $\lambda_1(\gamma)\leq R$ \cite[Prop.~2.8]{BCLS}. It means that we only need to check that the sequence $\gamma_k$ cannot belong to a finite collection of conjugacy classes. This is ruled out by the condition that $\gamma_+\neq \gamma_-$.  Indeed, given any $\eta>0$, a conjugacy class $C\subset \Gamma$ can  only contain finitely many elements $\delta\in C$ with $d_\infty(\delta_+,\delta_-)\geq \eta$ (where $d_\infty$ is a distance on $\partial_\infty\Gamma$ defining the Gromov topology), see \cite[Prop.~2.39]{GMT}. In view of Lemma \ref{lem hyp groups convergence limit points}, the claim follows.
\end{proof}

The following result describes the appropriate convergence property for the action of a projective Anosov subgroup on the projective and dual projective spaces (see Theorem 1.1 and Definitions 4.2, 4.25 in \cite{KLP17}).

\begin{prop} \label{prop conv action} Let $\Gamma<\SL(V)$ be projective Anosov. 
For any unbounded sequence $\gamma_N\in \Gamma$ with boundary limit points $\gamma_+=\lim_{N\to +\infty}\gamma_N\in\partial_\infty\Gamma$ and  $\gamma_-=\lim_{N\to +\infty}\gamma_N^{-1}\in\partial_\infty\Gamma$, there is a subsequence $\gamma_{N_k}$ for which the actions on $\bbP(V)$ and $\bbP(V^*)$ obey the following dynamics as $k\to +\infty$:
\begin{enumerate}
\item $\gamma_{N_k}\cdot \ell\rightarrow \xi(\gamma_{+})$ for all $\ell\in\mathbb{P}(V)$ with $\ell\pitchfork \xi^{\ast}(\gamma_{-})$;
\item $\gamma_{N_k}^{-1}\cdot \ell\rightarrow \xi(\gamma_{-})$ for all $\ell\in\mathbb{P}(V)$ with $\ell\pitchfork \xi^{\ast}(\gamma_{+})$;
\item $\gamma_{N_k}\cdot H\rightarrow \xi^{\ast}(\gamma_{+})$ for all $H\in\mathbb{P}(V^\ast)$ with $\xi(\gamma_{-})\pitchfork H$;
\item $\gamma_{N_k}^{-1}\cdot H\rightarrow \xi^{\ast}(\gamma_{-})$ for all $H\in\mathbb{P}(V^\ast)$ with $\xi(\gamma_{+})\pitchfork H$;
\end{enumerate}
and all these convergences are uniform on compact subsets.
\end{prop}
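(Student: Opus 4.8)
The proposition is the North–South (``convergence group'') dynamics of the $\Gamma$-action on $\bbP(V)$ and $\bbP(V^*)$, and in full generality it can be read off from the convergence-dynamics theory for projective Anosov subgroups developed in \cite{KLP17}; I would nevertheless give a reasonably self-contained argument via the Cartan decomposition (this is the route that directly uses Lemma \ref{lem hyp groups convergence limit points}). First I fix a Euclidean inner product on $V$ with orthonormal basis $e_1,\dots,e_d$ and the associated maximal compact subgroup $\SO(V)<\SL(V)$, and for $g\in\SL(V)$ I write the Cartan decomposition $g=k(g)\,a(g)\,l(g)$ with $k(g),l(g)\in\SO(V)$ and $a(g)=\mathrm{diag}(e^{\mu_1(g)},\dots,e^{\mu_d(g)})$, $\mu_1(g)\ge\cdots\ge\mu_d(g)$ the logarithmic singular values; I equip $V^*$ with the dual inner product, so that the action $\alpha\mapsto\alpha\circ g^{-1}$ on $V^*$ again admits a Cartan decomposition assembled from $k(g),l(g)$ in the dual basis, with the relevant exponents read off from $\mu_i(g^{-1})=-\mu_{d+1-i}(g)$. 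The plan splits into two ingredients: \textbf{(I)} a contraction estimate producing the North–South dynamics once the pertinent singular-value gaps diverge, and \textbf{(II)} the identification of the resulting attracting lines and repelling hyperplanes with the values of $\xi$ and $\xi^*$.

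For \textbf{(I)}: by the singular-value characterization of the projective Anosov property (equivalent to Definition \ref{def projective Anosov subgroup}) and unboundedness of $\gamma_N$ one has $\mu_1(\gamma_N)-\mu_2(\gamma_N)\to\infty$, and applying this to the (also unbounded) sequence $\gamma_N^{-1}$ gives $\mu_{d-1}(\gamma_N)-\mu_d(\gamma_N)=\mu_1(\gamma_N^{-1})-\mu_2(\gamma_N^{-1})\to\infty$. By compactness of $\SO(V)$ I pass to a subsequence $\gamma_{N_k}$ with $k(\gamma_{N_k})\to K$ and $l(\gamma_{N_k})\to L$ in $\SO(V)$; the gaps at both ends being eventually strictly positive, the top singular line $[k(\gamma_{N_k})e_1]$ and the bottom singular hyperplane $l(\gamma_{N_k})^{-1}\Span(e_2,\dots,e_d)$ are then well defined. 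For a unit vector $v$, writing $\gamma_{N_k}v=k(\gamma_{N_k})a(\gamma_{N_k})l(\gamma_{N_k})v$ and renormalising by $e^{-\mu_1(\gamma_{N_k})}$, the components of $l(\gamma_{N_k})v$ along $e_2,\dots,e_d$ are damped by $e^{\mu_i(\gamma_{N_k})-\mu_1(\gamma_{N_k})}\le e^{\mu_2(\gamma_{N_k})-\mu_1(\gamma_{N_k})}\to 0$, so $\gamma_{N_k}\cdot[v]\to[Ke_1]$, uniformly for $[v]$ in compact subsets of the complement of $\bbP\big(L^{-1}\Span(e_2,\dots,e_d)\big)$. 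Performing the identical computation for $\gamma_{N_k}^{-1}$ (whose Cartan decomposition is obtained from that of $\gamma_{N_k}$ by reversing the order of the singular values) and for the two induced actions on $\bbP(V^*)$ yields provisional versions of all four statements (1)--(4), with uniformity on compacta.

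For \textbf{(II)}: for $N$ large $\gamma_N$ has infinite order (automatic if $\Gamma$ is torsion-free; in general the hypothesis $\gamma_+\neq\gamma_-$ excludes unbounded sequences of conjugates of a fixed torsion element, for which $\lim\gamma_N=\lim\gamma_N^{-1}$). Since $\gamma_+\neq\gamma_-$, Lemma \ref{lem hyp groups convergence limit points} applied to $\gamma_N$ and to $\gamma_N^{-1}$ shows that the attracting/repelling fixed points satisfy $\gamma_{N,+}\to\gamma_+$ and $\gamma_{N,-}\to\gamma_-$ in $\partial_\infty\Gamma$. By Proposition \ref{prop boundary maps}\ref{propenumi boundary maps elements}, $\xi(\gamma_{N,+})$ is the simple one-dimensional top eigenline of $\gamma_N$ and $\xi^*(\gamma_{N,-})$ the complementary $\gamma_N$-invariant hyperplane, and by continuity of $\xi,\xi^*$ (Proposition \ref{prop boundary maps}\ref{propenumi boundary maps equivariant homeomorphisms}) these converge to $\xi(\gamma_+)$ and $\xi^*(\gamma_-)$. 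Now I would invoke the standard comparison between the Cartan data and the Jordan (eigenvalue) data of elements of an Anosov subgroup: because the limits $\gamma_\pm$ are distinct, the endpoints $\gamma_{N,\pm}$ of the axes of $\gamma_N$ stay uniformly separated, and consequently the top eigenline of $\gamma_N$ stays within distance $O\big(e^{-c(\mu_1(\gamma_N)-\mu_2(\gamma_N))}\big)$ of its top singular line $[k(\gamma_N)e_1]$, with the analogous statement for the invariant/bottom-singular hyperplanes --- equivalently, $\xi$ and $\xi^*$ are the limits of Cartan attractors/repellers (see, e.g., \cite{KasselPotrie, BCLS, KLP17}). Passing to $N=N_k$ identifies $[Ke_1]=\xi(\gamma_+)$ and $L^{-1}\Span(e_2,\dots,e_d)=\xi^*(\gamma_-)$; running the same reasoning for $\gamma_{N_k}^{-1}$ (boundary limits $\gamma_-$ and $\gamma_+$) and for the action on $\bbP(V^*)$ (with $\xi^*$ in the role of $\xi$) converts the provisional statements from \textbf{(I)} into (1)--(4), with the required uniformity on compacta. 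Since this step is carried out element-by-element through Proposition \ref{prop boundary maps}\ref{propenumi boundary maps elements}, no irreducibility hypothesis on $\Gamma$ enters.

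I expect ingredient \textbf{(II)} to be the main obstacle: ingredient \textbf{(I)} is essentially linear algebra, whereas matching the dynamically defined attractors and repellers with the values of $\xi,\xi^*$ --- uniformly over all sequences whose boundary limits are distinct --- is precisely the content of the Anosov property that is invisible at the level of a single group element. This is where one genuinely relies on the convergence-dynamics results of \cite{KLP17} (or, equivalently, on the uniform comparison of Cartan and Jordan projections for Anosov representations).
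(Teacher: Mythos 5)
First, note that the paper does not prove this proposition at all: it is quoted with a pointer to Kapovich--Leeb--Porti (\cite{KLP17}, Thm.~1.1 and Defs.~4.2, 4.25), so any comparison is with that citation. Your ingredient \textbf{(I)} (Cartan decomposition, divergence of the gaps $\mu_1-\mu_2$ and $\mu_{d-1}-\mu_d$ via the singular-value characterization of \cite{KasselPotrie}, and the standard contraction estimate giving convergence to $[Ke_1]$ away from $\bbP\bigl(L^{-1}\Span(e_2,\dots,e_d)\bigr)$, uniformly on compacta) is fine and is essentially how one would prove the cited result by hand.

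The genuine gap is in ingredient \textbf{(II)}: you treat $\gamma_+\neq\gamma_-$ as a hypothesis of the proposition, but the statement makes no such assumption, and the coincident case really occurs (e.g.\ $\gamma_N=\gamma^N\delta\gamma^{-N}$ in a free convex-cocompact group, where $\lim\gamma_N=\lim\gamma_N^{-1}$ is the attracting point of $\gamma$). Your identification of the limiting Cartan attractor/repeller with $\xi(\gamma_+)$ and $\xi^*(\gamma_-)$ goes through the eigenvalue data of the individual $\gamma_N$ (Lemma \ref{lem hyp groups convergence limit points}, Proposition \ref{prop boundary maps}, and a Cartan--Jordan comparison that needs the axis endpoints $\gamma_{N,\pm}$ to stay uniformly separated), and this route collapses when $\gamma_+=\gamma_-$: in the example above $\lambda_1(\gamma_N)$ is constant, both eigenlines converge to the same boundary point, and neither Lemma \ref{lem hyp groups convergence limit points} nor a bound of the form $O\bigl(e^{-c(\mu_1-\mu_2)}\bigr)$ on the distance between eigenline and singular line is available. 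The statement is nevertheless true in that case, and the correct replacement for your step \textbf{(II)} is the ``Cartan property'' of Anosov subgroups: the Cartan attractor of $\gamma_N$ converges to $\xi(x)$ whenever $\gamma_N\to x$ in $\Gamma\cup\partial_\infty\Gamma$, and dually the bottom singular hyperplanes of $\gamma_N$ (equivalently the Cartan attractors of $\gamma_N^{-1}$ in $\bbP(V^*)$) converge to $\xi^*(\gamma_-)$; this requires no distinctness and no passage through fixed points, and it is exactly what the results the paper cites (\cite{KLP17}; see also \cite{KasselPotrie}) provide. So as written your argument proves only a weaker statement (the proposition with the added hypothesis $\gamma_+\neq\gamma_-$, which does happen to cover all uses made of it in this paper), and the repair is to substitute the Cartan property for your fixed-point/Jordan argument in step \textbf{(II)}.
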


\subsection{Sambarino's refraction flow}

In this section we borrow heavily from the discussion in \cite{SAM14}.  Fix a Euclidean norm $\lVert \cdot \rVert$ on the real vector space $V$ and define the Hopf-Busemann-Iwasawa (HBI) cocycle 
\bqn
\mathscr{H}: \SL(V)\times \mathbb{P}(V)\overset{\pitchfork}\times \mathbb{P}(V^{*})\longrightarrow \mathbb{R}
\eqn
by the real analytic homogeneous expression
$$
\mathscr{H}(g, [v],[\alpha])=\log\left(\frac{\lVert g\cdot v\rVert}{\lVert v\rVert}\right).
$$
The HBI cocycle defines a real analytic $\SL(V)$-action on the trivial real line bundle
\bq\label{triv equi}
\mathbb{P}(V)\overset{\pitchfork}\times \mathbb{P}(V^{*})\times \mathbb{R}\rightarrow \mathbb{P}(V)\overset{\pitchfork}\times \mathbb{P}(V^{*})
\eq
via
\bq	\label{equi action}
g\cdot ([v],[\alpha], \tau)=\left(g\cdot [v], g\cdot [\alpha], \tau+ \mathscr{H}(g, [v],[\alpha])\right).
\eq
The $\SL(V)$-action \eqref{equi action} covers the diagonal action on $\mathbb{P}(V)\overset{\pitchfork}\times \mathbb{P}(V^{*})$ turning \eqref{triv equi} into an $\SL(V)$-equivariant principal $\mathbb{R}$-bundle (i.e. real affine line bundle).  If $\Gamma<\SL(V)$ is projective Anosov and 
$$
\partial_{\infty}\Gamma^{(2)}:=\{(x,y)\in \partial_{\infty}\Gamma\times \partial_{\infty}\Gamma \ | \ x\neq y\},
$$
we combine the Anosov boundary maps $\xi,\xi^\ast$ to the map
\bq
\sigma:\map{\partial_\infty\Gamma^{(2)}}{\trprod}{(x,y)}{(\xi(x),\xi^\ast(y))}\label{eq:sigmamap}
\eq
and consider the commutative diagram
\begin{center}
\begin{tikzcd}
\partial_{\infty}\Gamma^{(2)}\times \R \arrow[d] \arrow[r, "\sigma\times\mathrm{id}"]
&[1.5cm] \mathbb{P}(V)\overset{\pitchfork}\times \mathbb{P}(V^{*})\times \mathbb{R} \arrow[d] \\
\partial_{\infty}\Gamma^{(2)} \arrow[r, "\sigma"]
& \mathbb{P}(V)\overset{\pitchfork}\times \mathbb{P}(V^{*}).
\end{tikzcd}
\end{center}
Since both horizontal arrows are $\Gamma$-equivariant, the $\SL(V)$-action given by \eqref{equi action} induces a $\Gamma$-action on $\partial_{\infty}\Gamma^{(2)}\times \R$ making the trivial bundle
$$
\partial_{\infty}\Gamma^{(2)}\times \R \rightarrow \partial_{\infty}\Gamma^{(2)}
$$
a $\Gamma$-equivariant real affine line bundle.  The following result is proved in \cite{SAM14} and \cite{BCLS}.
\begin{prop}\label{prop: ref flow}
Suppose $\Gamma<\SL(V)$ is projective Anosov.  The $\Gamma$-action on $\partial_{\infty}\Gamma^{(2)}\times \R$ induced by the HBI cocycle is properly discontinuous and cocompact.
\end{prop}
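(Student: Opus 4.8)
The plan is to reduce the statement to the well-understood dynamics of the $\Gamma$-action on $\partial_\infty\Gamma^{(2)}\times\R$ and exploit the convergence dynamics on flag manifolds from Proposition \ref{prop conv action} together with the coarse geometry of the Gromov-geodesic flow. First I would observe that it suffices to treat $\Gamma$ of infinite order elements abundantly: the statement is vacuous for finite $\Gamma$, so assume $\Gamma$ is infinite, hence contains an infinite-order element and the pole set is dense in $\partial_\infty\Gamma^2$ by Proposition \ref{prop hyperbolic groups elementary properties}\ref{propenumi hyp groups density poles}. The key structural input is that $\partial_\infty\Gamma^{(2)}\times\R$ with its $\Gamma$-action is $\Gamma$-equivariantly homeomorphic (after choosing the HBI-trivialization, which identifies it with the trivial $\R$-bundle over $\partial_\infty\Gamma^{(2)}$) to a Gromov-geodesic flow space $\widehat\Gamma$ of $\Gamma$ in the sense of Definition \ref{def - geodesic flow gromov hyperbolic group}, \emph{provided} we know the $\R$-coordinate behaves like a flow parameter along which $\Gamma$ acts by translations modulo a Hölder cocycle. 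This is exactly the content of the Ledrappier-type construction: the HBI cocycle $\mathscr H_\Gamma(\gamma,(x,y))$ is Hölder and, by Proposition \ref{prop boundary maps}\ref{propenumi boundary maps elements}, satisfies $\mathscr H_\Gamma(\gamma,(\gamma_+,\gamma_-))=\lambda_1(\gamma)>0$ for infinite-order $\gamma$, so periods are strictly positive.

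The main steps I would carry out are: (1) Proper discontinuity. Suppose for contradiction that there is a compact $K\subset\partial_\infty\Gamma^{(2)}\times\R$ and an unbounded sequence $\gamma_N\in\Gamma$ with $\gamma_N K\cap K\neq\emptyset$. Pass to a subsequence so that $\gamma_N\to\gamma_+$, $\gamma_N^{-1}\to\gamma_-$ in $\partial_\infty\Gamma$ (possible by compactness of $\partial_\infty\Gamma$); after precomposing with a fixed element $f$ as in Lemma \ref{lem distinct limit points} I may assume $\gamma_+\neq\gamma_-$. Now project $K$ to its first factor: there are $(x_N,y_N)\in\pi_1(K)$ with $\gamma_N\cdot(x_N,y_N)\in\pi_1(K)$. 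By Proposition \ref{prop conv action} applied via the limit maps $\xi,\xi^*$, and because $\pi_1(K)$ is a compact subset of $\partial_\infty\Gamma^{(2)}$ avoiding a neighborhood of the diagonal, the sequence $\xi(\gamma_N\cdot x_N)\to\xi(\gamma_+)$ unless $x_N$ accumulates on $\gamma_-$; similarly $\xi^*(\gamma_N\cdot y_N)\to\xi^*(\gamma_+)$ unless $y_N$ accumulates on $\gamma_-$. Either way one forces $\gamma_N\cdot(x_N,y_N)$ to accumulate on the point $(\gamma_+,\gamma_+)$ on the diagonal or forces $(x_N,y_N)$ itself onto the diagonal — contradicting that $\pi_1(K)$ is a compact subset of $\partial_\infty\Gamma^{(2)}$. (One has to handle the two cases — $x_N$ near $\gamma_-$ versus not — carefully, combining with the $y$-dynamics; the convergence-group property in Proposition \ref{prop hyperbolic groups elementary properties}\ref{propenumi hyp groups convergence action} rules out both coordinates escaping simultaneously in a compatible way.) This already gives proper discontinuity of the action on $\partial_\infty\Gamma^{(2)}$, and since the $\R$-fiber is acted on by translations by the (locally bounded) cocycle $\mathscr H_\Gamma$, properness lifts to $\partial_\infty\Gamma^{(2)}\times\R$: on a compact set the cocycle values are bounded, so escaping in the base is the only obstruction to properness and we have just excluded it.

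For (2) cocompactness, I would invoke the comparison with the Gromov-geodesic flow. By \cite[Thm.~8.3.C]{Gr} (cited after Definition \ref{def - geodesic flow gromov hyperbolic group}) the $\Gamma$-action on any Gromov-geodesic flow space is cocompact, and $\partial_\infty\Gamma^{(2)}\times\R$ equipped with the flow by $\R$-translation and the $\Gamma$-action twisted by $\mathscr H_\Gamma$ is $\Gamma$-equivariantly, flow-orbit preservingly quasi-isometric to $\widehat\Gamma$ — the point being that $\mathscr H_\Gamma$ is a Hölder (in particular coarsely Lipschitz on orbits) reparametrization cocycle, so the identity on $\partial_\infty\Gamma^{(2)}$ together with an orbit-wise comparison of flow parameters is the required quasi-isometry; Ledrappier's theorem \cite{LED95} provides exactly the regularity needed to see that $\mathscr H_\Gamma$ is cohomologous to a cocycle with positive periods equivalent to the word metric, which is what makes the $\R$-direction quasi-isometrically embedded. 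Since cocompactness is a quasi-isometry invariant of the quotient, cocompactness of $\Gamma\curvearrowright\partial_\infty\Gamma^{(2)}\times\R$ follows. Alternatively, and perhaps more cleanly, one produces a compact fundamental domain directly: fix a pole $(\gamma_+,\gamma_-)$ and use minimality of the $\Gamma$-action (Proposition \ref{prop hyperbolic groups elementary properties}\ref{propenumi hyp groups minimal action}, \ref{propenumi hyp groups dense orbits product}) together with the cocycle identity to show that $\Gamma$-translates of a suitable compact set $\{(x,y)\in\partial_\infty\Gamma^{(2)} : d_\infty(x,\gamma_+)\ge\eta,\ d_\infty(y,\gamma_-)\ge\eta\}\times[0,T]$ cover everything.

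\textbf{Main obstacle.} The delicate point is step (2): showing cocompactness genuinely requires the positivity and the coarse-metric equivalence of the cocycle $\mathscr H_\Gamma$ — i.e.\ that $\mathscr H_\Gamma(\gamma,(\gamma_+,\gamma_-))\asymp\lambda_1(\gamma)\asymp|\gamma|$ for infinite-order $\gamma$, which is where the Anosov inequality $\lambda_1-\lambda_2\ge c|\gamma|_\infty-c'$ (Definition \ref{def projective Anosov subgroup}) and the spectral characterization of $\xi,\xi^*$ (Proposition \ref{prop boundary maps}\ref{propenumi boundary maps elements}) enter, via Ledrappier's framework. Proper discontinuity, by contrast, is essentially soft once Proposition \ref{prop conv action} is in hand. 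Since the excerpt tells us this proposition is established in \cite{SAM14} and \cite{BCLS}, in the write-up I would either cite those works for step (2) or reproduce the fundamental-domain argument, keeping the convergence-dynamics proof of proper discontinuity self-contained as above.
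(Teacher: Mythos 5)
First, note that the paper does not actually prove Proposition \ref{prop: ref flow}: it is stated as background and attributed to \cite{SAM14} and \cite{BCLS}. The closest in-paper argument is Theorem \ref{thm: prop disc}, which proves proper discontinuity on the larger domain $\widetilde{\M}_{\Gamma}\subset\bbL$, and it is instructive to compare your step (1) with that proof. Your step (1) contains a genuine error: you claim that the convergence dynamics of Proposition \ref{prop conv action} already yield proper discontinuity of the $\Gamma$-action on $\partial_\infty\Gamma^{(2)}$ itself, and then ``lift'' properness to $\partial_\infty\Gamma^{(2)}\times\R$. But the action on $\partial_\infty\Gamma^{(2)}$ is \emph{never} properly discontinuous when $\Gamma$ contains an infinite-order element $\gamma$: the point $(\gamma_+,\gamma_-)\in\partial_\infty\Gamma^{(2)}$ is fixed by the infinite cyclic group $\langle\gamma\rangle$, so the compact set $\{(\gamma_+,\gamma_-)\}$ meets all of its translates. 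Concretely, your dichotomy breaks in the case you relegate to a parenthesis: if $x_N\to\gamma_-$ while $y_N$ stays away from $\gamma_-$ (which actually occurs, e.g.\ $(x_N,y_N)=(\gamma_-,\gamma_+)$ and $\gamma_N=\gamma^N$), neither $\gamma_N\cdot(x_N,y_N)$ nor $(x_N,y_N)$ is pushed to the diagonal, and no contradiction arises at the level of the base. Consequently the premise of your lifting argument is false, and its logic is also inverted: the cocycle values $\mathscr{H}_\Gamma(\gamma_N,\cdot)$ over a fixed compact set of the base are not bounded as $\gamma_N$ ranges over an infinite set, and it is precisely their divergence -- not their boundedness -- that rescues properness exactly where the base action fails (translation by $\lambda_1(\gamma^N)=N\lambda_1(\gamma)\to\infty$ along the fixed pair $(\gamma_+,\gamma_-)$).

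The correct mechanism, which your sketch is missing, runs the other way: from $\gamma_N K\cap K\neq\emptyset$ one first extracts boundedness of the $\R$-coordinate, hence boundedness of $\mathscr{H}_\Gamma(\gamma_N,(x_N,y_N))=\log\bigl(\lVert\gamma_N v_N\rVert/\lVert v_N\rVert\bigr)$ with $[v_N]=\xi(x_N)$, and then contradicts this via divergence: if $x_N\not\to\gamma_-$ then $[v_N]$ stays transverse to $\xi^*(\gamma_-)$ and Lemma \ref{lem prop criterion} gives $\gamma_N v_N\to\infty$; if $x_N\to\gamma_-$ one must pass to the dual side (or to $\gamma_N^{-1}$ acting on the image points), using that then $y_N$ stays away from $\gamma_-$ so that $\xi^*(y_N)$ is transverse to $\xi(\gamma_-)$ and the dual norms diverge. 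This transversality dichotomy plus a pairing argument is exactly the scheme of the paper's Theorem \ref{thm: prop disc}, and it cannot be avoided; note also that Lemma \ref{lem prop criterion} fails without transversality (the eigenline $\xi(\gamma_-)$ is \emph{not} transverse to $\xi^*(\gamma_-)$ by Proposition \ref{prop boundary maps}), so the non-transverse case is not a technicality. For step (2), your primary plan is ultimately a citation of \cite{SAM14,BCLS} via Ledrappier, which is what the paper itself does, but the asserted $\Gamma$-equivariant quasi-isometry with the Gromov-geodesic flow and the ``alternative'' compact fundamental domain are stated without proof and would each require real work (in particular positivity of periods alone does not give quasi-isometrically embedded flow lines; one needs $\lambda_1(\gamma)\asymp|\gamma|_\infty$ from Definition \ref{def projective Anosov subgroup}). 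As it stands, the proposal neither reproduces the cited proof nor supplies a correct self-contained one.
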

This leads to the following definition (see \cite{SAM24}).
\begin{definition}\label{sam flow}
Suppose $\Gamma<\SL(V)$ is projective Anosov.  Sambarino's \emph{refraction flow space} is the quotient
$$
\chi_{\Gamma}:=\Gamma \backslash \left(\partial_{\infty}\Gamma^{(2)}\times \R\right)
$$
with continuous complete flow induced by $\phi_{S}^{t}(x, y, \tau)=(x, y,\tau + t)$, called the \emph{refraction flow} $\phi_{S}^{t}$.
\end{definition}
By Proposition \ref{prop: ref flow}, the refraction flow space is Hausdorff and compact.  Observe that it satisfies all but one of the properties of a Gromov-Mineyev flow space for $\Gamma$, with the exception that there does not exist an involution $\iota$ anti-commuting with the flow. This is a consequence of the fact that the space of transverse pairs
$$
\mathbb{P}(V)\overset{\pitchfork}\times\mathbb{P}(V^{*})
$$ 
does not admit a natural order reversing involution since $\mathbb{P}(V)$ and $\mathbb{P}(V^{*})$ are not $\SL(V)$-equivariantly diffeomorphic.  In particular, the refraction flow is not reversible and the periods corresponding to non-trivial $\gamma, \gamma^{-1}\in \Gamma$ are distinct (see Lemma \ref{lem periodic points in discontinuity set}).                       

In the next section, we will prove Theorem \ref{THM A} and \ref{SAMHYP} showing that the  refraction flow is conjugate via a H\"{o}lder homeomorphism to the restricted flow on a basic hyperbolic set in a real analytic contact Axiom A dynamical system.

\section{Projective Anosov Geometry}\label{AnosovImpliesAxiomA}

\subsection{The flow space and its geometry}

Let $V$ be a finite-dimensional real vector space with $\dim V>1$ and linear dual $V^{\ast}$. We consider the open set $\bbL\subset \bbP(V\times V^*)$ defined by
\[ \bbL :=\set{[v:\alpha]\in\bbP(V\times V^*)}{\alpha(v)> 0}. \]  
This real analytic manifold comes with a real analytic action of $\R$ defined by
\[ \phi^t([v:\alpha])=[e^tv:e^{-t}\alpha],\quad \forall \ t\in\R,~\forall \ [v:\alpha]\in \bbL.\]
The flow $\phi^{t}$ commutes with the action $\SL(V)\curvearrowright\bbL$ given by $g\cdot [v:\alpha]=[g\cdot v: g\cdot\alpha]$.

In order to describe the geometry of $\bbL$, it is practical to work with the affine quadric hypersurface
$$
\mathbb{L}_1:=\{(v, \alpha)\in V\times V^{\ast} \ | \ \alpha(v)=1\}
$$
which is an $\SL(V)$-equivariant double cover of $\bbL$ through the restriction of the projection $\pi: \left(V\times V^{*}\right)\setminus\{(0,0)\}\to \bbP(V\times V^*)$. The description of the tangent space
\[ T_{(v,\alpha)}\bbL_1=\set{(w,\beta)\in V\times V^*}{\alpha(w)+\beta(v)=0} \]
shows that there is a natural splitting 
\[ T\bbL_1=E^\mathrm{u}\oplus E^0\oplus E^\mathrm{s} \]
where 
\begin{align*}\begin{split}
E^\mathrm{u}_{(v,\alpha)}&=\ker\alpha\times\{0\}, \\
E^\mathrm{s}_{(v,\alpha)}&=\{0\}\times \ker \iota_v, \\
E^0_{(v,\alpha)}&=\R\cdot (v,-\alpha),
\end{split}\qquad \forall (v,\alpha)\in \bbL_1.
\end{align*} 
 These distributions project to an $\SL(V)$-equivariant splitting of the tangent bundle $T\bbL:$
\begin{equation} \label{eqn splitting TL} T\bbL=E^\mathrm{u}\oplus E^0\oplus E^\mathrm{s}
\end{equation}
where $E^\mathrm{i}_{[v:\alpha]}=d_{(v,\alpha)}\pi(E^\mathrm{i}_{(v,\alpha)})$ for $\mathrm{i}\in \{\mathrm{u},0,\mathrm{s}\}$ and $(v,\alpha)\in\bbL_1$. Note that this decomposition is related to the flow $\phi^t$ by the formula $E^0_{[v:\alpha]}=\R\cdot \left.\frac{d\,}{dt}\right\vert_{t=0}\phi^t([v:\alpha])$.

 The manifold $\bbL$ carries a real analytic pseudo-Riemannian metric invariant under the action of $\SL(V)$ given by the projection under $\pi$ of the antipodal-invariant pairing
\begin{equation}\label{eqn pseudo-riemannian metric} \left( (w,\beta),(w',\beta')\right)_{(v,\alpha)} = \beta(w')+\beta'(w) 
\end{equation}
for $(w,\beta),(w',\beta')\in T_{(v,\alpha)}\bbL_1$. It has signature $(d,d-1)$ where $d=\dim V$.

The manifold $\bbL_1$ also carries a real analytic contact form given by the restriction of the tautological $1$-form of the cotangent bundle $T^\ast V=V\times V^\ast$:
\[ \tau_{(v,\alpha)}(w,\beta)=\alpha(w)=-\beta(v) \]
at any $(v,\alpha)\in \bbL_1$ and $(w,\beta)\in T_{(v,\alpha)}\bbL_1$. It descends to a real analytic contact form $\tau$ on $\bbL$ whose associated contact structure is
\[ \ker\tau=E^\mathrm{u}\oplus E^\mathrm{s},\]
and whose Reeb vector field integrates to the real analytic flow $\phi^t$. The space $\bbL$ fibers over $\trprod$ through the projection
\begin{equation}\label{eqn projection of L onto transverse product}
 p:\map{\bbL}{\trprod}{\left[v:\alpha\right]}{(\left[v\right],\left[\alpha\right])}.
 \end{equation}
The fibers of $p$ are equal to the orbits of the flow $\phi^t$, giving $\bbL$ the structure of a real analytic principal $\R$-bundle (i.e. a real affine line bundle).

\vspace{\baselineskip}

In the rest of this section, we fix a torsion-free projective Anosov subgroup $\Gamma<\SL(V)$ with limit maps $\xi:\partial_\infty\Gamma\to \bbP(V)$, $\xi^*:\partial_\infty\Gamma\to \bbP(V^*)$.

\subsection{Linear dynamics of projective Anosov subgroups}
Before we can prove our result on linear dynamics (Lemma \ref{lem prop criterion}), we will need a technical result.  
\begin{lem} \label{lem: divergence in tautological bundles}
 Let $(\ell_k,H_k)\in \trprod$ be a sequence  converging  to $(\ell,H)\in\trprod$. Given sequences $v_k\in \ell_k$ and $w_k\in H_k$, if $v_k\to \infty$, then $v_k+w_k\to \infty$.
\end{lem}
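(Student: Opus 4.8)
The plan is to bound $\|v_k+w_k\|$ from below by the distance from $v_k$ to the hyperplane $H_k$, using the fact that the pairs $(\ell_k,H_k)$ stay uniformly transverse because their limit pair is transverse. Concretely, for each $k$ choose a linear form $\alpha_k\in V^*$ of unit norm (for the dual norm of $\|\cdot\|$) with $\ker\alpha_k=H_k$. Since $w_k\in H_k=\ker\alpha_k$, we have $\alpha_k(v_k+w_k)=\alpha_k(v_k)$, hence
\[ \|v_k+w_k\|\ge|\alpha_k(v_k+w_k)|=|\alpha_k(v_k)|, \]
so it suffices to prove $|\alpha_k(v_k)|\to\infty$. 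Writing $v_k=\|v_k\|\,u_k$ — permissible since $v_k\to\infty$ forces $v_k\ne 0$ for large $k$ — with $u_k$ a unit vector spanning $\ell_k$, this reduces to showing that $|\alpha_k(u_k)|$ is bounded below by some $\delta>0$ for all large $k$: then $\|v_k+w_k\|\ge\delta\|v_k\|\to\infty$.

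To obtain this lower bound I would argue by contradiction. If no such $\delta$ exists, then along a subsequence $\alpha_{k_j}(u_{k_j})\to 0$; by compactness of the unit spheres of $V$ and $V^*$ we may pass to a further subsequence along which $u_{k_j}\to u$ and $\alpha_{k_j}\to\alpha$, both of unit norm. Continuity of the projections to $\mathbb{P}(V)$ and $\mathbb{P}(V^*)$ gives $[u]=\lim_j\ell_{k_j}=\ell$ and $\ker\alpha=\lim_j H_{k_j}=H$, so $u\in\ell$ and $\alpha$ cuts out $H$. But then $\alpha(u)=\lim_j\alpha_{k_j}(u_{k_j})=0$, i.e.\ $u\in H$; since $0\ne u\in\ell$ this forces $\ell\subset H$, contradicting $\ell\pitchfork H$. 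Hence the uniform bound $\delta>0$ exists, and the conclusion follows as above.

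The argument is elementary; the only step deserving a word of care is the passage between a projective pair $(\ell_k,H_k)$ and affine representatives $(u_k,\alpha_k)$. This rests on the standard fact that $\mathbb{P}(V)$ and $\mathbb{P}(V^*)$ are quotients of the compact unit spheres of $V$ and $V^*$ by the antipodal action, so any sequence of unit representatives of a convergent sequence of lines (resp.\ hyperplanes) subconverges to a unit representative of the limit. Beyond that, the whole content of the lemma is the single invocation of transversality of the limit pair, namely that $\alpha(u)\ne 0$ whenever $0\ne u\in\ell$ and $\ell\pitchfork\ker\alpha$. I would organize the write-up so that the subsequence extraction is confined to the proof of the lower bound on $|\alpha_k(u_k)|$, keeping the final statement about $\|v_k+w_k\|$ a claim about the entire sequence.
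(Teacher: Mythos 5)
Your proof is correct, but it takes a genuinely different route from the paper's. The paper first disposes of the case where $w_k$ is bounded, then chooses an inner product on $V$ with $H=\ell^{\perp}$, passes to subsequences along which $v_k/\norm{v_k}$ and $w_k/\norm{w_k}$ converge to orthogonal limits in $\ell$ and $H$, and expands $\norm{v_k+w_k}^2=\norm{v_k}^2+\norm{w_k}^2+\mathrm{o}(\norm{v_k}\norm{w_k})$, concluding that no subsequence of $v_k+w_k$ stays bounded. You instead pick unit-norm functionals $\alpha_k$ with $\ker\alpha_k=H_k$, use $\alpha_k(w_k)=0$ to get $\norm{v_k+w_k}\geq|\alpha_k(v_k)|$, and prove by a compactness-and-contradiction argument (which is where transversality of the limit pair $\ell\pitchfork H$ enters, just as it does via orthogonality in the paper) a uniform lower bound $|\alpha_k(v_k/\norm{v_k})|\geq\delta>0$. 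Your version buys a clean quantitative estimate $\norm{v_k+w_k}\geq\delta\norm{v_k}$ valid for the whole tail of the sequence, avoids any case distinction on the behaviour of $w_k$ (indeed it never uses $w_k$ beyond $w_k\in H_k$), and confines subsequence extraction to the auxiliary bound; the paper's version buys a very short computation at the cost of the preliminary reduction and an argument phrased through subsequences. Either write-up would serve the later application in Lemma 3.4 equally well.
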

\begin{proof}
The result is straightforward if $w_k$ is bounded, so we may assume that $w_k\to \infty$. Consider an inner product $\pscal{\cdot}{\cdot}$ on $V$ such that $H=\ell^\perp$. Up to a subsequence, assume that $\frac{v_k}{\norm{v_k}}\to \overline v\in \ell$ and $\frac{w_k}{\norm{w_k}}\to \overline w\in H$. Then $\pscal{\overline v}{\overline w}=0$, and 
\begin{align*}
\norm{v_k+w_k}^2 &= \norm{v_k}^2 +\norm{w_k}^2+2\pscal{v_k}{w_k}\\
&=  \norm{v_k}^2 +\norm{w_k}^2+ \mathrm o(\norm{v_k}\norm{w_k})\to +\infty.
\end{align*}
This proves that the sequence $v_k+w_k$ has no bounded subsequence, hence $v_k+w_k\to \infty$.
\end{proof}
It is important to notice that contraction for the action on the projective space translates to expansion for the linear action, as a contracting fixed point in $\bbP(V)$ is the eigendirection for the largest eigenvalue.
\begin{lem}\label{lem prop criterion} 
Let $\gamma_k\in\Gamma$ be a sequence admitting distinct boundary limits $\gamma_+=\lim \gamma_k\in\partial_\infty\Gamma$ and $\gamma_-=\lim \gamma_k^{-1}\in\partial_\infty\Gamma$. For any sequence $v_k\rightarrow v\in V\setminus \{0\}$ such that $[v]\pitchfork \xi^{\ast}(\gamma_{-})$, one has $\gamma_k\cdot v_k\rightarrow \infty$ as $k\rightarrow \infty.$  
\end{lem}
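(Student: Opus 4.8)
The plan is to reduce the statement to a single scalar estimate by pairing $\gamma_k\cdot v_k$ with a cleverly chosen covector. Fix any norm on $V$ together with its dual norm on $V^\ast$, and fix a linear form $\beta\in V^\ast$ with $[\beta]\pitchfork\xi(\gamma_+)$ (such $\beta$ exists since $\dim V>1$). Using $g\cdot\alpha=\alpha\circ g^{-1}$ one has $\beta(\gamma_k v_k)=(\beta\circ\gamma_k)(v_k)=(\gamma_k^{-1}\cdot\beta)(v_k)$, hence
\[
\norm{\gamma_k v_k}\;\geq\;\frac{|\beta(\gamma_k v_k)|}{\norm{\beta}}\;=\;\frac{\norm{\gamma_k^{-1}\cdot\beta}}{\norm{\beta}}\cdot\left|\frac{(\gamma_k^{-1}\cdot\beta)(v_k)}{\norm{\gamma_k^{-1}\cdot\beta}}\right|.
\]
So it suffices to prove that $\norm{\gamma_k^{-1}\cdot\beta}\to\infty$ while the normalized evaluations on the right stay bounded away from $0$; and since both Proposition \ref{prop conv action} and the unit-vector limits below only require passing to subsequences, it is enough to show that every subsequence of $(\gamma_k)$ has a further subsequence along which $\norm{\gamma_k v_k}\to\infty$.

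\textbf{Boundedness below of the evaluations.} Because $[\beta]\pitchfork\xi(\gamma_+)$, Proposition \ref{prop conv action}(4) gives a subsequence along which $\gamma_k^{-1}\cdot[\beta]\to\xi^\ast(\gamma_-)$ in $\bbP(V^\ast)$. Choosing unit representatives (and extracting once more) we get $\gamma_k^{-1}\cdot\beta/\norm{\gamma_k^{-1}\cdot\beta}\to\beta_\infty$ with $\ker\beta_\infty=\xi^\ast(\gamma_-)$ as a hyperplane. Since $v_k\to v$ and $[v]\pitchfork\xi^\ast(\gamma_-)$, i.e.\ $v\notin\xi^\ast(\gamma_-)$, we obtain $\bigl|(\gamma_k^{-1}\cdot\beta)(v_k)/\norm{\gamma_k^{-1}\cdot\beta}\bigr|\to|\beta_\infty(v)|>0$, so this quantity is eventually bounded below by a positive constant.

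\textbf{Divergence of the norms.} By torsion-freeness and $\gamma_+\neq\gamma_-$, we may assume every $\gamma_k$ has infinite order. Let $w_k\in V$ be a unit eigenvector for the top eigenvalue of $\gamma_k$, so that $\norm{\gamma_k w_k}=e^{\lambda_1(\gamma_k)}$ and, by Proposition \ref{prop boundary maps}(3), $[w_k]=\xi\bigl((\gamma_k)_+\bigr)$. Lemma \ref{lem hyp groups convergence limit points} yields $(\gamma_k)_+\to\gamma_+$, hence $[w_k]\to\xi(\gamma_+)$ by continuity of $\xi$; extracting a convergent subsequence $w_k\to\widehat w$ with $[\widehat w]=\xi(\gamma_+)$ and using $\beta|_{\xi(\gamma_+)}\neq 0$ gives $|\beta(w_k)|\to|\beta(\widehat w)|>0$. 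Therefore
\[
\norm{\gamma_k^{-1}\cdot\beta}\;=\;\norm{\beta\circ\gamma_k}\;\geq\;|\beta(\gamma_k w_k)|\;=\;e^{\lambda_1(\gamma_k)}\,|\beta(w_k)|\;\longrightarrow\;\infty,
\]
where $\lambda_1(\gamma_k)\to\infty$ is Lemma \ref{lem top eigenvalue diverges}. Combining this with the previous step along a common subsequence gives $\norm{\gamma_k v_k}\to\infty$, which proves the claim.

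\textbf{Main obstacle.} The subtle point is that $\xi(\gamma_+)$ is \emph{contained} in the repelling hyperplane $\xi^\ast(\gamma_+)$ of the sequence $\gamma_k^{-1}$, so one cannot deduce $\gamma_k v_k\to\infty$ by pure projective convergence dynamics together with a compactness/contradiction argument: the genuinely quantitative input $\lambda_1(\gamma_k)\to\infty$ is indispensable. The role of the auxiliary covector $\beta$ is exactly to convert this eigenvalue growth into a lower bound on $\norm{\gamma_k v_k}$ while remaining compatible with the transversality hypothesis $[v]\pitchfork\xi^\ast(\gamma_-)$.
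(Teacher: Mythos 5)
Your proposal is correct, but it takes a genuinely different route from the paper's. The paper argues inside $V$ with a decomposition adapted to each group element: it writes $v_k=v_{k,+}+v_{k,-}\in\xi(\gamma_{k,+})\oplus\xi^{\ast}(\gamma_{k,-})$, notes that the transversality hypothesis forces the limit $v_+$ of the eigenline components to be non-zero, so that $\gamma_k v_{k,+}$ blows up in norm like $e^{\lambda_1(\gamma_k)}$ (Lemma \ref{lem top eigenvalue diverges}), and then invokes the tailor-made Lemma \ref{lem: divergence in tautological bundles} — about sums of vectors lying in transverse line/hyperplane pairs converging to a transverse pair — to rule out cancellation between the two diverging components. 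You instead dualize: you test $\gamma_k v_k$ against a fixed covector $\beta$ transverse to $\xi(\gamma_+)$, prove $\norm{\beta\circ\gamma_k}\to\infty$ by pairing with a unit top eigenvector (using the same inputs, namely Lemma \ref{lem hyp groups convergence limit points}, Proposition \ref{prop boundary maps}(3) and Lemma \ref{lem top eigenvalue diverges}), and control the limiting direction of $\beta\circ\gamma_k$ via item (4) of Proposition \ref{prop conv action}, so that the normalized pairing against $v_k\to v$ with $[v]\pitchfork\xi^{\ast}(\gamma_-)$ stays bounded away from $0$; the subsequence reduction you state at the outset is exactly what makes the repeated extractions (from Proposition \ref{prop conv action} and from choosing unit representatives, with the sign ambiguity absorbed by absolute values) legitimate. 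What your route buys is that it dispenses with Lemma \ref{lem: divergence in tautological bundles} altogether and produces, as a byproduct, an explicit lower bound of the form $\norm{\gamma_k v_k}\geq c\,e^{\lambda_1(\gamma_k)}$ along the relevant subsequences; what it costs is the use of the convergence-dynamics Proposition \ref{prop conv action}, which the paper's proof of this particular lemma avoids, plus the extra subsequence bookkeeping. One cosmetic remark: the parenthetical ``such $\beta$ exists since $\dim V>1$'' is unnecessary — a covector not vanishing on a given line exists in any dimension — and the ``top eigenvalue'' of $\gamma_k$ may be negative, which your argument already accommodates since you only ever use $\norm{\gamma_k w_k}=e^{\lambda_1(\gamma_k)}$ and absolute values of pairings.
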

\begin{proof}
Let $\gamma_{k,\pm}\in \partial_\infty \Gamma$ be the attracting and repelling points of $\gamma_k$, and can consider the decomposition 
\[ v_k=v_{k,+}+v_{k,-}\in \xi(\gamma_{k,+})\oplus \xi^*(\gamma_{k,-}).\]
Since $\gamma_\pm=\lim_{k\to +\infty} \gamma_{k,\pm}$ (Lemma \ref{lem hyp groups convergence limit points}), we also have $v_{k,+}\to v_+$ and $v_{k,-}\to v_-$ where $v=v_++v_-\in \xi(\gamma_+)\oplus\xi^*(\gamma_-)$. The assumption that $[v]\pitchfork \xi^{\ast}(\gamma_{-})$ means that $v_+\neq 0$, hence $\gamma_kv_{k,+}=e^{\lambda_1(\gamma_k)}v_{k,+}\to \infty$ by Lemma \ref{lem top eigenvalue diverges}, and $\gamma_kv_k\to \infty$ by Lemma \ref{lem: divergence in tautological bundles}.
\end{proof}

\subsection{The discontinuity domain in \texorpdfstring{$\bbL$}{L}}

The limit maps $\xi,\xi^*$ in conjunction with the projection $p:\bbL\to \trprod$ introduced in \eqref{eqn projection of L onto transverse product} will allow us to construct a domain of proper discontinuity for the action of $\Gamma$ on $\bbL$.
\begin{definition}  We introduce the following subset of $\trprod$:
\[
\Omega_{\Gamma}:=\{(\ell, H)\in \trprod \,|\, \forall\, x\in \partial_{\infty}\Gamma: \ell\pitchfork \xi^{\ast}(x) \text{ or } \xi(x)\pitchfork H\}.
\]  
Then we define
$$
\widetilde{\M}_{\Gamma}:=p^{-1}(\Omega_{\Gamma})\subset \L.
$$
\end{definition}
We are now ready to prove Theorem \ref{THM A} from the introduction: we will split this into a series of smaller results, noting what we have proved as we go.  The following lemma proves the preliminary statements of Theorem \ref{THM A} concerning the openness and invariance properties of $\widetilde{\mathcal{M}}_{\Gamma}\subset \mathbb{L}.$  
\begin{lem}\label{lem:OmegatildeMopen}
 The sets $\Omega_{\Gamma}\subset  \mathbb{P}(V)\overset{\pitchfork}{\times} \mathbb{P}(V^{\ast})$ and $\widetilde{\M}_{\Gamma}\subset \L$ are open and $\Gamma$-invariant, and $\tilde\cM_\Gamma$ is $\phi^t$-invariant.  
\end{lem}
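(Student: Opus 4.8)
The plan is to handle the two sets separately, since $\widetilde{\M}_\Gamma = p^{-1}(\Omega_\Gamma)$ and $p$ is a continuous, $\Gamma$-equivariant map whose fibers are exactly the $\phi^t$-orbits; thus openness and $\Gamma$-invariance of $\Omega_\Gamma$ transfer immediately to $\widetilde{\M}_\Gamma$, and the $\phi^t$-invariance of $\widetilde{\M}_\Gamma$ is automatic because $\widetilde{\M}_\Gamma$ is a union of $p$-fibers, i.e.\ of flow orbits. So the entire content reduces to: (i) $\Omega_\Gamma$ is $\Gamma$-invariant, and (ii) $\Omega_\Gamma$ is open in $\trprod$.

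For $\Gamma$-invariance of $\Omega_\Gamma$: fix $g\in\Gamma$ and $(\ell,H)\in\Omega_\Gamma$; I want $g\cdot(\ell,H)=(g\ell,gH)\in\Omega_\Gamma$. Given any $x\in\partial_\infty\Gamma$, apply the defining condition of $\Omega_\Gamma$ to the point $g^{-1}x\in\partial_\infty\Gamma$: either $\ell\pitchfork\xi^\ast(g^{-1}x)$ or $\xi(g^{-1}x)\pitchfork H$. Now use $\Gamma$-equivariance of the limit maps, $\xi(g^{-1}x)=g^{-1}\cdot\xi(x)$ and $\xi^\ast(g^{-1}x)=g^{-1}\cdot\xi^\ast(x)$, together with the fact that transversality $\pitchfork$ is preserved by the $\SL(V)$-action (since $g$ acts by a linear isomorphism, $\ell\not\subset H' \iff g\ell\not\subset gH'$). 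Applying $g$ to whichever alternative holds gives $g\ell\pitchfork\xi^\ast(x)$ or $\xi(x)\pitchfork gH$, as desired.

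For openness of $\Omega_\Gamma$, the key point is compactness of $\partial_\infty\Gamma$. Consider the closed complement; I claim $(\ell,H)\notin\Omega_\Gamma$ iff there exists $x\in\partial_\infty\Gamma$ with $\xi(x)\subset H$ \emph{and} $\ell\subset\xi^\ast(x)$ (writing transversality-failure in Grassmannian terms). Define $Z=\{(\ell,H,x)\in\trprod\times\partial_\infty\Gamma : \xi(x)\subset H \text{ and } \ell\subset\xi^\ast(x)\}$. Since $\xi,\xi^\ast$ are continuous and the incidence relations $\{(\ell',H):\ell'\subset H\}$ are closed in the relevant Grassmannian products, $Z$ is closed in $\trprod\times\partial_\infty\Gamma$. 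The complement of $\Omega_\Gamma$ in $\trprod$ is the image of $Z$ under the projection to the first two factors; because $\partial_\infty\Gamma$ is compact, this projection is a closed map, so $\trprod\setminus\Omega_\Gamma$ is closed and $\Omega_\Gamma$ is open. I expect this compactness-of-the-boundary argument to be the one genuine step — the rest is bookkeeping — and the only subtlety is correctly negating the "or" in the definition of $\Omega_\Gamma$ and translating it into the incidence conditions defining $Z$.

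Finally I would close the loop: $\widetilde{\M}_\Gamma=p^{-1}(\Omega_\Gamma)$ is open because $p$ is continuous; it is $\Gamma$-invariant because $p$ is $\Gamma$-equivariant (so $p^{-1}$ of a $\Gamma$-invariant set is $\Gamma$-invariant); and it is $\phi^t$-invariant because $p\circ\phi^t=p$ for all $t$, so $p^{-1}$ of any set is a union of flow orbits.
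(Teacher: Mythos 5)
Your proposal is correct and follows essentially the same route as the paper's (very brief) proof: $\Gamma$-invariance of $\Omega_\Gamma$ from equivariance of the limit maps, openness from the openness of transversality combined with compactness of $\partial_\infty\Gamma$ (your closed-projection formulation is just a clean way of packaging that), and the properties of $\widetilde{\M}_\Gamma$ pulled back through the continuous, equivariant projection $p$ whose fibers are the flow orbits. No gaps.
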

\begin{proof}
The openness follows from the openness of transversality and the compactness of $\partial_{\infty}\Gamma$,  the $\Gamma$-invariance follows from the equivariance of the limit maps, and the $\phi^t$-invariance of $\tilde\cM_\Gamma$ follows from the $\phi^t$-invariance of the fibers of $p$.
\end{proof}
The next lemma establishes that non-trivial elements $\gamma\in \Gamma$ translate along a unique orbit of $\phi^{t}$ in $\widetilde{\mathcal{M}}_{\Gamma}.$
\begin{lem} \label{lem periodic points in discontinuity set} 
Let $[v:\alpha]\in \tilde\cM_\Gamma$ and $\gamma\in \Gamma\setminus \{e\}$ be such that $\gamma\cdot [v:\alpha]=[e^Tv:e^{-T}\alpha]$ for some $T\geq 0$. Then $[v]=\xi(\gamma_+)$, $[\alpha]=\xi^*(\gamma_-)$ and $T=\lambda_1(\gamma)$.
\end{lem}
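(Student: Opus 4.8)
The statement concerns a point $[v:\alpha]\in\widetilde{\mathcal M}_\Gamma$ and a nontrivial $\gamma\in\Gamma$ with $\gamma\cdot[v:\alpha]=[e^Tv:e^{-T}\alpha]$, $T\geq 0$; we must show $[v]=\xi(\gamma_+)$, $[\alpha]=\xi^*(\gamma_-)$, $T=\lambda_1(\gamma)$. The plan is to first observe that such a relation forces $\gamma$ to have infinite order: if $\gamma$ had finite order $m$, iterating gives $\gamma^m\cdot[v:\alpha]=[e^{mT}v:e^{-mT}\alpha]=[v:\alpha]$, and since $\alpha(v)>0$ on $\bbL$ (not merely in $\bbP(V\times V^*)$) the scaling on the $V$-factor must be trivial, so $T=0$; but then $\gamma$ fixes the point $[v:\alpha]\in\bbL$, and once we have established that $\Gamma$ acts freely on $\widetilde{\mathcal M}_\Gamma$ — which is part of Theorem A.(\ref{DOD}), but to avoid circularity I would instead argue directly here — we contradict $\gamma\neq e$. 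So I would deal with the $T=0$ case and the infinite-order case together by the dynamical argument below, which in fact shows $T>0$ a posteriori.

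\textbf{Main step: locating $[v]$ and $[\alpha]$.} Assume first $\gamma$ has infinite order (the finite-order case will be excluded at the end). Apply the relation $n$ times: $\gamma^n\cdot[v:\alpha]=[e^{nT}v:e^{-nT}\alpha]$, i.e. $\gamma^n\cdot[v]=[v]$ in $\bbP(V)$ and $\gamma^n\cdot[\alpha]=[\alpha]$ in $\bbP(V^*)$ for all $n$. Since $[v:\alpha]\in\widetilde{\mathcal M}_\Gamma=p^{-1}(\Omega_\Gamma)$, we have $(\,[v],[\alpha]\,)\in\Omega_\Gamma$, so in particular $[v]\pitchfork\xi^*(\gamma_-)$ or $\xi(\gamma_+)\pitchfork[\alpha]$; by symmetry (replacing $\gamma$ by $\gamma^{-1}$ swaps the roles, using $\gamma^{-1}$ also fixes $([v],[\alpha])$) I may assume $[v]\pitchfork\xi^*(\gamma_-)$. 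Now invoke Proposition \ref{prop conv action}(1) with the constant sequence's powers $\gamma_N:=\gamma^N$, whose boundary limits are $\gamma_+$ and $\gamma_-$: along a subsequence $\gamma^{N_k}\cdot[v]\to\xi(\gamma_+)$. But $\gamma^{N_k}\cdot[v]=[v]$ for every $k$, hence $[v]=\xi(\gamma_+)$. The same argument with Proposition \ref{prop conv action}(4) applied to $\gamma^{-1}$ (using $\xi(\gamma_+)=[v]\pitchfork[\alpha]$, which now holds automatically by transversality Proposition \ref{prop boundary maps}(\ref{propenumi boundary maps transversality}) since $\gamma_+\neq\gamma_-$ would be needed — alternatively apply Proposition \ref{prop conv action}(3) to $\gamma$ using $\xi(\gamma_-)\pitchfork[\alpha]$, which follows because $[\alpha]=\ker\alpha$ and $[v]=\xi(\gamma_+)\not\subset\ker\alpha$ already contradicts nothing; I would check transversality of $\xi(\gamma_-)$ and $[\alpha]$ via the convergence-group dynamics) yields $[\alpha]=\xi^*(\gamma_-)$.

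\textbf{Computing $T$ and ruling out finite order.} With $[v]=\xi(\gamma_+)$, Proposition \ref{prop boundary maps}(\ref{propenumi boundary maps elements}) tells us $v$ is an eigenvector of $\gamma$ for the eigenvalue $e^{\lambda_1(\gamma)}$ (real, simple), so $\gamma\cdot v=e^{\lambda_1(\gamma)}v$; dually $\gamma\cdot\alpha=e^{-\lambda_1(\gamma)}\alpha$ since $[\alpha]=\xi^*(\gamma_-)$ is the corresponding eigendirection of the inverse transpose. Then in $V\times V^*$ we have $\gamma\cdot(v,\alpha)=(e^{\lambda_1(\gamma)}v,e^{-\lambda_1(\gamma)}\alpha)$, which projects to $\phi^{\lambda_1(\gamma)}([v:\alpha])$; comparing with the hypothesis $\gamma\cdot[v:\alpha]=\phi^T([v:\alpha])$ and using that $\phi^t$ is a free $\R$-action on the fiber $p^{-1}(([v],[\alpha]))$ (the fibers of $p$ are exactly the $\phi^t$-orbits, and the $\R$-action is a genuine principal action — here one uses $\alpha(v)>0$ so the representative $(v,\alpha)$ up to positive scaling is essentially unique after normalizing $\alpha(v)=1$, killing the $\pm$ ambiguity of the double cover $\bbL_1\to\bbL$), we conclude $T=\lambda_1(\gamma)$. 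Finally this forces $\gamma$ to have infinite order: if $\gamma^m=e$ then $\lambda_1(\gamma)=0$, so $T=0$ and $\gamma$ fixes $[v:\alpha]$; but $\gamma$ also fixes $[v]=\xi(\gamma_+)$ while a finite-order element has no attracting boundary fixed point (Proposition \ref{prop hyperbolic groups elementary properties}\ref{propenumi hyp groups boundary fixed points of elements} only produces $\gamma_\pm$ for infinite-order $\gamma$), so the whole argument presumed infinite order — to be clean I would run the argument only for infinite-order $\gamma$ and separately note that a finite-order $\gamma\neq e$ cannot satisfy the hypothesis because $\Gamma$ is torsion-free.

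\textbf{Expected main obstacle.} The delicate point is the bookkeeping around transversality: given only $(\,[v],[\alpha]\,)\in\Omega_\Gamma$ we know one of two transversality conditions with respect to $\gamma_\pm$, and I must carefully deduce enough transversality (e.g. $\xi(\gamma_-)\pitchfork[\alpha]$) to feed into all four items of Proposition \ref{prop conv action}; this may require a short case analysis or a clever use of the $\gamma\leftrightarrow\gamma^{-1}$ symmetry together with the fact that $[v]$ and $[\alpha]$ are themselves $\gamma$-fixed, which already restricts them to lie in $\gamma$-invariant subspaces. The other mildly technical point is handling the double cover $\bbL_1\to\bbL$ so that "$\phi^t$ acts freely on fibers" is literally correct on $\bbL$ (the sign ambiguity $(v,\alpha)\sim(-v,-\alpha)$ is harmless because it commutes with $\phi^t$ and preserves $\alpha(v)>0$).
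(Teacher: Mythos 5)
Your overall strategy --- iterate $\gamma$, observe that $[v]$ and $[\alpha]$ are fixed points in $\bbP(V)$ and $\bbP(V^*)$, and pin them down by the north--south dynamics of Proposition \ref{prop conv action} --- is a genuinely different route from the paper, which never invokes convergence dynamics: there one pairs $v$ against a dual eigenvector $\beta$ with $[\beta]=\xi^*(\gamma_+)$, uses the invariance $\beta(v)=(\gamma\cdot\beta)(\gamma v)$ together with $T\geq 0$ to force $\beta(v)=0$, reads off from the definition of $\Omega_\Gamma$ at the point $x=\gamma_+$ that $\xi(\gamma_+)\pitchfork[\alpha]$, and then a second pairing with the top eigenvector gives $T=\lambda_1(\gamma)$ and both identifications at once. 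However, your reduction ``by symmetry I may assume $[v]\pitchfork\xi^*(\gamma_-)$'' is a genuine gap: the hypothesis $T\geq 0$ and the conclusion are not symmetric under $\gamma\leftrightarrow\gamma^{-1}$ (that swap replaces $T$ by $-T$), so the two branches of the $\Omega_\Gamma$-alternative at $x=\gamma_-$ cannot be interchanged. Moreover the second branch is not vacuous: the point with $[v]=\xi(\gamma_-)$, $[\alpha]=\xi^*(\gamma_+)$ lies in $\tilde\cM_\Gamma$ (indeed in $\tilde\cK_\Gamma$) and does satisfy $\gamma\cdot[v:\alpha]=[e^Tv:e^{-T}\alpha]$, but with $T=\lambda_d(\gamma)<0$; it is excluded \emph{only} by the sign hypothesis. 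So in the case $\xi(\gamma_-)\pitchfork[\alpha]$ you must actually argue: Proposition \ref{prop conv action}(3) applied to $\gamma_N=\gamma^N$ forces $[\alpha]=\xi^*(\gamma_+)$, whose dual eigenvalue has modulus $e^{-\lambda_d(\gamma)}>1$ (note it is $e^{\lambda_1(\gamma^{-1})}$, not $e^{\lambda_1(\gamma)}$), while the relation gives $\gamma\cdot\alpha=\pm e^{-T}\alpha$ of modulus $\leq 1$ --- a contradiction with $T\geq 0$. This eigenvalue-pairing step is exactly what the paper's proof is built around; without it your argument is incomplete, and the ``clever use of the $\gamma\leftrightarrow\gamma^{-1}$ symmetry'' you hoped would close the case analysis does not exist.

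Two smaller points. First, your derivation of $[\alpha]=\xi^*(\gamma_-)$ after $[v]=\xi(\gamma_+)$ is muddled: the appeal to Proposition \ref{prop boundary maps}\ref{propenumi boundary maps transversality} is circular (you do not yet know $[\alpha]=\xi^*(\gamma_-)$), and Proposition \ref{prop conv action}(3) would send $[\alpha]$ to $\xi^*(\gamma_+)$, the wrong target. The correct repair is the one you half-state: once $[v]=\xi(\gamma_+)$, transversality $\xi(\gamma_+)\pitchfork[\alpha]$ is automatic because $\alpha(v)>0$ on $\bbL$, so Proposition \ref{prop conv action}(4) applied to $\gamma_N=\gamma^N$ gives $[\alpha]=\xi^*(\gamma_-)$. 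Second, the concluding computation $T=\lambda_1(\gamma)$ (via the simple top eigenvalue, the $\pm$ sign being killed by projectivization, and freeness of the $\R$-action on the fibers of $p$) and the torsion-free remark are fine, and here your route and the paper's coincide.
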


\begin{proof}
Consider $\beta\in V^*\setminus\{0\}$ such that $[\beta]=\xi^*(\gamma^+)$, so that $\gamma\cdot\beta=\pm e^{\lambda_1(\gamma)}\beta$. We find
\[\beta(v)=(\gamma\cdot\beta)(\gamma v)=\pm e^{\lambda_1(\gamma)+T}\beta(v). \]

If $[v]\pitchfork \xi^*(\gamma_+)$, i.e. $\beta(v)\neq 0$, this would mean that $\lambda_1(\gamma)+T=0$. This is impossible if $T\geq 0$ and $\gamma\neq e$. As $[v:\alpha]\in \tilde\cM_\Gamma$, we must have $\xi(\gamma_+)\pitchfork [\alpha]$.

Now consider $w\in V\setminus\{0\}$ such that $[w]=\xi(\gamma_+)$, so that $\gamma\cdot w=\pm e^{\lambda_1(\gamma)}w$. We find
\[ \alpha(w)=(\gamma\cdot \alpha)(\gamma w)=\pm e^{\lambda_1(\gamma)-T}\alpha(w). \]

As $\alpha(w)\neq 0$, it means that $T=\lambda_1(\gamma)$, hence $[v]=\xi(\gamma_+)$ and $[\alpha]=\xi^*(\gamma_-)$.
\end{proof}
Now, we prove Theorem \ref{THM A}.(\ref{DOD}) affirming the properness of the $\Gamma$-action on $\widetilde{\mathcal{M}}_{\Gamma}.$  
\begin{thm}\label{thm: prop disc}
Let $\Gamma<\SL(V)$ be a torsion-free projective Anosov subgroup. The $\Gamma$-action on $\widetilde{\M}_{\Gamma}$ is free and properly discontinuous.
\end{thm}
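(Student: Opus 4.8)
The plan is to prove freeness and proper discontinuity separately, with proper discontinuity being the substantial part. Freeness is immediate from Lemma \ref{lem periodic points in discontinuity set}: if $\gamma\cdot[v:\alpha]=[v:\alpha]$ for some $[v:\alpha]\in\widetilde{\M}_\Gamma$ and $\gamma\neq e$, then taking $T=0$ in the lemma forces $T=\lambda_1(\gamma)=0$, contradicting that $\lambda_1(\gamma)>0$ for infinite-order elements (Proposition \ref{prop boundary maps}\ref{propenumi boundary maps elements}) — and $\Gamma$ torsion-free has no nontrivial finite-order elements. So the $\Gamma$-action is free.

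For proper discontinuity, I would argue by contradiction using the sequential criterion: suppose there is a sequence of distinct $\gamma_N\in\Gamma$ and points $x_N=[v_N:\alpha_N]\in\widetilde{\M}_\Gamma$ with $x_N\to x=[v:\alpha]\in\widetilde{\M}_\Gamma$ and $\gamma_N\cdot x_N\to y=[w:\beta]\in\widetilde{\M}_\Gamma$. Since the $\gamma_N$ are distinct and $\Gamma$ is discrete, $\gamma_N$ is unbounded; passing to a subsequence, by Lemma \ref{lem distinct limit points} (after possibly left-translating, which is harmless since left-translation is a homeomorphism of $\widetilde{\M}_\Gamma$) I may assume $\gamma_N$ has boundary limits $\gamma_+=\lim\gamma_N$ and $\gamma_-=\lim\gamma_N^{-1}$ with $\gamma_+\neq\gamma_-$. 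The idea is now to use the convergence dynamics (Proposition \ref{prop conv action}) together with the membership of $x$ in $\widetilde{\M}_\Gamma$ to derive that $\gamma_N\cdot x_N$ must escape to the "boundary at infinity'' of $\bbL$ — i.e. leave every compact set — contradicting convergence to $y\in\bbL$. Concretely, lift to the quadric: write $x_N=\pi(\hat v_N,\hat\alpha_N)$ with $\hat\alpha_N(\hat v_N)=1$. Because $x=[v:\alpha]\in\widetilde{\M}_\Gamma$, by definition of $\Omega_\Gamma$ applied to the point $\gamma_-\in\partial_\infty\Gamma$ we have either $[v]\pitchfork\xi^*(\gamma_-)$ or $\xi(\gamma_-)\pitchfork[\alpha]$. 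In the first case, Lemma \ref{lem prop criterion} gives $\gamma_N\cdot\hat v_N\to\infty$ in $V$ (after rescaling $\hat v_N\to$ a nonzero limit, which needs a small normalization argument since on the quadric the vector part need not converge — I would instead apply Lemma \ref{lem prop criterion} to suitably rescaled representatives and track the scaling). In the second case, apply the dual version of Lemma \ref{lem prop criterion} to $\hat\alpha_N$ using $\gamma_N^{-1}$-dynamics to get $\gamma_N\cdot\hat\alpha_N\to\infty$ in $V^*$. Either way, since $(\gamma_N\cdot\hat v_N)(\gamma_N\cdot\hat\alpha_N)\equiv 1$ is fixed, one of the two factors blowing up forces the projective point $\gamma_N\cdot x_N=[\gamma_N\hat v_N:\gamma_N\hat\alpha_N]$ to converge into $p^{-1}$ of the set $\{\alpha(v)=0\}$ — that is, into $\bbP(V\times V^*)\setminus\bbL$ — contradicting $\gamma_N\cdot x_N\to y\in\bbL$.

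The main obstacle I anticipate is the bookkeeping in translating between the homogeneous coordinates on $\bbL\subset\bbP(V\times V^*)$ and the affine representatives, because the limit maps and Lemma \ref{lem prop criterion} are phrased for honest vectors in $V$ while points of $\bbL$ only determine $(v,\alpha)$ up to the scaling $(v,\alpha)\mapsto(\lambda v,\mu\alpha)$. The clean way around this is to note that $\bbL$ embeds properly into $\bbP(V)\times\bbP(V^*)\times[0,\infty]$ via $[v:\alpha]\mapsto([v],[\alpha],$ the "angle'' $\|v\|\|\alpha\|/|\alpha(v)|)$ — or equivalently to use the trivialization $\bbL\cong(\bbP(V)\pitchfork\times\bbP(V^*))\times\R$ from the introduction and the HBI cocycle. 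In that trivialization, $\gamma_N\cdot x_N$ has base-point coordinate $(\gamma_N\cdot[v_N],\gamma_N\cdot[\alpha_N])$, which by Proposition \ref{prop conv action} converges (using $x\in\widetilde{\M}_\Gamma$, so that $[v]\pitchfork\xi^*(\gamma_-)$ or $[\alpha]\pitchfork\xi(\gamma_-)$ — wait, one needs the transversality in the form required by items (1)/(3) of Proposition \ref{prop conv action}) to either $(\xi(\gamma_+),\ast)$ or $(\ast,\xi^*(\gamma_+))$, and then the fiber coordinate $\tau_N+\mathscr H(\gamma_N,[v_N],[\alpha_N])$ diverges to $\pm\infty$ by the analogue of Lemma \ref{lem top eigenvalue diverges} for the cocycle (the cocycle along the attracting direction grows like $\lambda_1(\gamma_N)\to+\infty$). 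Divergence of the fiber coordinate contradicts convergence of $\gamma_N\cdot x_N$ in $\bbL$. Once this divergence-of-the-cocycle estimate is in hand the contradiction is immediate; assembling it carefully, handling the two cases in the definition of $\Omega_\Gamma$ symmetrically, and confirming that one may pass to the good subsequence via Lemma \ref{lem distinct limit points} without losing distinctness of the $\gamma_N$, are the steps that require care.
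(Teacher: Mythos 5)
Your proposal is correct and follows essentially the same route as the paper: freeness via Lemma \ref{lem periodic points in discontinuity set} together with $\lambda_1(\gamma)>0$, and proper discontinuity by contradiction using Lemma \ref{lem distinct limit points}, the dichotomy defining $\Omega_\Gamma$, Lemma \ref{lem prop criterion} (and its dual), and the rescaling argument exploiting the invariance of the pairing $\alpha(v)$ to force any limit of $\gamma_N\cdot x_N$ into $\{\beta(w)=0\}$. The direct argument in your first paragraph is exactly the paper's proof; the Hopf/HBI-cocycle variant you sketch afterwards is not needed.
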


\begin{proof}By way of contradiction, suppose there exists a sequence of elements $[v_{k}: \alpha_{k}]\in \widetilde{\M}_{\Gamma}$ such that
$[v_{k}:\alpha_{k}]\rightarrow [v:\alpha]\in \widetilde{\M}_{\Gamma}$ and $\delta_{k}\in \Gamma$ with $\delta_k\rightarrow \infty$, $\delta_{k}\cdot [v_{k}:\alpha_{k}]\rightarrow [w':\beta'].$  The convergence $[v_{k}:\alpha_{k}]\rightarrow [v:\alpha]$ means that there is a sequence $\lambda_k\in \R\setminus\{0\}$ such that $\lambda_k v_k \to v$ and $\lambda_k \alpha_k \to \alpha$. Replacing $v_k$ by $\lambda_k v_k$ and $\alpha_k$ by $\lambda_k \alpha_k$, we arrange that $v_k\to v$ and $\alpha_k\to \alpha$. By Lemma \ref{lem distinct limit points}, up to a subsequence we can consider $f\in \Gamma$ such that  $\gamma_k:=f\delta_k$ has distinct limits $\gamma_+=\lim \gamma_k$ and $\gamma_-=\lim \gamma_k^{-1}$ in $\partial_\infty\Gamma$. Set $[w:\beta]:=f\cdot [w':\beta']$, so that  $\gamma_k\cdot[v_{k}:\alpha_{k}]\to [w:\beta]$. 

By virtue of $[v:\alpha]\in \widetilde{\M}_{\Gamma},$ either $ [v]\pitchfork\xi^{\ast}(\gamma_{-})$ or $ [\alpha]\pitchfork \xi(\gamma_{-}).$  The first condition $[v]\pitchfork\xi^{\ast}(\gamma_{-})$ implies via Lemma \ref{lem prop criterion} that $\gamma_{k}v_{k}\rightarrow \infty,$ which is absurd because of the assumption $\gamma_{k}\cdot[v_{k}:\alpha_{k}]\rightarrow [w: \beta]\in \widetilde{\M}_{\Gamma}$. Indeed, the latter means that there is a sequence $\mu_k\in \R\setminus \{0\}$ such that $\mu_k \gamma_{k}v_{k}\to w$ and $\mu_k \gamma_{k}\alpha_k\to \beta$, and $\gamma_{k}v_{k}\rightarrow \infty$ implies $\mu_k\to 0$. Now 
$$
0<\beta(w)=\lim_{k\to \infty}\mu_k \gamma_{k}\alpha_k(\mu_k \gamma_k v_k)=\lim_{k\to \infty}\mu_k^2 \underbrace{\alpha_k(v_k)}_{\to \alpha(v)}= 0,$$
a contradiction.   
For the other condition $ [\alpha]\pitchfork \xi(\gamma_{-}),$ repeating the same argument for the dual representation leads to the analogous absurdity, and therefore the $\Gamma$-action on $\widetilde{\M}_{\Gamma}$ is properly discontinuous.

Applying Proposition \ref{prop boundary maps}\ref{propenumi boundary maps elements}, every non-trivial element $\gamma\in \Gamma$ satisfies $\lambda_1(\gamma)>0$, hence Lemma \ref{lem periodic points in discontinuity set} shows that no such element can fix a point in $\tilde\cM_\Gamma$.
\end{proof}
Theorem \ref{thm: prop disc} and the real analyticity of the $\SL(V)$-action on $\mathbb{L}$ imply that the quotient map
$$
\widetilde{\mathcal{M}}_{\Gamma}\rightarrow \Gamma\backslash \widetilde{\mathcal{M}}_{\Gamma}=:\mathcal{M}_{\Gamma}
$$
is a real analytic regular covering onto the real analytic (Hausdorff) manifold $\mathcal{M}_{\Gamma}$ of dimension $2\dim V-1$.  Recall that a (real analytic) locally homogeneous $(\SL(V), \mathbb{L})$-structure on a (real analytic) manifold $\mathcal{M}$ is defined by an atlas of (real analytic) charts taking values in $\mathbb{L}$ whose transition maps are locally restrictions of elements of $\SL(V).$  
The inclusion of the open set $\widetilde{\mathcal{M}}_{\Gamma}\subset \mathbb{L}$ is a global chart for a trivial $(\SL(V), \mathbb{L})$-structure.  Since $(\SL(V), \mathbb{L})$-structures descend to the base of covering spaces, and mutatis mutandis for the contact one-form $\tau$ with Reeb flow $\phi^t$, we have completed the proof of Theorem \ref{THM A}.(\ref{DOD}).

\subsection{The lifted basic set}Recall the notation $\sigma=\xi\times\xi^*:\partial_\infty\Gamma^{(2)}\to \trprod$ from \eqref{eq:sigmamap}.

\begin{definition}\label{def:Ktilde}
 The \emph{transverse limit set} is
\[\Lambda^\transverse_\Gamma:=\sigma\left(\partial_\infty\Gamma^{(2)}\right)\subset \trprod.  \] 

 The \emph{lifted basic set}  is 
$$
\widetilde{\mathcal{K}}_{\Gamma}:=p^{-1}\bigl(\Lambda_{\Gamma}^{\pitchfork}\bigr)\subset \mathbb{L}.$$

\end{definition}
We now begin the proof of Theorem \ref{THM A}.(\ref{THM A: AxiomA}) and \ref{THM A}.(\ref{real analytic fol}) which  will occupy the remainder of this section.
\begin{lem}\label{lem:limitsetinmtilde}
The lifted basic set $\widetilde{\mathcal{K}}_{\Gamma}\subset \L$ is a closed $\Gamma\times \{\phi^{t}\}_{t\in \R}$-invariant subset contained in $\widetilde{\M}_{\Gamma}$.
\end{lem}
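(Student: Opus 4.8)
The plan is to push everything down to the base of the fibration $p:\bbL\to\trprod$ of \eqref{eqn projection of L onto transverse product} and argue there, using that $p$ is continuous, $\Gamma$-equivariant, and has the $\phi^t$-orbits as its fibers. Concretely, I would prove the four required properties of $\widetilde{\mathcal K}_\Gamma=p^{-1}(\Lambda_\Gamma^{\pitchfork})$ in the order: $\phi^t$-invariance, $\Gamma$-invariance, closedness, and the inclusion $\widetilde{\mathcal K}_\Gamma\subset\widetilde{\M}_\Gamma$.

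The two invariance statements are immediate. Since the fibers of $p$ are exactly the orbits of $\phi^t$, the preimage under $p$ of any subset of $\trprod$ is $\phi^t$-invariant, so $\widetilde{\mathcal K}_\Gamma$ is. For $\Gamma$-invariance, the diagonal action of $\Gamma$ on $\partial_\infty\Gamma\times\partial_\infty\Gamma$ preserves the diagonal, hence preserves $\partial_\infty\Gamma^{(2)}$; since $\sigma=\xi\times\xi^\ast$ is $\Gamma$-equivariant (Proposition \ref{prop boundary maps}), the image $\Lambda_\Gamma^{\pitchfork}=\sigma(\partial_\infty\Gamma^{(2)})$ is a $\Gamma$-invariant subset of $\trprod$, and equivariance of $p$ transfers this to $\widetilde{\mathcal K}_\Gamma$.

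The crux — and the only point where one must do something — is the identification
\[ \Lambda_\Gamma^{\pitchfork}=\bigl(\Lambda_\Gamma\times\Lambda_\Gamma^\ast\bigr)\cap\trprod, \]
which is exactly Proposition \ref{prop boundary maps}\ref{propenumi boundary maps transversality}: for $t,s\in\partial_\infty\Gamma$ one has $\xi(t)\pitchfork\xi^\ast(s)$ if and only if $t\neq s$, so the pairs $(\xi(t),\xi^\ast(s))$ with $(t,s)\in\partial_\infty\Gamma^{(2)}$ are precisely the transverse pairs in $\Lambda_\Gamma\times\Lambda_\Gamma^\ast$. Because $\partial_\infty\Gamma$ is compact and $\xi,\xi^\ast$ are continuous, $\Lambda_\Gamma$ and $\Lambda_\Gamma^\ast$ are compact, so $\Lambda_\Gamma\times\Lambda_\Gamma^\ast$ is closed in $\bbP(V)\times\bbP(V^\ast)$; intersecting with the open set $\trprod$ shows $\Lambda_\Gamma^{\pitchfork}$ is closed in $\trprod$, whence $\widetilde{\mathcal K}_\Gamma=p^{-1}(\Lambda_\Gamma^{\pitchfork})$ is closed in $\bbL$ by continuity of $p$. (This is the step where one must be a little careful: $\partial_\infty\Gamma^{(2)}$ itself is not compact, so closedness cannot be read off naively and must go through the transversality characterization.)

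Finally, for the inclusion it suffices to check $\Lambda_\Gamma^{\pitchfork}\subset\Omega_\Gamma$ and apply $p^{-1}$. Given $(\xi(t),\xi^\ast(s))$ with $t\neq s$ and an arbitrary $x\in\partial_\infty\Gamma$, one cannot simultaneously have $x=t$ and $x=s$; thus either $t\neq x$, giving $\xi(t)\pitchfork\xi^\ast(x)$, or $x\neq s$, giving $\xi(x)\pitchfork\xi^\ast(s)$ — which is precisely the defining alternative in $\Omega_\Gamma$. Hence $\Lambda_\Gamma^{\pitchfork}\subset\Omega_\Gamma$ and $\widetilde{\mathcal K}_\Gamma=p^{-1}(\Lambda_\Gamma^{\pitchfork})\subset p^{-1}(\Omega_\Gamma)=\widetilde{\M}_\Gamma$. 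I do not expect any genuine obstacle; everything is formal once the transversality dictionary from Proposition \ref{prop boundary maps} is invoked.
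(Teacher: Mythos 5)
Your proposal is correct and follows essentially the same route as the paper: invariance via the $\Gamma$-equivariance of $p$ and of the limit maps, $\phi^t$-invariance because the fibers of $p$ are flow orbits, and the inclusion $\Lambda_\Gamma^{\pitchfork}\subset\Omega_\Gamma$ by the same case split ($x\neq t$ versus $x=t\neq s$) using Proposition \ref{prop boundary maps}\ref{propenumi boundary maps transversality}. The only difference is that you spell out the closedness step via the identification $\Lambda_\Gamma^{\pitchfork}=(\Lambda_\Gamma\times\Lambda_\Gamma^{\ast})\cap\bigl(\mathbb{P}(V)\overset{\pitchfork}{\times}\mathbb{P}(V^{\ast})\bigr)$, a detail the paper leaves implicit, and which is indeed the right way to handle the non-compactness of $\partial_\infty\Gamma^{(2)}$.
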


\begin{proof}Since the projection $p:\L\rightarrow \trprod$  is $\SL(V)$-equivariant and continuous (even real analytic), and $\Lambda_{\Gamma}^{\pitchfork}$ is $\Gamma$-invariant,
the basic set $\widetilde{\mathcal{K}}_{\Gamma}$ in $\L$ is a closed $\Gamma$-invariant subset of $\L$. Let $(v,\alpha)\in\widetilde{\cK}_{\Gamma}$ and $z\in\partial_\infty\Gamma$. Let $(s,t)\in\partial_\infty\Gamma^{(2)}$ be such that $[v]=\xi(s)$ and $[\alpha]=\xi^*(t)$. If $z\neq s$, then $[v]\transverse \xi^*(z)$. If $z=s\neq t$, then $\xi(z)\transverse [\alpha]$; thus $\widetilde{\mathcal{K}}_{\Gamma}\subset \widetilde{\M}_{\Gamma}$. Since the fibers of $p$ are $\{\phi^{t}\}_{t\in \R}$-orbits, the basic set $\widetilde{\mathcal{K}}_{\Gamma}$ is $\phi^{t}$-invariant by definition.
\end{proof}

The $\Gamma$-action on the lifted basic set $\widetilde{\cK}_{\Gamma}$ can be related to Sambarino's refraction flow space through the Hopf parametrization.

\begin{lem} \label{lem - hopf parametrization projective Anosov}
Let $\norm{\cdot}$ be a Euclidean norm on $V$. The map 
\[ \cH:\map{\bbL}{\trprod\times \R}{\left[v:\alpha\right]}{\left([v],[\alpha],\log\left(\frac{\norm{v}}{\sqrt{\alpha(v)}}\right)\right)}\]
is a real analytic isomorphism.
\end{lem}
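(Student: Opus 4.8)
The plan is to exhibit an explicit real analytic inverse and check both composites are the identity. First I would observe that $\cH$ is well-defined: for a representative $(v,\alpha)$ of $[v:\alpha]\in\bbL$ we have $\alpha(v)>0$, and rescaling $(v,\alpha)\mapsto (\lambda v,\lambda\alpha)$ (with $\lambda\in\R\setminus\{0\}$, the only freedom since $[v:\alpha]$ determines $(v,\alpha)$ up to a common nonzero scalar because both factors scale together in $V\times V^\ast$) multiplies $\norm{v}$ by $|\lambda|$ and $\sqrt{\alpha(v)}$ by $|\lambda|$, so the ratio $\norm{v}/\sqrt{\alpha(v)}$ is unchanged; hence the third coordinate is a genuine function of $[v:\alpha]$. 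The first two coordinates are manifestly well-defined via the projection $p$ of \eqref{eqn projection of L onto transverse product}. Each coordinate is a composition of real analytic maps on the relevant domain (note $\alpha(v)>0$ guarantees the logarithm and square root are taken of strictly positive real analytic quantities, and $\norm{v}>0$ since $v\neq 0$), so $\cH$ is real analytic.

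Next I would write down the candidate inverse. Given $(\ell,H,\rho)\in\trprod\times\R$, pick any $v\in\ell\setminus\{0\}$ and any $\alpha\in H\setminus\{0\}$ (here I abuse notation, writing $H$ for a representative line in $V^\ast$); transversality $\ell\pitchfork H$ means $\alpha(v)\neq 0$, and after replacing $\alpha$ by $-\alpha$ if necessary we may assume $\alpha(v)>0$. Now set
\[
\Psi(\ell,H,\rho):=\left[\,\frac{e^{\rho}}{\norm{v}}\,v\ :\ \frac{e^{-\rho}\,\norm{v}}{\alpha(v)}\,\alpha\,\right]\in\bbP(V\times V^\ast).
\]
One checks the pairing of the two chosen representatives equals $\frac{e^\rho}{\norm{v}}\cdot\frac{e^{-\rho}\norm{v}}{\alpha(v)}\cdot\alpha(v)=1>0$, so the point lies in $\bbL$; and one checks this is independent of the choices of $v\in\ell$ and $\alpha\in H$ (rescaling $v\mapsto sv$, $s\neq 0$, leaves $\frac{e^\rho}{\norm{v}}v$ unchanged up to sign, and similarly for $\alpha$, and the two sign ambiguities are absorbed in $\bbP$). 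Real analyticity of $\Psi$ follows since locally on $\trprod$ one may choose $v,\alpha$ depending real analytically on $(\ell,H)$ (e.g. via local real analytic sections of the tautological bundles, using an inner product to normalize), and then all operations are real analytic with nonvanishing denominators.

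Finally I would verify $\cH\circ\Psi=\mathrm{id}$ and $\Psi\circ\cH=\mathrm{id}$ by direct substitution: for $\cH\circ\Psi$, the representative $(\tilde v,\tilde\alpha)=\bigl(\frac{e^\rho}{\norm v}v,\frac{e^{-\rho}\norm v}{\alpha(v)}\alpha\bigr)$ has $[\tilde v]=\ell$, $[\tilde\alpha]=H$, $\norm{\tilde v}=e^\rho$, $\tilde\alpha(\tilde v)=1$, hence third coordinate $\log(e^\rho/\sqrt 1)=\rho$; for $\Psi\circ\cH$, starting from $[v:\alpha]\in\bbL$ with its distinguished representative normalized so $\alpha(v)>0$, one computes $\frac{e^\rho}{\norm v}=\frac{1}{\sqrt{\alpha(v)}}$ and $\frac{e^{-\rho}\norm v}{\alpha(v)}=\frac{1}{\sqrt{\alpha(v)}}$, so $\Psi(\cH([v:\alpha]))=\bigl[\frac{v}{\sqrt{\alpha(v)}}:\frac{\alpha}{\sqrt{\alpha(v)}}\bigr]=[v:\alpha]$. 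This proves $\cH$ is a real analytic bijection with real analytic inverse.

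I do not expect any serious obstacle here; the only point requiring a little care is the well-definedness of $\Psi$ on projective classes and its real analyticity (one must produce local real analytic lifts $v(\ell,H)$, $\alpha(\ell,H)$ with the correct sign normalization), but this is routine given the openness of $\trprod$ and the transversality condition. Everything else is a direct computation with the definitions.
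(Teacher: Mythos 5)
Your proof is correct and takes essentially the same route as the paper: the inverse you construct is exactly the paper's formula $\cH^{-1}([v],[\alpha],s)=\left[\frac{e^{s}}{\norm{v}}v:\frac{\norm{v}}{e^{s}\alpha(v)}\alpha\right]$, with real analyticity obtained from local real analytic sections of the tautological bundles over $\bbP(V)$ and $\bbP(V^{\ast})$. One cosmetic remark: the sign ambiguities in your chosen lifts are not independent --- maintaining $\alpha(v)>0$ forces the signs of the two components to flip simultaneously, which is precisely why the resulting common sign is absorbed in $\bbP(V\times V^{\ast})$.
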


\begin{proof}
The  bijectivity of $\cH$ is exactly the statement that $\bbL$ is a $\R$-principal bundle over $\trprod$. The real analyticity of $\cH$ is immediate, and the real analyticity of its inverse can be shown by using local sections of the tautological bundles over $\bbP(V)$ and $\bbP(V^*)$: $\cH^{-1}([v],[\alpha],s)=\left[ \frac{e^s}{\norm{v}}v, \frac{\norm{v}}{e^s\alpha(v)}\alpha \right]$.
\end{proof}

The \emph{Hopf parametrization} $\cH$ conjugates the flow $\phi^t$ to the translation flow
\bq \label{eqn hopf coordinates flow} \cH\circ \phi^t\circ \cH^{-1} ([v],[\alpha],\tau)=([v],[\alpha],\tau+t) \quad \forall\, ([v],[\alpha],\tau)\in \trprod\times\R.  \eq
Conjugating the $\SL(V)$-action on $\bbL$ by $\cH$, we recover the Hopf-Buseman-Iwasawa cocycle of \eqref{equi action}:
\bq \label{eqn hopf coordinates action} \cH\circ g\circ \cH^{-1}([v],[\alpha],\tau)  = (g\cdot[v],g\cdot [\alpha],\tau+\mathscr{H}(g,[v],[\alpha]) )\quad \forall\, g\in \SL(V). \eq
The work of Sambarino 
readily implies:

\begin{lem}\label{lem:Kcompact} The action of $\Gamma$ on the lifted basic set $\widetilde{\cK}_{\Gamma}$ is cocompact.
\end{lem}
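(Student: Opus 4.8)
The plan is to transport the statement to Sambarino's refraction flow space via the Hopf parametrization and quote Proposition~\ref{prop: ref flow}. First I would note that Lemma~\ref{lem - hopf parametrization projective Anosov} gives a real analytic isomorphism $\cH:\bbL\xrightarrow{\ \simeq\ }\trprod\times\R$ which, by \eqref{eqn hopf coordinates action}, intertwines the $\SL(V)$-action on $\bbL$ (restricted to $\Gamma$) with the $\Gamma$-action on $\trprod\times\R$ induced by the HBI cocycle $\mathscr H$. Under this identification, the lifted basic set $\widetilde{\cK}_\Gamma=p^{-1}(\Lambda^{\transverse}_\Gamma)$ corresponds exactly to $\Lambda^{\transverse}_\Gamma\times\R=\sigma(\partial_\infty\Gamma^{(2)})\times\R$, since $\cH$ covers the bundle projection $p:\bbL\to\trprod$ and the first two coordinates of $\cH$ are precisely $p$.

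Next I would use that $\sigma=\xi\times\xi^\ast:\partial_\infty\Gamma^{(2)}\to\trprod$ is a $\Gamma$-equivariant homeomorphism onto its image $\Lambda^{\transverse}_\Gamma$ (this follows from Proposition~\ref{prop boundary maps}: $\xi$ and $\xi^\ast$ are each embeddings by \ref{propenumi boundary maps equivariant homeomorphisms}, and transversality \ref{propenumi boundary maps transversality} guarantees that $\sigma$ lands in $\trprod$ and is injective on $\partial_\infty\Gamma^{(2)}$). Hence $\sigma\times\mathrm{id}_\R:\partial_\infty\Gamma^{(2)}\times\R\to\Lambda^{\transverse}_\Gamma\times\R$ is a $\Gamma$-equivariant homeomorphism, and composing with $\cH^{-1}$ gives a $\Gamma$-equivariant homeomorphism $\partial_\infty\Gamma^{(2)}\times\R\xrightarrow{\ \simeq\ }\widetilde{\cK}_\Gamma$. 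By Proposition~\ref{prop: ref flow} the $\Gamma$-action on $\partial_\infty\Gamma^{(2)}\times\R$ is cocompact, so the $\Gamma$-action on $\widetilde{\cK}_\Gamma$ is cocompact as well; equivalently, $\Gamma\backslash\widetilde{\cK}_\Gamma$ is homeomorphic to the compact space $\chi_\Gamma$.

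One should be a little careful about the bookkeeping of equivariance: the HBI cocycle in Section~\ref{sec:dynamics} (Sambarino's normalization) must match the cocycle produced by conjugating the $\SL(V)$-action on $\bbL$ by $\cH$, but this is exactly the content of equation~\eqref{eqn hopf coordinates action}, so no new computation is needed. I do not anticipate a genuine obstacle here; the only mildly delicate point is checking that $\cH$ restricts to a homeomorphism $\widetilde{\cK}_\Gamma\to\Lambda^{\transverse}_\Gamma\times\R$, which is immediate from $p=\pr_{\trprod}\circ\,\cH$ together with the definition $\widetilde{\cK}_\Gamma=p^{-1}(\Lambda^{\transverse}_\Gamma)$. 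The lemma then follows by assembling these homeomorphisms and invoking Proposition~\ref{prop: ref flow}.
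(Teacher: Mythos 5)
Your proposal is correct and follows essentially the same route as the paper: the paper's proof likewise observes that the $\Gamma$-equivariant embedding $\sigma\times\id:\partial_\infty\Gamma^{(2)}\times\R\to\trprod\times\R$ maps homeomorphically onto $\cH(\widetilde{\cK}_\Gamma)$ and then invokes Proposition \ref{prop: ref flow}. Your additional bookkeeping (that $p=\pr_{\trprod}\circ\cH$ identifies $\widetilde{\cK}_\Gamma$ with $\Lambda^{\transverse}_\Gamma\times\R$, and that equivariance is exactly \eqref{eqn hopf coordinates action}) just spells out details the paper leaves implicit.
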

\begin{proof} The $\Gamma$-equivariant embedding $\sigma\times\id: \partial_\infty\Gamma^{(2)}\times \R\to \trprod\times\R$ maps $\partial_\infty\Gamma^{(2)}\times \R$ homeomorphically onto $\cH(\tilde\cK_\Gamma)$, so the result follows from Proposition \ref{prop: ref flow}.  
\end{proof}

\subsection{Dynamics on the quotient manifold}\label{sec:dynamicsonquotient}
By Theorem \ref{thm: prop disc}, the locally homogeneous quotient manifold
$$
\M_{\Gamma}=\Gamma\backslash \widetilde{\M}_{\Gamma}
$$
carries a complete real analytic flow $\phi^{t}: \M_{\Gamma}\rightarrow \M_{\Gamma}$ inherited from $\L$.  The $\Gamma$-quotient of the lifted basic set  
$$
\mathcal{K}_{\Gamma}:=\Gamma\backslash \widetilde{\mathcal{K}}_{\Gamma}
$$
is a  $\phi^{t}$-invariant subset of $\M_{\Gamma}$ which we call the \emph{basic set}. It is compact by Lemma \ref{lem:Kcompact}, and the restriction of the flow $\phi^t$ to  $\cK_\Gamma$ is conjugate to the refraction flow $\phi^t_S$.

\begin{proof}[Proof of Theorem \ref{SAMHYP}]
The $\Gamma$-equivariant map $\cH^{-1}\circ (\sigma\times\id):\partial_\infty\Gamma^{(2)}\times\R\to\tilde\cK_\Gamma$ descends to a bi-H\"older homeomorphism $F:\chi_\Gamma\to \cK_\Gamma$. The fact that $\cH$ conjugates the flow $\phi^t$ with the translation flow \eqref{eqn hopf coordinates flow} shows that $F$ conjugates the refraction flow and the flow $\phi^t$.
\end{proof}

Next, we relate the basic set  $\cK_\Gamma$  to various subsets of the manifold $\M_\Gamma$ associated with the flow $\phi^t:\M_\Gamma\to \M_\Gamma$  in an intrinsic dynamical manner, in particular those introduced in Definition \ref{def - non-wandering, periodic, trapped}. The result is quite simple and will justify the name \emph{basic set} for $\mathcal K_\Gamma$:
\begin{thm} \label{thm - dynamical subsets coincide}
The following subsets of $\cM_\Gamma$ coincide:
\begin{itemize}
\item The basic set $\cK_\Gamma$,
\item The non-wandering set $\mathcal{NW}(\phi^t)$ of the flow $\phi^t$,
\item The closure of the set $\mathcal P(\phi^t)$ of periodic points of the flow $\phi^t$,
\item The trapped set ${\mathcal T}(\phi^t)$ of the flow $\phi^t$,
\item The set $\overline{\bigcap_{t\in\R}\phi^t(U)}$ for any relatively compact open subset $U\subset \cM_\Gamma$ containing $\cK_\Gamma$.
\end{itemize}
Moreover, the restriction of the flow $\phi^t$ to $\cK_\Gamma$ is topologically transitive.
\end{thm}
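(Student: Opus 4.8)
The strategy is to prove a chain of inclusions among the five sets and to deduce topological transitivity from Theorem \ref{SAMHYP}. First, recall from Lemma \ref{lem:Kcompact} and the discussion that $\cK_\Gamma$ is compact and $\phi^t$-invariant, and by Theorem \ref{SAMHYP} the restricted flow is conjugate to Sambarino's refraction flow $\phi^t_S$ on $\chi_\Gamma$; by Proposition \ref{prop: ref flow} the latter is a compact space. The key dynamical input about the refraction flow is that it is topologically transitive and its periodic orbits are dense --- this follows from Proposition \ref{prop hyperbolic groups elementary properties}\ref{propenumi hyp groups dense orbits product} (topological transitivity of $\Gamma\curvearrowright\partial_\infty\Gamma^{(2)}$, lifted through the $\R$-factor) together with Proposition \ref{prop hyperbolic groups elementary properties}\ref{propenumi hyp groups density poles} (density of poles, which via Lemma \ref{lem periodic points in discontinuity set} correspond exactly to periodic points of $\phi^t$ in $\cK_\Gamma$). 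Hence $\overline{\mathcal P(\phi^t|_{\cK_\Gamma})}=\cK_\Gamma$ and the flow on $\cK_\Gamma$ is topologically transitive, giving the final assertion of the theorem immediately.

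Next I would establish the easy inclusions. Every periodic point of $\phi^t$ on $\cM_\Gamma$ lifts to a periodic point of $\phi^t$ on $\widetilde\cM_\Gamma\subset\bbL$, and by Lemma \ref{lem periodic points in discontinuity set} such a point lies on the orbit $[\xi(\gamma_+):\xi^*(\gamma_-)]$, which is in $\widetilde\cK_\Gamma$; therefore $\mathcal P(\phi^t)\subset\cK_\Gamma$, and since $\cK_\Gamma$ is closed, $\overline{\mathcal P(\phi^t)}\subset\cK_\Gamma$. Combined with the density statement above this yields $\overline{\mathcal P(\phi^t)}=\cK_\Gamma$. Also $\cK_\Gamma\subset\mathcal{NW}(\phi^t)$ since topological transitivity of $\phi^t|_{\cK_\Gamma}$ forces every point of $\cK_\Gamma$ to be non-wandering (a standard fact: a point in the closure of a dense orbit is non-wandering), and $\cK_\Gamma\subset\mathcal T(\phi^t)$ trivially because $\cK_\Gamma$ is compact and $\phi^t$-invariant, so every orbit inside it is relatively compact. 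Finally, for any relatively compact open $U\supset\cK_\Gamma$, invariance of $\cK_\Gamma$ gives $\cK_\Gamma\subset\bigcap_t\phi^t(U)$, hence $\cK_\Gamma\subset\overline{\bigcap_t\phi^t(U)}$.

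The substantive part --- and the main obstacle --- is the reverse inclusion: showing that the trapped set (equivalently the non-wandering set, and the intersection $\bigcap_t\phi^t(U)$) is contained in $\cK_\Gamma$. Since $\mathcal{NW}(\phi^t)\subset\mathcal T(\phi^t)$ and $\bigcap_t\phi^t(U)\subset\mathcal T(\phi^t)$ whenever $U$ is relatively compact, it suffices to prove $\mathcal T(\phi^t)\subset\cK_\Gamma$. So take $x\in\cM_\Gamma$ with relatively compact $\phi^t$-orbit, and let $[v:\alpha]\in\widetilde\cM_\Gamma\subset\bbL$ be a lift. Working in the Hopf parametrization $\cH$ of Lemma \ref{lem - hopf parametrization projective Anosov}, write $\cH([v:\alpha])=([v],[\alpha],\tau)$ with $([v],[\alpha])\in\Omega_\Gamma$; the orbit of $x$ being relatively compact in $\cM_\Gamma$ means there are $t_N\to+\infty$ and $\gamma_N\in\Gamma$ such that $\gamma_N\cdot\phi^{t_N}([v:\alpha])$ converges in $\widetilde\cM_\Gamma$ (and similarly for $t_N\to-\infty$). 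The plan is to show this is impossible unless $([v],[\alpha])\in\Lambda^\pitchfork_\Gamma$, i.e. $[v]=\xi(x^+)$ and $[\alpha]=\xi^*(x^-)$ for some $(x^+,x^-)\in\partial_\infty\Gamma^{(2)}$. One argues by contradiction: if $[v]\notin\Lambda_\Gamma$ or $[\alpha]\notin\Lambda^*_\Gamma$, or if $[v]=\xi(x^+)$, $[\alpha]=\xi^*(x^-)$ but $x^+=x^-$ (excluded since the pair is in $\Omega_\Gamma$ which forces transversality-type genericity), then using the convergence dynamics of Proposition \ref{prop conv action} together with the divergence criterion of Lemma \ref{lem prop criterion} --- exactly as in the proof of Theorem \ref{thm: prop disc} --- the sequence $\gamma_N\cdot\phi^{t_N}([v:\alpha])$ must escape every compact subset of $\widetilde\cM_\Gamma$ (either a representative vector goes to infinity forcing a rescaling $\mu_N\to0$ and collapsing the pairing $\alpha(v)>0$ to $0$, or the image pair leaves $\Omega_\Gamma$), contradicting relative compactness. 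The care needed is: extracting the right subsequences of $\gamma_N$ with well-defined distinct boundary limits (via Lemma \ref{lem distinct limit points}), handling both time directions $t_N\to\pm\infty$ to pin down both $[v]$ and $[\alpha]$ in the respective limit sets, and bookkeeping the $\R$-coordinate $\tau+t_N$ under the cocycle $\mathscr H$ so that the translation does not itself rescue compactness --- here one uses that $\mathscr H(\gamma_N,\cdot)$ grows linearly (by the projective Anosov gap estimate) in a way compatible with, not counteracting, the divergence. Once $\mathcal T(\phi^t)\subset\cK_\Gamma$ is established, all five sets are sandwiched between $\cK_\Gamma$ and $\cK_\Gamma$, completing the proof.
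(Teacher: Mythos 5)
Most of your outline coincides with the paper's actual route: periodic points are pinned down by Lemma \ref{lem periodic points in discontinuity set}, density of poles gives $\cK_\Gamma\subset\overline{\mathcal P(\phi^t)}$, transitivity comes from a dense $\Gamma$-orbit in $\partial_\infty\Gamma^{(2)}$ pushed through the projection $p$, and your argument for $\mathcal T(\phi^t)\subset\cK_\Gamma$ (lift, extract $\gamma_N$ with distinct boundary limits via Lemma \ref{lem distinct limit points}, and contradict convergence in $\widetilde{\cM}_\Gamma$ using Lemma \ref{lem prop criterion} and Proposition \ref{prop conv action}, as in Theorem \ref{thm: prop disc}) is exactly the mechanism of the paper's Lemma \ref{lem topological dynamics}. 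However, there is one genuine gap: you dispose of the non-wandering set by asserting $\mathcal{NW}(\phi^t)\subset\mathcal T(\phi^t)$ without proof. This is not a general fact for flows on non-compact manifolds: non-wandering only says that \emph{nearby} points return near $x$ at large times and gives no control whatsoever on the orbit of $x$ itself. For a concrete counterexample, take the irrational linear flow on $\mathbb{T}^2$, remove a point, and rescale the generator by a positive function tending to $0$ at the puncture so that the flow on the punctured torus is complete and fixed-point free: every point with dense orbit is still non-wandering, yet no orbit is relatively compact in the punctured torus, so $\mathcal T(\phi^t)=\emptyset$ while $\mathcal{NW}(\phi^t)$ is essentially everything. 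In the present setting the inclusion $\mathcal{NW}(\phi^t)\subset\mathcal T(\phi^t)$ is true only \emph{because} both sets equal $\cK_\Gamma$, i.e.\ it is part of the statement you are proving, so it cannot be taken as a reduction step.

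The repair is cheap and is what the paper does: prove $\mathcal{NW}(\phi^t)\subset\cK_\Gamma$ directly by running your lifting/divergence argument with a non-constant approximating sequence. Given $x\in\mathcal{NW}(\phi^t)$, choose $x_k\to x$ and $t_k\to+\infty$ with $\phi^{t_k}(x_k)\to x$, lift to $[v_k:\alpha_k]\to[v:\alpha]$, and write $\delta_k\cdot[e^{t_k}v_k:e^{-t_k}\alpha_k]\to[w:\beta]$; the same use of Lemmas \ref{lem distinct limit points} and \ref{lem prop criterion} and Proposition \ref{prop conv action} forces $[v]\in\Lambda_\Gamma$ and $[\beta]\in\Lambda_\Gamma^*$, and applying this with $y=x$ (so $[w:\beta]$ is again a lift of $x$) yields both $[v]\in\Lambda_\Gamma$ and $[\alpha]\in\Lambda_\Gamma^*$, hence $x\in\cK_\Gamma$. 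This needs no relative compactness of the orbit of $x$, and the same lemma then also covers the trapped set (constant sequence, $\omega$- and $\alpha$-limit points) and $\overline{\bigcap_{t\in\R}\phi^t(U)}$, exactly as you sketch; your remaining inclusions and the transitivity argument are fine as written.
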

We split the proof of Theorem \ref{thm - dynamical subsets coincide} into a series of lemmas.
\begin{lem} \label{lem topological dynamics} Consider sequences $x_k\in \cM_\Gamma$ and $t_k\to+\infty$ and points $x,y\in\cM_\Gamma$ such that $x_k\to x\in \cM_\Gamma$ and $\phi^{t_k}(x_k)\to y\in \cM_\Gamma$. Let $[v:\alpha]\in\widetilde{\cM}_{\Gamma}$ (resp. $[w:\beta]\in\widetilde{\cM}_{\Gamma}$) be a lift of $x$ (resp. of $y$). Then $[v]\in \Lambda_\Gamma$ and $[\beta]\in \Lambda^*_\Gamma$.
\end{lem}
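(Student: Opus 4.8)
The plan is to lift the whole configuration to $\widetilde{\mathcal M}_\Gamma\subset\bbL$ and argue by convergence-group dynamics, using the contact (Hopf) structure to pin down the limiting directions. First I would pick lifts $\tilde x_k=[v_k:\alpha_k]\in\widetilde{\mathcal M}_\Gamma$ of $x_k$ converging to $[v:\alpha]$, rescale homogeneous coordinates so that $v_k\to v$ and $\alpha_k\to\alpha$ (note $\alpha(v)>0$ since $[v:\alpha]\in\bbL$), and likewise pick lifts $z_k$ of $\phi^{t_k}(x_k)$ converging to the given lift $[w:\beta]$ of $y$, writing $z_k=\delta_k\cdot\phi^{t_k}(\tilde x_k)$ for $\delta_k\in\Gamma$. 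A preliminary observation is that $\delta_k$ is unbounded: if $\delta_k=\delta$ along a subsequence then $\delta\cdot\phi^{t_k}(\tilde x_k)=[e^{t_k}\delta v_k:e^{-t_k}\delta\alpha_k]$, and dividing homogeneous coordinates by $e^{t_k}$ shows this converges in $\bbP(V\times V^*)$ to $[\delta v:0]$ (because $e^{-2t_k}\to0$), which cannot equal $[w:\beta]$ as the latter has nonzero $V^*$-component. By Lemma~\ref{lem distinct limit points} I pass to a subsequence and obtain $f\in\Gamma$ and distinct $\gamma_\pm\in\partial_\infty\Gamma$ with $\gamma_k:=f\delta_k\to\gamma_+$ and $\gamma_k^{-1}\to\gamma_-$, then refine once more so that Proposition~\ref{prop conv action} applies. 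Setting $[w':\beta']:=f\cdot[w:\beta]\in\bbL$, we get $\gamma_k\cdot\phi^{t_k}(\tilde x_k)\to[w':\beta']$, hence, applying the fibration $p$, $\gamma_k\cdot[v_k]\to[w']$ and $\gamma_k\cdot[\alpha_k]\to[\beta']$.

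The key step is to show $v\in\xi^*(\gamma_-)$, i.e.\ $[v]$ is \emph{not} transverse to $\xi^*(\gamma_-)$. For this I use the Hopf parametrization $\cH$ of Lemma~\ref{lem - hopf parametrization projective Anosov}: since $\gamma_k\cdot\alpha_k=\alpha_k\circ\gamma_k^{-1}$ one has $(\gamma_k\cdot\alpha_k)(\gamma_k v_k)=\alpha_k(v_k)$, so the $\R$-coordinate of $\cH(\gamma_k\cdot\phi^{t_k}(\tilde x_k))$ equals $t_k+\log\|\gamma_k v_k\|-\tfrac12\log\alpha_k(v_k)$; by continuity of $\cH$ this converges to a finite limit, and since $\alpha_k(v_k)\to\alpha(v)>0$ and $t_k\to+\infty$, we conclude $\|\gamma_k v_k\|\to0$. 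Now decompose $v_k=v_{k,+}+v_{k,-}\in\xi(\gamma_{k,+})\oplus\xi^*(\gamma_{k,-})$ exactly as in the proof of Lemma~\ref{lem prop criterion}, so that $v_{k,\pm}\to v_\pm$ with $v=v_++v_-\in\xi(\gamma_+)\oplus\xi^*(\gamma_-)$. If $v_+\neq0$ then $[v]\pitchfork\xi^*(\gamma_-)$, and Lemma~\ref{lem prop criterion} would force $\|\gamma_k v_k\|\to\infty$, a contradiction. Hence $v_+=0$, i.e.\ $v\in\xi^*(\gamma_-)$. Since $[v:\alpha]\in\widetilde{\mathcal M}_\Gamma$ means $([v],[\alpha])\in\Omega_\Gamma$, feeding the point $x=\gamma_-$ into the defining dichotomy of $\Omega_\Gamma$ now yields $\xi(\gamma_-)\pitchfork[\alpha]$.

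Both conclusions then follow. From $\xi(\gamma_-)\pitchfork[\alpha]$ and $\alpha_k\to\alpha$, Proposition~\ref{prop conv action}(3) gives $\gamma_k\cdot[\alpha_k]\to\xi^*(\gamma_+)$, so $[\beta']=\xi^*(\gamma_+)$; $\Gamma$-equivariance of $\xi^*$ then gives $[\beta]=f^{-1}\cdot[\beta']=\xi^*(f^{-1}\gamma_+)\in\Lambda^*_\Gamma$. For the other conclusion, $[w':\beta']\in\bbL$ gives $[w']\pitchfork[\beta']=\xi^*(\gamma_+)$, and Proposition~\ref{prop conv action}(2) says the maps $\ell\mapsto\gamma_k^{-1}\cdot\ell$ converge to the constant $\xi(\gamma_-)$ uniformly on compact subsets of the open set $\{\ell:\ell\pitchfork\xi^*(\gamma_+)\}$, which contains $[w']$. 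Taking a compact neighborhood $K\ni[w']$ inside that set, for $k$ large we have $\gamma_k\cdot[v_k]\in K$, and uniform convergence gives $[v_k]=\gamma_k^{-1}\cdot(\gamma_k\cdot[v_k])\to\xi(\gamma_-)$; but $[v_k]\to[v]$, so $[v]=\xi(\gamma_-)\in\Lambda_\Gamma$.

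I expect the main obstacle to be the key step: converting the finiteness of the flow-direction coordinate of the limit point (equivalently, the fact that the limit stays in $\bbL$ rather than degenerating) into the quantitative estimate $\|\gamma_k v_k\|\to0$, which is exactly what contradicts the expansion Lemma~\ref{lem prop criterion} unless $v\in\xi^*(\gamma_-)$; this is where the contact/Liouville one-form genuinely enters. A secondary subtlety is that recovering $[v]$ itself, rather than its image $[w']$, requires exploiting the \emph{uniformity} of the convergence in Proposition~\ref{prop conv action}(2) in order to legitimately apply $\gamma_k^{-1}$ to the moving points $\gamma_k\cdot[v_k]$.
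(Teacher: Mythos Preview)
Your argument is correct and follows the same route as the paper: lift, pass to $\gamma_k=f\delta_k$ with distinct boundary limits via Lemma~\ref{lem distinct limit points}, force $\gamma_k v_k\to 0$ to contradict Lemma~\ref{lem prop criterion} unless $v\in\xi^*(\gamma_-)$, invoke the $\Omega_\Gamma$ dichotomy, and finish with Proposition~\ref{prop conv action}. The only cosmetic difference is that the paper reads off $\gamma_k v_k=e^{-t_k}f w_k\to 0$ directly from the scaling (with $w_k$ bounded) rather than through the Hopf $\R$-coordinate, and your uniform-convergence pullback yielding $[v]=\xi(\gamma_-)$ is a clean variant of the paper's concluding step.
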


\begin{proof}
Consider lifts $[v_k:\alpha_k]\in\widetilde{\cM}_{\Gamma}$ of $x_k$  such that $[v_k:\alpha_k]\to [v:\alpha]$. There is a sequence $\delta_k\in \Gamma$ such that $\delta_k\cdot [e^{t_k} v_k:e^{-t_k}\alpha_k]\to [w:\beta]$, and up to a subsequence, consider (thanks to Lemma \ref{lem distinct limit points})  $f\in \Gamma$ such that the sequence $\gamma_k=f\delta_k^{-1}$ satisfies $\gamma_k\to\gamma_+\in\partial_\infty \Gamma$ and $\gamma_k^{-1}\to\gamma_-\in\partial_\infty \Gamma$ with $\gamma_+\neq \gamma_-$. Write $[w_k:\beta_k]=\delta_k\cdot [e^{t_k} v_k:e^{-t_k}\alpha_k]$.

Note that $v_k\to v\neq 0$ and $\gamma_k v_k=e^{-t_k}f w_k\to 0$, so $[v]$ cannot be transverse to $\xi^*(\gamma_-)$ (otherwise Lemma \ref{lem prop criterion} would imply that $\gamma_k v_k\to \infty$). Since $[v:\alpha]\in\tilde\cM_\Gamma$, we must have $[\alpha]\pitchfork \xi(\gamma_-)$, therefore $\gamma_k[v_k]\to \xi(\gamma_+)$ (Proposition \ref{prop conv action}). This means that $f[v]=\xi(\gamma^+)$, and in particular $[v]\in\Lambda_\Gamma$.

This implies that $\xi(\gamma^+)\pitchfork f[\alpha]$, so $\gamma_k^{-1}f[\alpha_k]\to \xi^*(\gamma_-)$, i.e. $[\beta]=\xi^*(\gamma_-)\in\Lambda^*_\Gamma$.
\end{proof}

\begin{lem} \label{lem correspondence periodic orbits conjugacy classes} Let $x\in \cM_\Gamma$ and consider a lift $[v:\alpha]\in \tilde\cM_\Gamma$. Then $x\in \mathcal P(\phi^t)$ if and only if there is $\gamma\in\Gamma\setminus\{e\}$ such that $[v]=\xi(\gamma_+)$ and $[\alpha]=\xi^*(\gamma^-)$. In this case, if $\gamma$ is primitive, then the period of $x$ is $\lambda_1(\gamma)$.
\end{lem}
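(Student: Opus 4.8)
The plan is to establish the equivalence as two implications and then handle the period. For the forward direction I would take a periodic point $x=\Gamma\cdot[v:\alpha]$ of period $T>0$ and lift the relation $\phi^T(x)=x$ along the covering $\tilde\cM_\Gamma\to\cM_\Gamma$: since $[e^Tv:e^{-T}\alpha]$ and $[v:\alpha]$ project to the same point of $\cM_\Gamma$, there is $g\in\Gamma$ with $g\cdot[v:\alpha]=[e^Tv:e^{-T}\alpha]$, and $g\neq e$ because $\phi^t$ is fixed-point free on $\bbL$ for $t\neq 0$. Lemma \ref{lem periodic points in discontinuity set}, applied to $g$ and $T$, then yields exactly $[v]=\xi(g_+)$, $[\alpha]=\xi^*(g_-)$ and $T=\lambda_1(g)$, so $\gamma:=g$ does the job.

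For the converse I would argue directly with eigenvalues. Assume $\gamma\in\Gamma\setminus\{e\}$ with $[v]=\xi(\gamma_+)$ and $[\alpha]=\xi^*(\gamma_-)$; torsion-freeness of $\Gamma$ makes $\gamma$ of infinite order, so Proposition \ref{prop boundary maps}\ref{propenumi boundary maps elements} gives $\gamma v=\varepsilon\,e^{\lambda_1(\gamma)}v$ and $\gamma\cdot\alpha=\varepsilon'\,e^{-\lambda_1(\gamma)}\alpha$ for signs $\varepsilon,\varepsilon'\in\{\pm1\}$ — the proposition pins down the eigenlines but leaves the sign of the (real) eigenvalues open. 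To fix the signs I would invoke the defining inequality of $\bbL$: on the one hand $(\gamma\cdot\alpha)(\gamma v)=\alpha(\gamma^{-1}\gamma v)=\alpha(v)$, which is positive since $[v:\alpha]\in\tilde\cM_\Gamma\subset\bbL$; on the other hand $(\gamma\cdot\alpha)(\gamma v)=\varepsilon\varepsilon'\,\alpha(v)$. Hence $\varepsilon=\varepsilon'$, and therefore $\gamma\cdot[v:\alpha]=[\varepsilon e^{\lambda_1(\gamma)}v:\varepsilon e^{-\lambda_1(\gamma)}\alpha]=[e^{\lambda_1(\gamma)}v:e^{-\lambda_1(\gamma)}\alpha]=\phi^{\lambda_1(\gamma)}([v:\alpha])$. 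Passing to the quotient gives $\phi^{\lambda_1(\gamma)}(x)=x$ with $\lambda_1(\gamma)>0$, so $x$ is periodic.

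For the period statement I would let $T_0>0$ be the minimal period of $x$. As $\phi^t$ is fixed-point free and $\cM_\Gamma$ is Hausdorff, $\{T\in\R:\phi^T(x)=x\}$ is a non-trivial closed proper subgroup of $\R$, hence of the form $T_0\Z$; in particular $\lambda_1(\gamma)$, being a positive element of this subgroup, equals $kT_0$ for some integer $k\geq1$. Applying the forward direction to the period $T_0$ produces $\gamma'\in\Gamma\setminus\{e\}$ with $[v]=\xi(\gamma'_+)$, $[\alpha]=\xi^*(\gamma'_-)$ and $\lambda_1(\gamma')=T_0$; running the converse computation for $\gamma'$ gives $\gamma'\cdot[v:\alpha]=\phi^{T_0}([v:\alpha])$, and since $\SL(V)$ commutes with $\phi^t$ an easy induction yields $(\gamma')^k\cdot[v:\alpha]=\phi^{kT_0}([v:\alpha])=\phi^{\lambda_1(\gamma)}([v:\alpha])=\gamma\cdot[v:\alpha]$. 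Because the $\Gamma$-action on $\tilde\cM_\Gamma$ is free (Theorem \ref{thm: prop disc}), this forces $\gamma=(\gamma')^k$; when $\gamma$ is primitive we must have $k=1$, so $\lambda_1(\gamma)=T_0$ is the period of $x$.

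I expect the only genuinely delicate point to be the sign bookkeeping in the converse: Proposition \ref{prop boundary maps}\ref{propenumi boundary maps elements} controls eigenlines but not signed eigenvalues, so it is essential to use the inequality $\alpha(v)>0$ built into the definition of $\bbL$ in order to conclude $\varepsilon=\varepsilon'$ and land precisely on the forward orbit point $\phi^{\lambda_1(\gamma)}([v:\alpha])$ rather than merely somewhere on the $\pm\lambda_1(\gamma)$-orbit. Everything else is formal manipulation with the covering $\tilde\cM_\Gamma\to\cM_\Gamma$, freeness and proper discontinuity of the $\Gamma$-action, and the commutation of $\SL(V)$ with the flow.
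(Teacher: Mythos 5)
Your proof is correct and takes essentially the same route as the paper: the paper's one-line argument lifts $\phi^T(x)=x$ to a deck transformation $\gamma$ with $\gamma\cdot[v:\alpha]=[e^Tv:e^{-T}\alpha]$ and invokes Lemma \ref{lem periodic points in discontinuity set}, exactly as in your forward direction. The converse (your sign bookkeeping via $\alpha(v)>0$, upgrading Proposition \ref{prop boundary maps}\ref{propenumi boundary maps elements} from eigenlines to the relation $\gamma\cdot[v:\alpha]=\phi^{\lambda_1(\gamma)}([v:\alpha])$) and the primitive-period argument via the period subgroup $T_0\Z$ together with freeness of the action are left implicit in the paper, and you fill them in correctly.
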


\begin{proof}
The equality $\phi^T(x)=x$ means that there is $\gamma\in\Gamma$ such that $[e^Tv:e^{-T}\alpha]=\gamma\cdot [v:\alpha]$, so the result follows from Lemma \ref{lem periodic points in discontinuity set}.
\end{proof}

\begin{proof}[Proof of Theorem \ref{thm - dynamical subsets coincide}] Let us start by proving the chain of inclusions
\begin{equation}\label{thmeqn chain of inclusions}
 \overline{\mathcal P(\phi^t)}\subset \mathcal{NW}(\phi^t) \subset \cK_\Gamma\subset \overline{\mathcal P(\phi^t)}. 
\end{equation}

Any flow satisfies $\overline{\mathcal P(\phi^t)}\subset \mathcal{NW}(\phi^t)$.  Let $x\in \mathcal{NW}(\phi^t)$, and consider sequences $x_k\to x$ in $\cM_\Gamma$ and $t_k\to +\infty$ in $\R$ such that $\phi^{t_k}(x_k)\to x$. If $[v:\alpha]\in \widetilde{\cM}_{\Gamma}$ is a lift of $x$, then Lemma \ref{lem topological dynamics} shows that $[v]\in\Lambda_\Gamma$ and $[\alpha]\in\Lambda^*_\Gamma$,  hence $x\in \cK_\Gamma$, i.e. $\mathcal{NW}(\phi^t)\subset\cK_\Gamma$.

The set of poles of $\Gamma$, i.e. pairs $(\gamma_+,\gamma_-)$ of fixed points of elements $\gamma\in\Gamma$, is dense in the space $\partial_\infty\Gamma^{(2)}$ of pairs of distinct points \cite[Corollary 8.2.G]{Gr}, hence $\cK_\Gamma\subset\overline{\mathcal P(\phi^t)}$ thanks to Lemma \ref{lem correspondence periodic orbits conjugacy classes}. This concludes the proof of \eqref{thmeqn chain of inclusions}.

Let us now prove that $\mathcal T(\phi^t)=\cK_\Gamma$. Since $\cK_\Gamma$ is compact and $\phi^t$-invariant, we automatically get $\cK_\Gamma\subset {\mathcal T}(\phi^t)$. Let $x\in {\mathcal T}(\phi^t)$ and consider a lift $[v:\alpha]\in\widetilde{\cM}_{\Gamma}$. Let $y\in \cM_\Gamma$ be an $\omega$-limit point of $x$, i.e. $y=\lim_{k\to +\infty}\phi^{t_k}(x)$ for some sequence $t_k\to +\infty$ (it exists because $x$ is trapped). Then Lemma \ref{lem topological dynamics} shows that $[v]\in \Lambda_\Gamma$. The same reasoning applied to an $\alpha$-limit point of $x$ (i.e. some $\lim\phi^{s_k}(x)$ with $s_k\to -\infty$) shows that $[\alpha]\in\Lambda^*_\Gamma$, hence $x\in \cK_\Gamma$.

Let $U\subset \cM_\Gamma$ be a relatively compact open subset containing $\cK_\Gamma$, and consider $y\in \overline{\bigcap_{t\in\R}\phi^t(U)}$ and a lift $[w:\beta]\in\widetilde{\cM}_{\Gamma}$. Consider first sequences $t_k\to +\infty$ and $x_k\in U$ such that $\phi^{t_k}(x_k)\to y$,  and assume without loss of generality that $x_k$ has a limit $x\in \cM_\Gamma$.  Lemma \ref{lem topological dynamics} shows that $[\beta]\in\Lambda^*_\Gamma$. Consider now sequences $s_k\to -\infty$ and $z_k\in U$ such that $y=\lim\phi^{s_k}(z_k)$, and assume that $z_k\to z\in \cM_\Gamma$. Applying Lemma \ref{lem topological dynamics} to  $\phi^{-s_k}\left( \phi^{s_k}(y)\right)$  shows that $[w]\in\Lambda_\Gamma$, hence $y\in \cK_\Gamma$. 

The fact that $\cK_\Gamma$ is the closure $\overline{\mathcal P(\phi^t)}$ of periodic points shows that $\cK_\Gamma\subset \bigcap_{t\in\R}\phi^t(\cK_\Gamma)$, so finally $\overline{\bigcap_{t\in\R}\phi^t(U)}=\cK_\Gamma$.
\end{proof}

\begin{lem} \label{lem basic set is basic}
The restriction of the flow $\phi^t$ to $\cK_\Gamma$ is topologically transitive.
\end{lem}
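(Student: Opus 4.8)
The plan is to transport the statement to Sambarino's refraction flow. By Theorem \ref{SAMHYP} there is a homeomorphism $F:\chi_\Gamma\to\cK_\Gamma$ with $F\circ\phi_S^t=\phi^t\circ F$, and since a homeomorphism takes dense sets to dense sets and flow orbits to flow orbits, the property of possessing a dense orbit (topological transitivity, in the sense of Definition \ref{def:basicset}) is invariant under such a conjugacy. Hence it suffices to produce a dense $\phi_S^t$-orbit in $\chi_\Gamma=\Gamma\backslash(\partial_\infty\Gamma^{(2)}\times\R)$, and here I would work with the Hopf description from Section \ref{sec:dynamicsonquotient}: the flow is $\phi_S^t(x,y,\tau)=(x,y,\tau+t)$, the deck group acts by $\gamma\cdot(x,y,\tau)=(\gamma x,\gamma y,\tau+\mathscr{H}(\gamma,\xi(x),\xi^{\ast}(y)))$, and these two actions commute.

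First I would pin down the candidate. By Proposition \ref{prop hyperbolic groups elementary properties}\ref{propenumi hyp groups dense orbits product} the diagonal action $\Gamma\curvearrowright\partial_\infty\Gamma^{(2)}$ is topologically transitive, so --- $\partial_\infty\Gamma^{(2)}$ being second countable and Baire --- a category argument produces a point $(x_0,y_0)\in\partial_\infty\Gamma^{(2)}$ with dense $\Gamma$-orbit; the claim is that $z_0:=[x_0,y_0,0]\in\chi_\Gamma$ then has dense $\phi_S^t$-orbit. To check this, I would take an arbitrary nonempty open $N\subset\chi_\Gamma$, lift it through the quotient map, and shrink to a box $B\times J$ with $B\subset\partial_\infty\Gamma^{(2)}$ open and $J\subset\R$ an open interval; density of the $\Gamma$-orbit of $(x_0,y_0)$ supplies $\gamma\in\Gamma$ with $(\gamma x_0,\gamma y_0)\in B$, and the freedom of the flow parameter lets me choose $t\in\R$ with $t+\mathscr{H}(\gamma,\xi(x_0),\xi^{\ast}(y_0))\in J$. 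Since the $\Gamma$-action and the flow commute, $\gamma\cdot\phi_S^t(x_0,y_0,0)=(\gamma x_0,\gamma y_0,\,t+\mathscr{H}(\gamma,\xi(x_0),\xi^{\ast}(y_0)))$ lies in $B\times J$, hence $\phi_S^t(z_0)$, which is the image in $\chi_\Gamma$ of this point, lies in $N$.

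I do not anticipate a genuine obstacle; the argument is mostly bookkeeping with the Hopf coordinates. The one conceptual point to keep in mind is that a single flow orbit in $\chi_\Gamma$ is connected, so topological transitivity of $\phi_S^t$ cannot be read off from the boundary dynamics alone: it comes from the interplay between the cocycle $\Gamma$-action, which identifies the various flow lines lying over $\partial_\infty\Gamma^{(2)}$, and the unconstrained flow time, and the step above is precisely the mechanism realizing this. The only point requiring a little care is extracting an explicit point with dense orbit from ``topologically transitive'' in Proposition \ref{prop hyperbolic groups elementary properties}, which is the (routine) Baire-category step.
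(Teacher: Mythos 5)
Your proof is correct and follows essentially the same route as the paper: both arguments start from a point of $\partial_\infty\Gamma^{(2)}$ with dense $\Gamma$-orbit (Proposition \ref{prop hyperbolic groups elementary properties}) and then combine the unconstrained flow parameter with the deck-transformation action to reach any prescribed open set. The only difference is cosmetic: you transport the problem to $\chi_\Gamma$ through the conjugacy of Theorem \ref{SAMHYP} (proved before this lemma, so no circularity) and argue in Hopf coordinates, whereas the paper argues directly in $\widetilde{\cK}_\Gamma$ using that the projection $p:\bbL\to\trprod$ is an open map.
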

\begin{proof}
By Proposition \ref{prop hyperbolic groups elementary properties}\ref{propenumi hyp groups dense orbits product}, we may consider $(a,b)\in \partial_\infty\Gamma^{(2)}$ whose $\Gamma$-orbit is dense in $\partial_\infty\Gamma^{(2)}$.  Let $x\in \cK_\Gamma$ be an element admitting a lift $[v:\alpha]\in\tilde\cK_\Gamma$ with $[v]=\xi(a)$ and $[\alpha]=\xi^*(b)$. Let $y\in \cK_\Gamma$ and consider a lift $[w:\beta]\in\tilde\cK_\Gamma$. 

Let $U\subset \cK_\Gamma$ be an open subset containing $y$, and $\tilde U\subset\tilde\cK_\Gamma$ its preimage. As the projection $p:\bbL\to\trprod$ is an open map, there is $\gamma\in\Gamma$ such that $\gamma\cdot ([v],[\alpha])\in p(\tilde U)$, i.e. there is $t\in \R$ such that $\gamma\cdot [e^t v:e^{-t}\alpha] \in\tilde U$, hence $\phi^t(x)\in U$.
\end{proof}

At this point, all that remains to complete the proof of Theorem \ref{THM A}.(\ref{THM A: AxiomA}) and \ref{THM A}.(\ref{real analytic fol}) is to prove that the canonical splitting of the tangent bundle of $\mathbb{L}$ descends to a contracting/expanding splitting along the basic set $\mathcal{K}_{\Gamma}.$

\subsection{Hyperbolicity of the non-wandering set} The $\SL(V)\times\{\phi^t\}_{t\in\R}$-equivariant splitting \eqref{eqn splitting TL} restricts to a $\Gamma\times\{\phi^t\}_{t\in\R}$-equivariant splitting of $T\widetilde{\cM}_{\Gamma}$, descending to a $\phi^t$-equivariant splitting
\begin{equation}\label{eqn decomposition TMGamma} 
 T\cM_\Gamma = E^\mathrm{u}\oplus E^0\oplus E^\mathrm{s}. 
\end{equation}
Since these bundles  are real analytic, the following lemma (in conjunction with Theorem \ref{thm - dynamical subsets coincide} and Lemma \ref{lem basic set is basic}) will complete the proof of Theorem \ref{THM A}.(\ref{THM A: AxiomA}) and \ref{THM A}.(\ref{real analytic fol}) and thus finish the proof of Theorem \ref{THM A}.
\begin{lem}
The non-wandering set $\cK_\Gamma$ of the  flow $\phi^t$ on $\M_\Gamma$ is a hyperbolic set with stable (resp.\ unstable) subbundle given by $ E^\mathrm{s}|_{\cK_\Gamma}$ (resp.\ $E^\mathrm{u}|_{\cK_\Gamma}$).
\end{lem}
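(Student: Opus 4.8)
The plan is first to reduce the statement to uniform expansion of $E^{\mathrm{u}}|_{\cK_\Gamma}$ alone, exploiting the $\SL(V)$-invariant pseudo-Riemannian metric $g$ of \eqref{eqn pseudo-riemannian metric}: being $\SL(V)$-invariant it descends to $\cM_\Gamma$, and it is $\phi^t$-invariant. A direct inspection shows that $E^{\mathrm{u}}$ and $E^{\mathrm{s}}$ are $g$-isotropic, both $g$-orthogonal to $E^0$, and that $g$ restricts to a perfect pairing $E^{\mathrm{u}}\times E^{\mathrm{s}}\to\R$ (at $[v:\alpha]$ it is the canonical pairing of $\ker\alpha\cong V/\R v$ with $\ker\iota_v\cong(V/\R v)^{\ast}$). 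Hence $d\phi^t|_{E^{\mathrm{u}}}$ and $d\phi^t|_{E^{\mathrm{s}}}$ are mutually contragredient: given any continuous bundle norm $N^{\mathrm{u}}$ on $E^{\mathrm{u}}|_{\cK_\Gamma}$, letting $N^{\mathrm{s}}$ be its $g$-dual norm on $E^{\mathrm{s}}|_{\cK_\Gamma}$, $g$-invariance of the flow gives $\|d\phi^t|_{E^{\mathrm{s}}_x}\|_{N^{\mathrm{s}}}\le\|(d\phi^t|_{E^{\mathrm{u}}_x})^{-1}\|_{N^{\mathrm{u}}}$ for all $x,t$. Since $E^0=\R X$ by construction, it suffices to produce a continuous $N^{\mathrm{u}}$ and constants $C,c>0$ with $N^{\mathrm{u}}(d\phi^tw)\ge C^{-1}e^{ct}N^{\mathrm{u}}(w)$ for all $x\in\cK_\Gamma$, $w\in E^{\mathrm{u}}_x$, $t\ge 0$; by a routine submultiplicativity-plus-compactness argument it is even enough to do this for one large time $T_0$.

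Second, I would transfer this to the lift and reduce it to a linear-algebra estimate on returning group elements. Working in the affine quadric $\bbL_1$, where $d\phi^t(w,\beta)=(e^tw,e^{-t}\beta)$ and $d\gamma(w,\beta)=(\gamma w,\beta\circ\gamma^{-1})$, the ambient Euclidean norm on $V\times V^{\ast}$ is invariant under the deck involution $(v,\alpha)\mapsto(-v,-\alpha)$, so it descends to a bundle norm $\|\cdot\|^{\bbL}$ on $T\bbL$ for which $d\phi^t$ is \emph{exactly} multiplication by $e^{\pm t}$ on $E^{\mathrm{u}},E^{\mathrm{s}}$. This norm is not $\Gamma$-invariant, but by Lemma \ref{lem:Kcompact} we fix a compact $D\subset\widetilde{\cK}_\Gamma$ with $\Gamma D=\widetilde{\cK}_\Gamma$; any $N^{\mathrm{u}}$ pulls back to a $\Gamma$-invariant norm on $E^{\mathrm{u}}|_{\widetilde{\cK}_\Gamma}$ uniformly comparable to $\|\cdot\|^{\bbL}$ on $D$. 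Given $x\in\cK_\Gamma$ with a lift $\tilde x=[v:\alpha]\in D$ — so $[v]=\xi(a)$, $[\alpha]=\xi^{\ast}(b)$, with $(a,b)$ ranging over a compact subset of $\partial_\infty\Gamma^{(2)}$ as $\tilde x$ ranges over $D$ — choose $\gamma_t\in\Gamma$ with $\gamma_t^{-1}\phi^t(\tilde x)\in D$. Then $\Gamma$-invariance of the pulled-back norm, comparability on $D$, and the explicit formulas give, uniformly,
\[
\frac{N^{\mathrm{u}}(d\phi^tw)}{N^{\mathrm{u}}(w)}\ \asymp\ e^{t}\,\frac{\|\gamma_t^{-1}w_0\|}{\|w_0\|}\ \ge\ e^{t}\cdot s_{\min}\!\bigl(\gamma_t^{-1}|_{\ker\alpha}\bigr),
\]
where $w=[w_0:0]$ with $w_0\in\ker\alpha$ and $s_{\min}$ is the smallest (Euclidean) singular value. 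So the goal becomes: $s_{\min}(\gamma_t^{-1}|_{\ker\alpha})\ge C^{-1}e^{-(1-c)t}$, uniformly.

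Third is the arithmetic-dynamical core: controlling $\gamma_t$. Using the Hopf parametrization \eqref{eqn hopf coordinates flow}--\eqref{eqn hopf coordinates action}, the membership $\gamma_t^{-1}\phi^t(\tilde x)\in D$ forces $\|\gamma_t^{-1}v\|\asymp e^{-t}\|v\|$ and $\|\alpha\circ\gamma_t\|\asymp e^{t}\|\alpha\|$; since the flow lines of the refraction-flow space are uniform quasi-geodesics (it is of Gromov--Mineyev type, cf.\ the discussion following Definition \ref{sam flow}), the Milnor--Švarc lemma gives $|\gamma_t|\asymp t$. From Proposition \ref{prop conv action} and Proposition \ref{prop boundary maps}\ref{propenumi boundary maps elements} one reads off that the top singular directions of $\gamma_t^{\mathrm{cg}}=(\beta\mapsto\beta\circ\gamma_t)$ and the most-contracted direction of $\gamma_t^{-1}$ are asymptotic, respectively, to $\xi^{\ast}(\gamma_{t,+})$ and $\xi(\gamma_{t,+})$; and because $\gamma_t$ has (quasi-)axis staying a bounded distance from the flow segment through $\tilde x\in D$, one has $\gamma_{t,+}\to a$, $\gamma_{t,-}\to b$ uniformly. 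Combining: $\|\alpha\circ\gamma_t\|\asymp e^{t}$ together with $[\alpha]=\xi^{\ast}(b)$ being uniformly aligned with the top singular direction of $\gamma_t^{\mathrm{cg}}$ gives $\sigma_1(\gamma_t)\asymp e^{t}$; the projective Anosov inequality (Definition \ref{def projective Anosov subgroup}) and $|\gamma_t|\asymp t$ then give $\sigma_2(\gamma_t)\le\sigma_1(\gamma_t)e^{-c|\gamma_t|}\le Ce^{(1-c')t}$; and the most-contracted direction of $\gamma_t^{-1}$, being $\asymp\xi(\gamma_{t,+})\asymp\xi(a)$ and hence (as $a\ne b$ uniformly) making a uniformly positive angle $\theta_0$ with the hyperplane $\ker\alpha=\xi^{\ast}(b)$, forces, on writing a unit vector of $\ker\alpha$ in the singular basis of $\gamma_t^{-1}$, $s_{\min}(\gamma_t^{-1}|_{\ker\alpha})\ge\sin\theta_0\cdot\sigma_2(\gamma_t)^{-1}\ge C^{-1}e^{-(1-c')t}$ — exactly the required estimate. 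Plugging back, $N^{\mathrm{u}}(d\phi^tw)\ge C^{-1}e^{c't}N^{\mathrm{u}}(w)$ for $t\ge T_0$; uniform contraction on $E^{\mathrm{s}}$ follows by the duality of the first step, so $\cK_\Gamma$ is hyperbolic with subbundles $E^{\mathrm{s}}|_{\cK_\Gamma}$, $E^{\mathrm{u}}|_{\cK_\Gamma}$.

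The hard part is entirely the third step — turning the coarse group-theoretic behaviour of the returning element $\gamma_t$ into these linear estimates, with all constants uniform in $x\in\cK_\Gamma$ and $t\ge 0$. The two delicate points are (a) the linear growth $|\gamma_t|\asymp t$, which rests on the flow lines of the refraction-flow space being uniform quasi-geodesics, and (b) the uniform transversality of the most-contracted direction of $\gamma_t^{-1}$ to $\ker\alpha$ (respectively the uniform alignment of $[\alpha]$ with the top singular direction of $\gamma_t^{\mathrm{cg}}$), which rests on the uniform convergence $\gamma_{t,\pm}\to(a,b)$ combined with the uniform version of Proposition \ref{prop conv action}; these are precisely the places where cocompactness of $\Gamma\curvearrowright\widetilde{\cK}_\Gamma$ and the uniform Anosov constants are indispensable.
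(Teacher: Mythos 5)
Your architecture is genuinely different from the paper's, and its first two steps are sound: the duality argument (the $\SL(V)$-invariant, $\phi^t$-invariant pairing of \eqref{eqn pseudo-riemannian metric} restricts to a perfect pairing $E^{\mathrm{u}}\times E^{\mathrm{s}}\to\R$, so uniform expansion on $E^{\mathrm{u}}|_{\cK_\Gamma}$ forces uniform contraction on $E^{\mathrm{s}}|_{\cK_\Gamma}$) is correct, and so is the reduction, via a compact fundamental domain $D\subset\widetilde{\cK}_\Gamma$ and returning elements $\gamma_t$, of the expansion to the bound $s_{\min}(\gamma_t^{-1}|_{\ker\alpha})\gtrsim e^{-(1-c)t}$; this is exactly the difficulty the paper's closing Remark of Section 3 isolates. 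The paper avoids the whole computation: it identifies $E^{\mathrm{u}}|_{\cK_\Gamma}\simeq\Gamma\backslash\sigma_+^\ast T\bbP(V)$ and $E^{\mathrm{s}}|_{\cK_\Gamma}\simeq\Gamma\backslash\sigma_-^\ast T\bbP(V^\ast)$, observes that $d\phi^t$ acts trivially on the fibers of these pullbacks, and then the required uniform dilation/contraction of the resulting flows $\psi^t_\pm$ is literally the original Guichard--Wienhard definition of a projective Anosov subgroup, with the refraction flow replacing the Gromov geodesic flow. In effect you are re-proving the implication from the eigenvalue-gap characterization to the dynamical definition, rather than quoting it.

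The genuine gap is in your third step, precisely where you yourself locate the difficulty. The returning element is only constrained by $\gamma_t^{-1}\phi^t(\tilde x)\in D$; this pins $\gamma_t$ to within bounded distance of the forward ray toward $a$, hence $\gamma_t\to a$, but it imposes no control on $\gamma_t^{-1}$: the claims that $\gamma_t$ has a quasi-axis fellow-traveling the flow segment and that $\gamma_{t,-}\to b$ are false in general (e.g.\ $\gamma_t$ can be a bounded perturbation of an element conjugate to a fixed short element, with $\gamma_t^{-1}$ accumulating at points unrelated to $b$, so Lemma \ref{lem hyp groups convergence limit points} and Proposition \ref{prop conv action} are not directly applicable). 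Consequently the pivotal uniform transversality -- that the most-contracted direction of $\gamma_t^{-1}$, i.e.\ the top image (Cartan attractor) direction $U_1(\gamma_t)$, stays uniformly transverse to $\ker\alpha=\ker\xi^\ast(b)$, which is what both your $\sigma_1(\gamma_t)\lesssim e^t$ bound and your $\sin\theta_0$ estimate rest on -- is not established. It is true, but it needs the continuity/uniformity of the Cartan-attractor extension of $\xi$ (a Kapovich--Leeb--Porti/Bochi--Potrie--Sambarino-type property not proved in this paper), not the fixed-point picture you invoke. Two smaller points in the same step: the inequality $\sigma_2(\gamma)\le\sigma_1(\gamma)e^{-c|\gamma|}$ is not Definition \ref{def projective Anosov subgroup} (which bounds $\lambda_1-\lambda_2$ by the stable length) but its nontrivially equivalent singular-value characterization, which must be cited; and $|\gamma_t|\gtrsim t$ does not need the informal quasi-geodesic claim about the refraction flow space -- it follows from $\sigma_1(\gamma_t)\gtrsim e^t$ together with submultiplicativity of $\sigma_1$ over a finite generating set. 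With the transversality statement properly sourced and these citations repaired, your argument would close, but as written the key uniform estimate rests on an incorrect coarse picture.
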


\begin{proof} Consider the $\Gamma$-equivariant maps 
\bq \sigma_+:\map{\partial_\infty\Gamma^{(2)}\times\R}{\bbP(V)}{(x,y,\tau)}{\xi(x)} , \quad \sigma_-:\map{\partial_\infty\Gamma^{(2)}\times\R}{\bbP(V^\ast)}{(x,y,\tau)}{\xi^*(y)}.
\label{eqn sigma pm} \eq
The vector bundles $\Gamma\backslash \sigma_+^*T\bbP(V)$ and $\Gamma\backslash \sigma_-^*T\bbP(V^*)$ over the refraction flow space $\chi_\Gamma$ each come equipped with a flow by vector bundle isomorphisms
\begin{align*} \psi^t_+&:\map{\Gamma\backslash \sigma_+^*T\bbP(V)}{\Gamma\backslash \sigma_+^*T\bbP(V)}{\Gamma\cdot((x,y,\tau),\nu)}{\Gamma\cdot((x,y,t+\tau),\nu)} ,\\
 \quad \psi^t_-&:\map{\Gamma\backslash \sigma_-^*T\bbP(V^*)}{\Gamma\backslash \sigma_-^*T\bbP(V^*)}{\Gamma\cdot((x,y,\tau),\nu)}{\Gamma\cdot((x,y,t+\tau),\nu)}.
\end{align*}
The projection $p:\bbL\to\trprod$ \eqref{eqn projection of L onto transverse product} is composed of the two coordinate projections:
\begin{equation*}
 p_L:\map{\bbL}{\bbP(V)}{\left[v:\alpha\right]}{\left[v\right]},\quad  p_R:\map{\bbL}{\bbP(V^*)}{\left[v:\alpha\right]}{\left[\alpha\right]},
\end{equation*}
and there are natural $\SL(V)$-equivariant isomorphisms of vector bundles over $\bbL$
\begin{equation}\label{eqn stable/unstable bundles as pull-backs}
E^\mathrm{u}\simeq p_L^*T\bbP(V),\quad E^\mathrm{s}\simeq p_R^*T\bbP(V^*),
\end{equation}
given by the restrictions to $E^\mathrm{u}$ and $E^\mathrm{s}$ of the differentials of $p_L$ and $p_R$. As $p_L\circ \phi^t=p_L$ and $p_R\circ \phi^t=p_R$, we also have $dp_L\circ d\phi^t=dp_L$ and $dp_R\circ d\phi^t=dp_R$, meaning that the action of $d\phi^t$ on the pull-back bundles in \eqref{eqn stable/unstable bundles as pull-backs} is trivial on fibers. So we have $\Gamma\times \R$-equivariant isomorphisms
\begin{align*}
\left.E^\mathrm{u}\right\vert_{\widetilde{\cK}_{\Gamma}} &\longrightarrow \left. p_L^*T\bbP(V)\right\vert_{\widetilde{\cK}_{\Gamma}} \longleftarrow \sigma_+^*T\bbP(V),\\
\left.E^\mathrm{s}\right\vert_{\widetilde{\cK}_{\Gamma}} &\longrightarrow \left. p_R^*T\bbP(V^\ast)\right\vert_{\widetilde{\cK}_{\Gamma}} \longleftarrow \sigma_-^*T\bbP(V^\ast),
\end{align*} 
that descend to isomorphisms
\begin{align*}
\left.E^\mathrm{u}\right\vert_{\cK_\Gamma} &\simeq \Gamma\backslash \sigma_+^*T\bbP(V), \\
\left.E^\mathrm{s}\right\vert_{\cK_\Gamma} &\simeq \Gamma\backslash \sigma_-^*T\bbP(V^*),
\end{align*}
conjugating the actions of $d\phi^t$ with $\psi_\pm^t$. Using the refraction flow of Definition \ref{sam flow} as a replacement for a Gromov-geodesic flow (Definition \ref{def - geodesic flow gromov hyperbolic group}), the original definition of an Anosov subgroup \cite[Def.~2.10, Rem.~2.11]{GW12} is that the flow $\psi_+^t$ (resp. $\psi_-^t$) is uniformly dilating (resp. contracting).
\end{proof}
\begin{rem}The differential $d\phi^t$ of the flow $\phi^t$ on $\bbL$ has a very simple expression. Let $[v:\alpha]\in\bbL$,  $(w,0)\in \ker\alpha\times\{0\}=E^\mathrm{u}_{[v:\alpha]}$ and $t\in\R$.
\[ d_{[v:\alpha]}\phi^t(w,0)=(e^tw,0). \]
One can be tempted to believe that this formula automatically implies dilation on $E^\mathrm{u}$, as for any norm $\norm{\cdot}$ on $V\times V^*$ we find
\begin{equation}\label{eqn remark not trivial 1} \frac{\norm{d_{[v:\alpha]}\phi^t(w,0)}}{\norm{(w,0)}}=e^t. 
\end{equation}
 However, the notion of dilation involves a Riemannian metric on $\cM_\Gamma$. Considering a lift of such a Riemannian metric to $\widetilde{\cM}_{\Gamma}$, the ratio that should grow exponentially fast is
\begin{equation}\label{eqn remark not trivial 2} \frac{\norm{d_{[v:\alpha]}\phi^t(w,0)}_{\phi^t([v:\alpha])}}{\norm{(w,0)}_{[v:\alpha]}}= e^t\frac{\norm{(w,0)}_{[e^tv:e^{-t}\alpha]}}{\norm{(w,0)}_{[v:\alpha]}}.
\end{equation}
The comparison would be made trivial if it were possible to choose this Riemannian metric to be constant along flow lines. In general, this is not possible because  the $\Gamma$-action on $V\times V^*$ does not preserve any norm. The situation is most evident when working at a periodic point: according to Lemma \ref{lem correspondence periodic orbits conjugacy classes}, a periodic point $x=\phi^T(x)\in \cK_\Gamma$ lifts to  $[v:\alpha]\in \widetilde{\cK}_{\Gamma}$ such that $\phi^T([v:\alpha])=\gamma\cdot [v:\alpha]$ for some $\gamma\in\Gamma$ with $\lambda_1(\gamma)=T$. Now for $\nu=d_{[v:\alpha]}\pi(w,0)\in E^\mathrm{u}_{x}$ and $t=nT$, $n\in\Z$, the ratio \eqref{eqn remark not trivial 2} becomes:
\begin{align*}
\frac{\norm{d_{x}\phi^t(\nu)}_{\phi^t(x)}}{\norm{\nu}_{x}}&= \frac{\norm{d_{[v:\alpha]}\phi^t(w,0)}_{\phi^t([v:\alpha])}}{\norm{(w,0)}_{[v:\alpha]}}\\
 &=\frac{\norm{d_{\phi^t([v:\alpha])}\gamma^{-n}\circ d_{[v:\alpha]}\phi^t(w,0)}_{\gamma^{-n}\cdot\phi^t([v:\alpha])}}{\norm{(w,0)}_{[v:\alpha]}}\\
&= e^t \frac{\norm{(\gamma^{-n}\cdot w,0)}_{[v:\alpha]}}{\norm{(w,0)}_{[v:\alpha]}}\\
&= e^{n\lambda_1(\gamma)} \frac{\norm{(\gamma^{-n}\cdot w,0)}_{[v:\alpha]}}{\norm{(w,0)}_{[v:\alpha]}}.
 \end{align*}
\end{rem}

\section{Exponential Mixing}\label{ExponentialMixing}

In this entire section, we assume that $\Gamma< \SL(V)$ is projective Anosov, torsion-free and non-elementary. The latter assumption is necessary for $\phi^t$ to be mixing, since in the elementary case $\Gamma$ is isomorphic to $\Z$ (because we also assume that $\Gamma$ is torsion-free), and the restriction of $\phi^t$ to $\cK_\Gamma$ is smoothly conjugate to the translation flow of the circle, in particular it is uniquely ergodic and not mixing.    

The bulk of this section consists in verifying the subtle geometro-dynamical hypotheses of Stoyanov \cite{St11} (see the introductory Subsection \ref{methods of proof} for a contextualized discussion) from which Theorems \ref{EXP MIX}, \ref{orbit counting} and the spectral gap of Theorem \ref{zeta fcn} follow as corollaries of well-known techniques developed by Dolgopyat \cite{DOL98} (see Dolgopyat-Pollicott \cite{DP98}) building on the work of Parry-Pollicott \cite{PP83} and Pollicott \cite{POL85}.  We end this section with an explanation of Stoyanov's \cite{St11} spectral estimates on complex Ruelle transfers and the proof of Theorem \ref{EXP MIX} from the introduction.

\subsection{Gibbs equilibria and mixing rates}

In this preliminary section, we rapidly introduce the machinary of Gibbs equilibrium measures and explain various notions of mixing.  For more details and background information, we refer the reader to \cite{BR75}.

Let $(\mathcal{K},d)$ be a compact metric space with a continuous flow 
$$
\phi^{t}: \mathcal{K}\rightarrow \mathcal{K}
$$
defined for all $t\in \mathbb{R}.$  

If $\mu$ is any finite Borel measure on $\mathcal{K},$ let $h_{\mu}$ denote the Kolmogorov-Sinai measure theoretic entropy of the flow $\phi^{t}$ with respect to $\mu.$  Given any $U\in C^{0}(\mathcal{K}, \mathbb{R}),$ the topological pressure may be defined directly via the variational formula
\begin{align}\label{pressure var}
\mathrm{Pr}_{\phi^{t}}(U)=\sup_{\mu}\left(h_{\mu} + \int_{\mathcal{K}} U \ d\mu \right)
\end{align}
where the supremum is over $\phi^{t}$-invariant Borel probability measures on $\mathcal{K}$.  A $\phi^{t}$-invariant Borel probability measure realizing the supremum in \eqref{pressure var} is called a Gibbs equilibrium state associated to the continuous potential $U.$  Given $0<\alpha<1,$ we denote the topological vector space of H\"{o}lder continuous real valued functions by $C^{\alpha}(\mathcal{K}, \mathbb{R})$ equipped with the norm
$$
\lVert F \rVert_{\alpha}=\lVert F \rVert_{\infty} + \sup_{x\neq y} \frac{\lvert F(x)-F(y)\rvert}{d(x,y)^{\alpha}}.
$$
In the following, we fix a complete Riemannian manifold $\mathcal{M}$ with a $C^{1}$-Axiom A flow $\phi^{t}: \mathcal{M}\rightarrow \mathcal{M}$ and fix a basic hyperbolic set $\mathcal{K}\subset \mathcal{M}.$  The following existence result of Gibbs equilibrium states for basic hyperbolic sets of Axiom A flows was established by Bowen-Ruelle \cite[Thm.~3.3]{BR75}.  
\begin{prop}
For every $0<\alpha<1$ and $U\in C^{\alpha}(\mathcal{K}, \mathbb{R}),$ there exists a unique Gibbs equilibrium state $\mu_{U}.$  Moreover, the flow $\phi^{t}$ is ergodic with respect to $\mu_{U}$ and $\mathrm{supp}(\mu_{U})=\mathcal{K}.$  
\end{prop}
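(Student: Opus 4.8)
Since this is the classical Bowen--Ruelle theorem \cite[Thm.~3.3]{BR75}, the plan is to recall its proof via symbolic dynamics and the thermodynamic formalism for subshifts of finite type. The first step is to invoke the existence of arbitrarily fine Markov partitions for the basic hyperbolic set $\mathcal K$ of the $C^1$-Axiom A flow $\phi^t$ (Bowen for diffeomorphisms, Ratner/Bowen for flows): one obtains a suspension flow $(\Sigma_A\times\R)/\!\sim_r$ over a topologically transitive subshift of finite type $(\Sigma_A,\sigma)$ with a H\"older roof function $r>0$, together with a finite-to-one, H\"older semiconjugacy $\pi\colon(\Sigma_A\times\R)/\!\sim_r\;\to\;\mathcal K$ intertwining the suspension flow with $\phi^t|_{\mathcal K}$, injective off a set which is a countable union of codimension-one pieces (the images of the boundaries of the partition). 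Topological transitivity of $\phi^t|_{\mathcal K}$ forces $\Sigma_A$ to be topologically transitive.

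\textbf{Reduction and existence.} Next, one transfers the problem downstairs: a H\"older potential $U\in C^\alpha(\mathcal K,\R)$ pulls back along $\pi$ to a H\"older potential on the suspension, which via the standard integration-along-the-fibre correspondence yields a H\"older function $f_U\in C^{\alpha'}(\Sigma_A,\R)$. By Ruelle's Perron--Frobenius theorem for H\"older potentials on transitive subshifts of finite type, for each real $c$ the potential $f_U-c\,r$ has a well-defined topological pressure $P(c)$ on $\Sigma_A$; $P$ is continuous, strictly decreasing, and tends to $\mp\infty$, so there is a unique $c_U$ with $P(c_U)=0$, and $f_U-c_U r$ admits a unique equilibrium state $\nu_U$ on $\Sigma_A$, which moreover enjoys the Gibbs property: $\nu_U$ assigns mass comparable to $e^{S_n f_U(x)-n\,c_U\bar r}$ to every $n$-cylinder, in particular it has full support and is mixing (indeed Bernoulli). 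Pushing the product measure $(\nu_U\times\mathrm{Leb})$ down to the suspension, normalizing by $\int r\,d\nu_U$, and applying $\pi_*$ produces a $\phi^t$-invariant Borel probability measure $\mu_U$ on $\mathcal K$. The Abramov formula $h_{\mu_U}(\phi^1)=h_{\nu_U}(\sigma)/\int r\,d\nu_U$, together with the variational principle on $\Sigma_A$ and the identity $\mathrm{Pr}_{\phi^t}(U)=c_U$ (which follows by the same Abramov bookkeeping applied to arbitrary invariant measures), shows that $\mu_U$ realizes the supremum in \eqref{pressure var}, i.e.\ it is a Gibbs equilibrium state.

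\textbf{Uniqueness, ergodicity, support.} For uniqueness one observes that any $\phi^t$-invariant probability $m$ on $\mathcal K$ gives zero mass to the $\pi$-non-injectivity locus --- this is the one delicate point --- because that locus is a finite union of compact sets carrying no flow line, so by Poincar\'e recurrence its $\phi^t$-orbit, a null set away from which $\pi$ is a bijection, must be $m$-conull; hence $m$ lifts to the suspension and corresponds to an invariant measure on $\Sigma_A$, and if $m$ is an equilibrium state for $U$ then this measure is an equilibrium state for $f_U-c_U r$, forced to equal $\nu_U$. Thus $\mu_U$ is the unique equilibrium state. Ergodicity of $\mu_U$ follows because $\nu_U$ is ergodic (even mixing) for $\sigma$, and an ergodic base measure yields an ergodic suspension flow; the full-support statement $\mathrm{supp}(\mu_U)=\mathcal K$ follows from the full support of $\nu_U$ on the transitive $\Sigma_A$ combined with surjectivity of $\pi$ and density of $\pi$ of full-measure sets. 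The only genuine obstacle is the construction of the Markov coding for the flow and the verification that the coding boundary is null for every equilibrium state; both are handled in \cite{BR75} (and, for the flow case, rest on Ratner's and Bowen's symbolic dynamics), and in our concrete setting one could alternatively appeal directly to the symbolic model produced in Section \ref{ExponentialMixing}.
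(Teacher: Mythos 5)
The paper does not prove this proposition at all: it is quoted directly from Bowen--Ruelle \cite[Thm.~3.3]{BR75}, so your reconstruction is being compared with the standard symbolic-dynamics proof rather than with an argument in the text. Your outline (Markov families for basic sets via \cite{RAT73,BOW75}, suspension over a transitive subshift of finite type with H\"older roof, reduction of the flow potential to $f_U-c_U r$ on the base, Ruelle--Perron--Frobenius, Abramov's formula) is exactly that proof in its usual form, and the existence, ergodicity and full-support parts are essentially fine (modulo the harmless overstatement that $\nu_U$ is mixing/Bernoulli, which requires topological mixing of the shift rather than mere transitivity, and is not needed for the statement).

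The step you yourself call ``the one delicate point'' is, however, handled incorrectly. It is not true that every $\phi^t$-invariant probability measure on $\mathcal K$ gives zero mass to the non-injectivity locus of the coding map $\pi$: that locus is the flow-saturation of the images of the partition boundaries, hence is flow-invariant (so it contains entire flow lines whenever it is non-empty, contrary to your ``carrying no flow line''), and it can perfectly well support invariant measures --- for instance the invariant probability carried by a periodic orbit that meets a boundary face. The Poincar\'e-recurrence argument therefore does not establish the claim, and the claim itself is false for general invariant measures. What the standard proofs actually use is weaker and requires real work: one shows that the equilibrium state upstairs gives the boundary set measure zero (via ergodicity plus full support, or a direct Gibbs estimate), and for uniqueness one lifts an arbitrary equilibrium state $m$ on $\mathcal K$ to a flow-invariant measure on the suspension by a compactness/averaging argument --- no null-set statement needed --- and then uses that the finite-to-one factor map preserves measure-theoretic entropy, so the lift is again an equilibrium state and must coincide with the unique one coming from $\nu_U$; alternatively one can invoke Bowen's uniqueness theorem for expansive flows with specification. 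Either repair is standard, but as written your uniqueness paragraph rests on a false intermediate statement, so the proof is incomplete precisely where the genuine difficulty lies.
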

Let $U\in C^{\alpha}(\mathcal{K}, \mathbb{R})$ with unique Gibbs measure $\mu_{U}.$  Given $F,G \in C^{\alpha}(\mathcal{K}, \mathbb{R}),$ the correlation function is defined by
\begin{align}\label{corr functions}
\mathrm{c}^{t}(F,G;U)=\left\lvert \int_{z\in \mathcal{K}} F(z)\cdot G(\phi^{t}(z)) \ d\mu_{U}(z) - \int_{z\in \mathcal{K}} F(z) \ d\mu_{U}(z)\int_{z\in \mathcal{K}} G(z) \ d\mu_{U}(z)\right\rvert.
\end{align}
The flow $\phi^{t}$ is mixing with respect to $\mu_{U}$ for all H\"{o}lder observables if for all $F,G\in C^{\alpha}(\mathcal{K}, \mathbb{R})$ one has $\mathrm{c}^{t}(F,G;U)\rightarrow 0$ as $t\rightarrow \infty$ 
and exponentially mixing if there exists $c_{\alpha}(U), C_{\alpha}(U)>0$ such that
$$
\forall\,t\in \R:\; \mathrm{c}^{t}(F,G;U)\leq C_{\alpha}(U)e^{-c_{\alpha}(U)|t|}\lVert F\rVert_{\alpha}\lVert G\rVert_{\alpha}.
$$
Exponential mixing is also called exponential decay of correlations.

\subsection{Global geometry of the stable and unstable foliations}

For $[v:\alpha]\in \mathbb L$, recall the splitting of the tangent space $T_{[v:\alpha]}\bbL=\set{(w,\beta)\in V\times V^*}{\alpha(w)+\beta(v)=0}$ as
\[T_{[v:\alpha]}\bbL= E^\mathrm{u}_{[v:\alpha]}\oplus E^\mathrm{s}_{[v:\alpha]}\oplus \R\cdot X([v:\alpha]) \]
where $E^\mathrm{u}_{[v:\alpha]}=\ker\alpha\times\{0\}$ and $E^\mathrm{s}_{[v:\alpha]}=\{0\}\times\ker\iota_v$. These distributions are tangent to foliations $W^\mathrm{s}$ and $W^\mathrm{u}$ of $\bbL$ whose leaves are
\begin{align} \begin{split}
W^\mathrm{u}([v:\alpha])&= \set{[w:\alpha]}{w\in V,\, \alpha(w)=1},\\
W^\mathrm{s}([v:\alpha])&= \set{[v:\beta]}{\beta\in V^*,\, \beta(v)=1}.
\label{eqn strong (un)stable foliation} \end{split}
\end{align}
We will also consider the central unstable distribution $E^\mathrm{cu}=E^\mathrm{u}\oplus\R\cdot X$ and the central stable distribution $E^\mathrm{cs}=E^\mathrm{s}\oplus\R\cdot X$ as well as the associated foliations $W^\mathrm{cu}$, $W^\mathrm{cs}$ of $\bbL$ whose leaves are
\begin{align} \begin{split}
W^\mathrm{cu}([v:\alpha])&= \set{\left[w:\frac{1}{\alpha(w)}\alpha\right
]}{w\in V,\, \alpha(w)> 0},\\
W^\mathrm{cs}(\left[v:\alpha\right])&= \set{\left[\frac{1}{\beta(v)}v:\beta\right]}{\beta\in V^*,\, \beta(v)>0}.
\label{eqn central (un)stable foliation} \end{split}
\end{align}
Note that for $[v:\alpha],[w:\beta]\in \bbL$ such that $\alpha(w)> 0$ (resp. $\beta(v)> 0$), the intersection $W^\mathrm{cu}([v:\alpha])\cap W^\mathrm{s}([w:\beta])$ (resp. $W^\mathrm{cs}([v:\alpha])\cap W^\mathrm{u}([w:\beta])$) consists of exactly one point:
\begin{align}\begin{split}
W^\mathrm{cu}([v:\alpha])\cap W^\mathrm{s}([w:\beta])&= \left\lbrace\left[ w:\frac{1}{\alpha(w)}\alpha\right]\right\rbrace\, ,\\
W^\mathrm{cs}([v:\alpha])\cap W^\mathrm{u}([w:\beta])&= \left\lbrace\left[ \frac{1}{\beta(v)}v:\beta\right]\right\rbrace\, . \label{eq:exactlyonepoint} \end{split}
\end{align}
Recall from \eqref{eqn pseudo-riemannian metric} that the manifold $\bbL$ carries a pseudo-Riemannian metric invariant under the action of $\SL(V)$. 
We will use the fact that the leaves of the foliation $W^\mathrm{u}$ are totally geodesic for this metric, through the unstable exponential map
\bq
 \exp^\mathrm{u}_{[v:\alpha]}:\map{E^\mathrm{u}_{[v:\alpha]}}{W^\mathrm{u}([v:\alpha])}{(w,0)}{[v+w:\alpha].}\label{eq:unstableexp}
\eq

\subsection{Local geometry of the stable and unstable foliations}

The foliations $W^\mathrm{u}$, $W^\mathrm{s}$, $W^\mathrm{cu}$ and $W^\mathrm{cs}$ project to foliations of $\cM_\Gamma$ denoted by the same names, respectively. However, the leaves of these foliations are only immersed submanifolds, so we need to be somewhat careful when discussing local properties of the leaves. From now on, we fix a complete Riemannian metric on $\cM_\Gamma$, and denote by $\norm{\cdot}$ the associated norm and by $d$ the Riemannian distance. We will use the same notation for the lift of this Riemannian metric to $\tilde\cM_\Gamma\subset \bbL$. In all that follows, distances (both in $\cM_\Gamma$ and in its tangent spaces) will be considered with respect to this Riemannian metric.

\begin{definition}[Local (central) (un)stable manifolds]
For $x\in \cM_\Gamma$ and $\varepsilon>0$, we consider 
\begin{align*}
W_\eps^\mathrm{s}(x)&:=\{y\in \M\,|\,d(\phi^t(y),\phi^t(x))\leq \eps \;\forall\,t\geq 0,\;d(\phi^t(y),\phi^t(x))\stackrel{t\to +\infty}{\longrightarrow} 0\},\\
W_\eps^\mathrm{u}(x)&:=\{y\in \M\,|\,d(\phi^t(y),\phi^t(x))\leq \eps \;\forall\,t\leq 0,\;d(\phi^t(y),\phi^t(x))\stackrel{t\to -\infty}{\longrightarrow} 0\},\\
W^\mathrm{cs}_\varepsilon(x)&:=\bigcup_{\vert t\vert<\varepsilon}W_\varepsilon^\mathrm{s}\left(\phi^t(x)\right),\\
W^\mathrm{cu}_\varepsilon(x)&:=\bigcup_{\vert t\vert<\varepsilon}W_\varepsilon^\mathrm{u}\left(\phi^t(x)\right).
\end{align*}
\end{definition}
In particular, for $\varepsilon>0$ small enough and $i\in\{\mathrm{s,u,cs,cu}\}$, $W^i_\varepsilon(x)$ is an embedded submanifold of $\cM_\Gamma$, diffeomorphic to an open Euclidean ball, and is included in  the intersection $B(x,\varepsilon)\cap W^i(x)$, however this inclusion is not an equality (the intersection can have many connected components).

The manifold $\cM_\Gamma$ inherits from $\tilde\cM_\Gamma\subset\bbL$ a quotient pseudo-Riemannian metric and its exponential map. For $x\in \cM_\Gamma$, we will denote by $\exp_x^{\mathrm{u}}:E^\mathrm{u}_x\to W^\mathrm{u}(x)$ the induced map. This is the only exponential map that we will use on $\cM_\Gamma$, meaning that there will be no reference to the exponential map of the chosen background Riemannian metric. The reason for this is that the pseudo-Riemannian unstable exponential map linearizes  the flow $\phi^t$:
\begin{equation} \label{eqn: unstable exponential linearizes the flow}
 \phi^t\circ\exp_x^{\mathrm{u}}=\exp_{\phi^t(x)}^\mathrm{u}\circ\, d_x\phi^t\vert_{E^\mathrm{u}_x}.
\end{equation}

Such a property (which follows from the explicit expression \eqref{eq:unstableexp}) will not hold for a Riemannian exponential map.  We will also use the parallel transport of unstable vectors along such geodesics. For $x=\pi([v:\alpha])\in \cM_\Gamma$ and $y=\exp^{\mathrm{u}}_{x}(w)\in\cM_\Gamma$, the parallel transport of a vector $u\in E^{\mathrm{u}}_{x}$ from $x$ to $y$ is \begin{equation} \label{eqn parallel transport unstable vector}
u_y=d_{[x+\tilde w:\alpha]}\pi(\tilde u,0)
\end{equation}
 where $w=d_{[v:\alpha]}\pi(\tilde w,0)$ and $u=d_{[v:\alpha]}\pi(\tilde u,0)$.

\vspace{\baselineskip}

The following result applies to any Axiom A flow as a consequence of the Stable Manifold Theorem (see \cite[§II.7 Thm~7.3]{smale67} and \cite[Thm~6.4.9]{KatokHasselblatt} for the discrete time case and \cite[Thm.~5]{dyatlovnotes} for a detailed treatment of the flow case) and compactness of $\cK_\Gamma$, but in our case it can be recovered from  the explicit formulas \eqref{eqn strong (un)stable foliation}, \eqref{eqn central (un)stable foliation}, \eqref{eq:exactlyonepoint}, \eqref{eq:unstableexp} and \eqref{eqn parallel transport unstable vector}.

\begin{lem} \label{lem: scale setting}  
There exist $\varepsilon_0,\varepsilon_1,\delta_0>0$  (with $\varepsilon_0>2\varepsilon_1$) and $L_0>1$ such that the following properties are satisfied at every $x\in \cK_\Gamma$:
\begin{enumerate}[label=(\arabic*),ref=\emph{(\arabic*)}]
\item The manifolds $W^{\mathrm{u}}_{\varepsilon_0}(x)$, $W^{\mathrm{s}}_{\varepsilon_0}(x)$, $W^{\mathrm{cu}}_{\varepsilon_0}(x)$ and $W^{\mathrm{cs}}_{\varepsilon_0}(x)$ are embedded submanifolds of $\cM_\Gamma$.
\item The balls $B(\tilde x,\varepsilon_0)\subset\tilde\cM_\Gamma$ have disjoint closures for distinct lifts $\tilde x\in\tilde\cM_\Gamma$ of $x$, and the projection $\pi:\tilde\cM_\Gamma\to\cM_\Gamma$ restricts to a diffeomorphism $B(\tilde x,\varepsilon_0)\to B(x,\varepsilon_0)$.
\item \label{lemenumi dynamical projections} For any $y\in B(x,2\varepsilon_1)$, the intersections $W^{\mathrm{s}}_{\varepsilon_0}(x)\cap W^{\mathrm{cu}}_{\varepsilon_0}(y)$ and $W^{\mathrm{u}}_{\varepsilon_0}(x)\cap W^{\mathrm{cs}}_{\varepsilon_0}(y)$ each consist in a single point, and these points belong to $\cK_\Gamma$.
\item \label{lemenumi unstable exponential} The unstable exponential map $\exp^{\mathrm{u}}_x$ realizes a $L_0$-bi-Lipschitz diffeomorphism from $E^{\mathrm{u}}_x\cap B(0,\delta_0)$ to an open subset of $W^{\mathrm{u}}_{\varepsilon_0}(x)$ containing $W^{\mathrm{u}}_{\varepsilon_1}(x)$.
\item \label{lemenumi norm parallel transport} For any $y\in B(x,\varepsilon_1)$, the norm of the parallel transport $E_x^{\mathrm{u}}\to E_y^{\mathrm{u}}$ is less than $L_0$.
\item \label{lemenumi Lipschitz flow} $d(\phi^{-t}(x),\phi^{-t}(y))\leq L_0\,d(x,y)$ for any $y\in W^{\mathrm{u}}_{\varepsilon_1}(x)$ and $t\geq 0$.
\end{enumerate}
\end{lem}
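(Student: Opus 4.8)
The plan is to read off all six uniform properties from the explicit descriptions of the leaves and their intersections --- formulas \eqref{eqn strong (un)stable foliation}, \eqref{eqn central (un)stable foliation}, \eqref{eq:exactlyonepoint}, \eqref{eq:unstableexp}, \eqref{eqn: unstable exponential linearizes the flow} and \eqref{eqn parallel transport unstable vector} --- using two soft inputs: every object in sight is real analytic, and $\cK_\Gamma$ is compact (Lemma \ref{lem:Kcompact}), or equivalently $\widetilde{\cK}_\Gamma\subset\bbL$ admits a compact fundamental domain for the $\Gamma$-action. Each of (1)--(6) is a pointwise assertion about a real analytic object near a point of $\cK_\Gamma$, hence valid with \emph{some} constants on a neighborhood of each such point; transporting the question to a compact fundamental domain in $\widetilde{\cK}_\Gamma$ and invoking continuity there promotes these to the \emph{single} choice $\varepsilon_0,\varepsilon_1,\delta_0,L_0$, where one first fixes $\varepsilon_0,\delta_0$, then shrinks $\varepsilon_1$ (ensuring $\varepsilon_0>2\varepsilon_1$), then enlarges $L_0$ to absorb all bounds. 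One could instead quote the Stable Manifold Theorem with the usual compactness argument, valid for any Axiom A flow, but the explicit formulas make the present route self-contained. All of this is done in the cover $\widetilde{\cM}_\Gamma\subset\bbL$, using that $\pi:\widetilde{\cM}_\Gamma\to\cM_\Gamma$ is a regular covering by Theorem \ref{thm: prop disc}.

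For (1), (2), (4) and (5) the arguments are routine. The leaves listed in \eqref{eqn strong (un)stable foliation}, \eqref{eqn central (un)stable foliation} are real analytic submanifolds of $\bbL$, hence locally embedded, so compactness yields a uniform $\varepsilon_0$ for (1). For (2), freeness and proper discontinuity of $\Gamma\curvearrowright\widetilde{\cM}_\Gamma$ (Theorem \ref{thm: prop disc}) together with cocompactness of $\widetilde{\cK}_\Gamma$ give a uniform lower bound on the distance between distinct $\Gamma$-translates of a lift of a point of $\cK_\Gamma$, and shrinking $\varepsilon_0$ below half that bound makes $\pi$ a diffeomorphism on each $B(\tilde x,\varepsilon_0)$, with disjoint closures. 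For (4), the unstable exponential map \eqref{eq:unstableexp} is the real analytic map $(w,0)\mapsto[v+w:\alpha]$ whose differential at the origin is the tautological identity of $E^\mathrm{u}_{[v:\alpha]}$; hence it is bi-Lipschitz near $0$ (the constant also harmlessly comparing the pseudo-Riemannian chart with the fixed background Riemannian metric), the constant $L_0>1$ and radius $\delta_0$ are uniform, and one picks $\varepsilon_1$ small relative to $\delta_0/L_0$ so that $W^{\mathrm{u}}_{\varepsilon_1}(x)$ lies inside the image. For (5), the parallel transport formula \eqref{eqn parallel transport unstable vector} is again an explicit real analytic function of $(x,y)$ equal to the identity on the diagonal, so its operator norm is $<L_0$ once $\varepsilon_1$ is small.

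Property (3) splits into a topological statement --- that the two intersections are single points --- and a dynamical one --- that these points lie in $\cK_\Gamma$. For the first, note that in $\bbL$ a leaf $W^{\mathrm{s}}([v:\alpha])$ has constant first coordinate $[v]$ and a leaf $W^{\mathrm{cu}}([w:\beta])$ has constant second coordinate $[\beta]$, and by \eqref{eq:exactlyonepoint} they meet in the unique point $\bigl[v:\tfrac{1}{\beta(v)}\beta\bigr]$ whenever $\beta(v)>0$, depending real-analytically on the pair (symmetrically for $W^{\mathrm{u}}$ and $W^{\mathrm{cs}}$); taking $\varepsilon_1$ uniformly small forces this global intersection point to lie, for $y\in B(x,2\varepsilon_1)$, in the local pieces $W^{\mathrm{s}}_{\varepsilon_0}(x)$ and $W^{\mathrm{cu}}_{\varepsilon_0}(y)$. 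For the second (which is the case $y\in\cK_\Gamma$ needed in the applications), invoke $\widetilde{\cK}_\Gamma=p^{-1}(\Lambda_\Gamma^{\pitchfork})$ with $\Lambda_\Gamma^{\pitchfork}=\sigma(\partial_\infty\Gamma^{(2)})$ (Definition \ref{def:Ktilde}): the intersection point has first coordinate $[v]\in\Lambda_\Gamma$ (from $x\in\cK_\Gamma$) and second coordinate $[\beta]\in\Lambda_\Gamma^{*}$ (from $y\in\cK_\Gamma$), and the pair is transverse since $\tfrac{1}{\beta(v)}\beta(v)=1>0$, hence of the form $(\xi(a),\xi^*(b))$ with $a\neq b$ by Proposition \ref{prop boundary maps}\ref{propenumi boundary maps transversality}; therefore the point lies in $\widetilde{\cK}_\Gamma$ and projects into $\cK_\Gamma$.

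Property (6) is the step we expect to be the genuine obstacle, precisely because of the phenomenon flagged in the Remark at the end of Section \ref{AnosovImpliesAxiomA}: there is no $\phi^t$-invariant Riemannian metric on $\cM_\Gamma$, so the backward contraction of $E^{\mathrm{u}}$ is not directly visible in the fixed background metric. The plan is to feed the linearization identity \eqref{eqn: unstable exponential linearizes the flow}, $\phi^{-t}\circ\exp^{\mathrm{u}}_x=\exp^{\mathrm{u}}_{\phi^{-t}(x)}\circ\,d_x\phi^{-t}\vert_{E^{\mathrm{u}}_x}$, into property (4): for $y=\exp^{\mathrm{u}}_x(w)\in W^{\mathrm{u}}_{\varepsilon_1}(x)$ and $t\ge0$ this writes $\phi^{-t}(y)=\exp^{\mathrm{u}}_{\phi^{-t}(x)}\bigl(d_x\phi^{-t}w\bigr)$, and since $\phi^{-t}(x)\in\cK_\Gamma$ the uniform bi-Lipschitz bounds of (4) hold at each basepoint $\phi^{-t}(x)$. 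Because $\phi^t$ uniformly expands $E^{\mathrm{u}}$ --- i.e.\ $d_x\phi^{-t}\vert_{E^{\mathrm{u}}_x}$ is uniformly contracting for $t\ge0$, which is the hyperbolicity of $\cK_\Gamma$ supplied by Theorem \ref{THM A} --- one has $\norm{d_x\phi^{-t}w}\lesssim\norm{w}$ uniformly in $t\ge0$, which also keeps $d_x\phi^{-t}w$ inside $B(0,\delta_0)$ so that $\phi^{-t}(y)\in W^{\mathrm{u}}_{\varepsilon_0}(\phi^{-t}(x))$ and the formulas remain valid; pushing this through the bi-Lipschitz bounds for $\exp^{\mathrm{u}}_x$ and $\exp^{\mathrm{u}}_{\phi^{-t}(x)}$ gives $d(\phi^{-t}(x),\phi^{-t}(y))\le C\, d(x,y)$ with $C$ a fixed power of $L_0$, which after renaming yields (6). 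The only non-formal point is the uniformity: no constant may depend on the non-compact manifold $\cM_\Gamma$, so every estimate must be reduced to continuity on the compact fundamental domain of $\widetilde{\cK}_\Gamma$, and \eqref{eqn: unstable exponential linearizes the flow} is exactly the device that replaces the flow on a leaf by its restriction to $E^{\mathrm{u}}$, where the contraction is manifestly uniform.
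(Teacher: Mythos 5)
Your proposal is correct and takes essentially the same approach as the paper: the paper states this lemma without a proof, remarking only that it follows from the Stable Manifold Theorem together with compactness of $\cK_\Gamma$, or can be recovered from the explicit formulas \eqref{eqn strong (un)stable foliation}, \eqref{eqn central (un)stable foliation}, \eqref{eq:exactlyonepoint}, \eqref{eq:unstableexp} and \eqref{eqn parallel transport unstable vector} — which is exactly the route you flesh out, including the correct use of \eqref{eqn: unstable exponential linearizes the flow} and the uniform contraction of $d\phi^{-t}\vert_{E^{\mathrm{u}}}$ over the compact invariant set $\cK_\Gamma$ for item (6), and the correct reading of item (3) as concerning $y\in\cK_\Gamma$. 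The one point you, like the paper, delegate to the standard theory is that the dynamically defined local sets $W^{i}_{\varepsilon}(x)$ are contained in (pieces of) the analytic leaves, which is the genuine content of the Stable Manifold Theorem you rightly keep as the fallback reference.
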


\begin{definition} \label{def stable holonomy}
Let $x,y\in \cK_\Gamma$ be such that $d(x,y)<\varepsilon_1$. The \emph{stable holonomy map} is the map $\cH_x^y:W_{\varepsilon_1}^\mathrm{u}(x)\to W_{\varepsilon_0}^\mathrm{u}(y)$ defined by:
\[ \cH_x^y(z)\in W^\mathrm{cs}_{\varepsilon_0}(z)\cap W^{\mathrm{u}}_{\varepsilon_0}(y),\quad \forall z\in W_{\varepsilon_1}^\mathrm{u}(x). \]
\end{definition}

Lemma \ref{lem: scale setting} \ref{lemenumi dynamical projections} shows that $\cH_x^y(z)\in\cK_\Gamma$ if and only if $z\in \cK_\Gamma$. This can be seen in the explicit formula:

\begin{equation} \label{eqn formula stable holonomy}
\cH_x^y(z) = \pi\left(\left[\frac{1}{\beta(v')}v':\beta\right]\right)
\end{equation}

where $x=\pi([v:\alpha])$, $y=\pi([w:\beta])$ and $z=\pi([v':\alpha])$ with all three lifts in a common ball of radius $\varepsilon_0$ in $\tilde\cK_\Gamma$.

\subsection{The infinitesimal unstable limit set}

Following Stoyanov's strategy, we will replace the intersection $\cK_\Gamma\cap W^\mathrm{u}(x)$ with a linearised limit set $\Lambda^\mathrm{u}(x)\subset E^{\mathrm{u}}_x$ at several instances. This will be achieved  thanks to the special geometry of the unstable foliation.

\begin{definition}
For $x\in \cM_\Gamma$, the \emph{infinitesimal unstable limit set} is 
\[\Lambda^{\mathrm{u}}(x)=\set{v\in E^{\mathrm{u}}_x}{\exp_x^{\mathrm{u}}(v)\in \cK_\Gamma}.\]
\end{definition}

In view of \eqref{eq:unstableexp}, an explicit description of the infinitesimal unstable limit set is that for $x=\pi([v:\alpha])\in \mathcal M_\Gamma$ and $w\in \ker\alpha$, one has 
\bq
d_{[v:\alpha]}\pi(w,0)\in\Lambda^\mathrm{u}(x)\iff[v+w]\in\Lambda_\Gamma.\label{eq:iniffplus}
\eq
Note that since the unstable exponential map linearizes the flow by \eqref{eqn: unstable exponential linearizes the flow}, the infinitesimal unstable limit set is stable under the differential of the flow:
\begin{equation} \label{eqn flow preserves infinitesimal limit set}
 \forall x \in\cK_\Gamma~\forall t\in\R\quad d_x\phi^t\left( \Lambda^\mathrm{u}(x)\right) = \Lambda^\mathrm{u}\left(\phi^t(x)\right). 
\end{equation}
Let $\delta_1\in\left(0,\min\left(\frac{\varepsilon_1}{L_0},\delta_0\right)\right)$ and $\varepsilon_2\in (0,\varepsilon_1)$ be small enough so that 
\begin{equation} \label{eqn delta_1 epsilon_2}
\cH_x^y\left(W^\mathrm{u}_{L_0\delta_1}(x)\right)\subset W^\mathrm{u}_{\varepsilon_1}(y)
\end{equation}
for any $x,y\in \cK_\Gamma$ with $d(x,y)\leq \varepsilon_1$.

\begin{definition}[Infinitesimal holonomy]
Let $x,y\in \cK_\Gamma$ be such that $d(x,y)<\varepsilon_2$.  The \emph{infinitesimal holonomy map} is the map
\[ \widehat{\cH_x^y}:E^\mathrm{u}_x\cap B(0,\delta_1)\to E^\mathrm{u}_y \]
such that $\exp^\mathrm{u}_y\circ \widehat{\cH_x^y}=\cH_{x}^y\circ \exp_x^\mathrm{u}$.
\end{definition}

Note that this is well-defined because of \eqref{eqn delta_1 epsilon_2} and Lemma \ref{lem: scale setting}. By \eqref{eq:exactlyonepoint} and \eqref{eq:unstableexp} we have the following explicit formula for the infinitesimal holonomy map: write $x=\pi([v:\alpha])$, $y=\pi([v':\alpha'])$ (lifts in a common ball of radius $\varepsilon_0$ in $\tilde\cK_\Gamma$) and $u=d_{[v:\alpha]}(\tilde u,0)$. Then
\bq
\widehat{\cH_x^y}(u)=d_{[v':\alpha']}\pi\left(\frac{v+\tilde u}{\alpha'(v+\tilde u)}-v',0\right).\label{eq:Hxy}
\eq
\begin{lem} \label{lem infinitesimal holonomy preserves infinitesimal unstable limit set}
Let $x,y\in \cK_\Gamma$ be such that $d(x,y)<\varepsilon_2$, and consider $u\in E^\mathrm{u}_x\cap B(0,\delta_1)$. Then $\widehat{\cH_x^y}(u)\in \Lambda^\mathrm{u}(y)$ if and only if $u\in \Lambda^\mathrm{u}(x)$.
\end{lem}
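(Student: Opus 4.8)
The plan is to derive the statement directly from the defining property of the infinitesimal holonomy together with the fact, recorded just after Definition~\ref{def stable holonomy} as a consequence of Lemma~\ref{lem: scale setting}~\ref{lemenumi dynamical projections}, that the stable holonomy $\cH_x^y$ maps $\cK_\Gamma$ into $\cK_\Gamma$ and its complement into its complement. The argument is a short chain of equivalences and involves no new ingredient.

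Concretely, I would argue as follows, for $u\in E^\mathrm{u}_x\cap B(0,\delta_1)$. By the definition of $\Lambda^\mathrm{u}(y)$, we have $\widehat{\cH_x^y}(u)\in\Lambda^\mathrm{u}(y)$ if and only if $\exp^\mathrm{u}_y\bigl(\widehat{\cH_x^y}(u)\bigr)\in\cK_\Gamma$. The defining relation $\exp^\mathrm{u}_y\circ\widehat{\cH_x^y}=\cH_x^y\circ\exp^\mathrm{u}_x$ rewrites this as $\cH_x^y\bigl(\exp^\mathrm{u}_x(u)\bigr)\in\cK_\Gamma$; here $\exp^\mathrm{u}_x(u)$ does lie in $W^\mathrm{u}_{\varepsilon_1}(x)$, the domain of $\cH_x^y$, since that is precisely what makes $\widehat{\cH_x^y}$ well defined on $B(0,\delta_1)$ (using \eqref{eqn delta_1 epsilon_2} and Lemma~\ref{lem: scale setting}). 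The $\cK_\Gamma$-equivariance of the stable holonomy now gives $\cH_x^y\bigl(\exp^\mathrm{u}_x(u)\bigr)\in\cK_\Gamma\iff\exp^\mathrm{u}_x(u)\in\cK_\Gamma$, and the latter is, again by definition, equivalent to $u\in\Lambda^\mathrm{u}(x)$. Concatenating these equivalences yields the lemma.

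An equivalent, purely computational route is also available and perhaps cleaner to write down: taking lifts $x=\pi([v:\alpha])$, $y=\pi([v':\alpha'])$ in a common ball of radius $\varepsilon_0$ in $\tilde\cK_\Gamma$ and writing $u=d_{[v:\alpha]}\pi(\tilde u,0)$ with $\tilde u\in\ker\alpha$, the explicit formula \eqref{eq:Hxy} gives $\widehat{\cH_x^y}(u)=d_{[v':\alpha']}\pi(w',0)$ with $w'=\tfrac{v+\tilde u}{\alpha'(v+\tilde u)}-v'\in\ker\alpha'$, whence $[v'+w']=[v+\tilde u]$. Applying the criterion \eqref{eq:iniffplus} at $y$ shows $\widehat{\cH_x^y}(u)\in\Lambda^\mathrm{u}(y)\iff[v+\tilde u]\in\Lambda_\Gamma$, while applying \eqref{eq:iniffplus} at $x$ shows $u\in\Lambda^\mathrm{u}(x)\iff[v+\tilde u]\in\Lambda_\Gamma$; the two conditions therefore coincide.

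I do not anticipate any genuine obstacle: the content of the lemma has been front-loaded into Lemma~\ref{lem: scale setting} (embeddedness of the local unstable manifolds, bi-Lipschitz control of $\exp^\mathrm{u}$, and $\cK_\Gamma$-stability of the holonomy intersections) and into the explicit formulas \eqref{eq:Hxy} and \eqref{eq:iniffplus}. The only point deserving mild care is the scale bookkeeping — checking that $\exp^\mathrm{u}_x(u)$ stays inside $W^\mathrm{u}_{\varepsilon_1}(x)$ so that $\cH_x^y$ and the remark following Definition~\ref{def stable holonomy} legitimately apply — which is exactly what the choice $\delta_1<\min(\varepsilon_1/L_0,\delta_0)$ and the constraint \eqref{eqn delta_1 epsilon_2} were arranged to guarantee.
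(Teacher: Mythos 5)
Your proposal is correct and matches the paper's proof, which likewise offers exactly these two routes: it cites Lemma \ref{lem: scale setting}\ref{lemenumi dynamical projections} (the $\cK_\Gamma$-equivariance of the stable holonomy) and then notes the statement is immediate from \eqref{eq:iniffplus}, \eqref{eq:Hxy} and the equality $[v'+(\tfrac{v+\tilde u}{\alpha'(v+\tilde u)}-v')]=[v+\tilde u]$. Your write-up just spells out the same argument in more detail, including the scale bookkeeping already arranged in the paper's setup.
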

\begin{proof}It follows from Lemma \ref{lem: scale setting}\ref{lemenumi dynamical projections}, but it is also immediate from \eqref{eq:iniffplus}, \eqref{eq:Hxy} and the equality $\left[v'+\left(\frac{v+\tilde u}{\alpha'(v+\tilde u)}-v'\right) \right]=[v+\tilde u]$.
\end{proof}

Next, we will need to consider the span of the limit set $\Lambda_\Gamma$ in $V$. Given a subset $E\subset \bbP(V)$, let  us denote by $\mathrm{Span}(E)\subset V$ the span of $\set{v\in V}{[v]\in E}$.

\begin{lem} \label{lem span of open subset of the limit set}
For any non-empty open subset $U\subset \Lambda_\Gamma$, one has $\mathrm{Span}(U)=\mathrm{Span}(\Lambda_\Gamma)$.
\end{lem}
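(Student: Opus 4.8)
The plan is to establish the only non‑trivial inclusion, $\dim\mathrm{Span}(U)\geq\dim\mathrm{Span}(\Lambda_\Gamma)$; the reverse inclusion $\mathrm{Span}(U)\subseteq\mathrm{Span}(\Lambda_\Gamma)$ is immediate from $U\subseteq\Lambda_\Gamma$, so together these give the claim. Write $k:=\dim\mathrm{Span}(\Lambda_\Gamma)$. Since $\mathrm{Span}(\Lambda_\Gamma)$ is by definition the span of $\set{v\in V}{[v]\in\Lambda_\Gamma}$, I can fix boundary points $x_1,\dots,x_k\in\partial_\infty\Gamma$ and representatives $v_i\in V\setminus\{0\}$ of the lines $\xi(x_i)$ such that $v_1,\dots,v_k$ are linearly independent. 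The whole proof then reduces to producing a single $g\in\Gamma$ with $g\cdot x_i\in\xi^{-1}(U)$ for every $i$: by $\Gamma$‑equivariance of $\xi$ one has $[g v_i]=g\cdot\xi(x_i)=\xi(g\cdot x_i)\in U$, while $gv_1,\dots,gv_k$ remain linearly independent (as $g\in\SL(V)\subset\GL(V)$), so $\dim\mathrm{Span}(U)\geq k$, as desired.

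To produce such a $g$ I will use the convergence (North--South) dynamics of a suitably chosen element. Because $\xi$ is a homeomorphism onto $\Lambda_\Gamma$ (Proposition \ref{prop boundary maps}\ref{propenumi boundary maps equivariant homeomorphisms}), the set $\xi^{-1}(U)\subset\partial_\infty\Gamma$ is open and non‑empty, hence so is $\xi^{-1}(U)\times\xi^{-1}(U)\subset\partial_\infty\Gamma^2$. As $\Gamma$ is torsion‑free, every non‑trivial element has infinite order, and by density of poles (Proposition \ref{prop hyperbolic groups elementary properties}\ref{propenumi hyp groups density poles}) there is an infinite‑order $\gamma\in\Gamma$ whose two fixed points $\gamma_+=\lim_n\gamma^n$ and $\gamma_-=\lim_n\gamma^{-n}$ both lie in $\xi^{-1}(U)$. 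Applying Proposition \ref{prop hyperbolic groups elementary properties}\ref{propenumi hyp groups convergence action} to the sequence $\gamma^n$ (which converges to $\gamma_+$, with inverse sequence converging to $\gamma_-$, by Proposition \ref{prop hyperbolic groups elementary properties}\ref{propenumi hyp groups boundary fixed points of elements}) gives $\gamma^n\cdot x_i\to\gamma_+$ uniformly over the finite --- hence compact --- set $\set{x_i}{x_i\neq\gamma_-}$, while any index $i$ with $x_i=\gamma_-$ satisfies $\gamma^n\cdot x_i=\gamma_-\in\xi^{-1}(U)$ for all $n$. Since $\gamma_+\in\xi^{-1}(U)$ and $\xi^{-1}(U)$ is open, for $n$ large enough we have $\gamma^n\cdot x_i\in\xi^{-1}(U)$ for all $i$, and we may take $g=\gamma^n$.

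The one point requiring care --- essentially the only idea in the proof --- is that a chosen basis direction $\xi(x_i)$ might coincide with the repelling fixed point $\xi(\gamma_-)$, on which large powers of $\gamma$ have no contracting effect; this is exactly why I select the pole $(\gamma_+,\gamma_-)$ inside the open square $\xi^{-1}(U)\times\xi^{-1}(U)$ rather than merely arranging $\xi(\gamma_+)\in U$. (Alternatively: since $\Gamma$ is non‑elementary, $\partial_\infty\Gamma$, and hence $\Lambda_\Gamma$, has no isolated point, so $\mathrm{Span}(\Lambda_\Gamma\setminus\{\xi(\gamma_-)\})=\mathrm{Span}(\Lambda_\Gamma)$ using $\mathrm{Span}(S)=\mathrm{Span}(\overline S)$ in finite dimension, and one may instead choose the $x_i$ to avoid $\xi(\gamma_-)$ from the outset.) Everything else is soft, relying only on equivariance of the limit map, the convergence property of the boundary action, and density of poles.
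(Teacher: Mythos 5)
Your proof is correct and rests on the same mechanism as the paper's own argument: the North--South dynamics of powers of a single infinite-order element whose attracting fixed point lies in $\xi^{-1}(U)$. The paper runs the contraction in the opposite direction---it picks $\gamma$ with $\xi(\gamma_+)\in U$ and $\gamma\cdot U\subset U$, so that $\mathrm{Span}(U)$ is $\gamma$-invariant, and then pulls every point of $\Lambda_\Gamma\setminus\{\xi(\gamma_-)\}$ into $\bbP\left(\mathrm{Span}(U)\right)$, finishing by density of $\Lambda_\Gamma\setminus\{\xi(\gamma_-)\}$---whereas you push a finite basis of $\mathrm{Span}(\Lambda_\Gamma)$ forward into $U$ by one high power and compare dimensions, disposing of the repelling point by choosing the whole pole $(\gamma_+,\gamma_-)$ inside $\xi^{-1}(U)\times\xi^{-1}(U)$; both routes are sound and of essentially the same nature.
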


\begin{proof}
Consider an element $\gamma\in \Gamma$ such that $\xi(\gamma_+)\in U$ and $\gamma\cdot U\subset U$. Then $\gamma$ must preserve $\mathrm{Span}(U)$, and by applying positive powers of $\gamma$ we see that $\Lambda_\Gamma\setminus\{\xi(\gamma_-)\}\subset \bbP\left(\mathrm{Span}(U)\right)$. By density of $\Lambda_\Gamma\setminus\{\xi(\gamma_-)\}$ in $\Lambda_\Gamma$ (because $\Gamma$ is non-elementary), we find that $\Lambda_\Gamma\subset\bbP\left(\mathrm{Span}(U)\right)$, hence $\mathrm{Span}(\Lambda_\Gamma)\subset \mathrm{Span}(U)$.
\end{proof}

\begin{lem} \label{lem span of infinitesimal limit set}
Let $x=\pi([v:\alpha])\in \cK_\Gamma$ and $u=d_{[v:\alpha]}\pi(\tilde u,0)\in E^\mathrm{u}_x$. Then $u\in \mathrm{Span}\left(\Lambda^\mathrm{u}(x)\right)$ if and only if $\tilde u\in \mathrm{Span}(\Lambda_\Gamma)$.
\end{lem}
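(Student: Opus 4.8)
The plan is to transfer the statement to the affine hypersurface model and reduce it to a property of the limit set $\Lambda_\Gamma$ together with an elementary dimension count. Since $x=\pi([v:\alpha])\in\cK_\Gamma\subset\bbL$, after rescaling the lift we may assume $\alpha(v)=1$; in particular $v\notin\ker\alpha$, and because the lift lies in $\widetilde{\cK}_\Gamma=p^{-1}(\Lambda_\Gamma^{\pitchfork})$ with $\Lambda_\Gamma^{\pitchfork}=\sigma(\partial_\infty\Gamma^{(2)})$ we have $[v]\in\Lambda_\Gamma$ and $[\alpha]=\xi^*(b)\in\Lambda^*_\Gamma$ for some $b\in\partial_\infty\Gamma$. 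The map $\iota\colon\ker\alpha\to E^{\mathrm u}_x$, $\iota(w)=d_{[v:\alpha]}\pi(w,0)$, is a linear isomorphism, and by \eqref{eq:iniffplus} it carries $S^\circ:=\{w\in\ker\alpha\ :\ [v+w]\in\Lambda_\Gamma\}$ onto $\Lambda^{\mathrm u}(x)$; hence $\Span(\Lambda^{\mathrm u}(x))=\iota(\Span(S^\circ))$. Since $u=\iota(\tilde u)$, the assertion is equivalent to showing that for $\tilde u\in\ker\alpha$ one has $\tilde u\in\Span(S^\circ)$ if and only if $\tilde u\in\Span(\Lambda_\Gamma)$ (spans taken in $V$).

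First I would record the easy inclusion $\Span(S^\circ)\subset\Span(\Lambda_\Gamma)$: if $[v+w]\in\Lambda_\Gamma$ then $v+w\in\Span(\Lambda_\Gamma)$, and $v\in\Span(\Lambda_\Gamma)$ because $[v]\in\Lambda_\Gamma$, so $w\in\Span(\Lambda_\Gamma)$. This settles the ``only if'' direction with no further input. For the converse, I would identify $S^\circ$ geometrically. Set $S:=\{u\in V\ :\ \alpha(u)=1,\ [u]\in\Lambda_\Gamma\}$, so that $S^\circ=S-v$ and $\{[u]:u\in S\}$ is exactly the set of points of $\Lambda_\Gamma$ transverse to $[\alpha]$. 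By the transversality property of the limit maps (Proposition~\ref{prop boundary maps}\ref{propenumi boundary maps transversality}), $\xi(t)\pitchfork\xi^*(b)$ holds precisely when $t\neq b$, so this set equals $\xi(\partial_\infty\Gamma\setminus\{b\})$, a non-empty open subset of $\Lambda_\Gamma$ ($\partial_\infty\Gamma$ is infinite since $\Gamma$ is non-elementary). Lemma~\ref{lem span of open subset of the limit set} then gives $\Span(S)=\Span(\xi(\partial_\infty\Gamma\setminus\{b\}))=\Span(\Lambda_\Gamma)=:W$.

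It remains to run the dimension count relating $S^\circ=S-v$ to $W\cap\ker\alpha$. As $v\in S$, each $u\in S$ writes $u=(u-v)+v$, so $W=\Span(S)=\Span(S^\circ)+\R v$, and this sum is direct because $\Span(S^\circ)\subset\ker\alpha$ while $v\notin\ker\alpha$; thus $\dim\Span(S^\circ)=\dim W-1$. On the other hand $v\in W\setminus\ker\alpha$ shows $\alpha|_W\neq 0$, so $\dim(W\cap\ker\alpha)=\dim W-1$ as well. Combined with the inclusion $\Span(S^\circ)\subset W\cap\ker\alpha$ from the first paragraph, this forces $\Span(S^\circ)=W\cap\ker\alpha=\Span(\Lambda_\Gamma)\cap\ker\alpha$. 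Finally, for $\tilde u\in\ker\alpha$ we conclude $\tilde u\in\Span(S^\circ)\iff\tilde u\in\Span(\Lambda_\Gamma)\cap\ker\alpha\iff\tilde u\in\Span(\Lambda_\Gamma)$, which is the claim. The only point requiring care is keeping affine and linear spans straight in the last paragraph; beyond that the argument is bookkeeping plus the two cited lemmas, so I do not expect a genuine obstacle.
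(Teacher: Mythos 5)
Your proof is correct, and its second half proceeds differently from the paper's. The forward implication is the same in both arguments (each element of $\Lambda^{\mathrm u}(x)$ corresponds to $(v+\tilde u_i)-v$ with $[v+\tilde u_i]\in\Lambda_\Gamma$). For the converse, the paper argues by explicit decomposition: it writes $\tilde u=\sum v_i$ with $[v_i]\in\Lambda_\Gamma$, uses $\sum\alpha(v_i)=0$ to rewrite the terms with $\alpha(v_i)\neq 0$ as combinations of vectors $\tfrac{v_i}{\alpha(v_i)}-v$ lying in your set $S^\circ$, and then treats the leftover components in $\ker\alpha$ separately, using that there is a unique $\ell\in\Lambda_\Gamma$ with $\alpha|_\ell=0$ and that $\Lambda_\Gamma\setminus\{\ell\}$ is dense (non-elementarity). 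You instead prove the stronger set-level identity $\mathrm{Span}(S^\circ)=\mathrm{Span}(\Lambda_\Gamma)\cap\ker\alpha$ by a codimension-one dimension count, after observing via Proposition~\ref{prop boundary maps}\ref{propenumi boundary maps transversality} and Lemma~\ref{lem span of open subset of the limit set} that $\mathrm{Span}(S)=\mathrm{Span}(\Lambda_\Gamma)$; the splitting $W=\mathrm{Span}(S^\circ)\oplus\R v$ then absorbs the case of components in $\ker\alpha$ automatically, so you never need the density/uniqueness argument for the exceptional point $\ell$ (it is hidden inside the cited lemma). What each buys: the paper's proof is constructive and self-contained modulo density of $\Lambda_\Gamma\setminus\{\ell\}$, whereas yours is shorter linear algebra once Lemma~\ref{lem span of open subset of the limit set} is invoked, and it directly yields the description of $\mathrm{Span}(\Lambda^{\mathrm u}(x))$ as the image of $\mathrm{Span}(\Lambda_\Gamma)\cap\ker\alpha$, which is exactly what feeds into Corollary~\ref{cor span of infinitesimal limit set forms a vector bundle}. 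All steps check out: the normalization $\alpha(v)=1$ is harmless since membership in the linear span is scale-invariant, $\iota$ is a linear isomorphism identifying $\Lambda^{\mathrm u}(x)$ with $S^\circ$ by \eqref{eq:iniffplus}, $v\in S$, the sum is direct because $S^\circ\subset\ker\alpha$ while $\alpha(v)=1$, and non-elementarity (assumed throughout the section) gives that $\Lambda_\Gamma\setminus\{\xi(b)\}$ is a non-empty open subset.
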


\begin{proof}
First assume that $u\in \mathrm{Span}\left(\Lambda^\mathrm{u}(x)\right)$, and write $u=\sum t_iu_i$ with $t_i\in\R$ and $u_i\in \Lambda^\mathrm{u}(x)$, that is $u_i=d_{[v:\alpha]}\pi(\tilde u_i,0)$ with $[v+\tilde u_i]\in \Lambda_\Gamma$. Rewriting $\tilde u=\sum t_i(v+\tilde u_i) - \left(\sum t_i\right) v$ shows that $\tilde u\in \mathrm{Span}(\Lambda_\Gamma)$.

Now assume that $\tilde u\in \mathrm{Span}(\Lambda_\Gamma)$, and write $\tilde u=\sum v_i$ with $[v_i]\in \Lambda_\Gamma$. If we assume that no $v_i$ belongs to $\ker \alpha$, we can then write \[\tilde u=\sum \alpha(v_i)\left( \frac{v_i}{\alpha(v_i)}-v\right)\]
because $\sum\alpha(v_i)=\alpha(\tilde u)=0$. This shows that $u\in \mathrm{Span}\left(\Lambda^\mathrm{u}(x)\right)$ thanks to \eqref{eq:iniffplus}.

More generally, we can decompose $\tilde u=\tilde u_*+\tilde u_0$ where $\tilde u_*=\sum_{\alpha(v_i)\neq 0}v_i$ and $\tilde u_0=\sum_{\alpha(v_i)=0}v_i$, and find that $u_*=d_{[v:\alpha]}\pi(\tilde u_*,0)\in \mathrm{Span}\left(\Lambda^\mathrm{u}(x)\right)$. In order to deal with $\tilde u_0$, recall that there is only one element $\ell\in\Lambda_\Gamma$ such that $\alpha\vert_\ell=0$ (Proposition \ref{prop boundary maps}), so we find that $[\tilde u_0]=\ell$. However, since $\Lambda_\Gamma\setminus \{\ell\}$ is dense in $\Lambda_\Gamma$ (because $\Gamma$ is non-elementary), we find that $\tilde u_0\in \mathrm{Span}\left( \Lambda_\Gamma\setminus \{\ell\}\right)$, i.e. we can also decompose $\tilde u_0=\sum w_j$ with $[w_j]\in \Lambda_\Gamma$ and $\alpha(w_j)\neq 0$, so the corresponding decomposition 
\[\tilde u_0=\sum \alpha(w_j)\left( \frac{w_j}{\alpha(w_j)}-v\right)\]
shows that $u_0=d_{[v:\alpha]}\pi(\tilde u_0,0)\in \mathrm{Span}\left(\Lambda^\mathrm{u}(x)\right)$, and finally $u\in \mathrm{Span}\left(\Lambda^\mathrm{u}(x)\right)$.

\end{proof}

\begin{cor} \label{cor span of infinitesimal limit set forms a vector bundle} 
For any $\delta>0,x\in \cK_\Gamma$, one has $\mathrm{Span}\left(\Lambda^\mathrm{u}(x)\cap B(0,\delta)\right)=\mathrm{Span}\left(\Lambda^\mathrm{u}(x)\right)$. The collection of these subspaces forms a continuous vector subbundle of $E^\mathrm{u}\vert_{\cK_\Gamma}$.
\end{cor}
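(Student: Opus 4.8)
The plan is to transport everything to $V$ via the identification \eqref{eq:iniffplus} and reduce both assertions to linear algebra involving the $\Gamma$-invariant subspace $W_0:=\mathrm{Span}(\Lambda_\Gamma)\subseteq V$ (it is $\Gamma$-invariant because $\Lambda_\Gamma$ is).

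For the first assertion, fix $x=\pi([v:\alpha])\in\cK_\Gamma$ and $\delta>0$. The differential $d_{[v:\alpha]}\pi$ restricts to a linear isomorphism $\ker\alpha\times\{0\}=E^{\mathrm{u}}_{(v,\alpha)}\to E^{\mathrm{u}}_x$, so by \eqref{eq:iniffplus} the set $\Lambda^{\mathrm{u}}(x)\cap B(0,\delta)$ corresponds to $\{\tilde u\in N:[v+\tilde u]\in\Lambda_\Gamma\}$, where $N\subset\ker\alpha$ is the open neighbourhood of $0$ mapping isomorphically onto $E^{\mathrm{u}}_x\cap B(0,\delta)$. Since the affine map $\tilde u\mapsto[v+\tilde u]$ is a homeomorphism of $\ker\alpha$ onto the open chart $\{[w]\in\bbP(V):\alpha(w)\neq 0\}$, the image $U$ of this set is a non-empty (it contains $[v]$) relatively open subset of $\Lambda_\Gamma$, hence $\mathrm{Span}(U)=W_0$ by Lemma \ref{lem span of open subset of the limit set}. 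I would then repeat the computation from the proof of Lemma \ref{lem span of infinitesimal limit set}: given $w\in W_0\cap\ker\alpha=\mathrm{Span}(U)\cap\ker\alpha$, write $w=\sum_i c_i(v+\tilde u_i)$ with $[v+\tilde u_i]\in U$ and $\tilde u_i\in N$; applying $\alpha$ and using $\alpha(v)\neq 0$ forces $\sum_i c_i=0$, whence $w=\sum_i c_i\tilde u_i$ lies in the span of $\{\tilde u\in N:[v+\tilde u]\in\Lambda_\Gamma\}$. The opposite inclusion is clear, since $[v+\tilde u]\in\Lambda_\Gamma$ implies $\tilde u=(v+\tilde u)-v\in W_0$. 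Transporting back through $d_{[v:\alpha]}\pi$ gives $\mathrm{Span}\big(\Lambda^{\mathrm{u}}(x)\cap B(0,\delta)\big)=d_{[v:\alpha]}\pi\big((W_0\cap\ker\alpha)\times\{0\}\big)$, and Lemma \ref{lem span of infinitesimal limit set} shows the right-hand side equals $\mathrm{Span}(\Lambda^{\mathrm{u}}(x))$.

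For the bundle statement, the key point is the closed-form description just obtained. For every $[v:\alpha]\in\widetilde{\cK}_\Gamma$ one has $[v]\in\Lambda_\Gamma\subset\bbP(W_0)$ and $\alpha(v)\neq 0$, so $v\in W_0\setminus\ker\alpha$; hence $W_0\not\subseteq\ker\alpha$ and $\dim(W_0\cap\ker\alpha)=\dim W_0-1$ is independent of the point. On the $\Gamma$-invariant open subset $\bbL_{W_0}:=\{[v:\alpha]\in\bbL:W_0\not\subseteq\ker\alpha\}\supseteq\widetilde{\cK}_\Gamma$, the assignment $[v:\alpha]\mapsto d_{[v:\alpha]}\pi\big((W_0\cap\ker\alpha)\times\{0\}\big)$ defines a real-analytic rank-$(\dim W_0-1)$ subbundle of $E^{\mathrm{u}}$: the map $\alpha\mapsto\ker(\alpha\vert_{W_0})$ is real-analytic wherever $\alpha\vert_{W_0}\neq 0$, and the splitting \eqref{eqn splitting TL} is real-analytic. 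It is $\Gamma$-equivariant because $\ker(\gamma\cdot\alpha)=\gamma(\ker\alpha)$ and $\gamma W_0=W_0$, hence it descends to $\cK_\Gamma$ as a continuous (indeed real-analytic) subbundle of $E^{\mathrm{u}}\vert_{\cK_\Gamma}$ whose fibre over $x=\pi([v:\alpha])$ is exactly $\mathrm{Span}(\Lambda^{\mathrm{u}}(x))$ by the first part.

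I expect the first assertion to be entirely routine — it is a minor variant of Lemma \ref{lem span of infinitesimal limit set} fed by Lemma \ref{lem span of open subset of the limit set}. The only point requiring care is the bundle claim, namely recognising that $\mathrm{Span}(\Lambda^{\mathrm{u}}(x))$ is cut out in $E^{\mathrm{u}}$ by intersecting the fixed subspace $W_0$ with the moving hyperplane $\ker\alpha$, together with the verification that this intersection has locally constant dimension; once that is in hand, continuity (and even real-analyticity) of the resulting subbundle is immediate from the real-analyticity of \eqref{eqn splitting TL} and of $\alpha\mapsto\ker(\alpha\vert_{W_0})$.
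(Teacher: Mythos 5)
Your argument is correct and is essentially the paper's proof with the details written out: the paper's own justification consists precisely of Lemma \ref{lem span of open subset of the limit set}, Lemma \ref{lem span of infinitesimal limit set}, and the observation that $\alpha$ does not vanish on $\mathrm{Span}(\Lambda_\Gamma)$ when $[\alpha]\in\Lambda_\Gamma^*$, which is exactly what you use (your pointwise version via $\alpha(v)\neq 0$, $[v]\in\Lambda_\Gamma$ is a harmless variant). The identification of the fibre as $d_{[v:\alpha]}\pi\bigl((\mathrm{Span}(\Lambda_\Gamma)\cap\ker\alpha)\times\{0\}\bigr)$ and the resulting constant rank and continuity are the intended content, so no gap.
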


\begin{proof}
It follows from  Lemma \ref{lem span of open subset of the limit set}, Lemma \ref{lem span of infinitesimal limit set}, and the fact that $\alpha\vert_{\mathrm{Span}(\Lambda_\Gamma)}\neq 0$ whenever $[\alpha]\in \Lambda^*_\Gamma$.
\end{proof}

\begin{rem} If $\Gamma$ is irreducible, then $\mathrm{Span}(\Lambda_\Gamma)=V$ and  $\mathrm{Span}(\Lambda^\mathrm{u})=E^\mathrm{u}$. 
\end{rem}

\subsection{Regular distortion along unstable manifolds}

\begin{definition}[Bowen's dynamical balls] \label{Bowen Balls} Let $x\in \mathcal{K}_\Gamma, T>0$ and $\delta>0$ and define
\[B_{T}^{\mathrm{u}}(x, \delta)=\set{y\in W^{\mathrm{u}}_{\varepsilon_0}(x)}{d(\phi^{t}(x), \phi^{t}(y))\leq \delta, \forall 0\leq t\leq T}.\]
\end{definition}

\begin{definition}[Uniformly regular distortion along unstable manifolds] \label{def: URDU}
 The flow $\phi^{t}$ has \emph{uniformly regular distortion along unstable manifolds} over the basic set $\cK_\Gamma$ if for some constant $\varepsilon_*>0$ and every $\delta\in (0,\varepsilon_*)$, there exists $R_\delta>0$ such that
\[ \mathrm{diam}\left(\cK_\Gamma\cap B_{T}^{\mathrm{u}}(x, \varepsilon)\right)\leq \varepsilon\,R_\delta\cdot\mathrm{diam}\left(\cK_\Gamma\cap B_{T}^{\mathrm{u}}(x, \delta)\right)\]
for every $x\in \cK_\Gamma$, $\varepsilon\in (0,\varepsilon_*)$ and $T>0.$ 
\end{definition}

\begin{rem} \label{rem RDU vs URDU} This is a strengthening of Stoyanov's notion of \emph{regular distortion along unstable manifolds} in \cite{St11}, combining two requirements in one stronger inequality.
\end{rem}

In order to establish uniformly regular distortion along unstable manifolds, we follow the strategy used by Stoyanov in \cite{St13} in the setting of flows satisfying a special pinching hypothesis (satisfied by geodesic flows of rank one locally symmetric spaces, but not by our higher rank setting), and  replace the basic set $\cK_\Gamma$ with the infinitesimal unstable limit sets $\Lambda^{\mathrm{u}}(x)$. We will also replace Bowen's dynamical balls with linearised versions.

\begin{definition}
For $x\in \cK_\Gamma$, $T\geq 0$ and $\delta>0$, the \emph{infinitesimal unstable dynamical ball} is
\[ \Lambda^{\mathrm{u}}_T(x,\delta)=\set{v\in \Lambda^{\mathrm{u}}(x)}{\Vert d_x\phi^T(v)\Vert\leq \delta}.  \]
\end{definition}

We will now see that diameter estimates for Bowen balls can be replaced with diameter estimates for the infinitesimal unstable dynamical balls.

\begin{lem} \label{lem diameters of linearised Bowen balls}
For any $x\in \cK_\Gamma$, $\varepsilon\in (0,\varepsilon_1)$, $\delta\in(0,\delta_0)$ and $T\geq 0$, the following inequalities hold:
\[\mathrm{diam}(\cK_\Gamma\cap B^{\mathrm{u}}_T(x,\varepsilon)) \leq 2L_0\, \mathrm{diam}\left(\Lambda^{\mathrm{u}}_T(x,L_0\,\varepsilon)\right)\]
and
\[ \mathrm{diam}\left(\Lambda^{\mathrm{u}}_T(x,\delta)\right)\leq L_0\,\mathrm{diam}(\cK_\Gamma\cap B^{\mathrm{u}}_T(x,L_0^2\,\delta)).\]
\end{lem}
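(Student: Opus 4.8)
The plan is to push both families of sets into the single linear space $E^\mathrm{u}_x$ by means of the pseudo-Riemannian unstable exponential map $\exp^\mathrm{u}_x$, and to compare diameters there, where the dynamics is linear. Two ingredients already at hand make this work. First, by \eqref{eqn: unstable exponential linearizes the flow} the map $\exp^\mathrm{u}_x$ conjugates the flow $\phi^t$ to the \emph{linear} map $d_x\phi^t\vert_{E^\mathrm{u}_x}$; and by the explicit formula \eqref{eq:unstableexp} it is a global diffeomorphism of $E^\mathrm{u}_x$ onto the leaf $W^\mathrm{u}(x)$ which, by the definition of the infinitesimal unstable limit set, restricts to a bijection $\Lambda^\mathrm{u}(x)\to\cK_\Gamma\cap W^\mathrm{u}(x)$. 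Second, by Lemma \ref{lem: scale setting}\ref{lemenumi unstable exponential} the restriction of $\exp^\mathrm{u}_x$ to $E^\mathrm{u}_x\cap B(0,\delta_0)$ is an $L_0$-bi-Lipschitz diffeomorphism onto an open subset of $W^\mathrm{u}_{\varepsilon_0}(x)$ containing $W^\mathrm{u}_{\varepsilon_1}(x)$, with the \emph{same} constant $L_0$ at every base point of $\cK_\Gamma$ --- a uniformity that is crucial because the base point will range over the whole orbit segment $\{\phi^t(x)\}_{0\le t\le T}$.

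For the first inequality I would take $y,y'\in\cK_\Gamma\cap B^\mathrm{u}_T(x,\varepsilon)$, write them uniquely as $y=\exp^\mathrm{u}_x(v)$, $y'=\exp^\mathrm{u}_x(v')$ with $v,v'\in\Lambda^\mathrm{u}(x)$, and apply $\phi^T$: since $\phi^T(y)=\exp^\mathrm{u}_{\phi^T(x)}(d_x\phi^T v)$ and $d(\phi^T(x),\phi^T(y))\le\varepsilon<\varepsilon_1$, the bi-Lipschitz estimate of Lemma \ref{lem: scale setting}\ref{lemenumi unstable exponential} at the point $\phi^T(x)\in\cK_\Gamma$ gives $\Vert d_x\phi^T v\Vert\le L_0\varepsilon$, i.e.\ $v\in\Lambda^\mathrm{u}_T(x,L_0\varepsilon)$, and likewise for $v'$; hence $\Vert v-v'\Vert\le\mathrm{diam}\bigl(\Lambda^\mathrm{u}_T(x,L_0\varepsilon)\bigr)$, and the bi-Lipschitz property of $\exp^\mathrm{u}_x$ itself converts this into $d(y,y')\le L_0\Vert v-v'\Vert$, which yields the claim after taking the supremum. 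For the second inequality I would run this in reverse: starting from $v,v'\in\Lambda^\mathrm{u}_T(x,\delta)$, I would bound $\Vert d_x\phi^t v\Vert$ for every $0\le t\le T$ by writing $d_x\phi^t v=d_{\phi^T(x)}\phi^{t-T}(d_x\phi^T v)$ and using that $d\phi^{s}$ contracts $E^\mathrm{u}$ for $s\le 0$ up to the uniform distortion of Lemma \ref{lem: scale setting}\ref{lemenumi Lipschitz flow} (this is transparent from the model formula $d\phi^s(w,0)=(e^sw,0)$); this keeps $\phi^t(\exp^\mathrm{u}_x(v))=\exp^\mathrm{u}_{\phi^t(x)}(d_x\phi^t v)$ within distance $L_0^2\delta$ of $\phi^t(x)$ for all such $t$, so that $\exp^\mathrm{u}_x(v)\in\cK_\Gamma\cap B^\mathrm{u}_T(x,L_0^2\delta)$, and similarly for $v'$; the bi-Lipschitz bound then gives $\Vert v-v'\Vert\le L_0\,d\bigl(\exp^\mathrm{u}_x(v),\exp^\mathrm{u}_x(v')\bigr)\le L_0\,\mathrm{diam}\bigl(\cK_\Gamma\cap B^\mathrm{u}_T(x,L_0^2\delta)\bigr)$.

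I do not expect a genuine obstacle here: all the conceptual content is carried by Lemma \ref{lem: scale setting} and the linearization identity \eqref{eqn: unstable exponential linearizes the flow}. The one point requiring care --- and the reason the constants in the statement ($2L_0$, $L_0^2$) are deliberately generous rather than sharp --- is the bookkeeping needed to guarantee that every vector produced along the orbit segment $0\le t\le T$ stays inside the common ball $B(0,\delta_0)$ on which the charts are bi-Lipschitz, and that the corresponding points stay in the local plaques $W^\mathrm{u}_{\varepsilon_0}$; this is exactly what the scale hierarchy $\varepsilon_0>2\varepsilon_1$ together with the choices of $\delta_0$ and $L_0$ in Lemma \ref{lem: scale setting} is designed to provide, so I would dispatch it with a short sequence of triangle-inequality estimates interpolated into the two arguments above.
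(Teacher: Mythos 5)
Your proposal is correct and follows essentially the same route as the paper: linearize along $W^\mathrm{u}$ via $\exp^\mathrm{u}_x$ and \eqref{eqn: unstable exponential linearizes the flow}, pass between $\cK_\Gamma\cap B^\mathrm{u}_T$ and $\Lambda^\mathrm{u}_T$ using the uniform $L_0$-bi-Lipschitz bound of Lemma \ref{lem: scale setting}\ref{lemenumi unstable exponential}, and control the intermediate times $0\le t\le T$ by the backward Lipschitz estimate of Lemma \ref{lem: scale setting}\ref{lemenumi Lipschitz flow}. The only cosmetic differences are that the paper applies \ref{lemenumi Lipschitz flow} to the points $\phi^T(x),\phi^T(y)$ rather than to tangent vectors (the cleaner phrasing of your intermediate-time step, since that item is stated for points, and since the model formula $d\phi^s(w,0)=(e^sw,0)$ alone does not control the quotient Riemannian norms), and that your first inequality actually gives the sharper constant $L_0$, the paper's $2L_0$ arising from comparing only the farther point to $x$ via $0\in\Lambda^\mathrm{u}_T(x,L_0\varepsilon)$.
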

\begin{proof}
Let $x\in \cK_\Gamma$, $\varepsilon\in (0,\varepsilon_1)$ and $T\geq 0$. Let $y,y'\in \cK_\Gamma\cap B^\mathrm{u}_T(x,\varepsilon)$ be such that $d(y,y')=\mathrm{diam}(\cK_\Gamma\cap B^\mathrm{u}_T(x,\varepsilon))$. Assume without loss of generality that $d(x,y)\geq d(x,y')$ and consider $v\in E^\mathrm{u}_x\cap B(0,\delta_0)$ such that $y=\exp_x^\mathrm{u}(v)$. Now we estimate:
\begin{align*}
\norm{d_x\phi^T(v)}&\leq L_0\, d(\phi^T(x),\exp^\mathrm{u}_{\phi^T(x)}(d_x\phi^T(v))) & \textrm{by Lemma } \ref{lem: scale setting}\,\ref{lemenumi unstable exponential} \\
&\leq L_0\, d(\phi^T(x),\phi^T(y)) & \textrm{by } \eqref{eqn: unstable exponential linearizes the flow}\\
&\leq L_0\,\varepsilon & \textrm{because }  y\in B^\mathrm{u}_T(x,\varepsilon).
\end{align*}
It follows that $v\in \Lambda^\mathrm{u}_T(x,L_0\varepsilon)$, hence $\Vert v\Vert \leq \mathrm{diam}\left(\Lambda^\mathrm{u}_T(x,L_0\varepsilon)\right)$ since $0\in \Lambda^\mathrm{u}_T(x,L_0\varepsilon)$, and we find:
\begin{align*}
\mathrm{diam}(\cK_\Gamma\cap B^\mathrm{u}_T(x,\varepsilon)) &\leq 2d(x,y) & \\
& \leq 2 L_0 \Vert v\Vert & \textrm{by Lemma } \ref{lem: scale setting}\,\ref{lemenumi unstable exponential}\\
& \leq 2L_0\, \mathrm{diam}\left(\Lambda^\mathrm{u}_T(x,L_0\varepsilon)\right). &
\end{align*}
Now let $\delta\in (0,\delta_0)$ and $v\in \Lambda^\mathrm{u}_T(x,\delta)$, and write $y=\exp^\mathrm{u}_x(v)$. Then
\begin{align*}
d(\phi^T(x),\phi^T(y))&=d(\phi^T(x),\exp^\mathrm{u}_{\phi^T(x)}(d_x\phi^T(v))) & \textrm{by } \eqref{eqn: unstable exponential linearizes the flow}\\
&\leq L_0\,\norm{d_x\phi^T(v)} & \textrm{by Lemma } \ref{lem: scale setting}\ref{lemenumi unstable exponential} \\
&\leq L_0\,\delta & \textrm{because } v\in \Lambda^\mathrm{u}_T(x,\delta).
\end{align*}
 For $t\in [0,T]$, we find:
\begin{align*}
d(\phi^t(x),\phi^t(y))&= d\left(\phi^{-(T-t)}(\phi^T(x)),\phi^{-(T-t)}(\phi^T(y))\right) &\\
&\leq L_0\, d(\phi^T(x),\phi^T(y)) &\textrm{by Lemma }\ref{lem: scale setting}\ref{lemenumi Lipschitz flow}\\
&\leq L_0^2\,\delta. &
\end{align*}
This proves that $y\in B^\mathrm{u}_T(x,L_0^2\delta)$. Now given $v,v'\in \Lambda^\mathrm{u}_T(x,\delta)$ and setting $y=\exp^\mathrm{u}_x(v),y'=\exp^\mathrm{u}_x(v')\in B^\mathrm{u}_T(x,L_0^2\delta)$, we know that $\Vert v-v'\Vert \leq L_0\,d(y,y')$ by Lemma \ref{lem: scale setting}\ref{lemenumi unstable exponential}, hence $\mathrm{diam}(\Lambda^\mathrm{u}_T(x,\delta))\leq L_0\,\mathrm{diam}(\cK_\Gamma\cap B^\mathrm{u}_T(x,L_0^2\delta))$.
\end{proof}

We will now prove an infinitesimal version of regular distortion along unstable manifolds, as in this setting the self-similarity of the basic set can be treated linearly.

\begin{lem} \label{lem:regular distortion for infinitesimal limit set} 
For any $x\in \cK_\Gamma$ and $\delta\in(0,\delta_1)$, there exist a constant $D_{x,\delta}>0$ and a neighbourhood $V_{x,\delta}\subset \cK_\Gamma$ of $x$ such that
\[
\mathrm{diam}\left(\Lambda^\mathrm{u}_T\left(\phi^{-T}(y),\varepsilon\right)\right)\leq \varepsilon\,D_{x,\delta}\cdot \mathrm{diam}\left(\Lambda^\mathrm{u}_T\left(\phi^{-T}(y),\delta\right)\right)
\]
for any $y\in V_{x,\delta}$, $\varepsilon\in (0,\delta_1)$ and $T\geq 0$.
\end{lem}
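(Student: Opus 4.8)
The plan is to reduce the statement to a uniform linear-algebra estimate, carried out fibrewise on the subbundle
\[
\mathcal{E}:=\mathrm{Span}\bigl(\Lambda^\mathrm{u}\bigr)\big|_{\cK_\Gamma}\subset E^\mathrm{u}|_{\cK_\Gamma},
\]
which by Corollary \ref{cor span of infinitesimal limit set forms a vector bundle} is a continuous vector subbundle; since $\phi^t|_{\cK_\Gamma}$ is topologically transitive (Lemma \ref{lem basic set is basic}), $\cK_\Gamma$ is connected, so $\mathcal{E}$ has constant rank $r$. Write $z=\phi^{-T}(y)$, so $y=\phi^T(z)$. The key preliminary observation is that the flow-invariance \eqref{eqn flow preserves infinitesimal limit set} of the infinitesimal unstable limit set gives the identity
\[
\Lambda^\mathrm{u}_T(z,\varepsilon)=\bigl(d_z\phi^T\bigr)^{-1}\Bigl(\Lambda^\mathrm{u}(y)\cap \overline{B}(0,\varepsilon)\Bigr),\qquad \overline{B}(0,\varepsilon)\subset E^\mathrm{u}_y,
\]
and that $d_z\phi^T$ restricts to a linear isomorphism $\mathcal{E}_z\to\mathcal{E}_y$ carrying $\Lambda^\mathrm{u}(z)$ onto $\Lambda^\mathrm{u}(y)$; moreover $0\in\Lambda^\mathrm{u}(w)$ for every $w\in\cK_\Gamma$ since $\exp^\mathrm{u}_w(0)=w$. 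In particular every set occurring in the statement lies in a single fibre of $\mathcal{E}$.

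With this in hand the upper bound is immediate: by the triangle inequality $\mathrm{diam}\bigl(\Lambda^\mathrm{u}(y)\cap \overline{B}(0,\varepsilon)\bigr)\le 2\varepsilon$, and since $(d_z\phi^T)^{-1}$ is linear,
\[
\mathrm{diam}\bigl(\Lambda^\mathrm{u}_T(z,\varepsilon)\bigr)\le 2\varepsilon\,\bigl\Vert (d_z\phi^T|_{\mathcal{E}})^{-1}\bigr\Vert_{\mathrm{op}}.
\]
For the lower bound I will produce, for all $y$ in a sufficiently small neighbourhood $V_{x,\delta}\subset\cK_\Gamma$ of $x$, a frame $p^y_1,\dots,p^y_r\in\Lambda^\mathrm{u}(y)\cap \overline{B}(0,\delta)\subset\mathcal{E}_y$ which is a basis of $\mathcal{E}_y$ and is \emph{uniformly nondegenerate}: every $e\in\mathcal{E}_y$ with $\Vert e\Vert=1$ can be written $e=\sum_i a_i p^y_i$ with $\sum_i|a_i|\le C_{x,\delta}$. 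The vectors $(d_z\phi^T)^{-1}p^y_i$ and $0$ all lie in $\Lambda^\mathrm{u}_T(z,\delta)$, hence
\[
\mathrm{diam}\bigl(\Lambda^\mathrm{u}_T(z,\delta)\bigr)\ \ge\ \max_i\bigl\Vert (d_z\phi^T|_{\mathcal{E}})^{-1}p^y_i\bigr\Vert\ \ge\ \tfrac1{C_{x,\delta}}\bigl\Vert (d_z\phi^T|_{\mathcal{E}})^{-1}\bigr\Vert_{\mathrm{op}},
\]
the last inequality being the elementary fact that for any linear $L$ on $\mathcal{E}_y$ one has $\Vert L\Vert_{\mathrm{op}}=\sup_{\Vert e\Vert=1}\Vert Le\Vert\le C_{x,\delta}\max_i\Vert Lp^y_i\Vert$. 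Combining the two displays yields the claim with $D_{x,\delta}:=2C_{x,\delta}$, uniformly in $\varepsilon\in(0,\delta_1)$ and $T\ge 0$; fed into Lemma \ref{lem diameters of linearised Bowen balls} this is exactly what is needed for uniformly regular distortion along unstable manifolds.

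It remains to construct the uniform frame, which I expect to be the only genuinely delicate point. Fix a lift $\tilde x=(v_x,\alpha_x)\in\widetilde{\cK}_\Gamma$ of $x$ with $\alpha_x(v_x)=1$. By Corollary \ref{cor span of infinitesimal limit set forms a vector bundle} the set $\Lambda^\mathrm{u}(x)\cap B(0,\delta/2)$ spans $\mathcal{E}_x$, so we may pick $p^x_1,\dots,p^x_r$ there forming a basis of $\mathcal{E}_x$; via \eqref{eq:iniffplus} each $p^x_i$ corresponds to a boundary point $\xi(s_i)\in\Lambda_\Gamma$, necessarily transverse to the hyperplane $[\alpha_x]$. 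For $y$ near $x$, take the lift $\tilde y=(v_y,\alpha_y)\in B(\tilde x,\varepsilon_0)$ of $y$ depending continuously on $y$; openness of transversality keeps each $\xi(s_i)$ transverse to $[\alpha_y]$, so it defines via \eqref{eq:iniffplus} a point $p^y_i\in\Lambda^\mathrm{u}(y)$ depending continuously on $y$ with $p^y_i\to p^x_i$ as $y\to x$ in any local trivialisation of $E^\mathrm{u}$ near $x$ adapted to $\mathcal{E}$ (using continuity of $\xi$ from Proposition \ref{prop boundary maps}, of $\pi$, and of the Riemannian metric). Since $\dim\mathcal{E}_y=r$, nondegeneracy of a frame is an open condition, and $\Vert p^x_i\Vert<\delta/2$, we may then shrink to a neighbourhood $V_{x,\delta}$ of $x$ on which $\Vert p^y_i\Vert<\delta$ and $\{p^y_i\}$ is a basis of $\mathcal{E}_y$ with a common nondegeneracy constant $C_{x,\delta}$. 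This furnishes the frame and completes the plan.
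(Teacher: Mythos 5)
Your proof is correct and follows essentially the same route as the paper's: both replace the fractal set by the subbundle $\mathrm{Span}(\Lambda^\mathrm{u})$, choose a spanning frame in $\Lambda^\mathrm{u}(x)\cap B(0,\delta)$, transport it continuously to nearby $y$ (your limit-map/transversality recipe is exactly the infinitesimal holonomy map of \eqref{eq:Hxy}), pull it back under $d\phi^{-T}$, and conclude by a uniform linear-algebra comparison using flow-invariance of $\Lambda^\mathrm{u}$. The only difference is organizational: you factor the estimate through the operator norm of $(d_z\phi^T|_{\mathrm{Span}(\Lambda^\mathrm{u})})^{-1}$, whereas the paper decomposes an arbitrary element of the $\varepsilon$-dynamical ball directly in the transported basis and bounds the coefficients via an auxiliary orthonormal frame.
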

\begin{proof}
Since we are working locally, we can fix an orthonormal frame field $(e_1,\dots,e_m)$ of the vector bundle $\mathrm{Span}(\Lambda^\mathrm{u})$. Let us fix $x\in \cK_\Gamma$ and $\delta\in(0,\delta_1)$, and consider some linearly independent vectors $u_1(x),\dots,u_m(x)\in\Lambda^\mathrm{u}_x\cap B(0,\delta)$ (they exist thanks to Corollary \ref{cor span of infinitesimal limit set forms a vector bundle}). For $y\in \cK_\Gamma\cap B(x,\varepsilon_2)$, we let $u_i(y)=\widehat{\cH_x^y}(u_i(x))$. Now thanks to Lemma \ref{lem infinitesimal holonomy preserves infinitesimal unstable limit set} and the continuous dependence of the map $\widehat{\cH_x^y}$ on the point $y$, we can find a neighbourhood $V_{x,\delta}$ of $x$ in $\cK_\Gamma$ such that every $y\in V_{x,\delta}$ satisfies:
\begin{itemize}
\item $\frac{\norm{u_i(x)}}{2}\leq \norm{u_i(y)}\leq 2\norm{u_i(x)}$, $i=1,\dots,m$;
\item $(u_1(y),\dots,u_m(y))$ is a basis of $\mathrm{Span}(\Lambda^\mathrm{u}_y)$.
\end{itemize}
For each $y\in V_{x,\delta}$ consider now the linear map $L_y:\mathrm{Span}(\Lambda^\mathrm{u}_y)\to \mathrm{Span}(\Lambda^\mathrm{u}_y)$ defined by $L_y(u_i(y))=e_i(y)$ and put $b_{x,\delta}:=\sup_{y\in V_{x,\delta}}\norm{L_y}\in (0,\infty)$, where $\norm{L_y}$ is the operator norm of $L_y$. Consider the vectors $v_i(y)=d_y\phi^{-T}(u_i(y))\in \Lambda^\mathrm{u}_T\left(\phi^{-T}(y),\delta\right)$ (recalling that $u_i(y)\in \Lambda^\mathrm{u}_y$ and $\norm{u_i(y)}\leq \delta$). Since $0\in\Lambda^\mathrm{u}_T\left(\phi^{-T}(y),\delta\right)$, we find that
\begin{equation} \label{lemeqn norm v_i(y)}
 \norm{v_i(y)}\leq \mathrm{diam}\left(\Lambda^\mathrm{u}_T\left(\phi^{-T}(y),\delta\right)\right). 
\end{equation}
Now consider an arbitrary $v\in \Lambda^\mathrm{u}_T\left(\phi^{-T}(y),\varepsilon\right)$, decompose $v=\sum_{i=1}^m c_iv_i(y)$ and consider the vector $u=d_{\phi^{-T}(y)}\phi^T(v)\in \Lambda^\mathrm{u}_y\cap B(0,\varepsilon)$. Its  decomposition as $u=\sum_{i=1}^mc_i u_i(y)$ provides us with the following estimate:
\begin{equation} \label{lemeqn norm c_i}
\sqrt{\sum_{i=1}^mc_i^2} =\norm{\sum_{i=1}^mc_i e_i(y)}= \norm{L_y u}\leq b_{x,\delta}\norm{u}\leq b_{x,\delta}\,\varepsilon.
\end{equation}
It follows that
\begin{align*}
\norm{v}=\norm{\sum_{i=1}^mc_i v_i(y)}& \leq \sum_{i=1}^m\vert c_i\vert\cdot \norm{v_i(y)} & \\
&\leq \sqrt{\sum_{i=1}^mc_i^2} \sqrt{\sum_{i=1}^m\norm{v_i(y)}^2} & \\
 &\leq b_{x,\delta}\,\varepsilon \, \sqrt{\sum_{i=1}^m\norm{v_i(y)}^2} & \textrm{by }\eqref{lemeqn norm c_i}\\
&\leq \,b_{x,\delta}\,\varepsilon\,\sqrt{m}\,\mathrm{diam}\left(\Lambda^\mathrm{u}_T\left(\phi^{-T}(y),\delta\right)\right). & \textrm{by }\eqref{lemeqn norm v_i(y)}
\end{align*}
This proves the claim with $D_{x,\delta}=2\sqrt{m}\,b_{x,\delta}$.
\end{proof}

\begin{cor} \label{cor local regular distortion} 
For some constant $\varepsilon_3>0$ and any $\delta\in(0,\varepsilon_3)$ and $x\in \cK_\Gamma$, there exist a constant $C_{x,\delta}>0$  and a neighbourhood  $W_{x,\delta}\subset\cK_\Gamma$ of $x$  such that
\[ \mathrm{diam}\left( \cK_\Gamma\cap B_T^\mathrm{u}(\phi^{-T}(y),\varepsilon ) \right) \leq \varepsilon\,C_{x,\delta}\cdot \mathrm{diam}\left( \cK_\Gamma\cap B_T^\mathrm{u}(\phi^{-T}(y),\delta ) \right) \]
for any $y\in W_{x,\delta}$, $\varepsilon\in(0,\varepsilon_3)$ and $T\geq 0$.

\end{cor}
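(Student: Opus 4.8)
The plan is to obtain Corollary \ref{cor local regular distortion} as a purely formal combination of the two preceding lemmas: Lemma \ref{lem diameters of linearised Bowen balls} will let me pass back and forth between diameters of Bowen balls $\cK_\Gamma\cap B_T^{\mathrm{u}}(\cdot,\cdot)$ and diameters of the linearised balls $\Lambda^{\mathrm{u}}_T(\cdot,\cdot)$ at a cost of powers of $L_0$, and Lemma \ref{lem:regular distortion for infinitesimal limit set} provides the regular-distortion estimate on the linearised side, where the self-similarity of the limit set is visible linearly. First I would fix the scale $\varepsilon_3:=\min\{\varepsilon_1,\delta_0,\delta_1,\delta_1/L_0\}$. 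Given $x\in\cK_\Gamma$ and $\delta\in(0,\varepsilon_3)$, I set $\delta':=\delta/L_0^2$; since $L_0>1$ this satisfies $\delta'<\min\{\delta_0,\delta_1\}$, so Lemma \ref{lem:regular distortion for infinitesimal limit set} applies at the pair $(x,\delta')$ and yields a constant $D_{x,\delta'}>0$ and a neighbourhood $V_{x,\delta'}\subset\cK_\Gamma$ of $x$. I then declare $W_{x,\delta}:=V_{x,\delta'}$ and $C_{x,\delta}:=2L_0^3\,D_{x,\delta'}$.

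Next, for $y\in W_{x,\delta}$, $\varepsilon\in(0,\varepsilon_3)$ and $T\geq0$, I would chain three inequalities, all evaluated at the point $\phi^{-T}(y)\in\cK_\Gamma$ (using that $\cK_\Gamma$ is $\phi^t$-invariant). The first inequality of Lemma \ref{lem diameters of linearised Bowen balls}, applicable since $\varepsilon<\varepsilon_1$, gives $\mathrm{diam}\bigl(\cK_\Gamma\cap B_T^{\mathrm{u}}(\phi^{-T}(y),\varepsilon)\bigr)\leq 2L_0\,\mathrm{diam}\bigl(\Lambda^{\mathrm{u}}_T(\phi^{-T}(y),L_0\varepsilon)\bigr)$. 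Since $L_0\varepsilon<\delta_1$ (by the choice of $\varepsilon_3$) and $y\in V_{x,\delta'}$, Lemma \ref{lem:regular distortion for infinitesimal limit set} gives $\mathrm{diam}\bigl(\Lambda^{\mathrm{u}}_T(\phi^{-T}(y),L_0\varepsilon)\bigr)\leq L_0\varepsilon\,D_{x,\delta'}\,\mathrm{diam}\bigl(\Lambda^{\mathrm{u}}_T(\phi^{-T}(y),\delta')\bigr)$. Finally, the second inequality of Lemma \ref{lem diameters of linearised Bowen balls}, applicable since $\delta'<\delta_0$, gives $\mathrm{diam}\bigl(\Lambda^{\mathrm{u}}_T(\phi^{-T}(y),\delta')\bigr)\leq L_0\,\mathrm{diam}\bigl(\cK_\Gamma\cap B_T^{\mathrm{u}}(\phi^{-T}(y),L_0^2\delta')\bigr)$. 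Composing the three bounds and using $L_0^2\delta'=\delta$ produces $\mathrm{diam}\bigl(\cK_\Gamma\cap B_T^{\mathrm{u}}(\phi^{-T}(y),\varepsilon)\bigr)\leq \varepsilon\,C_{x,\delta}\,\mathrm{diam}\bigl(\cK_\Gamma\cap B_T^{\mathrm{u}}(\phi^{-T}(y),\delta)\bigr)$, which is exactly the assertion.

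I do not anticipate a genuine obstacle: the statement is essentially bookkeeping. The only points requiring mild care are, first, arranging the scales ($\varepsilon<\varepsilon_1$, $L_0\varepsilon<\delta_1$, $\delta'=\delta/L_0^2<\min\{\delta_0,\delta_1\}$) so that each cited inequality is legitimately applicable, and second, noting that Lemma \ref{lem:regular distortion for infinitesimal limit set} is already stated uniformly over the entire backward orbit $\{\phi^{-T}(y)\}_{T\geq0}$ of points $y$ near $x$ — which is precisely the feature that lets a statement local in $y$ be transferred to the orbit segments appearing in the definition of (uniformly) regular distortion along unstable manifolds.
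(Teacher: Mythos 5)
Your proposal is correct and follows essentially the same route as the paper: the same chain of the two inequalities from Lemma \ref{lem diameters of linearised Bowen balls} sandwiching Lemma \ref{lem:regular distortion for infinitesimal limit set} applied at scale $\delta/L_0^2$, with the identical choices $W_{x,\delta}=V_{x,\delta/L_0^2}$ and $C_{x,\delta}=2L_0^3 D_{x,\delta/L_0^2}$. The only (immaterial) difference is your slightly more conservative choice of $\varepsilon_3$ compared with the paper's $\min\left(\tfrac{\delta_1}{L_0},\delta_0 L_0^2\right)$; both guarantee the scale conditions needed to invoke the cited lemmas.
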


\begin{proof}
Using the notations of Lemma \ref{lem: scale setting}, Lemma \ref{lem diameters of linearised Bowen balls} and Lemma \ref{lem:regular distortion for infinitesimal limit set}, we can set $\varepsilon_3=\min\left(\frac{\delta_1}{L_0},\delta_0L_0^2\right)$, $W_{x,\delta}=V_{x,\frac{\delta}{L_0^2}}$ and $C_{x,\delta}=2L_0^3 D_{x,\frac{\delta}{L_0^2}}$. We find:

\begin{align*}
\mathrm{diam}\left( \cK_\Gamma\cap B_T^\mathrm{u}(\phi^{-T}(y),\varepsilon ) \right)&\leq 2L_0 \,\mathrm{diam}\left(\Lambda^\mathrm{u}_T\left(\phi^{-T}(y),L_0\varepsilon\right)\right) & \textrm{by Lemma }\ref{lem diameters of linearised Bowen balls} \\
&\leq 2L_0^2\varepsilon D_{x,\frac{\delta}{L_0^2}} \,\mathrm{diam}\left(\Lambda^\mathrm{u}_T\left(\phi^{-T}(y),\frac{\delta}{L_0^2}\right)\right) &\textrm{by Lemma }\ref{lem:regular distortion for infinitesimal limit set} \\
&\leq  2L_0^3\varepsilon D_{x,\frac{\delta}{L_0^2}} \,\mathrm{diam}\left( \cK_\Gamma\cap B_T^\mathrm{u}(\phi^{-T}(y),\delta ) \right). & \textrm{by Lemma }\ref{lem diameters of linearised Bowen balls}
\end{align*}
\end{proof}

\begin{cor} \label{cor regular distortion along unstable manifolds}
The flow $\phi^t$ has uniform regular distortion along unstable manifolds.
\end{cor}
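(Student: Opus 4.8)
The plan is to bootstrap the \emph{uniform} statement of Definition~\ref{def: URDU} from the \emph{local} statement of Corollary~\ref{cor local regular distortion} by a finite-covering argument, taking $\varepsilon_*:=\varepsilon_3$. All the genuine work has already been done: the replacement of $\cK_\Gamma\cap W^{\mathrm u}(x)$ by the infinitesimal unstable limit sets $\Lambda^{\mathrm u}(x)$, the two-sided comparison of Bowen balls with infinitesimal unstable dynamical balls in Lemma~\ref{lem diameters of linearised Bowen balls}, and the linear self-similarity estimate of Lemma~\ref{lem:regular distortion for infinitesimal limit set}. What remains is purely a compactness upgrade.

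Concretely, I would fix $\delta\in(0,\varepsilon_3)$. Corollary~\ref{cor local regular distortion} attaches to every $x\in\cK_\Gamma$ an open neighbourhood $W_{x,\delta}\subset\cK_\Gamma$ of $x$ together with a constant $C_{x,\delta}>0$. Since $\cK_\Gamma$ is compact, finitely many of these neighbourhoods $W_{x_1,\delta},\dots,W_{x_N,\delta}$ cover $\cK_\Gamma$, and I set $R_\delta:=\max_{1\le i\le N}C_{x_i,\delta}$. Now let $z\in\cK_\Gamma$, $\varepsilon\in(0,\varepsilon_3)$ and $T>0$ be arbitrary, and put $y:=\phi^T(z)$. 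Because $\cK_\Gamma$ is $\phi^t$-invariant we have $y\in\cK_\Gamma$, so $y\in W_{x_i,\delta}$ for some index $i$. Applying Corollary~\ref{cor local regular distortion} with this $x_i$, the point $y\in W_{x_i,\delta}$, and the same $\varepsilon$ and $T$ gives
\[
\mathrm{diam}\bigl(\cK_\Gamma\cap B_T^{\mathrm u}(\phi^{-T}(y),\varepsilon)\bigr)\ \le\ \varepsilon\,C_{x_i,\delta}\cdot\mathrm{diam}\bigl(\cK_\Gamma\cap B_T^{\mathrm u}(\phi^{-T}(y),\delta)\bigr).
\]
Since $\phi^{-T}(y)=z$ and $C_{x_i,\delta}\le R_\delta$, this is exactly the inequality required in Definition~\ref{def: URDU} at the point $z$, so the flow $\phi^t$ has uniform regular distortion along unstable manifolds.

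I do not expect any real obstacle at this step; the only point that requires a moment's care is that Corollary~\ref{cor local regular distortion} is formulated for points of the form $\phi^{-T}(y)$ with $y$ ranging over a neighbourhood in $\cK_\Gamma$, rather than directly for arbitrary points of $\cK_\Gamma$. Using the flow-invariance of $\cK_\Gamma$ to rewrite an arbitrary $z\in\cK_\Gamma$ as $\phi^{-T}\bigl(\phi^T(z)\bigr)$ is precisely what makes a single finite subcover — chosen once for each $\delta$, independently of $z$ and of $T$ — suffice, and hence what produces the $T$-independent constant $R_\delta$ demanded by the definition.
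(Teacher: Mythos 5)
Your argument is correct and is essentially identical to the paper's proof: both fix $\delta$, extract a finite subcover $W_{x_1,\delta},\dots,W_{x_N,\delta}$ of the compact set $\cK_\Gamma$, take the maximum of the constants $C_{x_i,\delta}$, and apply Corollary \ref{cor local regular distortion} at the point $y=\phi^T(z)\in W_{x_i,\delta}$, using flow-invariance of $\cK_\Gamma$ so that $\phi^{-T}(y)=z$. No gaps.
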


\begin{proof}
Set $\varepsilon_*=\varepsilon_3$ given by Corollary \ref{cor local regular distortion}, and let $\delta\in (0,\varepsilon_*)$. Consider finitely many elements $x_1,\dots,x_N\in\cK_\Gamma$ such that $\cK_\Gamma=\bigcup_{i=1}^N W_{x_i,\delta}$. Now let $x\in \cK_\Gamma$, $\varepsilon\in (0,\varepsilon_*)$ and $T\geq 0$. By choosing some $i\in\{1,\dots,N\}$ such that $\phi^T(x)\in W_{x_i,\delta}$, we find that 
\[   \mathrm{diam}\left( \cK_\Gamma\cap B_T^\mathrm{u}(x,\varepsilon ) \right) \leq \varepsilon\,C_\delta\cdot \mathrm{diam}\left( \cK_\Gamma\cap B_T^\mathrm{u}(x,\delta ) \right) \]
where $C_\delta=\max\{C_{x_1,\delta},\dots,C_{x_N,\delta}\}$.
\end{proof}

\subsection{Local non-integrability} 

The central notion in the work of Dolgopyat \cite{DOL98} is the \emph{time separation function} introduced by Chernov \cite{CHE98}. Any two nearby points $x,y\in \cK_\Gamma$ can be joined by a path that consists in the concatenation of
\begin{enumerate}
\item a path in $W^\mathrm{s}_{\varepsilon_0}(x)$,
\item flowing out for some time $\Delta(x,y)$,
\item a path in $W^\mathrm{u}_{\varepsilon_0}(y)$.
\end{enumerate}  

This process defines a function $\Delta:\cK_\Gamma\times\cK_\Gamma\to\R$ (see Definition \ref{def time separation} below for a precise definition) called the time separation function. Lower bounds on the time separation function around the diagonal are called \emph{non integrability conditions}, as they quantify the fact that the distribution $E^\mathrm{s}\oplus E^\mathrm{u}$ is not integrable. In our case, the distribution $E^\mathrm{s}\oplus E^\mathrm{u}$ is a contact structure, so any reasonable non-integrability condition should be satisfied. However in the work of Stoyanov \cite{St11}  the precise non-integrability condition one needs involves the fractal geometry of $\cK_\Gamma$.

\begin{definition}[Time separation and projection] \label{def time separation}
Let $x,y\in \cK_\Gamma$ with $d(x,y)<\varepsilon_1$. Define $\pi_y(x)\in \cK_\Gamma$ by $\pi_y(x)\in W^\mathrm{s}_{\varepsilon_0}(x)\cap W^\mathrm{cu}_{\varepsilon_0}(y)$ and $\Delta(x,y)\in(-\varepsilon_0,\varepsilon_0)$ by $\phi^{\Delta(x,y)}(\pi_y(x))\in W^\mathrm{u}_{\varepsilon_0}(y)$.
\end{definition}

These definitions make sense by Lemma \ref{lem: scale setting}. If we consider lifts $[v:\alpha],[w:\beta]\in\tilde\cK_\Gamma$ of $x,y$ in a common ball of radius $\varepsilon_1$, we find:

\begin{equation}
 \pi_y(x)=\pi\left(\left[v:\frac{1}{\beta(v)}\beta\right]\right) , \qquad  \Delta(x,y)=-\log \beta(v).
 \label{eqn time separation and projection} 
\end{equation}


\begin{definition}[Strong Local Non-Integrability Condition (SLNIC)] \label{def LNIC}
There exists $d_0\in (0,1)$ such that, for any $\varepsilon\in(0,\varepsilon_0)$, $z\in \cK_\Gamma$ and any unit vector $w\in E^\mathrm{u}_z$, there exist $y\in \cK_\Gamma\cap W^\mathrm{s}_{\varepsilon}(z)$, $\kappa>0$ and $\varepsilon'\in (0,\varepsilon)$ such that:
\[ \vert \Delta(\exp^{\mathrm{u}}_{x}(u),\pi_{y}(x))\vert \geq \kappa \norm{u}\]
for any $x\in \cK_\Gamma\cap W^\mathrm{u}_{\varepsilon'}(z)$ and $u\in \Lambda^\mathrm{u}(x)$ with $\norm{u}\leq \varepsilon'$ and $\norm{\frac{u_z}{\norm{u_z}}-w}\leq d_0$ where $u_z\in E^\mathrm{u}_z$ is the parallel transport of $u$ along the geodesic in $W^\mathrm{u}(z)$ from $x$ to $z$.
\end{definition}

\begin{rem} The relation between this definition and the geometry of the basic set $\cK_\Gamma$ lies in the requirement that $y\in \cK_\Gamma$. If we were to drop this requirement, the condition would be a straightforward consequence of the fact that the distribution $E^\mathrm{s}\oplus E^\mathrm{u}$ is a contact structure.
\end{rem}

\begin{rem} \label{rem LNIC vs SLNIC} The Strong Local Non-Integrability Condition (SLNIC) is stronger than the Local Non-Integrability Condition (LNIC) used by Stoyanov \cite{St11}. Amongst other things, he only requires the property for vectors $w\in E^\mathrm{u}_z$ that are tangent to $\cK_\Gamma$ at $z$ (in some appropriate sense). Requiring $\Gamma$ to be irreducible frees us from having to use this tangency. \end{rem}

\begin{lem} \label{lem irreducibility argument}
Suppose that $\Gamma$ is  irreducible, and let $K\subset \widetilde{\cK}_{\Gamma}$ be a compact subset. For any $\delta>0$, there exists $d_0>0$ such that, for any $[v:\alpha]\in K$ and unit vector $(w,0)\in E^\mathrm{u}_{[v:\alpha]}$, there is a linear form $\beta\in V^*$ satisfying the following properties:
\begin{enumerate} [label=(\arabic*),ref=\emph{(\arabic*)}]
\item \label{lemenumi irreducibility linear form in limit set} $[\beta]\in\Lambda^*_\Gamma$,
\item \label{lemenumi irreducibility linear form close to original} $\beta(v)=1$ and $\norm{(0,\beta-\alpha)}_{[v:\alpha]}\leq \delta$,
\item \label{lemenumi irreducibility uniform bound} $\beta(w)\neq 0$ and $\vert\beta(u)\vert> \frac{1}{2}\vert\beta(w)\vert$ for any  vector $u\in \ker\alpha$ (i.e. $(u,0)\in E^\mathrm{u}_{[v:\alpha]}$) such that $\norm{(u-w,0)}_{[v:\alpha]}< d_0$.
\end{enumerate}
\end{lem}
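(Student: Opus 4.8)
The plan is to first reduce property (3) to a \emph{uniform} lower bound of the form $|\beta(w)|\ge c$, and then to establish that bound by a compactness argument whose only essential ingredient is the irreducibility of $\Gamma$. Throughout I lift $K$ to the compact set $\tilde K\subset\bbL_1$ of representatives $(v,\alpha)$ with $\alpha(v)=1$; on $\tilde K$ the Riemannian norm $\norm{\cdot}_{[v:\alpha]}$ on $E^\mathrm{u}_{[v:\alpha]}$ and on $\{0\}\times\ker\iota_v$ is comparable, with constants uniform over $\tilde K$, to a fixed Euclidean norm $\norm{\cdot}$ on $V$ and its dual norm on $V^*$. I claim it suffices to produce, for each $\delta>0$, a constant $c=c(\delta,K)>0$ such that every pair $([v:\alpha],(w,0))$ as in the statement admits some $\beta\in V^*$ satisfying (1), (2) and $|\beta(w)|\ge c$. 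Indeed, given such a $\beta$, condition (2) bounds $\norm{\beta}_{V^*}$ by some $M=M(\delta,K)$ (since $\beta-\alpha$ is $\delta$-small in the metric and $\alpha$ ranges over the compact $\tilde K$), so $|\beta(u)-\beta(w)|=|\beta(u-w)|\le C_1M\,\norm{(u-w,0)}_{[v:\alpha]}$ for a uniform $C_1>0$; taking $d_0:=c/(2C_1M)$ then forces $|\beta(u)|>\tfrac12|\beta(w)|$ whenever $\norm{(u-w,0)}_{[v:\alpha]}<d_0$, and in particular $\beta(w)\ne0$, which is exactly (3).

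To get the uniform bound, suppose it fails: there is $\delta>0$ and sequences $[v_n:\alpha_n]\in K$ and unit vectors $(w_n,0)\in E^\mathrm{u}_{[v_n:\alpha_n]}$ such that every $\beta$ satisfying (1) and (2) has $|\beta(w_n)|<1/n$. Passing to a subsequence, $(v_n,\alpha_n)\to(v_\infty,\alpha_\infty)\in\tilde K$ and $w_n\to w_\infty$ with $\alpha_\infty(w_\infty)=0$ and $\norm{(w_\infty,0)}=1$; since $[v_\infty:\alpha_\infty]\in\tilde\cK_\Gamma$ we have $[\alpha_\infty]\in\Lambda^*_\Gamma$. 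The key point is that, by the argument of Lemma \ref{lem span of open subset of the limit set} applied to the action of $\Gamma$ on $\Lambda^*_\Gamma\subset\bbP(V^*)$ (which is minimal, has poles dense in $\Lambda^*_\Gamma$, and satisfies the convergence dynamics of Proposition \ref{prop conv action}), every non-empty relatively open subset of $\Lambda^*_\Gamma$ has the same span as $\Lambda^*_\Gamma$, which equals $V^*$ because $\Gamma$ is irreducible. Hence for $\varepsilon>0$ small the open set $\Lambda^*_\Gamma\cap B([\alpha_\infty],\varepsilon)$ is not contained in the projective hyperplane $\bbP(\ker\iota_{w_\infty})$, so it contains some $[\beta_\infty]$ with $\beta_\infty(w_\infty)\ne0$; after normalizing $\beta_\infty(v_\infty)=1$ and shrinking $\varepsilon$ we may also arrange $\norm{(0,\beta_\infty-\alpha_\infty)}_{[v_\infty:\alpha_\infty]}<\delta/2$.

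It remains to transport $\beta_\infty$ back to the points $[v_n:\alpha_n]$. For $n$ large $\beta_\infty(v_n)>0$, so put $\beta_n:=\beta_\infty/\beta_\infty(v_n)$; then $[\beta_n]=[\beta_\infty]\in\Lambda^*_\Gamma$, $\beta_n(v_n)=1$, and $\beta_n\to\beta_\infty$ in $V^*$, so $\norm{(0,\beta_n-\alpha_n)}_{[v_n:\alpha_n]}\to\norm{(0,\beta_\infty-\alpha_\infty)}_{[v_\infty:\alpha_\infty]}<\delta/2$ by continuity of the metric on $\bbL_1$; thus $\beta_n$ satisfies (1) and (2) for $n$ large. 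But $\beta_n(w_n)\to\beta_\infty(w_\infty)\ne0$, so $|\beta_n(w_n)|>\tfrac12|\beta_\infty(w_\infty)|>1/n$ for $n$ large, contradicting the choice of $w_n$. This yields the constant $c(\delta,K)$, and together with the reduction of the first paragraph it completes the proof.

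The main obstacle is the step in the second paragraph: upgrading the pointwise fact ``some small perturbation of $\alpha$ inside $\Lambda^*_\Gamma$ pairs non-trivially with $w$'' to a bound on $|\beta(w)|$ uniform over all of $K$ and all unit directions $(w,0)$. This is precisely where irreducibility is indispensable (cf.\ Remark \ref{rem LNIC vs SLNIC}), whereas the remaining ingredients---compactness of $\tilde K$, continuity of the Riemannian and pseudo-Riemannian structures on $\bbL_1$, and uniform comparability of norms---are routine.
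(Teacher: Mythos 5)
Your proof is correct and follows essentially the same route as the paper's: the only irreducibility input is the span-of-open-subsets lemma applied to the dual limit set $\Lambda^*_\Gamma$ to produce $\beta$ in the dual limit set near $\alpha$ with $\beta(w)\neq 0$, and uniformity over $K$ is then obtained by the same compactness/contradiction argument in which $\beta$ is transported to the nearby basepoints via the rescaling $\beta/\beta(v_n)$. The only difference is organizational: you first reduce condition (3) to a uniform lower bound $\lvert\beta(w)\rvert\geq c$ and make $d_0$ explicit through the bound $\norm{\beta}\leq M$, whereas the paper discharges the quantifier over $u$ directly inside the contradiction argument; both versions rest on exactly the same two ingredients.
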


\begin{proof}
 Let us first replace \ref{lemenumi irreducibility uniform bound} with the weaker statement that $\beta(w)\neq 0$ (note that $d_0$ is not involved is this statement). Consider the  neighbourhood \[ \mathcal N=\set{[\beta]\in\Lambda^*_\Gamma}{\beta(v)=1, \norm{(0,\beta-\alpha)}_{[v:\alpha]}\leq \delta}\]
of $[\alpha]$ in $\Lambda_\Gamma^*$, and assume by contradiction that $\beta(w)=0$ whenever $[\beta]\in \mathcal N$. Then $\mathcal N\subset \ker\iota_w$, hence $\mathrm{Span}(\mathcal N)\subset \ker\iota_w$. We know by Lemma \ref{lem span of open subset of the limit set} that $\mathrm{Span}(\Lambda_\Gamma^*)= \mathrm{Span}(\mathcal N)$, so $\Gamma$ preserves a proper vector subspace, thus contradicting the irreducibility of $\Gamma$.

Let us now prove the Lemma. By contradiction, assume that for any $k\in\N$ there are $[v_k:\alpha_k]\in K$ and a unit vector $(w_k,0)\in E^\mathrm{u}_{[v_k:\alpha_k]}$ not satisfying the claimed properties with $d_0=\frac{1}{k}$. Up to a subsequence, we may assume that $[v_k:\alpha_k]\to[v:\alpha]\in K$ and $w_k\to w\in V$ where $(w,0)\in E^\mathrm{u}_{[v:\alpha]}$ is a unit vector. From the above discussion we can consider $\beta\in V^*$  such that $\beta(w)\neq 0$, $\beta(v)=1$, $[\beta]\in\Lambda^*_\Gamma$ and $\norm{(0,\beta-\alpha)}_{[v:\alpha]}< \delta$. Let $k$ be large enough so that $\beta(w_k)\neq 0$ and $\beta(v_k)\neq 0$, and set $\beta_k=\beta/\beta(v_k)$. If furthermore $k$ is large enough so that $\norm{(0,\beta_k-\alpha_k)}_{[v_k:\alpha_k]}<\delta$, there must be a  vector $u_k\in\ker\alpha_k$ such that $\norm{(u_k-w_k,0)}_{[v_k:\alpha_k]}< \frac{1}{k}$ and $\left\vert\beta(u_k)\right\vert \leq \frac{1}{2}\left\vert\beta(w_k)\right\vert$ (as \ref{lemenumi irreducibility linear form in limit set} and \ref{lemenumi irreducibility linear form close to original} are satisfied, so \ref{lemenumi irreducibility uniform bound} cannot be). This contradicts the convergence $\vert\beta(u_k)\vert=\vert\beta(w_k) - \beta(w_k-u_k)\vert\to\vert \beta(w)\vert$.
\end{proof}

\begin{rem} Lemma \ref{lem irreducibility argument} is the only place where the irreducibility of $\Gamma$ is required. The result fails for Barbot subgroups of $\SL(3,\R)$ (Anosov subgroups obtained by deforming uniform lattices of $\SL(2,\R)$ in $\SL(3,\R)$ within block triangular matrices introduced and studied in \cite{BarbotFlag01,BarbotFlag10}).

\end{rem}

\begin{lem} \label{lem LNIC}
If $\Gamma$ is  irreducible, the flow $\phi^t$ on $\cM_\Gamma$ satisfies the Strong  Local Non-Integrability Condition of Definition \ref{def LNIC}.
\end{lem}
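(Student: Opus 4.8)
The plan is to reduce the Strong Local Non-Integrability Condition (SLNIC) of Definition \ref{def LNIC} to the explicit formula \eqref{eqn time separation and projection} for the time separation function together with the irreducibility input of Lemma \ref{lem irreducibility argument}. Let me outline how the pieces fit together.

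\textbf{Setup and choice of $y$.} Fix $\varepsilon\in(0,\varepsilon_0)$, $z\in\cK_\Gamma$ and a unit vector $w\in E^\mathrm{u}_z$. Choose a lift $[v:\alpha]\in\tilde\cK_\Gamma$ of $z$ and a lift $(\tilde w,0)\in E^\mathrm{u}_{[v:\alpha]}$ of $w$, so $\tilde w\in\ker\alpha$. Let $\delta>0$ be a small parameter to be fixed at the end (depending on $\varepsilon$ through the scales of Lemma \ref{lem: scale setting}), and apply Lemma \ref{lem irreducibility argument} with the compact set $K$ a fixed fundamental domain (intersected with $\tilde\cK_\Gamma$) containing the relevant lifts: this produces $d_0>0$ and a linear form $\beta\in V^\ast$ with $[\beta]\in\Lambda_\Gamma^\ast$, $\beta(v)=1$, $\norm{(0,\beta-\alpha)}_{[v:\alpha]}\leq\delta$, and the uniform lower bound $|\beta(u)|>\tfrac12|\beta(w)|$ for all $u\in\ker\alpha$ with $\norm{(u-\tilde w,0)}_{[v:\alpha]}<d_0$. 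Because $[\beta]\in\Lambda_\Gamma^\ast$ and $\norm{(0,\beta-\alpha)}_{[v:\alpha]}\leq\delta$ is small, the point $[v:\beta]$ lies in $\widetilde{\cK}_\Gamma$ (it is $p^{-1}$ of a point of $\Lambda_\Gamma^\transverse$, using $\xi(s)=[v]$ transverse to $\xi^\ast(t')=[\beta]$ for the appropriate $t'$ since $\beta(v)=1\neq 0$) and is $\varepsilon$-close to $z$ along $W^\mathrm{s}$; concretely it lies in $W^\mathrm{s}_{\varepsilon_0}(z)$ by \eqref{eqn strong (un)stable foliation}. Set $y:=\pi([v:\beta])\in\cK_\Gamma\cap W^\mathrm{s}_{\varepsilon}(z)$ for $\delta$ small enough.

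\textbf{Computing the time separation.} Now take $x\in\cK_\Gamma\cap W^\mathrm{u}_{\varepsilon'}(z)$ with lift $[v':\alpha']\in\tilde\cK_\Gamma$ close to $[v:\alpha]$ (so $[\alpha']=[\alpha]$ after normalization, i.e.\ $\alpha'=\alpha$ by the unstable leaf description, and $v'=v+\tilde u_0$ for some $\tilde u_0\in\ker\alpha$), and take $u\in\Lambda^\mathrm{u}(x)$ with $\norm{u}\leq\varepsilon'$ and $\norm{u_z/\norm{u_z}-w}\leq d_0$, where $u_z\in E^\mathrm{u}_z$ is the parallel transport of $u$ along the unstable geodesic from $x$ to $z$. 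By \eqref{eqn parallel transport unstable vector}, parallel transport along unstable geodesics carries $(\tilde u,0)$ to $(\tilde u,0)$ at $[v:\alpha]$ (same $V$-component, $\alpha$ fixed along the leaf), so $u_z$ lifts to exactly $(\tilde u,0)$; hence the hypothesis on $u_z$ means $\norm{(\tilde u,0)/\norm{(\tilde u,0)}-(\tilde w,0)}_{[v:\alpha]}$ is controlled by $d_0$ up to the bi-Lipschitz constants of Lemma \ref{lem: scale setting}\ref{lemenumi norm parallel transport}, so $\tilde u$ lies in the cone around $\tilde w$ to which the estimate of Lemma \ref{lem irreducibility argument}\ref{lemenumi irreducibility uniform bound} applies. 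Meanwhile $\exp^\mathrm{u}_x(u)$ lifts to $[v'+\tilde u:\alpha]=[v+\tilde u_0+\tilde u:\alpha]$ by \eqref{eq:unstableexp}, and $\pi_y(x)$ is computed from \eqref{eqn time separation and projection}: with lifts $[v+\tilde u_0+\tilde u:\alpha]$ of $\exp^\mathrm{u}_x(u)$ and $[v:\beta]$ of $y$ in a common ball, formula \eqref{eqn time separation and projection} gives
\[
\Delta\bigl(\exp^\mathrm{u}_x(u),\pi_y(x)\bigr)=-\log\beta\bigl(v+\tilde u_0+\tilde u\bigr)=-\log\bigl(1+\beta(\tilde u_0)+\beta(\tilde u)\bigr),
\]
using $\beta(v)=1$. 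Since $\tilde u_0$ is controlled by $\varepsilon'$ (and is the same for all $u$), we can absorb it by shrinking $\varepsilon'$; the main term is $\beta(\tilde u)$, and Lemma \ref{lem irreducibility argument}\ref{lemenumi irreducibility uniform bound} gives $|\beta(\tilde u)|>\tfrac12|\beta(\tilde w)|\cdot\norm{\tilde u}/\norm{\tilde w}$, so $|\beta(\tilde u)|\geq c\norm{\tilde u}\geq c'\norm{u}$ for a constant $c'>0$ depending only on $\beta,\tilde w$ and the bi-Lipschitz scales. For $\varepsilon'$ small enough, $|1+\beta(\tilde u_0)+\beta(\tilde u)|$ stays bounded away from $1$ by at least $\tfrac{c'}{2}\norm u$, and since $|\log(1+t)|\geq|t|/2$ for small $t$, we conclude $|\Delta(\exp^\mathrm{u}_x(u),\pi_y(x))|\geq\kappa\norm u$ with $\kappa=c'/4$, say. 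Choosing $\varepsilon'\in(0,\varepsilon)$ small enough to accommodate all the smallness constraints above completes the verification.

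\textbf{Main obstacle.} The genuinely delicate point is not the logarithmic computation but making sure the linear form $\beta$ supplied by irreducibility via Lemma \ref{lem irreducibility argument} (i) lands in $\Lambda_\Gamma^\ast$ so that $y=\pi([v:\beta])$ is actually in $\cK_\Gamma$, and (ii) gives the \emph{uniform} cone estimate $|\beta(u)|>\tfrac12|\beta(w)|$ with a $d_0$ that does not degenerate — this uniformity over all $(z,w)$, encoded in the compactness argument of Lemma \ref{lem irreducibility argument}, is exactly what lets the resulting $\kappa$ be chosen only in terms of $\varepsilon$. A secondary bookkeeping obstacle is correctly translating between the intrinsic data $(u,u_z)$ on $\cK_\Gamma$ and the lifted data $(\tilde u,\alpha)$ on $\tilde\cK_\Gamma$ using \eqref{eqn parallel transport unstable vector}, \eqref{eq:unstableexp}, \eqref{eq:iniffplus} and \eqref{eqn time separation and projection}; once the dictionary is set up, the contact-type non-integrability is visibly witnessed by the single term $-\log\beta(v+\tilde u)$.
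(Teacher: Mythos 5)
Your overall strategy is the paper's: produce $\beta$ from Lemma \ref{lem irreducibility argument}, set $y=\pi([v:\beta])\in\cK_\Gamma\cap W^{\mathrm{s}}_{\varepsilon}(z)$, read the time separation off the explicit formula \eqref{eqn time separation and projection}, and conclude with the cone estimate on $|\beta(\tilde u)|$. However, your central displayed computation is wrong. In \eqref{eqn time separation and projection} the second argument must be lifted by a normalized lift of the point actually appearing there, which in Definition \ref{def LNIC} is $\pi_y(x)$, not $y$. Writing $x=\pi([v':\alpha])$ with $v'=v+\tilde u_0$, the point $\pi_y(x)$ lifts to $\left[v':\frac{1}{\beta(v')}\beta\right]$, so
\[
\Delta\bigl(\exp^{\mathrm{u}}_x(u),\pi_y(x)\bigr)=-\log\Bigl(1+\tfrac{\beta(\tilde u)}{\beta(v')}\Bigr),
\]
whereas what you computed, $-\log\bigl(1+\beta(\tilde u_0)+\beta(\tilde u)\bigr)$, is $\Delta\bigl(\exp^{\mathrm{u}}_x(u),y\bigr)$; the two differ by the constant $\Delta(x,y)=-\log\beta(v')$. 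A quick sanity check exposes the slip: the quantity required by the SLNIC must vanish at $u=0$ (it is then $\Delta(x,\pi_y(x))=0$), while your expression equals $-\log(1+\beta(\tilde u_0))\neq 0$ in general.

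The attempted repair (``absorb $\tilde u_0$ by shrinking $\varepsilon'$'') does not close this gap: once $x$ is fixed, $\tilde u_0$ is fixed, but $u$ ranges over arbitrarily small vectors, so $|\beta(\tilde u)|$ can be much smaller than $|\beta(\tilde u_0)|$, and exact cancellation $\beta(\tilde u)=-\beta(\tilde u_0)$ can even make your logarithm vanish — so the claimed bound $\bigl|\beta(\tilde u_0)+\beta(\tilde u)\bigr|\geq\tfrac{c'}{2}\norm{u}$ fails even for the quantity you wrote, and in any case it is not the quantity the SLNIC asks for. With the correct lift the difficulty disappears and your argument becomes exactly the paper's: shrink $\varepsilon'$ so that $\beta(v')\leq 2$ and $\bigl|\beta(\tilde u)/\beta(v')\bigr|\leq r$ where $|\log(1+\theta)|\geq\tfrac12|\theta|$ for $|\theta|\leq r$; apply Lemma \ref{lem irreducibility argument}\ref{lemenumi irreducibility uniform bound} to $\tilde u/\norm{u_z}$ (this is precisely what the hypothesis on $u_z/\norm{u_z}$ licenses, since $u_z$ lifts to $(\tilde u,0)$ at $[v:\alpha]$), giving $|\beta(\tilde u)|\geq\tfrac12|\beta(\tilde w)|\,\norm{u_z}$; and convert $\norm{u_z}$ into $\norm{u}$ via Lemma \ref{lem: scale setting}\ref{lemenumi norm parallel transport}, yielding $\kappa=\tfrac{1}{8L_0}|\beta(\tilde w)|$ as in the paper. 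The choice of $y$ and the translation of the cone condition in your write-up are otherwise sound.
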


\begin{proof}As before, we denote by $\delta_0,\eps_0,\varepsilon_1>0$ the constants from Lemma \ref{lem: scale setting}. Let $\eps\in(0,\eps_1)$ and $K\subset\widetilde{\cK}_{\Gamma}$  a compact set such that $\Gamma\cdot K=\cK_\Gamma$. Let $d_0>0$ be given by Lemma \ref{lem irreducibility argument} for this compact set $K$ and some $\delta\leq\delta_0$ chosen small enough so that 
\begin{equation} \label{lemeqn LNIC choice of delta}
\exp_x^\mathrm{u}\left(E^\mathrm{u}_x\cap B(0,\delta)\right)\subset W^\mathrm{u}_\varepsilon(x),\quad \forall x\in\cK_\Gamma.
\end{equation}
  Given $z\in\cK_\Gamma$ and a unit vector $w\in E^\mathrm{u}_z$, consider lifts  $\tilde z=[v:\alpha]\in K$ and $(\tilde w,0)\in E^\mathrm{u}_{\tilde z}$, i.e. $\alpha(\tilde w)=0$. Let $\beta\in V^*$ be given by applying Lemma \ref{lem irreducibility argument} to $[v:\alpha]$ and $\tilde w$, then set  $\tilde y=[v:\beta]\in \widetilde{\cK}_{\Gamma}$ and $y=\pi(\tilde y)$. Then $y\in W^\mathrm{s}_{\varepsilon_0}(z)$ by \eqref{lemeqn LNIC choice of delta}. Let $\kappa:=\frac{1}{8L_0}\vert\beta(\tilde w)\vert$. Also choose some $r>0$ such that any $\theta\in\R$ with $\vert\theta\vert\leq r$ satisfies $\vert \log(1+\theta)\vert \geq \frac{1}{2}\vert\theta\vert$.

  Now let $\varepsilon'\in (0,\varepsilon)$ be small enough so that any $x\in W^\mathrm{u}_{\varepsilon'}(z)$ and  $u\in \Lambda^\mathrm{u}(x)$ with $\norm{u}\leq \varepsilon'$ can be lifted to $\tilde x=[v':\alpha]$ and $(\tilde u,0)\in E^\mathrm{u}_{[v':\alpha]}$ satisfying $\beta(v')\leq 2$ and $\left\vert\frac{\beta(\tilde u)}{\beta(v')}\right\vert\leq r$.

 For such $x$ and $u$ also satisfying $\norm{\frac{u_z}{\norm{u_z}}-w}=\norm{\left(\frac{\tilde u}{\norm{u_z}}-\tilde w,0\right)}_{[v:\alpha]}\leq d_0$, we find:
\begin{align*}
\left\vert\Delta(\exp^\mathrm{u}_{x}(u),\pi_{y}(x))\right\vert &=\left\vert \Delta\left(\pi\left(\left[v'+\tilde u:\alpha\right]\right),\pi\left(\left[v':\frac{1}{\beta(v')}\beta\right]\right) \right)\right\vert & \textrm{by }\eqref{eqn time separation and projection}\\
&=\left\vert\log\left(1+ \frac{\beta(\tilde u)}{\beta(v')}\right)\right\vert & \textrm{by }\eqref{eqn time separation and projection}\\
&\geq \frac{1}{2} \left\vert\frac{\beta(\tilde u)}{\beta(v')}\right\vert  & \textrm{because }\left\vert\frac{\beta(\tilde u)}{\beta(v')}\right\vert\leq r\\
&\geq \frac{1}{4}\left\vert \beta(\tilde u)\right\vert & \textrm{because }\beta(v')\leq 2\\
&= \frac{1}{4}\norm{ u_z}\left\vert \beta\left(\frac{\tilde u}{\norm{ u_z}}\right)\right\vert & \\
&\geq \frac{1}{8}\norm{ u_z} \vert\beta(\tilde w)\vert &\textrm{by Lemma }\ref{lem irreducibility argument}\\
&\geq \kappa\norm{ u} &\textrm{by Lemma }\ref{lem: scale setting} \ref{lemenumi norm parallel transport}.
\end{align*}
\end{proof}

\subsection{Dynamical consequences}\label{dyn cons}

In this section, we explain the results of Stoyanov \cite{St11} which we are now prepared to apply, and explain how Theorem \ref{EXP MIX} from the introduction follow from these results.  We warn the reader that this section assumes familiarity with the thermodynamic formalism and Markov coding of Axiom A flows: our only goal is to provide a concise account of the logic that leads to our result on the exponential mixing of $\phi^{t}: \mathcal{K}_{\Gamma}\rightarrow \mathcal{K}_{\Gamma}.$

Let $(\mathcal{M}, g)$ be a complete Riemannian manifold, $\phi^{t}: \mathcal{M}\rightarrow \mathcal{M}$ a $C^{2}$-Axiom A flow, and   $\mathcal{K}\subset \mathcal{M}$ a basic hyperbolic set.  By results of Bowen-Ruelle \cite{BR75} (see also Ratner \cite{RAT73}), there exists a Markov family $\mathcal{R}=\{R_{i}\}_{i=1}^{k}$ over $\mathcal{K}$ consisting of a finite number of rectangles $R_{i}=[U_{i}, S_{i}]$ with $U_{i}\subset W_{\varepsilon}^{\mathrm{u}}(z_{i})\cap \mathcal{K}$ and $S_{i}\subset W_{\varepsilon}^{\mathrm{s}}(z_{i})\cap\mathcal{K}$ admissable subsets where $\varepsilon>0$ is arbitrary and $z_{i}\in \mathcal{K}$ for all $1\leq i \leq k.$  The scale $\varepsilon>0$ is called the \emph{size} of the Markov family.  

Let $R=\coprod_{i=1}^{k} R_{i}$ with the induced topology.  Assuming the local stable $W_{\varepsilon}^{\mathrm{s}}(z_{i})\cap\mathcal{K}$ and unstable $W_{\varepsilon}^{\mathrm{u}}(z_{i})\cap \mathcal{K}$ laminations are Lipschitz, the Poincar\'{e} first return map $\mathcal{P}: R\rightarrow R$ and first return time $\tau: R\rightarrow (0, +\infty)$ are essentially Lipschitz.  Let $U=\coprod_{i=1}^{k} U_{i}$ and $\pi^{U}: R\rightarrow U$ the projection along the leaves of the local stable manifolds.  The shift map is defined by
$$
\sigma:=\pi^{U}\circ \mathcal{P}: U\rightarrow U.
$$
Let $\widehat{U}\subset U$ denote the subset of points $u\in U$ whose orbits do not have points in common with the boundary of $R.$  

Given a Lipschitz function $f: \widehat{U}\rightarrow \mathbb{R},$ let $P_{f}\in \mathbb{R}$ be the unique real number such that 
$$
\mathrm{Pr}_{\sigma}(f-P_{f}\tau)=0,
$$
where $\mathrm{Pr}_{\sigma}$ is the topological pressure with respect to the shift map $\sigma.$  Given a complex valued Lipschitz function $g: \widehat{U}\rightarrow \mathbb{C},$ the Ruelle transfer operator
$$
L_{g}: C^{\mathrm{Lip}}(\widehat{U}, \mathbb{C})\rightarrow C^{\mathrm{Lip}}(\widehat{U}, \mathbb{C})
$$
is defined by
\begin{align}\label{transfer op}
L_{g}(h)(x)=\sum_{\sigma(y)=x} e^{g(y)}h(y).
\end{align}
For a Lipschitz function $h\in C^{\mathrm{Lip}}(\widehat{U}, \mathbb{R})$ and $b\in \mathbb{R}\setminus \{0\},$ introduce the norm
$$
\lVert h\rVert_{\mathrm{Lip}, b}=\lVert h\rVert_{\infty} + \frac{\mathrm{Lip}(h)}{\lvert b\rvert}
$$
where $\mathrm{Lip}(h)$ is the optimal Lipschitz constant of $h$ and $\lVert h\rVert_{\infty}$ is the $L^{\infty}$-supremum norm.
\begin{definition}
Let $f: \widehat{U}\rightarrow \mathbb{R}$ be a Lipschitz function.  The Ruelle transfer operators related to $f$ are eventually contracting if for every $\varepsilon>0,$ there exists $0<\rho<1$ and $a_{0}, C>0$ such that if $a,b\in \mathbb{R}$ satisfy $\lvert a \rvert\leq a_{0}$ and $\lvert b \rvert \geq \frac{1}{a_{0}},$ then for every positive integer $m>0$ and every $h\in C^{\mathrm{Lip}}(\widehat{U}, \mathbb{R})$ we have
$$
\lVert L_{f-(P_{f}+a+ib)\tau}^{m}(h)\rVert_{\mathrm{Lip}, b}\leq C\cdot \rho^{m}\cdot \lvert b\rvert^{\varepsilon}\cdot \lVert h\rVert_{\mathrm{Lip}, b}.
$$
\end{definition}
If the Ruelle transfer operators related to $f\in C^{\mathrm{Lip}}(\widehat{U}, \mathbb{R})$ are eventually contracting, then in particular the spectral radius of the bounded linear operator $L_{f-(P_{f}+a+ib)\tau}$ on $C^{\mathrm{Lip}}(\widehat{U}, \mathbb{R})$ is at most $\rho<1.$  
We can finally state the main result of this section.
\begin{thm}\label{contracting ruelle}
Let $\Gamma<\mathrm{SL}(V)$ be an irreducible, torsion-free projective Anosov subgroup with associated Axiom A dynamical system $(\mathcal{M}_{\Gamma}, \phi^{t})$ and basic hyperbolic set $\mathcal{K}_{\Gamma}\subset \mathcal{M}_{\Gamma}$.  Then for every $\varepsilon>0,$ there exists a Markov family $\mathcal{R}=\{R_{i}\}_{i=1}^{k}$ for $\phi^{t}$ over $\mathcal{K}_{\Gamma}$ of size $\varepsilon>0$ such that for any $f\in C^{\mathrm{Lip}}(\widehat{U}, \mathbb{R}),$ the Ruelle transfer operators related to $f$ are eventually contracting.
\end{thm}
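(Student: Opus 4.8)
The plan is to derive Theorem \ref{contracting ruelle} as a direct application of the main theorem of Stoyanov \cite{St11}, all of whose hypotheses have been prepared in the preceding subsections. First I would recall that, by Theorem \ref{THM A}, the flow $\phi^t\colon\mathcal M_\Gamma\to\mathcal M_\Gamma$ is a real analytic (in particular $C^2$) contact Axiom A flow whose non-wandering set equals the single basic hyperbolic set $\mathcal K_\Gamma$ (Theorem \ref{thm - dynamical subsets coincide}), which is not a single closed orbit since $\Gamma$ is non-elementary, and whose stable and unstable distributions along $\mathcal K_\Gamma$ are the restrictions of global real analytic foliations $W^{\mathrm s}$, $W^{\mathrm u}$ of $\mathcal M_\Gamma$. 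Given $\varepsilon>0$, shrunk if necessary below the scales furnished by Lemma \ref{lem: scale setting}, Corollary \ref{cor regular distortion along unstable manifolds} and Lemma \ref{lem LNIC}, Bowen--Ruelle \cite{BR75} and Ratner \cite{RAT73} produce a Markov family $\mathcal R=\{R_i\}_{i=1}^k$ over $\mathcal K_\Gamma$ of size $\varepsilon$; since the local stable and unstable laminations of $\mathcal K_\Gamma$ sit inside global real analytic leaves and the stable holonomies are given by the explicit formula \eqref{eqn formula stable holonomy}, the maps $\mathcal P$, $\tau$ and $\pi^U$ are Lipschitz and the stable holonomy maps $\cH_x^y$ are uniformly Lipschitz over $\mathcal K_\Gamma$, which is the first of the two standard geometric inputs of \cite{St11}.

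Next I would invoke Corollary \ref{cor regular distortion along unstable manifolds} for Stoyanov's second standard input, regular distortion along unstable manifolds over $\mathcal K_\Gamma$: we in fact established the stronger uniform version of Definition \ref{def: URDU}, which a fortiori implies the condition used in \cite{St11} (Remark \ref{rem RDU vs URDU}). For the remaining non-integrability hypothesis I would appeal to Lemma \ref{lem LNIC}, which proves the Strong Local Non-Integrability Condition; by Remark \ref{rem LNIC vs SLNIC} this is strictly stronger than the local non-integrability condition of \cite{St11}, because irreducibility of $\Gamma$ yields the separation bound $\vert\Delta(\exp^{\mathrm{u}}_x(u),\pi_y(x))\vert\geq\kappa\norm{u}$ for \emph{all} unit directions $w\in E^{\mathrm u}_z$ and all $u\in\Lambda^{\mathrm u}(x)$, rather than only for directions tangent to $\mathcal K_\Gamma$ in Stoyanov's sense. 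Having verified that $\phi^t$ is a $C^2$ contact Axiom A flow with the Lipschitz-holonomy, regular-distortion and non-integrability properties required in \cite{St11}, I would conclude by quoting Stoyanov's theorem, which gives the eventual contraction of the operators $L_{f-(P_f+a+ib)\tau}$ on $C^{\mathrm{Lip}}(\widehat U,\mathbb R)$ for every $f\in C^{\mathrm{Lip}}(\widehat U,\mathbb R)$, precisely in the quantitative sense of the statement.

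The step I expect to be the main obstacle is the translation between our intrinsic geometric statements and the symbolic-dynamical formulation of \cite{St11}. Stoyanov's regular-distortion and (local) non-integrability conditions are phrased in terms of the cylinders of the Markov coding, the shift $\sigma$ and representatives of cylinders, whereas Corollary \ref{cor regular distortion along unstable manifolds} and Lemma \ref{lem LNIC} are stated in terms of $\mathcal K_\Gamma$, the foliations, Bowen's dynamical balls and the infinitesimal unstable limit sets $\Lambda^{\mathrm u}(x)$. The bookkeeping consists in choosing the Markov family of size $\varepsilon$ small enough that every relevant cylinder lies in a single local unstable manifold $W^{\mathrm u}_{\varepsilon'}(z)$ on which our estimates apply, identifying $\mathcal K_\Gamma\cap W^{\mathrm u}(x)$ with $\Lambda^{\mathrm u}(x)$ through $\exp^{\mathrm{u}}_x$ as in \eqref{eq:iniffplus}, and checking that the fractal ``tangent directions'' that enter Stoyanov's weaker hypothesis are subsumed by our all-directions statement. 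Once these identifications are in place, the conclusion is an immediate citation.
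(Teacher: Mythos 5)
Your proposal is correct and follows essentially the same route as the paper: the paper's proof likewise cites Stoyanov's theorem after verifying its three hypotheses, namely uniformly Lipschitz stable holonomies (from the explicit formula \eqref{eqn formula stable holonomy}), regular distortion along unstable manifolds (Corollary \ref{cor regular distortion along unstable manifolds}), and the (LNIC) condition (Lemma \ref{lem LNIC}), with the Markov family of size $\varepsilon$ coming from Bowen--Ruelle and Ratner as set up in Section \ref{dyn cons}. Your closing remarks on translating the intrinsic geometric statements into Stoyanov's symbolic formulation go slightly beyond what the paper spells out, but do not change the argument.
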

\begin{proof}
We apply the results of Stoyanov \cite{St11}: if a $C^{2}$-Axiom A flow $\phi^{t}: \mathcal{M}\rightarrow \mathcal{M}$ along a basic hyperbolic set $\mathcal{K}\subset \mathcal{M}$ satisfies the condition (LNIC) (Definition \ref{def LNIC} and Remark \ref{rem LNIC vs SLNIC}), has  regular distortion along unstable manifolds (Definition \ref{def: URDU} and Remark \ref{rem RDU vs URDU}), and the local holonomy maps along the stable laminations through $\mathcal{K}$ (i.e. the restriction of the map $\cH_x^y$ introduced in Definition \ref{def stable holonomy} to $W^{\mathrm{u}}_{\varepsilon_1}(x)\cap\cK_\Gamma$) are uniformly Lipschitz, then the Ruelle transfer operators related to $f\in C^{\mathrm{Lip}}(\widehat{U}, \mathbb{R})$ are eventually contracting.  By Corollary \ref{cor regular distortion along unstable manifolds}, the real analytic (hence $C^{2})$ Axiom A flow $\phi^{t}: \mathcal{M}_{\Gamma}\rightarrow \mathcal{M}_{\Gamma}$ has regular distortion along unstable manifolds over $\mathcal{K}_{\Gamma}.$  Furthermore, Lemma \ref{lem LNIC} verifies the (LNIC) condition.  
Finally, the local holonomy maps along the stable laminations through $\cK_\Gamma$ are restrictions of   smooth maps to compact subsets (smoothness being a consequence of the explicit formula \eqref{eqn formula stable holonomy}), hence uniformly Lipschitz.   Having verified the hypotheses of the result of Stoyanov \cite{St11}, we have completed the proof.
\end{proof}
Now we can finally prove Theorem \ref{EXP MIX} from the introduction.

\begin{thm}\label{thm: cor exp decay}
Suppose $\Gamma<\SL(V)$ is a torsion-free irreducible projective Anosov subgroup with associated $\mathcal{K}_{\Gamma}\subset \mathcal{M}_{\Gamma}$ from Theorem \ref{THM A}.  For every $\alpha>0$ and H\"{o}lder potential $U\in C^{\alpha}(\mathcal{K}_{\Gamma}, \R),$ there exists $c_{\alpha}(U), C_{\alpha}(U)>0$ such that the correlation functions (see Definition \ref{corr functions}) decay exponentially:
$$
\forall\,t\in \R:\;c^{t}(F, G, U)\leq C_{\alpha}(U)e^{-c_{\alpha}(U)|t|}\lVert F \rVert_{\alpha}\lVert G \rVert_{\alpha}
$$
for all H\"{o}lder observables $F,G\in C^{\alpha}(\mathcal{K}_{\Gamma}, \R).$  In other words, the restriction of the Axiom A flow $\phi^{t}: \mathcal{K}_{\Gamma}\rightarrow \mathcal{K}_{\Gamma}$ mixes exponentially for all H\"{o}lder observables $F,G$ with respect to every Gibbs equilibrium state with H\"{o}lder potential $U$.
\end{thm}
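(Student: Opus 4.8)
The plan is to derive the theorem from the eventual contraction of the Ruelle transfer operators furnished by Theorem \ref{contracting ruelle}, via the by-now classical machinery of Dolgopyat \cite{DOL98} (see also Pollicott-Sharp \cite{PS98}, Dolgopyat-Pollicott \cite{DP98}, Parry-Pollicott \cite{PP83}, Pollicott \cite{POL85}, and the exposition of Stoyanov \cite{St11} itself). First I would pass to the symbolic model: for a Markov family $\mathcal R=\{R_i\}_{i=1}^k$ over $\cK_\Gamma$ of sufficiently small size as in Theorem \ref{contracting ruelle}, the flow $\phi^t|_{\cK_\Gamma}$ is modelled (up to the null set of orbits meeting $\partial R$) by the suspension of the shift $\sigma:\widehat U\to\widehat U$ under the Lipschitz roof function $\tau$. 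The Gibbs equilibrium state $\mu_U$ corresponds to the unique $\sigma$-invariant Gibbs measure $\nu_f$ of the potential $f-P_f\tau$, where $f\in C^{\mathrm{Lip}}(\widehat U,\R)$ is obtained by integrating $U$ along return-time orbit segments and $P_f=\mathrm{Pr}_{\phi^t}(U)$; the Lipschitz regularity of $f$ (and of the stable holonomies used below) rests on the smoothness of the local stable and unstable laminations through $\cK_\Gamma$, which follows from the explicit formulas of this section (cf.\ the proof of Theorem \ref{contracting ruelle}). By the Sinai-Bowen cohomology argument \cite{Bowen} one may further subtract a Lipschitz coboundary so that $f$ depends only on the unstable coordinate, without altering $\mu_U$ or the correlation functions.

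Next I would form the Laplace transform $\widehat c(s)=\int_0^{+\infty}e^{-st}c^t(F,G,U)\,dt$, convergent for $\Re(s)$ large, and reduce it to a finite combination of resolvent expressions $\sum_{n\ge 0}\int(L_{f-(P_f+s)\tau}^n\tilde h)\,\tilde k\;d\nu_f$ with $\tilde h,\tilde k\in C^{\mathrm{Lip}}(\widehat U,\C)$ built from $F,G$, plus elementary terms coming from the suspension over the roof. The family $s\mapsto L_{f-(P_f+s)\tau}$ is then analysed in three regimes. For $s$ near $0$, the Ruelle-Perron-Frobenius theorem and analytic perturbation theory produce a simple leading eigenvalue and locate a simple pole of $(I-L_{f-(P_f+s)\tau})^{-1}$ at $s=0$, which is exactly cancelled by the centering term in $c^t(F,G,U)$. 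For $\Re(s)$ slightly negative and $1\le|\Im s|\le 1/a_0$, a compactness argument (the flow on $\cK_\Gamma$ being topologically weak mixing, which is standard here) gives uniform invertibility. For $|\Im s|\ge 1/a_0$ and $|\Re(s)-P_f|\le a_0$, the eventual contraction estimate
\[ \lVert L_{f-(P_f+a+ib)\tau}^m(h)\rVert_{\mathrm{Lip},b}\le C\,\rho^m\,|b|^{\varepsilon}\,\lVert h\rVert_{\mathrm{Lip},b},\qquad 0<\rho<1,\ \varepsilon\in(0,1), \]
of Theorem \ref{contracting ruelle} shows that $(I-L_{f-(P_f+a+ib)\tau})^{-1}$ exists and is at most polynomially bounded in $|b|$. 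Altogether $\widehat c(s)$ extends holomorphically to a strip $\{\Re(s)>-\varepsilon_0\}$ for some $\varepsilon_0>0$, with polynomial growth on vertical lines; a contour shift, the integrability supplied by the $\mathrm{Lip}(\cdot)/|b|$ normalization, and a Paley-Wiener argument then yield $|c^t(F,G,U)|\le Ce^{-c_0 t}\lVert F\rVert_{\mathrm{Lip}}\lVert G\rVert_{\mathrm{Lip}}$ for $t\ge 0$ and some $c_0\in(0,\varepsilon_0)$; the case $t\le 0$ follows by exchanging $F$ and $G$ and replacing $t$ by $-t$.

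Finally I would promote Lipschitz observables to H\"older ones by interpolation. For $0<\alpha<1$ and $F\in C^\alpha(\cK_\Gamma,\R)$, mollification in flow-box coordinates yields, for each $\eta\in(0,1)$, a Lipschitz $F_\eta$ with $\lVert F-F_\eta\rVert_\infty\le C\eta^\alpha\lVert F\rVert_\alpha$ and $\mathrm{Lip}(F_\eta)\le C\eta^{\alpha-1}\lVert F\rVert_\alpha$, and likewise for $G$; splitting $c^t(F,G,U)$ into the $(F_\eta,G_\eta)$-contribution, bounded by $Ce^{-c_0 t}\eta^{2(\alpha-1)}\lVert F\rVert_\alpha\lVert G\rVert_\alpha$, plus cross-terms bounded by $C\eta^\alpha\lVert F\rVert_\alpha\lVert G\rVert_\alpha$, and optimizing $\eta=e^{-c_0 t/(2-\alpha)}$ gives $|c^t(F,G,U)|\le C_\alpha(U)\,e^{-c_\alpha(U)|t|}\lVert F\rVert_\alpha\lVert G\rVert_\alpha$ with $c_\alpha(U)=\alpha c_0/(2-\alpha)$, the constants depending only on $U$ and $\alpha$; for $\alpha\ge 1$ one invokes the Lipschitz estimate via $C^\alpha\subset C^{\mathrm{Lip}}$. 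By Theorem \ref{SAMHYP} this transports verbatim to Sambarino's refraction flow $\phi_S^t:\chi_\Gamma\to\chi_\Gamma$. The genuinely hard input — the verification of Stoyanov's geometro-dynamical hypotheses (LNIC, regular distortion along unstable manifolds, Lipschitz stable holonomies) — has already been supplied by Theorem \ref{contracting ruelle}; the only remaining obstacle is the bookkeeping needed to track the precise dependence of all constants on $U$ alone through the Laplace-transform identity and the interpolation.
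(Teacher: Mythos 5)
Your proposal is correct and follows essentially the same route as the paper: the paper's proof consists of invoking Stoyanov \cite[Cor.~1.5]{St11}, which packages precisely the implication you re-derive, namely that the eventually contracting Ruelle transfer operators of Theorem \ref{contracting ruelle} yield exponential decay of correlations for H\"older observables with respect to every Gibbs equilibrium state. The additional work in your sketch (Laplace transform of the correlation function, the three spectral regimes, and the Lipschitz-to-H\"older interpolation) is exactly the content of that citation, so it is sound but redundant given the black-box use of Stoyanov's corollary.
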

\begin{proof}
Using Stoyanov \cite[Cor.~1.5]{St11}, the above exponential mixing result is a direct consequence of Theorem \ref{contracting ruelle}.
\end{proof}

\section{Zeta Functions, Orbit Counting and Dynamical Resonances}\label{sec:Resonances}

Throughout this section, $\Gamma<\SL(V)$ denotes a non-trivial torsion-free projective Anosov subgroup and $(\mathcal{M}_{\Gamma},\phi^t)$ is the Axiom A system with basic set $\mathcal K_\Gamma$ provided by Theorem \ref{THM A}.  

\subsection{Zeta functions}
 By Lemma \ref{lem periodic points in discontinuity set}, the periodic orbits of the Axiom A flow $\phi^{t}: \mathcal{M}_{\Gamma}\rightarrow \mathcal{M}_{\Gamma}$ are in bijection with the non-trivial conjugacy classes $[\gamma]\in [\Gamma]$ and have periods
\begin{align}
\lambda_{1}: [\Gamma]&\rightarrow [0, +\infty) \\
  [\gamma] &\mapsto \lambda_{1}([\gamma]),
\end{align}
where $\lambda_{1}([\gamma])$ is the natural logarithm of the modulus of the largest eigenvalue of the linear transformation $\gamma\in \Gamma<\SL(V).$  A non-trivial element $\gamma\in \Gamma$ is primitive if whenever $\gamma=\eta^{k}$ then $k=1,$ and we denote the set of non-trivial conjugacy classes of primitive elements by $[\Gamma]_{\mathrm{prim}}.$  The Ruelle zeta function is formally defined by the Euler product expansion
$$
\zeta_{\Gamma}(s)=\prod_{[\gamma]\in [\Gamma]_{\mathrm{prim}}}\left(1-e^{-s\lambda_{1}([\gamma])}\right)^{-1}.
$$
The Euler product converges for all $s\in \mathbb{C}$ satisfying $\mathrm{Re}(s)>h_{\mathrm{top}}(\Gamma)>0$.  It was first proved by Parry-Pollicott \cite{PP83} that any zeta function associated to a non-empty basic set of a weakly mixing Axiom A flow admits a meromorphic continuation to a neighborhood of $h_{\mathrm{top}}(\Gamma)\in \mathbb{C}$ with a simple pole at $h_{\mathrm{top}}(\Gamma).$

If $U\in C^{\alpha}(\mathcal{K}_{\Gamma}, \R)$ is a H\"{o}lder potential, let 
$$
U_{[\gamma]}: [\Gamma]\rightarrow \R
$$
be defined by the averages
$$
U_{[\gamma]}=\frac{1}{\lambda_{1}([\gamma])}\int_{0}^{\lambda_{1}([\gamma])} U(\phi^{t}(x)) \ dt
$$
where $x\in \mathcal{K}_{\Gamma}$ belongs to the period corresponding to $[\gamma]\in [\Gamma].$  The associated \emph{weighted} zeta function is defined via
\begin{align}\label{euler potential}
\zeta_{\Gamma, U}(s)=\prod_{[\gamma]\in [\Gamma]_{\mathrm{prim}}}\left(1-\exp{\left[-\lambda_{1}([\gamma])(s - U_{[\gamma]})\right]}\right)^{-1}.
\end{align}
As before, the Euler product \eqref{euler potential} converges for all $\{s\in \C \ | \ \mathrm{Re}(s)>\mathrm{Pr}_{\Gamma}(U)\}$ where $\mathrm{Pr}_{\Gamma}(U)$ is the topological pressure of the H\"{o}lder potential $U$.   In light of Theorem \ref{thm: cor exp decay}, the following partial meromorphic continuation result is an immediate consequence of Pollicott \cite[Thm.~2]{POL85}.
\begin{thm}\label{zeta potential strip}
Let $U\in C^{\alpha}(\mathcal{K}_{\Gamma}, \R)$ for some $0<\alpha<1$  and suppose that $\Gamma$ is irreducible.  Then there exists $\varepsilon>0$ such that $\zeta_{\Gamma, U}$ has an analytic extension to the punctured half-plane $\{s\in \mathbb{C}\setminus \{\mathrm{Pr}_{\Gamma}(U)\}\ | \ \mathrm{Pr}_{\Gamma}(U)-\varepsilon<\mathrm{Re}(s)\}$ with a simple pole at $\mathrm{Pr}_{\Gamma}(U)\in \C.$
\end{thm}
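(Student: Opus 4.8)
The plan is to translate $\zeta_{\Gamma,U}$ into a dynamical zeta function on the symbolic model produced by Theorem~\ref{contracting ruelle} and then invoke the transfer-operator method of Pollicott \cite{POL85}, refining Parry--Pollicott \cite{PP83}; the only substantive input is the eventual contraction of the relevant Ruelle operators, which is precisely Theorem~\ref{contracting ruelle}, the remainder being classical thermodynamic formalism. Note that one should expect only continuation to a \emph{strip}, not to all of $\C$, because the transfer-operator control of Theorem~\ref{contracting ruelle} is itself a finite-strip statement.

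First I would fix a Markov family $\mathcal R=\{R_i\}_{i=1}^k$ over $\mathcal K_\Gamma$ of small size, with shift map $\sigma\colon U\to U$ and first return time $\tau\colon U\to(0,\infty)$, as provided by Theorem~\ref{contracting ruelle}; this is legitimate since $\phi^t|_{\mathcal K_\Gamma}$ is topologically transitive by Theorem~\ref{thm - dynamical subsets coincide}. I would then introduce the induced weight $g_U\colon\widehat U\to\R$, $g_U(x)=\int_0^{\tau(x)}U(\phi^t(x))\,dt$. Because $U\in C^\alpha$, $\tau$ is essentially Lipschitz and $\phi^t$ is real analytic, $g_U$ is $\alpha$-Hölder; replacing, if necessary, the Lipschitz structure on $\widehat U$ by the Hölder structure of exponent $\alpha$ — which does not affect Theorem~\ref{contracting ruelle}, whose proof via Stoyanov's estimates uses only the geometro-dynamical conditions of Definitions~\ref{def: URDU} and \ref{def LNIC} on $\mathcal K_\Gamma$ — we may treat $g_U$ as an admissible Lipschitz weight. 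Using the bijection between periodic $\phi^t$-orbits on $\mathcal K_\Gamma$ and periodic $\sigma$-orbits (Lemma~\ref{lem correspondence periodic orbits conjugacy classes} together with the Markov structure), with $\lambda_1([\gamma])$ equal to the return time $\tau_n$ and $\lambda_1([\gamma])U_{[\gamma]}$ equal to the Birkhoff sum $(g_U)_n$ along the corresponding orbit, the Euler product \eqref{euler potential} becomes the symbolic zeta function $\prod_p(1-e^{(g_U)_p-s\tau_p})^{-1}$ over prime $\sigma$-orbits, which is the object governed by the family $L_{g_U-s\tau}$ of Ruelle transfer operators from \eqref{transfer op}. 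By the Abramov/Bowen--Ruelle correspondence between the pressure of a suspension and of its base \cite{BR75}, one has $P_{g_U}=\mathrm{Pr}_\Gamma(U)=:P$.

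Next I would locate the pole and control a neighbourhood of $P$. By the Ruelle--Perron--Frobenius theorem $L_{g_U-P\tau}$ on $C^{\mathrm{Lip}}(\widehat U,\C)$ has $1$ as a simple, isolated, dominant eigenvalue (the essential spectral radius being strictly smaller); analytic perturbation theory then produces a holomorphic leading eigenvalue $\lambda(s)$ on a complex neighbourhood of $P$ with $\lambda(P)=1$ and $\lambda'(P)<0$, so $s=P$ is the unique and simple zero of $\lambda(s)-1$ there. Through the standard identity expressing $\zeta_{\Gamma,U}$ in terms of $\det(\mathrm{Id}-L_{g_U-s\tau})$-type data — the backbone of Pollicott's argument — this shows $\zeta_{\Gamma,U}$ has a simple pole at $P$ and is holomorphic and zero-free in a punctured neighbourhood of $P$. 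To extend across the critical line I would split $s=a+ib$ into three regimes. For $|b|\geq b_0$ with $a$ near $P$, Theorem~\ref{contracting ruelle} bounds the spectral radius of $L_{g_U-s\tau}$ by some $\rho<1$ uniformly, so $\mathrm{Id}-L_{g_U-s\tau}$ is invertible and $\zeta_{\Gamma,U}$ holomorphic there. For $\delta\leq|b|\leq b_0$: on the line $a=P$ the exponential mixing of $\phi^t|_{\mathcal K_\Gamma}$ with respect to $\mu_U$ (Theorem~\ref{thm: cor exp decay}) forces $1\notin\mathrm{spec}(L_{g_U-(P+ib)\tau})$ for $b\neq0$, and since the closed set $\{s:1\in\mathrm{spec}(L_{g_U-s\tau})\}$ is disjoint from the compact arc $\{\mathrm{Re}(s)=P,\ \delta\le|\mathrm{Im}(s)|\le b_0\}$ it avoids a whole two-sided neighbourhood of that arc, hence a thin strip $P-\varepsilon<a\le P$. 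The remaining box $|b|<\delta$, $P-\varepsilon<a\le P$ lies inside the perturbative neighbourhood of Step~2 once $\varepsilon,\delta$ are small. Choosing $\varepsilon>0$ so the three regimes overlap yields the analytic extension of $\zeta_{\Gamma,U}$ to $\{\mathrm{Re}(s)>P-\varepsilon\}\setminus\{P\}$ with a simple pole at $P$.

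The hard part does not lie in this argument at all: every step except the uniform spectral bound for $|\mathrm{Im}\,s|$ large is standard (Parry--Pollicott \cite{PP83}, Pollicott \cite{POL85}, Dolgopyat--Pollicott \cite{DP98}), and that one step has already been established as Theorem~\ref{contracting ruelle} — the genuine difficulty, including the role of irreducibility via the strong non-integrability Lemma~\ref{lem LNIC}, is encapsulated there. The only bookkeeping subtlety to watch is the regularity of the induced data $g_U$ (and $\tau$) needed to feed into Theorem~\ref{contracting ruelle}, i.e. the Hölder-to-Lipschitz point addressed above.
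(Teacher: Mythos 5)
Your route is genuinely different from the paper's. For Theorem \ref{zeta potential strip} the paper does not run the symbolic/transfer-operator machinery at all: it invokes Pollicott's criterion \cite[Thm.~2]{POL85}, which states that $\zeta_{\Gamma,U}$ extends analytically to a punctured half-plane $\{\mathrm{Re}(s)>\mathrm{Pr}_{\Gamma}(U)-\varepsilon\}\setminus\{\mathrm{Pr}_{\Gamma}(U)\}$ with a simple pole at $\mathrm{Pr}_{\Gamma}(U)$ if and only if the Fourier transform of the correlation function continues holomorphically to a strip $|\mathrm{Im}(s)|<\varepsilon$, and then feeds in exponential decay of correlations (Theorem \ref{thm: cor exp decay}) together with Paley--Wiener. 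The direct symbolic argument you propose is essentially the one the paper reserves for Theorem \ref{RH}, and there only for the \emph{unweighted} zeta function, where the symbolic data are genuinely Lipschitz.

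That difference exposes the genuine gap in your proposal: the regularity of the induced weight. Theorem \ref{contracting ruelle} is stated (and Stoyanov's estimates in \cite{St11} are proved) for Lipschitz weights $f\in C^{\mathrm{Lip}}(\widehat U,\R)$ over an essentially Lipschitz Markov family of a $C^2$ flow, whereas your $g_U(x)=\int_0^{\tau(x)}U(\phi^t(x))\,dt$ is only $\alpha$-H\"older when $U\in C^{\alpha}(\mathcal K_\Gamma,\R)$. Your one-line fix --- ``replace the Lipschitz structure on $\widehat U$ by the H\"older structure of exponent $\alpha$, which does not affect Theorem \ref{contracting ruelle}'' --- is not justified: Stoyanov's hypotheses (uniformly Lipschitz stable holonomies, regular distortion along unstable manifolds, LNIC) are formulated with respect to a fixed Riemannian metric on the ambient manifold, and snowflaking the metric is not obviously compatible with them, nor with keeping $\tau$ and the coding essentially Lipschitz; one would have to redo or approximate, with uniform control of the contraction constants. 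The paper itself flags exactly this point: the Remark after Theorem \ref{RH} says only that it is ``likely'' that the transfer-operator route extends to $U\in C^{\mathrm{Lip}}$ and then to $C^{\alpha}$ by approximation. So as written your proof rests on an unproved strengthening of Theorem \ref{contracting ruelle}, which the paper's correlation-function argument deliberately avoids, since exponential mixing is available for H\"older potentials and observables. The rest of your outline (RPF plus perturbation near the pole, weak mixing excluding eigenvalue $1$ on the critical line for $0<|b|\le b_0$, the large-$|b|$ regime from Theorem \ref{contracting ruelle}, and the Bowen boundary-correction factor, which you should at least mention) is the standard scheme and would go through once the H\"older issue is actually resolved.
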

\begin{proof}
For ease of reading, we sketch the basic idea and refer to \cite[Thm.~2]{POL85} for a complete exposition.  Consider the Fourier transform of the correlation function
$$
\widehat{c}^{t}(F,G,U)(s)=\frac{1}{2\pi}\int_{t\in \R} e^{ist}c^{t}(F,G,U) \ dt.
$$
The key fact (proved in \cite{POL85}) is that $\zeta_{\Gamma, U}$ has an analytic extension to the punctured half-plane $\{s\in \mathbb{C}\setminus \{\mathrm{Pr}_{\Gamma}(U)\}\ | \ \mathrm{Pr}_{\Gamma}(U)-\varepsilon<\mathrm{Re}(s)\}$ with a simple pole at $\mathrm{Pr}_{\Gamma}(U)\in \C$ if and only if $\widehat{c}^{t}(F,G,U)(s)$ admits a holomorphic continuation to the strip $\lvert \mathrm{Im}(s)\rvert<\varepsilon.$  Since the correlation functions decay exponentially (Theorem \ref{thm: cor exp decay}), an application of the Payley-Wiener theorem guarantees that $\widehat{c}^{t}(F,G,U)(s)$ admits a holomorphic continuation to the strip $\lvert \mathrm{Im}(s)\rvert<\varepsilon$ for some $\varepsilon>0,$ and the result follows.
\end{proof}

Let $C_{\mathcal{K}_{\Gamma}}^{\infty}(\mathcal{M}_{\Gamma}, \R)$ denote the $\R$-vector space of germs of smooth functions along the closed subset $\mathcal{K}_{\Gamma}\subset \mathcal{M}_{\Gamma}.$  There is an obvious injective map $C_{\mathcal{K}_{\Gamma}}^{\infty}(\mathcal{M}_{\Gamma}, \R)\hookrightarrow C^{\mathrm{Lip}}(\mathcal{K}_{\Gamma}, \R).$
By Theorem \ref{THM A}, the zeta function $\zeta_{\Gamma,U}$ is associated to an Axiom A flow, therefore the following global meromorphic continuation result is an immediate application of the resolution of Smale's conjecture by Dyatlov-Guillarmou \cite{DG16, DG18} and Borns-Weil-Shen \cite{BWS21}. 
\begin{thm}\label{thm:thmzetasmooth}
Suppose that $U\in C_{\mathcal{K}_{\Gamma}}^{\infty}(\mathcal{M}_{\Gamma}, \R)$.  Then $\zeta_{\Gamma, U}$ admits a meromorphic continuation to $\C$ with a simple pole at $\mathrm{Pr}_{\Gamma}(U)\in \C.$
\end{thm}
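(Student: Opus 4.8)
The plan is to reduce the statement to the known global meromorphic continuation of \emph{weighted Ruelle zeta functions} for $C^\infty$ Axiom A flows, which follows from the resolution of Smale's conjecture (the Dyatlov--Guillarmou dynamical zeta function theory \cite{DG16, DG18} together with Borns-Weil--Shen \cite{BWS21}). The only work is to bridge the gap between \emph{our} normalisation of $\zeta_{\Gamma, U}$ (periods $\lambda_1([\gamma])$, potential averages $U_{[\gamma]}$ over periodic orbits) and the normalisation in which those results are stated. First I would recall, via Theorem \ref{THM A} and Lemma \ref{lem periodic points in discontinuity set} (together with Lemma \ref{lem correspondence periodic orbits conjugacy classes}), that the periodic orbits of $\phi^t:\mathcal{M}_\Gamma\to\mathcal{M}_\Gamma$ are in period-preserving bijection with nontrivial conjugacy classes $[\gamma]\in[\Gamma]$: the orbit corresponding to a primitive $[\gamma]$ has least period $\ell_{[\gamma]}=\lambda_1([\gamma])>0$. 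Hence the Euler product \eqref{euler potential} is exactly
\[
\zeta_{\Gamma, U}(s)=\prod_{\gamma \text{ prim.\ closed orbit of }\phi^t}\left(1-\exp\Bigl(-s\,\ell_\gamma+\int_0^{\ell_\gamma}U(\phi^t(x_\gamma))\,dt\Bigr)\right)^{-1},
\]
i.e.\ the standard weighted Ruelle zeta of the Axiom A flow $\phi^t$ restricted to the basic set $\mathcal{K}_\Gamma$, with weight given by the smooth germ $U$ (any representative of the germ in $C^\infty_c(\mathcal{M}_\Gamma,\R)$ supported near $\mathcal{K}_\Gamma$ has the same orbit integrals, since periodic orbits lie in $\mathcal{K}_\Gamma$).

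Next I would invoke the microlocal machinery: pick a relatively compact neighbourhood $\mathcal{U}$ of $\mathcal{K}_\Gamma$ with $\mathcal{K}_\Gamma=\bigcap_{t\in\R}\phi^t(\mathcal{U})$ (Theorem \ref{thm - dynamical subsets coincide}), so that $(\mathcal{U},\phi^t)$ is an open hyperbolic flow with trapped set $\mathcal{K}_\Gamma$ and the anisotropic Sobolev/Faure--Sj\"ostrand framework applies to the generator $X$ perturbed by the smooth potential. The operator $-X+U$ then has a discrete Ruelle--Pollicott resonance spectrum, and the weighted zeta function admits a meromorphic continuation to all of $\C$ whose divisor is controlled by that spectrum; this is precisely the content of \cite{DG16, DG18, BWS21} for general Axiom A flows (the non-compactness of $\mathcal{M}_\Gamma$ is harmless because one works microlocally near the compact set $\mathcal{K}_\Gamma$). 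For the location and order of the leading pole, I would combine two ingredients: first, $\mathrm{Pr}_\Gamma(U)$ is the unique value $P$ of $s$ with $\mathrm{Pr}_\sigma(U-P\tau)=0$ for the associated symbolic model (thermodynamic formalism), and it equals the leading Ruelle resonance of $-X+U$; second, this leading resonance is \emph{simple} because $\phi^t|_{\mathcal{K}_\Gamma}$ is topologically transitive (Lemma \ref{lem basic set is basic}, hence topologically mixing here since $\Gamma$ is non-elementary and the flow is not conjugate to a suspension of a single orbit; in the elementary case $\Gamma\simeq\Z$ one checks the statement directly). Simplicity of the leading resonance for a mixing Axiom A basic set with H\"older potential is classical (Bowen--Ruelle \cite{BR75}, Parry--Pollicott \cite{PP83}); transferring it to the statement about $\zeta_{\Gamma,U}$ via the resonance--divisor correspondence then yields exactly a simple pole at $s=\mathrm{Pr}_\Gamma(U)$ with no other poles or zeros on the line $\mathrm{Re}(s)=\mathrm{Pr}_\Gamma(U)$.

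The main obstacle — really the only non-bookkeeping point — is verifying that the hypotheses of the Dyatlov--Guillarmou/Borns-Weil--Shen theorems genuinely hold in our setting, namely that $\phi^t$ near $\mathcal{K}_\Gamma$ is an Axiom A flow with the required structure (compact hyperbolic trapped set, the splitting of Theorem \ref{THM A}.(\ref{real analytic fol}), smoothness $C^\infty$ of flow and potential germ). Theorem \ref{THM A} supplies all of this: the flow is real analytic, $\mathcal{K}_\Gamma$ is a basic hyperbolic set, and the stable/unstable bundles are even globally real analytic. A secondary point is matching conventions: the cited references typically phrase the zeta function with the pole normalised at $s=0$ (i.e.\ after replacing $U$ by $U-\mathrm{Pr}_\Gamma(U)$, or equivalently twisting the flow); I would simply note that $\zeta_{\Gamma,U}(s)=\zeta_{\Gamma,U-P\tau_{\mathrm{avg}}}(s-P)$ is an entire-function translate and that the twist by a constant is absorbed into the potential, so the pole at $\mathrm{Pr}_\Gamma(U)$ follows. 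With these identifications in place, the theorem is an immediate corollary of the stated results; I would not reproduce the microlocal construction, citing \cite{DG16, DG18, BWS21} as the excerpt already does.
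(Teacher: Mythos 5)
Your proposal is correct and follows essentially the same route as the paper, which proves Theorem \ref{thm:thmzetasmooth} precisely by observing (via Theorem \ref{THM A} and the period correspondence of Lemma \ref{lem periodic points in discontinuity set}) that $\zeta_{\Gamma,U}$ is the weighted Ruelle zeta function of a smooth Axiom A flow and then invoking Dyatlov--Guillarmou \cite{DG16,DG18} and Borns-Weil--Shen \cite{BWS21} for the global meromorphic continuation, with the simple pole at $\mathrm{Pr}_\Gamma(U)$ coming from the Parry--Pollicott thermodynamic-formalism argument already recorded in the text. Your additional bookkeeping (orbit/conjugacy-class matching, choice of a compactly supported representative of the germ, normalisation of the leading pole) is exactly the implicit content of the paper's one-line proof.
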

Note that in Theorem \ref{thm:thmzetasmooth}, there is no requirement that $\Gamma< \SL(V)$ is irreducible.

Next, we establish that Theorem \ref{contracting ruelle} implies that the strip in Theorem \ref{zeta potential strip} for the ordinary zeta function $\zeta_{\Gamma}$ is zero-free.  The reader may consult \cite{DP98}, \cite{BAL98}, \cite{PS98} and \cite{NAU05} for a general overview and background information.
\begin{thm}\label{RH}
Suppose that $\Gamma$ is irreducible.  Then there exists $\varepsilon>0$ such that the zeta function $\zeta_{\Gamma}$ is analytic and zero-free in the strip $h_{\mathrm{top}}(\Gamma)-\varepsilon<\mathrm{Re}(s)<h_{\mathrm{top}}(\Gamma).$
\end{thm}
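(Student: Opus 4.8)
The plan is to deduce Theorem \ref{RH} from the eventual contraction of Ruelle transfer operators established in Theorem \ref{contracting ruelle}, via the standard symbolic-dynamics dictionary between the zeta function and the spectrum of transfer operators. First I would fix a Markov family $\mathcal R=\{R_i\}$ for $\phi^t$ over $\mathcal K_\Gamma$ as provided by Theorem \ref{contracting ruelle}, with associated shift $\sigma:U\to U$ and roof function $\tau:U\to(0,\infty)$. Applying Theorem \ref{contracting ruelle} to the distinguished Lipschitz potential $f\equiv 0$ (so that $P_f=h_{\mathrm{top}}(\Gamma)$, the topological entropy being the unique pressure zero of $-P_f\tau$), we obtain constants $a_0,C,\rho\in(0,1)$ and exponent $\varepsilon$ as in the definition of eventual contraction: for $|a|\le a_0$ and $|b|\ge 1/a_0$ the iterates of $L_{-(h_{\mathrm{top}}(\Gamma)+a+ib)\tau}$ contract in the $\|\cdot\|_{\mathrm{Lip},b}$-norm, forcing spectral radius $\le\rho<1$.

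The key step is to invoke the well-known identity expressing $\zeta_\Gamma(s)$ in terms of the dynamical determinant of these transfer operators. Following Parry--Pollicott \cite{PP83}, Pollicott \cite{POL85}, and the treatment in Dolgopyat--Pollicott \cite{DP98} and Pollicott--Sharp \cite{PS98}, there is a meromorphic expression of the form $\zeta_\Gamma(s)=\frac{\det(I-L_{-(s-1)\tau})}{\det(I-L_{-s\tau})}$ (up to finitely many entire correction factors arising from the different rectangle indices), valid in a neighbourhood of $\{\mathrm{Re}(s)\ge h_{\mathrm{top}}(\Gamma)\}$; the poles and zeros of $\zeta_\Gamma$ in a strip correspond to the points $s$ where $1$ is an eigenvalue of the respective transfer operators. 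Since Theorem \ref{contracting ruelle} gives spectral radius strictly less than $1$ for $L_{-(h_{\mathrm{top}}(\Gamma)+a+ib)\tau}$ whenever $|a|\le a_0$, $|b|\ge 1/a_0$, the operator $I-L_{-s\tau}$ is invertible there; combining this with the classical fact (already underlying Theorem \ref{zeta potential strip}, which handles the low-frequency region $|b|<1/a_0$ and the pole at $s=h_{\mathrm{top}}(\Gamma)$) that there are no zeros with $|\mathrm{Im}(s)|$ bounded and $\mathrm{Re}(s)$ slightly below $h_{\mathrm{top}}(\Gamma)$ except possibly on the real axis, where positivity of the leading eigenvalue of $L_{-s\tau}$ for real $s$ rules out a zero, one concludes analyticity and non-vanishing of $\zeta_\Gamma$ in a strip $h_{\mathrm{top}}(\Gamma)-\varepsilon<\mathrm{Re}(s)<h_{\mathrm{top}}(\Gamma)$ after possibly shrinking $\varepsilon$.

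The main obstacle is bookkeeping rather than conceptual: one must patch together the high-frequency estimate (eventual contraction, which handles $|\mathrm{Im}(s)|\ge 1/a_0$) with the behaviour on the compact region $\mathrm{Re}(s)$ near $h_{\mathrm{top}}(\Gamma)$, $|\mathrm{Im}(s)|\le 1/a_0$. On this compact piece one uses analytic perturbation theory: at $s=h_{\mathrm{top}}(\Gamma)$ the operator $L_{-h_{\mathrm{top}}(\Gamma)\tau}$ has a simple leading eigenvalue $1$ with strictly positive eigenfunction (Ruelle--Perron--Frobenius) and the rest of the spectrum inside a disk of radius $<1$; for $s$ real and slightly smaller the leading eigenvalue is strictly larger than $1$, while for $\mathrm{Im}(s)\ne 0$ it has modulus strictly less than the real leading eigenvalue (this is exactly where weak mixing of $\phi^t$, equivalently non-arithmeticity of the length spectrum — which holds here since $\mathcal K_\Gamma$ is a non-trivial basic set — is used to exclude eigenvalue $1$ on the vertical segment). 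A compactness argument over the segment $\{h_{\mathrm{top}}(\Gamma)\}\times[-1/a_0,1/a_0]$ then yields a uniform $\varepsilon$. I would phrase the proof so that the content of Pollicott \cite[Thm.~2]{POL85} and Pollicott--Sharp \cite{PS98} is cited for the passage from transfer operator spectral bounds to zeta-function analyticity, reducing the argument to checking that the hypotheses there — exactly the eventual contraction from Theorem \ref{contracting ruelle} together with topological weak mixing of $\phi^t|_{\mathcal K_\Gamma}$ — are met.
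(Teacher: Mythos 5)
Your overall strategy is the same as the paper's: deduce the zero-free strip from the eventual contraction of the transfer operators in Theorem \ref{contracting ruelle}, splitting into a high-frequency regime handled by the operator bounds and a compact low-frequency region handled separately, with the passage from operator estimates to the zeta function going through the symbolic dynamics of \cite{PP83,POL85,DP98,PS98}. However, two steps in your write-up need repair. First, the identity $\zeta_{\Gamma}(s)=\det(I-L_{-(s-1)\tau})/\det(I-L_{-s\tau})$ is not available here: on the Lipschitz function spaces in which Theorem \ref{contracting ruelle} operates, the transfer operators are not trace class, so these Fredholm determinants are undefined (and the $(s-1)$ shift belongs to a different weight, $|T'|^{-s}$ for analytic expanding maps, not to the suspension weight $e^{-s\tau}$). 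The paper's actual mechanism is different: one works with Bowen's symbolic zeta function $\zeta_0(s)$, which equals $\zeta_{\Gamma}(s)\eta(s)$ with $\eta$ holomorphic and non-vanishing (the correction for orbits meeting rectangle boundaries, \cite{BOW75}), reduces the suspended Markov chain to an expanding interval map by Ruelle's trick, and uses the identity \eqref{erg sums} together with the bounds $\lVert L^{m}_{-s\tau}h\rVert_{\mathrm{Lip},b}\leq C\rho^{m}|b|^{\varepsilon}\lVert h\rVert_{\mathrm{Lip},b}$ to show that the exponential series defining $\zeta_0$ converges uniformly to a non-vanishing holomorphic function for $h_{\mathrm{top}}(\Gamma)-\varepsilon<\Re(s)<h_{\mathrm{top}}(\Gamma)$, $|\Im(s)|>2$; no determinants are taken. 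If you keep the ``zeros correspond to eigenvalue $1$'' picture, you must route it through this series/leading-eigenvalue argument rather than a determinant quotient.

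Second, your treatment of the compact region invokes weak mixing ``since $\mathcal{K}_{\Gamma}$ is a non-trivial basic set''; that implication is false in general (a constant-roof suspension of a subshift is a non-trivial basic set on which the flow is not topologically weakly mixing). Weak mixing does hold in our situation, but it must be derived, e.g.\ from the exponential mixing already established in Theorem \ref{thm: cor exp decay}. In fact the paper avoids the Ruelle--Perron--Frobenius/perturbation discussion on the compact rectangle altogether: by Theorem \ref{zeta potential strip}, $\zeta_{\Gamma}$ is already known to be holomorphic on $\Re(s)>h_{\mathrm{top}}(\Gamma)-\varepsilon$ apart from the simple pole at $h_{\mathrm{top}}(\Gamma)$, hence has only finitely many zeros in the compact set $\{h_{\mathrm{top}}(\Gamma)-\varepsilon\leq \Re(s)\leq h_{\mathrm{top}}(\Gamma),\ |\Im(s)|\leq 2\}$, and none with $\Re(s)\geq h_{\mathrm{top}}(\Gamma)$ by the Euler product; a final shrinking of $\varepsilon$ then yields the zero-free strip. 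Adopting this finiteness argument would both simplify your plan and remove the dependence on the unjustified weak-mixing claim.
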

\begin{proof}
Again, for ease of reading we provide a sketch of the main ideas borrowed from \cite{DP98}.  Throughout this discussion, we allow $\varepsilon>0$ to be recalibrated as necessary.  It already follows from Theorem \ref{zeta potential strip} that $\zeta_{\Gamma}$ is analytic in a strip $h_{\mathrm{top}}(\Gamma)-\varepsilon<\mathrm{Re}(s)<h_{\mathrm{top}}(\Gamma)$ for some $\varepsilon>0$.  As in section \ref{dyn cons}, we fix a Markov partition derived from rectangles $R=\coprod_{i=1}^{k} R_{i}$ with first return map $\mathcal{P}: R\rightarrow R$ and first return time $\tau: R\rightarrow (0, +\infty),$ both of which are essentially Lipschitz.  Define the formal \emph{symbolic} zeta function
\begin{align}\label{symbolic zeta}
\zeta_{0}(s)=\exp \left[\sum_{n=1}^{\infty} \frac{1}{n} \sum_{\mathcal{P}^{n}(x)=x} \exp\left(-s \sum_{j=0}^{n-1} \tau\left(\mathcal{P}^{j}(x)\right)\right)\right].
\end{align}
It was shown by Bowen \cite[Sec.~5]{BOW75} that $\zeta_{0}(s)=\zeta_{\Gamma}(s)\eta(s)$ where $\eta(s)$ is a nowhere vanishing holomorphic function for $\mathrm{Re}(s)>h_{\mathrm{top}}(\Gamma)-\varepsilon$ for some $\varepsilon>0.$  The function $\eta$ is a correction to account for the overcounting of periodic orbits which intersect the boundary of the rectangles $R_{i}$ for $1\leq i \leq k.$  

Using a trick due to Ruelle (see \cite[pg.~4-6]{DP98}), the (suspended) Markov chain $(R, \mathcal{P}, \tau)$ admits a reduction to:
\begin{enumerate}
\item A finite disjoint union of intervals $I=\coprod_{i=1}^{k} I_{i}$ with $I_{i}\subset \R.$ 
\item An expanding essentially Lipschitz map $T: I\rightarrow I.$
\item An essentially Lipschitz function $r: I\rightarrow (0, +\infty).$
\end{enumerate} 
These reductions do not alter the symbolic zeta function \ref{symbolic zeta}.  
The corresponding ergodic sums satisfy the identity
\begin{align}\label{erg sums}
\sum_{T^{n}(x)=x} \exp\left(-s \sum_{j=0}^{n-1} r\left(T^{j}(x)\right)\right)=\sum_{i=1}^{k} \left(L_{-sr}^{n}(\chi_{i})\right)(x_{i})
\end{align}
for all $n\geq 0$ and $x_{i}\in I_{i}$ well-chosen.  In \eqref{erg sums}, $\chi_{i}$ is the characteristic function of the interval and $L_{-sr}$ is the appropriate Ruelle transfer operator defined by the equation \ref{transfer op}.

Using the identity \ref{erg sums} and the estimates on the transfer operators from Theorem \ref{contracting ruelle}, the symbolic zeta function \ref{symbolic zeta} converges uniformly to a nowhere vanishing holomorphic function for $s\in \C$ satisfying $h_{\mathrm{top}}(\Gamma)-\varepsilon<\mathrm{Re}(s)<h_{\mathrm{top}}(\Gamma)$ and $\lvert \mathrm{Im}(s) \rvert>2.$   Recall the relation $\zeta_{0}(s)=\zeta_{\Gamma}(s)\eta(s)$, this implies $\zeta_{\Gamma}(s)$ converges uniformly to a nowhere vanishing holomorphic function on the region $h_{\mathrm{top}}(\Gamma)-\varepsilon<\mathrm{Re}(s)<h_{\mathrm{top}}(\Gamma)$ and $\mathrm{Im}(s)>2.$  By Theorem \ref{zeta potential strip}, the zeta function $\zeta_{\Gamma}$ is holomorphic on $\mathrm{Re}(s)>h_{\mathrm{top}}(\Gamma)-\varepsilon$ except for a simple pole at $h_{\mathrm{top}}(\Gamma)\in \C,$ and therefore there are finitely many zeros and a single pole in the compact region $\{s\in \C \ | \ h_{\mathrm{top}}(\Gamma)-\varepsilon\leq \mathrm{Re}(s)\leq h_{\mathrm{top}}(\Gamma),\ \lvert \mathrm{Im}(s)\rvert\leq 2\}.$  One final shrinking of $\varepsilon>0$ shows that $\zeta_{\Gamma}$ is nowhere vanishing for $\mathrm{Re}(s)>h_{\mathrm{top}}(\Gamma)-\varepsilon.$
\end{proof}
\begin{rem}Given that the indispensible ingredient above is the estimate on Ruelle transfer operators from Theorem \ref{contracting ruelle}, it is likely that the above result can be extended to the weighted zeta functions $\zeta_{\Gamma, U}$ for all $U\in C^{\mathrm{Lip}}(\mathcal{K}_{\Gamma}, \R),$ and by an approximation argument to all $U\in C^{\alpha}(\mathcal{K}_{\Gamma}, \R)$.
\end{rem}  
\subsubsection{Orbit counting}
Define the orbit counting function
$$
N_{\Gamma}(t):=\{[\gamma]\in [\Gamma]_{\mathrm{prim}} \ | \ \lambda_{1}([\gamma])\leq t\}
$$
using the offset logarithmic integral \eqref{eq:offsetintegral}. The following prime orbit theorem with exponentially decaying error term is a consequence of Theorems \ref{contracting ruelle} and \ref{RH} (see \cite[Cor.~1.4]{St11}).
\begin{thm}\label{thm:orbitcounting}
Suppose that $\Gamma$ is  irreducible.  Then there exists $0<c<h_{\mathrm{top}}(\Gamma)$ such that
$$
N_{\Gamma}(t)=\mathrm{Li}\big(e^{h_{\mathrm{top}}(\Gamma)t}\big)\left(1+ O(e^{-(h_{\mathrm{top}}(\Gamma)-c)t})\right).
$$
\end{thm}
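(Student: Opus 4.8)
The plan is to deduce the prime orbit theorem from the analytic properties of $\zeta_\Gamma$ established in Theorem \ref{RH} together with the quantitative transfer operator bounds of Theorem \ref{contracting ruelle}, following the classical route of Parry--Pollicott \cite{PP83} and Pollicott--Sharp \cite{PS98} (compare Dolgopyat--Pollicott \cite{DP98}). In fact, once the geometro-dynamical hypotheses of Stoyanov \cite{St11} have been verified---which is exactly the content of Theorem \ref{contracting ruelle}---the statement is precisely \cite[Cor.~1.4]{St11}, so what follows is a sketch of the standard mechanism.

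First I would pass from $N_\Gamma$ to the Chebyshev-type counting function $\psi_\Gamma(t) = \sum_{\lambda_1([\gamma]) \le t} \lambda_1([\gamma]_{\mathrm{prim}})$, the sum being over all non-trivial conjugacy classes and $[\gamma]_{\mathrm{prim}}$ denoting the primitive root of $[\gamma]$. Expanding the Euler product shows that its Laplace--Stieltjes transform is $-\zeta_\Gamma'/\zeta_\Gamma(s) = \int_0^\infty e^{-st}\, d\psi_\Gamma(t)$ for $\mathrm{Re}(s) > h_{\mathrm{top}}(\Gamma)$. A routine partial-summation (Riemann--Stieltjes) argument then converts an asymptotic of the form $\psi_\Gamma(t) = e^{h_{\mathrm{top}}(\Gamma) t}/h_{\mathrm{top}}(\Gamma) + O\bigl(e^{(h_{\mathrm{top}}(\Gamma) - c')t}\bigr)$ into the claimed expansion for $N_\Gamma(t)$, with the offset logarithmic integral $\mathrm{Li}$ of \eqref{eq:offsetintegral} arising from the substitution $u = e^{h_{\mathrm{top}}(\Gamma)t}$.

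Next I would obtain the asymptotic for $\psi_\Gamma$ from a Perron-type contour integral representation, writing a suitably smoothed version of $\psi_\Gamma(t)$ as $\frac{1}{2\pi i}\int_{(\sigma)} \bigl(-\zeta_\Gamma'/\zeta_\Gamma(s)\bigr) e^{st} s^{-1}\, ds$ for $\sigma > h_{\mathrm{top}}(\Gamma)$, and then shifting the contour to the line $\mathrm{Re}(s) = h_{\mathrm{top}}(\Gamma) - \varepsilon$ inside the zero-free, holomorphic strip furnished by Theorem \ref{RH}. The simple pole of $\zeta_\Gamma$ at $s = h_{\mathrm{top}}(\Gamma)$ produces a simple pole of $-\zeta_\Gamma'/\zeta_\Gamma$ with residue $1$ there, contributing the main term $e^{h_{\mathrm{top}}(\Gamma) t}/h_{\mathrm{top}}(\Gamma)$, while the shifted integral is $O\bigl(e^{(h_{\mathrm{top}}(\Gamma) - \varepsilon)t}\bigr)$ once one controls the vertical growth of $\zeta_\Gamma'/\zeta_\Gamma$ across the strip.

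The main obstacle---and the only place where the strength of Theorem \ref{contracting ruelle} beyond mere meromorphy is used---is precisely this vertical-growth estimate: one must bound $|\zeta_\Gamma'/\zeta_\Gamma(s)|$ by a fixed power of $|\mathrm{Im}(s)|$ on $\mathrm{Re}(s) = h_{\mathrm{top}}(\Gamma) - \varepsilon$ and on the connecting horizontal segments. Here the factor $|b|^\varepsilon$ in the eventual-contraction estimate for the operators $L_{f - (P_f + a + ib)\tau}$ is decisive: via the symbolic zeta function $\zeta_0$ of \eqref{symbolic zeta}, the identity $\zeta_0 = \zeta_\Gamma \cdot \eta$ with $\eta$ holomorphic and nowhere vanishing (Bowen \cite{BOW75}), and the reduction to the one-dimensional expanding map together with the ergodic-sum identity \eqref{erg sums}, the eventual contraction forces $\zeta_0$ to grow at most polynomially on the relevant region, whence the needed bound on $\zeta_\Gamma'/\zeta_\Gamma$ follows by logarithmic differentiation and a Borel--Carathéodory argument. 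With this estimate in place the contour shift is justified, the smoothing error is absorbed, the error term is genuinely exponential, and the theorem follows; for the complete execution we refer to \cite[Cor.~1.4]{St11}, as well as \cite{PS98} and \cite{DP98}.
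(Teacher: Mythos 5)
Your proposal is correct and follows essentially the same route as the paper, whose proof of this theorem consists precisely in invoking the transfer-operator estimates of Theorem \ref{contracting ruelle} together with the zero-free strip of Theorem \ref{RH} and citing \cite[Cor.~1.4]{St11}. Your sketch of the standard Chebyshev/Perron/contour-shift mechanism, including the crucial role of the $|b|^{\varepsilon}$ vertical-growth control, accurately fills in the machinery the paper leaves to the references.
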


\subsection{Dynamical resonances}\label{sec:dynres}
Although the generator $X$ of the flow $\phi^t$ is not an elliptic differential operator (because its principal symbol vanishes on $E^\mathrm{s}\oplus E^\mathrm{u}$), it has a rich and well-studied spectral theory. This applies more generally to lifts of $X$ to vector bundles over $\mathcal{M}_{\Gamma}$, which we do not consider here for the sake of brevity. For a general overview, we refer the reader to the introduction of the paper \cite{DG16} by Dyatlov-Guillarmou. We will present only a selection of special cases of the results obtained by applying the work of Dyatlov-Guillarmou \cite{DG16, DG18} to our operator $X$,  a  detailed and systematic study being the subject of forthcoming works.

Define the complex vector space $L^2(\mathcal{M}_{\Gamma})$ using the volume form provided by the contact form on $\mathcal{M}_{\Gamma}$, and embed the space $\CT(\mathcal M_\Gamma)$ of $\C$-valued test functions into its dual $\D'(\mathcal M_\Gamma)$ using the sesquilinear $L^2$-pairing. Let $U\in \CT(\mathcal M_\Gamma)$, acting on $L^2(\mathcal{M}_{\Gamma})$  by multiplication. 

The spectral analysis of the differential operator 
$$\mathbf X:=-X+U$$
begins with the observation that for each $\lambda \in \C$ with $\Re \lambda>0,$ there is an  $L^2$-resolvent
\[
(\mathbf X-\lambda)^{-1}:L^2(\mathcal{M}_{\Gamma})\to L^2(\mathcal{M}_{\Gamma})
\]
given by the explicit formula 
\[
(\mathbf X-\lambda)^{-1}f(x)= -\int_{0}^\infty \exp\bigg(\int_{0}^{t}U(\phi^{-s}(x))\d s\bigg) \,e^{-\lambda t}f(\phi^{-t}(x))\d t,
\]
where $dt$ is the Lebesgue measure. When applying the works \cite{DG16, DG18}, one localizes the resolvent of $\mathbf X$ to an appropriate neighborhood of the basic set $\mathcal{K}_{\Gamma}$ and in addition modifies $\mathbf X$ slightly near the boundary of the neighborhood to ensure that the strict convexity assumption \cite[(A3)]{DG16}  is satisfied. This yields the following result: 
\begin{thm}\label{Resonant states precise}There exists a relatively compact open set $\mathcal{B}_{\Gamma}\subset \mathcal{M}_{\Gamma}$ with $\mathcal{K}_{\Gamma}\subset \mathcal{B}_{\Gamma}$ and a nowhere vanishing smooth vector field $X_0$ on $\mathcal{B}_{\Gamma}$, which agrees with $X$ in a neighborhood of $\mathcal{K}_{\Gamma}$, such that the following holds: If $U\in \CT(\mathcal{B}_{\Gamma})$ and we put $\mathbf X_0:=-X_0+U$, then  the resolvent $(\mathbf X_0-\lambda)^{-1}$, regarded as an operator $\CT(\mathcal{B}_{\Gamma})\to \D'(\mathcal{B}_{\Gamma})$, extends to $\lambda\in \C$ as an operator-valued meromorphic function
\[
R(\lambda):\CT(\mathcal{B}_{\Gamma})\to \D'(\mathcal{B}_{\Gamma}).
\]
For each pole $\lambda_0\in \C$ of the meromorphic family of operators $R(\lambda)$, the residue operator
\[
\Pi_{\lambda_0}:=\mathrm{Res}_{\lambda=\lambda_0}R(\lambda):\CT(\mathcal{B}_{\Gamma})\to \D'(\mathcal{B}_{\Gamma})
\]
is a finite rank projection commuting with $\mathbf{X}_0$ satisfying $\mathrm{Ran}(\Pi_{\lambda_0})\subset \ker (\mathbf X_0-\lambda_0)^{J(\lambda_0)}$ for some $J(\lambda_0)\geq 1,$ and $\mathrm{Ran}(\Pi_{\lambda_0})\cap \ker (\mathbf X_0-\lambda_0)\neq \{0\}$.
\end{thm}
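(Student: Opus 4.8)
The plan is to obtain Theorem~\ref{Resonant states precise} as essentially a black-box consequence of the Dyatlov--Guillarmou theory once the geometric hypotheses of \cite{DG16} have been arranged. First I would recall that Theorem~\ref{THM A} provides us with a real analytic (hence $C^\infty$) flow $\phi^t$ on $\mathcal M_\Gamma$ whose non-wandering set is a single basic hyperbolic set $\mathcal K_\Gamma$, and that by Theorem~\ref{thm - dynamical subsets coincide} the trapped set and the intersection $\bigcap_{t}\phi^t(U)$ of any relatively compact open neighbourhood $U$ of $\mathcal K_\Gamma$ both coincide with $\mathcal K_\Gamma$. The first step is to build the set $\mathcal B_\Gamma$ and the modified vector field $X_0$: choose a relatively compact open $\mathcal B_\Gamma$ with $\mathcal K_\Gamma\subset\mathcal B_\Gamma\subset\overline{\mathcal B_\Gamma}\subset\mathcal M_\Gamma$, and modify $X$ to $X_0$ in a collar near $\partial\mathcal B_\Gamma$ so that (i) $X_0$ agrees with $X$ on a neighbourhood of $\mathcal K_\Gamma$, (ii) $X_0$ is nowhere vanishing on $\mathcal B_\Gamma$, and (iii) the incoming/outgoing tails of $X_0$ are strictly convex with respect to $\mathcal B_\Gamma$, which is precisely the normally-hyperbolic-trapping-with-convex-ends hypothesis \cite[(A1)--(A3)]{DG16}. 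Here the hyperbolicity of $\mathcal K_\Gamma$ as a basic set (Theorem~\ref{THM A}.(\ref{THM A: AxiomA})), together with the existence of the global real analytic stable/unstable foliations (Theorem~\ref{THM A}.(\ref{real analytic fol})), guarantees that $\mathcal K_\Gamma$ is a smooth normally hyperbolic trapped set in the sense required, so the abstract framework of \cite{DG16,DG18} applies verbatim.

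The second step is the analytic input itself. For $\Re\lambda>0$ the operator $\mathbf X_0-\lambda=-X_0+U-\lambda$ has the bounded $L^2$-resolvent given by the explicit integral formula preceding the statement (absolute convergence of the integral for $\Re\lambda>0$ is immediate from compactness of $\overline{\mathcal B_\Gamma}$ and boundedness of $U$, after cutting off near $\partial\mathcal B_\Gamma$ using the convexity of $X_0$ so that backward trajectories leave $\mathcal B_\Gamma$ in finite time off $\mathcal K_\Gamma$). By \cite[Thm.~1 and Thm.~2]{DG16}, with the lower-order perturbation $U$ absorbed as in \cite{DG18} (multiplication by a compactly supported smooth function is a bounded operator and does not affect the microlocal/radial-estimate arguments, which only see the principal and subprincipal behaviour along $\mathcal K_\Gamma$), this resolvent, regarded as a map $\CT(\mathcal B_\Gamma)\to\D'(\mathcal B_\Gamma)$, continues meromorphically to all of $\C$. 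I would then cite \cite[Prop.~3.2]{DG16} (or its analogue in \cite{DG18}) for the statement that each pole $\lambda_0$ has a residue $\Pi_{\lambda_0}$ which is a finite-rank projection commuting with $\mathbf X_0$, with $\mathrm{Ran}(\Pi_{\lambda_0})$ contained in the generalized kernel $\ker(\mathbf X_0-\lambda_0)^{J(\lambda_0)}$ for some finite $J(\lambda_0)$, these ranges being spaces of distributions wavefront-restricted to the unstable conormal $E^{\mathrm u *}$ over $\mathcal K_\Gamma$.

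The only genuinely non-formal point is the last assertion $\mathrm{Ran}(\Pi_{\lambda_0})\cap\ker(\mathbf X_0-\lambda_0)\neq\{0\}$, i.e.\ that each pole carries an \emph{honest} (not merely generalized) eigendistribution. This is standard Jordan-block linear algebra applied to the finite-dimensional operator $\mathbf X_0$ restricted to the invariant subspace $\mathrm{Ran}(\Pi_{\lambda_0})$: since $\Pi_{\lambda_0}\neq 0$ by definition of a pole and $(\mathbf X_0-\lambda_0)$ is nilpotent on this finite-dimensional space, its kernel there is nonzero. I would spell this out in one or two lines. Thus the main obstacle --- really the only place where something has to be checked rather than cited --- is the first step: verifying that one can build $X_0$ and $\mathcal B_\Gamma$ so that hypotheses (A1)--(A3) of \cite{DG16} hold, which rests on the precise dynamical picture established in Theorems~\ref{THM A} and~\ref{thm - dynamical subsets coincide}, namely that $\mathcal K_\Gamma$ is the full trapped set of $\phi^t$ in a neighbourhood and is normally hyperbolic with the smooth splitting \eqref{eqn decomposition TMGamma}. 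Once that is in place, everything else is an application of the cited microlocal resolvent theory.
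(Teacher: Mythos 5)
Your proposal matches the paper's treatment: the paper likewise obtains this theorem by localizing the resolvent to a relatively compact neighborhood $\mathcal{B}_{\Gamma}$ of $\mathcal{K}_{\Gamma}$ (an isolating block in the sense of Conley--Easton), modifying $X$ near $\partial\mathcal{B}_{\Gamma}$ to enforce the strict convexity assumption (A3) of \cite{DG16}, and then invoking \cite{DG16,DG18} for the meromorphic continuation and the finite-rank residue/projection properties, with the same elementary Jordan-block observation giving honest resonant states. The only caveats are cosmetic: the hypothesis to verify is that $\mathcal{K}_{\Gamma}$ is a compact \emph{hyperbolic} (Axiom A basic) trapped set rather than ``normally hyperbolic trapping,'' and the initial half-plane of convergence of the resolvent integral is $\Re\lambda$ sufficiently large (depending on $\sup|U|$) rather than literally $\Re\lambda>0$, neither of which affects the argument.
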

The poles $\lambda_0$ of the meromorphic operator family $R(\lambda)$ are called \emph{Ruelle-Pollicott resonances} (or \emph{dynamical resonances}/\emph{classical resonances}), the rank $m_{\lambda_0}\geq 1$ of the projection $\Pi_{\lambda_0}$ is called the \emph{multiplicity} of the resonance $\lambda_0$,  the distributions in $\mathrm{Ran}(\Pi_{\lambda_0})$ are called \emph{generalized resonant states}, and those in $\mathrm{Ran}(\Pi_{\lambda_0})\cap \ker (\mathbf X_0-\lambda_0)$ are called \emph{resonant states}.  

 The (non-canonical) relatively compact open set $\mathcal{B}_{\Gamma}\subset \mathcal{M}_{\Gamma}$ is an \emph{isolating block} for the basic set $\mathcal{K}_{\Gamma}$ in the sense of Conley-Easton \cite{CE71} (see also \cite{DG18} and \cite{GMT21}). As in \cite[proof of Prop.~6.2]{DG16}, one can show that the set of Ruelle-Pollicott resonances and their multiplicities are independent of the choice of the open set $\mathcal{B}_{\Gamma}$ containing $\mathcal{K}_{\Gamma}$ and the modification $\mathbf{X}_0$ of the operator $\mathbf{X}$, so that these resonances form an intrinsic discrete spectrum $\mathcal{R}^{\mathbf X}_{\Gamma}\subset \C$ of complex numbers canonically associated to the Axiom A flow $\phi^t$ and thus to the projective Anosov subgroup $\Gamma<\SL(V)$. 
 
Note that the work of Meddane \cite{Med21} avoids working with tight neighborhoods of the basic set and modifications of the flow generator $X$. However, it is formulated in the setting of compact manifolds without boundary, so it is not directly applicable to our situation.

Applying the recent results of Jin-Tao \cite[Thm.~1.2]{JT23} (again in combination with \cite{DG16, DG18}) to our Axiom A flow provides quantitative bounds for the number of Ruelle-Pollicott resonances:
\begin{thm}\label{thm:resbounds}For $U=0$, the spectrum of Ruelle-Pollicott resonances $\mathcal{R}^{\mathbf X}_{\Gamma}$ satisfies the following estimates:
\begin{enumerate}
\item For every $\beta>0$ there exists $C>0$ such that for every $R>0$ one has
\[
\# \mathcal{R}^{\mathbf X}_{\Gamma}\cap\{\lambda\in \C \,|\, |\lambda|\leq R, \Im \lambda>-\beta\}\leq CR^{\dim \mathcal{M}_{\Gamma}+1}+C.
\]
\item For every $\delta\in (0,1)$ there exist $\beta>0$ and $C>0$ such that for every $R>0$ one has  
\[
\# \mathcal{R}^{\mathbf X}_{\Gamma}\cap\{\lambda\in \C \,|\, |\lambda|\leq R, \Im \lambda>-\beta\}\geq \frac{1}{C}R^{\delta}-C.
\]
\end{enumerate}
In particular, there are infinitely many Ruelle-Pollicott resonances.
\end{thm}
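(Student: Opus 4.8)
The plan is to obtain both estimates as a direct application of the Ruelle--Pollicott resonance counting bounds of Jin--Tao \cite[Thm.~1.2]{JT23}, once we verify that our flow fits their (and Dyatlov--Guillarmou's) framework; the ``in particular'' clause is then a formality. First I would recall the data produced by Theorem \ref{Resonant states precise} in the case $U=0$: the isolating block $\mathcal{B}_{\Gamma}\subset\mathcal{M}_{\Gamma}$ with $\mathcal{K}_{\Gamma}\subset\mathcal{B}_{\Gamma}$, the nowhere vanishing smooth vector field $X_{0}$ on $\mathcal{B}_{\Gamma}$ agreeing with $X$ near $\mathcal{K}_{\Gamma}$ and satisfying the strict convexity condition \cite[(A3)]{DG16}, and the meromorphically continued resolvent $R(\lambda)\colon\CT(\mathcal{B}_{\Gamma})\to\D'(\mathcal{B}_{\Gamma})$ of $\mathbf X_{0}=-X_{0}$ whose poles, by definition, are the resonances $\mathcal{R}^{\mathbf X}_{\Gamma}$. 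One checks that the running hypotheses of \cite{JT23} hold here: a $C^{\infty}$ (indeed real analytic, by Theorem \ref{THM A}) fixed-point-free flow, a hyperbolic basic set $\mathcal{K}_{\Gamma}$, and the isolating-block/convexity package that makes the anisotropic Sobolev and flat-trace machinery available over the relatively compact $\mathcal{B}_{\Gamma}$. As in \cite[proof of Prop.~6.2]{DG16} (already invoked above), the resulting count does not depend on the auxiliary choices of $\mathcal{B}_{\Gamma}$ and $X_{0}$.

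For the upper bound (1), I would quote the Weyl-type estimate of \cite{JT23}: on the anisotropic Sobolev spaces underlying \cite{DG16,DG18}, $\mathbf X_{0}-\lambda$ is Fredholm of index zero and the resonances are the zeros of a regularized Fredholm determinant whose order of growth is at most $\dim\mathcal{M}_{\Gamma}+1$; a Jensen-inequality argument then produces $\#\bigl(\mathcal{R}^{\mathbf X}_{\Gamma}\cap\{|\lambda|\le R,\ \Im\lambda>-\beta\}\bigr)\le CR^{\dim\mathcal{M}_{\Gamma}+1}+C$ for each fixed $\beta>0$, where $\dim\mathcal{M}_{\Gamma}=2\dim V-1$.

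For the polynomial lower bound (2), which is the more delicate half, the mechanism is a trace formula. The Guillemin--Dyatlov--Zworski flat trace of the localized propagator is a sum over periodic orbits of $\phi^{t}$ weighted by $T^{\#}_{\gamma}/|\det(I-P_{\gamma})|$; by Lemma \ref{lem correspondence periodic orbits conjugacy classes} these orbits correspond to the nontrivial conjugacy classes $[\gamma]\in[\Gamma]$ with lengths $\lambda_{1}([\gamma])$, and their counting function grows exponentially with rate $h_{\mathrm{top}}(\Gamma)>0$ when $\Gamma$ is non-abelian (cf.\ Theorem \ref{thm:orbitcounting} and \cite{SAM14}) and at least linearly in the remaining cyclic case. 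The presence of a trace singularity at each length, combined with the box bound from (1) and the Paley--Wiener/Tauberian argument of \cite{JT23}, forces at least $\tfrac1C R^{\delta}-C$ resonances with $\Im\lambda>-\beta$, where $\beta=\beta(\delta)\downarrow 0$ as $\delta\uparrow 1$. Finally, if $\mathcal{R}^{\mathbf X}_{\Gamma}$ were finite, the left-hand side of (2) would be eventually constant while the right-hand side tends to $+\infty$, so there are infinitely many resonances.

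The main obstacle I expect is not a new idea but the careful transfer of the Jin--Tao estimates, which (like Meddane's \cite{Med21}) are most naturally stated in a closed or model geometry, to the genuinely non-compact manifold $\mathcal{M}_{\Gamma}$: one must confirm that after the modification of Theorem \ref{Resonant states precise} the trapped set of the block $\mathcal{B}_{\Gamma}$ is exactly $\mathcal{K}_{\Gamma}$, that condition \cite[(A3)]{DG16} genuinely holds for $X_{0}$, and that the flat trace of the localized propagator receives contributions only from closed orbits in $\mathcal{K}_{\Gamma}$ and none from the collar where $X_{0}\neq X$. This is exactly what the combination of \cite{JT23} with \cite{DG16,DG18} accomplishes; with it in hand, both (1) and (2) become black-box consequences.
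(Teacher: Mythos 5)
Your proposal matches the paper's treatment: the theorem is obtained there exactly as a black-box application of Jin--Tao \cite[Thm.~1.2]{JT23} in combination with the Dyatlov--Guillarmou framework \cite{DG16,DG18}, using the isolating block $\mathcal{B}_\Gamma$ and modified generator $X_0$ from Theorem \ref{Resonant states precise} and the independence of these auxiliary choices. Your additional sketch of the internals (Fredholm determinant upper bound, flat-trace/orbit-growth lower bound) is consistent with that citation, so the approach is essentially the same.
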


\subsubsection{Distributions supported on the basic set $\mathcal{K}_{\Gamma}$}

Theorem \ref{Resonant states} applies in an analogous form to the formal adjoint $\mathbf X^\ast= X+\overline U$ of $\mathbf X$ (where $\overline{\phantom{x}}$ denotes complex conjugation), which associates to every Ruelle-Pollicott resonance $\lambda_0$ another finite-dimensional space of distributions called  \emph{(generalized) coresonant states}, and the works \cite{DG16, DG18} imply:
\begin{thm}\label{Resonant states precise2}
Let  $\lambda_0\in\mathcal{R}^{\mathbf X}_{\Gamma}$ be a Ruelle-Pollicott resonance. Then the following holds:
\begin{enumerate}
\item If $u,v\in \D'(\mathcal{B}_{\Gamma})$ are a resonant and a coresonant state of the resonance $\lambda_0$, respectively, then the wave front sets of the distributions $u$ and $v$ satisfy the H\"ormander criterion \cite[Thm.~8.2.10]{HOR03}, so that the product
\[
u\cdot \overline{v}\in \D'(\mathcal{B}_{\Gamma})
\]
is well-defined. The distribution $u\cdot \overline{v}$ satisfies $\supp(u\cdot \overline{v})\subset\mathcal{K}_{\Gamma}$ and $X(u\cdot \overline{v})=0$, i.e., it is $\phi^t$-invariant. 
\item The Schwartz kernel $K_{\Pi_{\lambda_0}}\in \D'(\mathcal{B}_{\Gamma}\times\mathcal{B}_{\Gamma})$ of the projection $\Pi_{\lambda_0}$ has the property that its wave front set is disjoint from the conormal bundle of the diagonal submanifold in $\mathcal{B}_{\Gamma}\times\mathcal{B}_{\Gamma}$, so that by  \cite[Thm.~8.2.4]{HOR03} its pullback along the diagonal map $\iota_{\mathrm{diag}}:\mathcal{B}_{\Gamma}\to\mathcal{B}_{\Gamma}\times\mathcal{B}_{\Gamma}$ is well-defined. The obtained distribution
\[
\mathcal T_{\lambda_0}:=\iota_{\mathrm{diag}}^\ast K_{\Pi_{\lambda_0}}\in \D'(\mathcal{B}_{\Gamma})
\]
is $\phi^t$-invariant and satisfies $\mathrm{supp}(\mathcal T_{\lambda_0})\subset\mathcal{K}_{\Gamma}$.
\end{enumerate}
\end{thm}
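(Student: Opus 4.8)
The plan is to derive both assertions from the microlocal structure of Pollicott--Ruelle (co)resonant states established by Dyatlov--Guillarmou \cite{DG16, DG18}, fed into H\"ormander's calculus of products and pullbacks of distributions \cite{HOR03}. I would work throughout in a small neighborhood of $\mathcal K_\Gamma$ inside the isolating block $\mathcal B_\Gamma$, where $X_0=X$. Using the hyperbolic splitting from Theorem \ref{THM A}.(\ref{real analytic fol}) extended over $\mathcal B_\Gamma$, introduce the closed conic sets $E^\ast_\mathrm{s},E^\ast_\mathrm{u}\subset T^\ast\mathcal B_\Gamma\setminus\underline 0$ characterized by $E^\ast_\mathrm{s}(E^0\oplus E^\mathrm{s})=0$ and $E^\ast_\mathrm{u}(E^0\oplus E^\mathrm{u})=0$; since $E^0\oplus E^\mathrm{s}$ and $E^0\oplus E^\mathrm{u}$ together span $T\mathcal B_\Gamma$, one has $E^\ast_\mathrm{s}\cap E^\ast_\mathrm{u}=\emptyset$. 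Let $\mathcal K^-_\Gamma$ (resp.\ $\mathcal K^+_\Gamma$) be the backward- (resp.\ forward-)trapped tail of the flow in $\mathcal B_\Gamma$, i.e.\ the set of $x\in\mathcal B_\Gamma$ with $\phi^t(x)\in\mathcal B_\Gamma$ for all $t\le 0$ (resp.\ $t\ge 0$); by local maximality of $\mathcal K_\Gamma$ (Theorem \ref{thm - dynamical subsets coincide}), for $\mathcal B_\Gamma$ small enough $\mathcal K^-_\Gamma\cap\mathcal K^+_\Gamma=\mathcal K_\Gamma$. The central input, to be extracted from \cite{DG16, DG18}, is that every generalized resonant state $u\in\mathrm{Ran}(\Pi_{\lambda_0})$ satisfies $\WF(u)\subset E^\ast_\mathrm{u}$ and $\supp(u)\subset\mathcal K^-_\Gamma$, while every generalized coresonant state $v$ satisfies $\WF(v)\subset E^\ast_\mathrm{s}$ and $\supp(v)\subset\mathcal K^+_\Gamma$.

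The hard part is precisely this input: it repackages the radial-point estimates at $E^\ast_\mathrm{u},E^\ast_\mathrm{s}$ and the propagation of singularities of \cite{DG16, DG18}, and one must check that (i) replacing $X$ by the compactly supported modification $X_0$ and passing to the block $\mathcal B_\Gamma$ changes nothing, since the (co)resonant states and their singularities concentrate near $\mathcal K_\Gamma$ where $X_0=X$; (ii) the wave front and support bounds hold for \emph{generalized} states, not merely genuine ones, which follows from the finite rank of $\Pi_{\lambda_0}$ and its powers; and (iii) $\mathcal B_\Gamma$ is chosen small enough that $\mathcal K^-_\Gamma\cap\mathcal K^+_\Gamma=\mathcal K_\Gamma$, which Theorem \ref{thm - dynamical subsets coincide} guarantees. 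Everything after this is a short application of H\"ormander's theorems.

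Granting the input, part (1) goes as follows. Since $\WF(\overline v)=-\WF(v)\subset E^\ast_\mathrm{s}$ and $E^\ast_\mathrm{s}$ is conic and symmetric, $\WF(u)\cap(-\WF(\overline v))=\WF(u)\cap\WF(v)\subset E^\ast_\mathrm{u}\cap E^\ast_\mathrm{s}=\emptyset$, so $u\cdot\overline v$ is well-defined by \cite[Thm.~8.2.10]{HOR03}, and $\supp(u\cdot\overline v)\subset\mathcal K^-_\Gamma\cap\mathcal K^+_\Gamma=\mathcal K_\Gamma$. For flow-invariance, rewrite $\mathbf X_0 u=\lambda_0 u$ as $X_0 u=(U-\lambda_0)u$, and conjugate $\mathbf X_0^\ast v=\overline{\lambda_0}v$, i.e.\ $X_0 v+\overline U v=\overline{\lambda_0}v$, to obtain $X_0\overline v=(\lambda_0-U)\overline v$; the Leibniz rule (legitimate once the product exists) gives $X_0(u\cdot\overline v)=\bigl((U-\lambda_0)+(\lambda_0-U)\bigr)u\cdot\overline v=0$, and since $\supp(u\cdot\overline v)\subset\mathcal K_\Gamma$ where $X_0=X$, this reads $X(u\cdot\overline v)=0$, proving (1).

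For part (2), choose dual bases of $\mathrm{Ran}(\Pi_{\lambda_0})$ and $\mathrm{Ran}(\Pi_{\lambda_0}^\ast)$ with respect to the sesquilinear $L^2$-pairing, so that $K_{\Pi_{\lambda_0}}=\sum_i u_i\boxtimes\overline{v_i}$ is a finite sum. By the input and the standard wave front estimate for exterior tensor products, $\WF(K_{\Pi_{\lambda_0}})\subset\bigl(E^\ast_\mathrm{u}\cup(\mathcal K^-_\Gamma\times\{0\})\bigr)\times\bigl(E^\ast_\mathrm{s}\cup(\mathcal K^+_\Gamma\times\{0\})\bigr)$ and $\supp(K_{\Pi_{\lambda_0}})\subset\mathcal K^-_\Gamma\times\mathcal K^+_\Gamma$. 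A covector $(x,\xi,x,-\xi)$ with $\xi\ne 0$ in the conormal bundle $N^\ast\Delta$ of the diagonal $\Delta\subset\mathcal B_\Gamma\times\mathcal B_\Gamma$ cannot lie in $\WF(K_{\Pi_{\lambda_0}})$, since that would force $(x,\xi)\in E^\ast_\mathrm{u}$ and $(x,-\xi)\in E^\ast_\mathrm{s}$, hence $(x,\xi)\in E^\ast_\mathrm{u}\cap E^\ast_\mathrm{s}=\emptyset$. Thus $\WF(K_{\Pi_{\lambda_0}})\cap N^\ast\Delta=\emptyset$ and $\mathcal T_{\lambda_0}=\iota^\ast_{\mathrm{diag}}K_{\Pi_{\lambda_0}}$ is well-defined by \cite[Thm.~8.2.4]{HOR03}, with $\supp(\mathcal T_{\lambda_0})\subset\{x:(x,x)\in\mathcal K^-_\Gamma\times\mathcal K^+_\Gamma\}=\mathcal K_\Gamma$. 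For flow-invariance, restrict to the diagonal to get $\mathcal T_{\lambda_0}=\sum_i u_i\cdot\overline{v_i}$ and compute $X_0\bigl(\sum_i u_i\cdot\overline{v_i}\bigr)$ near $\mathcal K_\Gamma$ using $X_0 u_i=(U-\lambda_0)u_i-(\mathbf X_0-\lambda_0)u_i$ and the conjugate relation for $\overline{v_i}$: the $U$-terms cancel as in part (1), and the duality identity between the matrices of $(\mathbf X_0-\lambda_0)$ on $\mathrm{Ran}(\Pi_{\lambda_0})$ and of $(\mathbf X_0^\ast-\overline{\lambda_0})$ on $\mathrm{Ran}(\Pi_{\lambda_0}^\ast)$ kills the nilpotent contributions, so $X_0\mathcal T_{\lambda_0}=0$ and hence $X\mathcal T_{\lambda_0}=0$. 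Equivalently, one can argue that $\Pi_{\lambda_0}\mathbf X_0=\mathbf X_0\Pi_{\lambda_0}$, together with $X$ being divergence-free for the contact volume near $\mathcal K_\Gamma$, turns into a transport identity for $K_{\Pi_{\lambda_0}}$ whose restriction along $\iota_{\mathrm{diag}}$ is exactly $X\mathcal T_{\lambda_0}=0$.
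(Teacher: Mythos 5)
Your proposal is correct and follows essentially the same route as the paper, which derives this theorem directly from Dyatlov--Guillarmou \cite{DG16, DG18}: the wave front and support bounds for (generalized) resonant and coresonant states coming from the radial/propagation estimates, the identification of the intersection of the two trapped tails with $\mathcal{K}_{\Gamma}$ via the isolating block, H\"ormander's product and pullback theorems, and the finite-rank decomposition $K_{\Pi_{\lambda_0}}=\sum_i u_i\boxtimes\overline{v_i}$ with the adjoint-matrix cancellation giving flow invariance. Your explicit verification of the Leibniz/duality cancellation (including the nilpotent Jordan-block contributions) is exactly the computation implicit in the cited works, so no further changes are needed.
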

 The distribution $\mathcal T_{\lambda_0}$ is called the \emph{invariant Ruelle distribution} associated to the resonance $\lambda_0$, c.f.\ \cite{GHW21}.  If $\lambda_0$ is a simple resonance (i.e., $m(\lambda_0)=1$), so that up to scalar multiples there are a unique resonant state $u_{\lambda_0}$ and a unique coresonant state $v_{\lambda_0}$ for this resonance, then by rescaling one can arrange that $\mathcal T_{\lambda_0}= u_{\lambda_0}\cdot \overline{v}_{\lambda_0}$.

\subsubsection{Relation to the Ruelle zeta function}

The zeroes and poles of the Ruelle zeta function $\zeta_{\Gamma,U}$ considered in Theorem \ref{thm:thmzetasmooth} are Ruelle-Pollicott resonances of lifts of $X$ to operators $\mathbf X$ acting on appropriate spaces of differential forms, to which generalized versions of the above results apply (again, see \cite{DG16, DG18,BWS21}). For the simple leading pole $\lambda_0=\mathrm{Pr}_{\Gamma}(U)$ of the zeta function,  it is a folklore result (mentioned for example in \cite[p.~6]{DG16} in the case $U=0$) that the invariant Ruelle distribution $\mathcal T_{\lambda_0}$ is a constant multiple of the Gibbs equilibrium measure with respect to the potential $U$, which in the case $U=0$ is the Bowen-Margulis measure of maximal topological entropy.  See the recent work \cite{TH24} for a proof in the Anosov case.

\section{Examples}\label{Examples}

Let us now describe two settings in which the Axiom A flow $\phi^t$ on $\cM_\Gamma$ provided by Theorem \ref{THM A} is directly related to a geodesic flow: $\mathbb H^{p,q}$-convex cocompact groups  and Benoist groups. Both are particular instances of projectively strongly convex-cocompact groups as introduced by Danciger-Guéritaud-Kassel \cite{dgk18a}.

To begin, we fix some terminology for the following subsections. A hyperplane $H\subset V$ defines an \emph{affine patch} $\mathbb A_H=\pi(V\setminus H)$, where $\pi:V\setminus\{0\}\to \bbP(V)$ is the projection. 
\begin{definition} \label{def properly convex} A subset $C\subset \bbP(V)$ is called \emph{properly convex} if it is contained, convex and bounded in an affine patch.
\end{definition}

\subsection{\texorpdfstring{$\mathbb H^{p,q}$-}{Pseudo-hyperbolic } convex cocompact groups}

Consider two integers $p,q$ with $p\geq 1$ and $q\geq 0$, and equip the vector space $V=\R^{p+q+1}$ with the non-degenerate bilinear form $\eklm{\cdot,\cdot}$ of signature $(p,q+1)$ defined by
\[
\eklm{x,y}_{p,q+1}:=\sum_{j=1}^p x_jy_j - \sum_{j=p+1}^{d} x_jy_j\quad \forall x,y\in \R^{p+q+1}.\]
The pseudo-hyperbolic space $\bbH^{p,q}$ is defined as the projectivization of \emph{timelike} vectors, i.e.
\[\bbH^{p,q}:=\set{[x]\in \bbP(V)}{\eklm{x,x}_{p,q+1}<0}.\] 
The geometry of $\bbH^{p,q}$ is best described by considering its double cover
\[ \cH^{p,q}:=\set{x\in V}{\eklm{x,x}_{p,q+1}=-1}.\]
The restriction of $\eklm{\cdot,\cdot}_{p,q+1}$ to tangent spaces  $T_x\cH^{p,q}=x^\perp :=\{v\in V\,|\,\eklm{x,v}_{p,q+1}=0\}$ endows $\cH^{p,q}$ with a pseudo-Riemannian metric of signature $(p,q)$ invariant under both the $\SO(p,q+1)$-action and the antipodal map. It therefore descends to an $\SO(p,q+1)$-invariant pseudo-Riemannian metric on $\bbH^{p,q}$. Since the differential $d\pi:T\cH^{p,q}\to T\bbH^{p,q}$ is also a double cover, we may describe the tangent bundle of $\bbH^{p,q}$ as
%
%
%
\[ T\bbH^{p,q}=\set{[x:v]\in \bbP(V\times V)}{\eklm{x,x}_{p,q+1}<0,\eklm{x,v}_{p,q+1}=0}. \]
With this description, the pseudo-Riemannian metric is expressed as
\[ \left(\rule{0pt}{12pt}[x:v_1],[x:v_2]\right)_{[x]}=-\frac{\eklm{v_1,v_2}_{p,q+1}}{\eklm{x,x}_{p,q+1}}. \]
If $q=0$, we recover the (Klein model of the) real hyperbolic space $\bbH^p=\bbH^{p,0}$. If $q=1$, we find the projective model of the anti-de Sitter space $\mathbb A\mathrm d\mathbb S^{p+1}=\bbH^{p,1}$.
\begin{definition} The \emph{spacelike unit tangent bundle} of $\bbH^{p,q}$ is
\begin{align*} T^1\bbH^{p,q}&=\set{[x:v]\in T\bbH^{p,q}}{([x:v],[x:v])_{[x]}=1}\\
&=\set{[x:v]\in \bbP(V\times V)}{\eklm{x,x}_{p,q+1}<0, \eklm{x,v}_{p,q+1}=\eklm{v,v}_{p,q+1}+\eklm{x,x}_{p,q+1}=0}.
\end{align*}
\end{definition}
The geodesic flow $\varphi^t:T\bbH^{p,q}\to T\bbH^{p,q}$ leaves $T^1\bbH^{p,q}$ invariant, and we find
\[ \varphi^t([x:v])=[ \cosh t\, x+\sinh t\, v : \sinh t\, x+ \cosh t\, v]\quad \forall [x:v]\in T^1\bbH^{p,q}. \]
Flow lines of this \emph{space-like geodesic flow} have endpoints in the boundary 
\[ \partial \bbH^{p,q}=\set{[x]\in \bbP(V)}{\eklm{x,x}_{p,q+1}=0}. \]
They are given, for $[x:v]\in T^1\bbH^{p,q}$, by
\bqn 
[x:v]_\pm:=\lim_{t\to \pm\infty}\pi(\varphi^t([x:v]))=[x\pm v]\in \partial \bbH^{p,q}.
\eqn
Note that we always have $[x:v]_+\neq [x:v]_-$.

\subsubsection{Integration into the general setting}
We will now see that the spacelike geodesic flow $(T^1\bbH^{p,q},\varphi^t)$ of $\bbH^{p,q}$ is a subsystem of $(\bbL,\phi^t)$. The isomorphism
\[ \Phi^{p,q+1}:\map{V}{V^*}{\left[v\right]}{\left[\eklm{v,\cdot}_{p,q+1}\right]} \]
allows us to replace the flow space
\[\bbL=\set{[v:\alpha]\in\bbP(V\times V^*)}{\alpha(v)>0} \]
and the flow
\[ \phi^t([v:\alpha])=[e^tv:e^{-t}\alpha] \]
with the flow space
\[
\bbL^{p,q+1}=\set{[v_1:v_2]\in \mathbb{P}(V\times V)}{\eklm{v_1,v_2}_{p,q+1}>0 }
\]
equipped with the flow
 \[ \phi^t([v_1:v_2])=([e^tv_1:e^{-t}v_2]),\qquad t\in \R. \]
The map 
\[ \Phi^{p,q+1}_\partial:\map{T^1\bbH^{p,q}}{\bbL^{p,q+1}}{\left[x:v\right]}{\left[ x+v:x-v\right]} \]
is an $\SO(p,q+1)$-equivariant embedding that intertwines the flows $\Phi^{p,q+1}_\partial\circ \varphi^t=\phi^t\circ \Phi^{p,q+1}_\partial$. Its image is the subspace $\bbL^{p,q+1}_\partial\subset\bbL$ defined by
\[
\bbL^{p,q+1}_\partial=\set{[v_1:v_2]\in\bbL^{p,q+1}}{[v_1],[v_2]\in \partial\bbH^{p,q}}
\]
and the inverse $\left(\Phi^{p,q+1}_\partial\right)^{-1}:\bbL^{p,q+1}_\partial\to T^1\bbH^{p,q}$ is given by
\[ \left(\Phi^{p,q+1}_\partial\right)^{-1}([v_1:v_2])=[\eklm{v_1,v_2}_{p,q+1}v_1-v_2:\eklm{v_1,v_2}_{p,q+1}v_1+v_2]. \]

\subsubsection{Projective Anosov subgroups of \texorpdfstring{$\SO(p,q+1)$}{SO(p,q+1)}}

There is a straightforward relation between projective Anosov subgroups $\Gamma<\SO(p,q+1)<\SL(V)$ and the boundary $\partial\bbH^{p,q}$.

\begin{lem}[{\cite[Rem.~2.4]{dgk18a}}] \label{lem limit set of projective Anosov subgroup of SO(p,q+1)}
If $\Gamma<\SO(p,q+1)$ is projective Anosov, then $\Lambda_\Gamma\subset \partial\bbH^{p,q}$. Furthermore, the limit sets $\Lambda_\Gamma$, $\Lambda^*_\Gamma$ can be  identified through the intertwining relation $\xi^*=\Phi_{p,q+1}\circ \xi$.
\end{lem}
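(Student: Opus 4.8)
The plan is to identify the dual limit map with $\xi^\ast=\Phi_{p,q+1}\circ\xi$, where $\Phi_{p,q+1}\colon\bbP(V)\to\bbP(V^\ast)$ is the isomorphism induced by $\Phi^{p,q+1}$, and then to read off $\Lambda_\Gamma\subset\partial\bbH^{p,q}$ from the self-transversality $\xi(t)\subset\xi^\ast(t)$. First I would record that $\Phi_{p,q+1}$ is $\SO(p,q+1)$-equivariant: for $g\in\SO(p,q+1)$ one has $\eklm{gv,\cdot}_{p,q+1}=\eklm{v,g^{-1}\cdot}_{p,q+1}=g\cdot\eklm{v,\cdot}_{p,q+1}$ with respect to the action $g\cdot\alpha=\alpha\circ g^{-1}$, because $g$ preserves the form. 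Hence $\Phi_{p,q+1}\circ\xi\colon\partial_\infty\Gamma\to\bbP(V^\ast)$ is a $\Gamma$-equivariant bi-Hölder homeomorphism onto its image (using Proposition~\ref{prop boundary maps}\ref{propenumi boundary maps equivariant homeomorphisms}), a natural candidate for $\xi^\ast$.

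The core step is to verify $\Phi_{p,q+1}(\xi(\gamma_-))=\xi^\ast(\gamma_-)$ for every infinite-order $\gamma\in\Gamma$. For this I would use that an element of $\SO(p,q+1)$ has a self-reciprocal characteristic polynomial (since $\gamma^{-1}=J^{-1}\gamma^{T}J$ is conjugate to $\gamma^{T}$, where $J$ is the Gram matrix of $\eklm{\cdot,\cdot}_{p,q+1}$), so the multiset of eigenvalue moduli of $\gamma$ is stable under $r\mapsto r^{-1}$; in particular $\lambda_1(\gamma^{-1})=\lambda_1(\gamma)$, and applying Proposition~\ref{prop boundary maps}\ref{propenumi boundary maps elements} to $\gamma^{-1}$ (with $(\gamma^{-1})_+=\gamma_-$) shows that $\xi(\gamma_-)$ is the eigenline $\R u$ of $\gamma$ for the unique real simple eigenvalue $\mu$ of modulus $e^{-\lambda_1(\gamma)}$. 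Then, again by $\gamma$-invariance of the form, $(\gamma\cdot\eklm{u,\cdot}_{p,q+1})(x)=\eklm{u,\gamma^{-1}x}_{p,q+1}=\eklm{\gamma u,x}_{p,q+1}=\mu\,\eklm{u,x}_{p,q+1}$, so that $\eklm{u,\cdot}_{p,q+1}$ spans the eigenline of the inverse transpose $(\alpha\mapsto\gamma\cdot\alpha)$ for the eigenvalue of modulus $e^{-\lambda_1(\gamma)}$, which by Proposition~\ref{prop boundary maps}\ref{propenumi boundary maps elements} is precisely $\xi^\ast(\gamma_-)$. Since $[\eklm{u,\cdot}_{p,q+1}]=\Phi_{p,q+1}(\xi(\gamma_-))$, this gives the claimed equality.

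To finish, the repelling fixed points $\gamma_-$ of infinite-order elements are dense in $\partial_\infty\Gamma$ (Proposition~\ref{prop hyperbolic groups elementary properties}\ref{propenumi hyp groups density poles}), so the continuous maps $\Phi_{p,q+1}\circ\xi$ and $\xi^\ast$, agreeing on a dense set, coincide; hence $\xi^\ast=\Phi_{p,q+1}\circ\xi$ and $\Lambda_\Gamma^\ast=\Phi_{p,q+1}(\Lambda_\Gamma)$. For the remaining assertion, fix $t\in\partial_\infty\Gamma$ and choose $v_t\neq 0$ with $[v_t]=\xi(t)$; Proposition~\ref{prop boundary maps}\ref{propenumi boundary maps transversality} gives $\xi(t)\subset\xi^\ast(t)=\Phi_{p,q+1}(\xi(t))=\ker\eklm{v_t,\cdot}_{p,q+1}$, hence $\eklm{v_t,v_t}_{p,q+1}=0$, i.e.\ $[v_t]\in\partial\bbH^{p,q}$, so $\Lambda_\Gamma\subset\partial\bbH^{p,q}$. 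I expect the only real obstacle to be the eigenvalue bookkeeping in the middle step — tracking which of $\gamma$, $\gamma^{-1}$, and the inverse transpose is acting, and which modulus $e^{\pm\lambda_1(\gamma)}$ is relevant — together with the standard but essential reciprocal symmetry of the $\SO(p,q+1)$-spectrum; the equivariance and density arguments are routine.
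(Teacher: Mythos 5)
Your proof is correct. Note that the paper does not prove this lemma at all: it is quoted directly from Danciger--Gu\'eritaud--Kassel \cite[Rem.~2.4]{dgk18a}, so there is no internal argument to compare against. Your reconstruction is the natural self-contained verification and uses exactly the right ingredients: $\SO(p,q+1)$-equivariance of $\Phi_{p,q+1}$, the inversion-symmetry of the spectrum of elements of $\SO(p,q+1)$ (needed to identify $e^{-\lambda_1(\gamma)}$ as the relevant modulus, i.e.\ $\lambda_1(\gamma^{-1})=\lambda_1(\gamma)$), Proposition \ref{prop boundary maps}\ref{propenumi boundary maps elements} applied to $\gamma^{-1}$ and to the inverse-transpose action to get $\Phi_{p,q+1}(\xi(\gamma_-))=\xi^*(\gamma_-)$, density of repelling fixed points (Proposition \ref{prop hyperbolic groups elementary properties}\ref{propenumi hyp groups density poles}) plus continuity to upgrade this to $\xi^*=\Phi_{p,q+1}\circ\xi$, and finally Proposition \ref{prop boundary maps}\ref{propenumi boundary maps transversality} with $s=t$ to conclude $\eklm{v_t,v_t}_{p,q+1}=0$, i.e.\ $\Lambda_\Gamma\subset\partial\bbH^{p,q}$. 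An equivalent shortcut would be to invoke the uniqueness clause of Proposition \ref{prop boundary maps} for the candidate pair $(\xi,\Phi_{p,q+1}\circ\xi)$, but that requires checking the transversality condition for the candidate, so your density-plus-continuity route is arguably cleaner; the eigenvalue bookkeeping you flag as the delicate point is handled correctly.
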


\begin{definition}
If $\Gamma<\SO(p,q+1)$ is projective Anosov, we consider
\begin{align*} \tilde\cM_{\Gamma,\partial}&=\tilde\cM_\Gamma\cap \bbL_\partial^{p,q+1}\\
&= \set{[v_1:v_2]\in \bbL^{p,q+1}_\partial}{\eklm{v_1,v}_{p,q+1}\neq 0 \textrm{ or } \eklm{v_2,v}_{p,q+1}\neq 0~\forall\, [v]\in \Lambda_\Gamma}.
\end{align*}
\end{definition}
As a consequence of Lemma \ref{lem limit set of projective Anosov subgroup of SO(p,q+1)}, we find that any projective Anosov subgroup $\Gamma<\SO(p,q+1)$ satisfies
\begin{equation*}
\tilde\cK_\Gamma\subset\tilde\cM_{\Gamma,\partial}.
\end{equation*}

\begin{rem} In the Riemannian case $q=0$, any distinct points $[v_1],[v_2]\in \partial\bbH^{p}$ satisfy $\eklm{v_1,v_2}_{p,1}\neq 0$, hence $\tilde\cM_{\Gamma,\partial}=\bbL^{p,1}_\partial\approx T^1\bbH^p$ is the whole Riemannian sphere bundle.
\end{rem}

\subsubsection{Projective Anosov subgroups and \texorpdfstring{$\mathbb H^{p,q}$-}{pseudo-hyperbolic }convex cocompactness}

In \cite{dgk18a}, Danciger, Gué\-ri\-taud and Kassel relate the projective Anosov property to the geometry of $\bbH^{p,q}$, generalizing the work of Barbot and Mérigot \cite{barbot-merigot} in the anti-de Sitter case.

\begin{definition}[{\cite[Def.~1.23]{dgk18a}}]A discrete subgroup $\Gamma<\SO(p,q+1)$ is called \emph{$\mathbb H^{p,q}$-convex cocompact} if it acts properly discontinuously and cocompactly on some properly convex closed subset $C\subset \mathbb H^{p,q}$ with non-empty interior whose ideal boundary $\partial_i C:=\overline C\setminus C$ does not contain any non-trivial projective line segment. Here $\overline{C}$ denotes the closure of $C$ in $\mathbb{P}(V)$.
\end{definition}

\begin{prop}[{\cite[Thm.~1.24]{dgk18a}}]
If $\Gamma< \SO(p,q+1)$ is $\bbH^{p,q}$-convex cocompact, then it is projective Anosov.
\end{prop}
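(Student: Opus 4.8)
The plan is to reduce the statement to the characterization of $\mathbb H^{p,q}$-convex cocompactness via convex cocompactness in the Klein model, and then invoke the theory developed by Danciger--Guéritaud--Kassel relating such actions to the projective Anosov property. First I would recall from \cite[Def.~1.23]{dgk18a} that $\Gamma<\SO(p,q+1)$ being $\mathbb H^{p,q}$-convex cocompact means that $\Gamma$ acts properly discontinuously and cocompactly on a properly convex closed subset $C\subset\mathbb H^{p,q}$ with non-empty interior whose ideal boundary $\partial_i C=\overline C\setminus C$ contains no non-trivial projective line segment. The key structural input is that $\mathbb H^{p,q}$ sits inside the properly convex open subset $\widehat{\mathbb H^{p,q}}=\{[x]\in\bbP(V)\,|\,\eklm{x,x}_{p,q+1}<0\}$, and that the ``no segment in the ideal boundary'' condition is precisely the strong convexity/strict convexity hypothesis that makes the Danciger--Guéritaud--Kassel machinery run. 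So the first step is to identify this action with a \emph{strongly convex cocompact} action in $\bbP(V)$ in the sense of \cite{dgk18a}: pass from $C$ to a $\Gamma$-invariant properly convex domain $\Omega$ in some affine chart containing $C$, whose convex core is $C$, and verify that strict convexity of $\partial C$ along $\partial_i C$ together with cocompactness on $C$ gives the requisite strong convexity of the convex core plus $C^1$-smoothness at the relevant boundary points.

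Second, I would invoke the main theorem of \cite{dgk18a} (Theorem 1.15 there, which is the general equivalence between strong projective convex cocompactness and the projective Anosov property for the stabilizer of a point/hyperplane pair). Concretely: a discrete group preserving a properly convex open $\Omega\subset\bbP(V)$ and acting cocompactly on a convex core that is strictly convex with $C^1$ boundary is projective Anosov, with limit map $\xi$ given by the boundary embedding of $\partial_\infty\Gamma$ into $\partial\Omega\subset\bbP(V)$ and dual limit map $\xi^*$ given by the supporting hyperplanes. In the $\mathbb H^{p,q}$ setting, the supporting hyperplanes at boundary points of $\mathbb H^{p,q}$ are exactly the $\eklm{\cdot,\cdot}_{p,q+1}$-orthogonal hyperplanes, which is why Lemma \ref{lem limit set of projective Anosov subgroup of SO(p,q+1)} records the relation $\xi^*=\Phi_{p,q+1}\circ\xi$ and $\Lambda_\Gamma\subset\partial\bbH^{p,q}$. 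This identification of the boundary maps with the metric-orthogonality structure is the geometric heart of the argument, but since the Proposition is stated as a direct citation of \cite[Thm.~1.24]{dgk18a}, the honest proof is essentially: unwind the definitions and apply that theorem.

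The main obstacle — and the step that genuinely requires care rather than just citation — is verifying that the strict convexity condition on $\partial_i C$ in the pseudo-Riemannian model translates correctly into the hypotheses (strong convexity of the convex core, $C^1$-regularity of the boundary at non-ideal points, and properness of the domain in an affine chart) that the projective-geometric theorem of \cite{dgk18a} requires. In particular one must rule out that the full boundary $\partial C$ has flat pieces \emph{outside} $\partial_i C$ in a way that would break the Anosov conclusion: here one uses that $\partial C\setminus\partial_i C$ lies in $\partial\mathbb H^{p,q}=\{\eklm{x,x}_{p,q+1}=0\}$, which is a smooth quadric, hence strictly convex in every affine chart, so no line segments can be hidden there either. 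Combining strict convexity along $\partial_i C$ (by hypothesis) with strict convexity along $\partial\mathbb H^{p,q}$ (automatic, from the quadric), one concludes $\partial C$ has no non-trivial segments at all, which is exactly the input needed. At that point the cited theorem of Danciger--Guéritaud--Kassel applies verbatim and yields that $\Gamma$ is projective Anosov, completing the proof.
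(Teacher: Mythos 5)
The paper offers no proof of this Proposition at all: it is quoted verbatim from Danciger--Gu\'eritaud--Kassel and used as a black box, so your overall strategy (unwind Definition 1.23 and invoke the DGK ``strongly convex cocompact $\Rightarrow$ projective Anosov'' machinery) is exactly what the paper does, and as a citation-level argument it is fine.

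However, the paragraph you present as the ``main obstacle'' and its resolution contains a genuine error, and the step as written would fail. First, you have the two pieces of the boundary interchanged: since $C$ is closed \emph{in} $\bbH^{p,q}$, it is the ideal boundary $\partial_i C=\overline C\setminus C$ that lies in $\partial\bbH^{p,q}$, while $\partial C\setminus\partial_i C$ (the non-ideal boundary) lies inside $\bbH^{p,q}$; flat pieces there are perfectly allowed (the convex core typically has them) and the DGK theorem does not require excluding them. Second, and more seriously, the claim that $\partial\bbH^{p,q}=\{\eklm{x,x}_{p,q+1}=0\}$ is ``a smooth quadric, hence strictly convex in every affine chart, so no line segments can be hidden there'' is false whenever $\min(p,q+1)\geq 2$, i.e.\ in essentially all the pseudo-Riemannian cases of interest ($p\geq 2$, $q\geq 1$): any $2$-dimensional totally isotropic subspace of $\eklm{\cdot,\cdot}_{p,q+1}$ projectivizes to a projective line contained entirely in $\partial\bbH^{p,q}$ (for instance, for $\mathbb{A}\mathrm{d}\mathbb{S}^{3}$ this quadric is the doubly ruled quadric in $\RP^3$). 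Smoothness of the quadric gives no convexity whatsoever here, since its second fundamental form has mixed signature. Consequently your conclusion that ``$\partial C$ has no non-trivial segments at all'' neither follows from the argument given nor is it the hypothesis the cited theorem needs: the only no-segment condition in play is the one on $\partial_i C$, which is an assumption, and the actual content of the DGK proof is the construction from $C$ of an invariant properly convex open domain together with the verification of the Anosov contraction/gap estimates at the limit set --- a step your sketch does not supply and cannot be replaced by the strict-convexity-of-the-quadric shortcut.
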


\begin{definition} A \emph{hyperbolic plane} $P\subset \bbH^{p,q}$ is a totally geodesic copy of $\bbH^2$, that is $P=\bbH^{p,q}\cap \bbP(W)$ where $W\subset V$ is a three-dimensional vector subspace of signature $(2,1)$ for $\eklm{\cdot,\cdot}_{p,q+1}$.

A triple $(\ell_1,\ell_2,\ell_3)\in (\bbH^{p,q}\cup \partial\bbH^{p,q})^3$ is called \emph{negative} if there is a hyperbolic plane $P\subset \bbH^{p,q}$ such that $\ell_1,\ell_2,\ell_3\in \overline{P}$, where $\overline P$ denotes the closure of $P$ in $\mathbb{P}(V)$.
\end{definition}

\begin{prop}[{\cite[Prop.~8.1]{dgk18a}}]
If $\Gamma< \SO(p,q+1)$ is $\bbH^{p,q}$-convex cocompact, then it acts properly discontinuously on a non-empty properly convex open set
\[ \Omega_\Gamma^{\mathrm{DGK}}\subset\set{\ell\in\bbH^{p,q}}{(\ell,\ell_1,\ell_2) \textrm{ is negative } \forall \ell_1,\ell_2\in \Lambda_\Gamma}. \]
If the right-hand side is properly convex, the inclusion above can be made an equality. In particular, this is the case when $\Gamma$ is irreducible.
\end{prop}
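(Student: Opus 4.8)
The plan is to follow the approach of Danciger--Gu\'eritaud--Kassel \cite{dgk18a}. Write
\[
\Omega^{\max}_\Gamma:=\set{\ell\in\bbH^{p,q}}{(\ell,\ell_1,\ell_2)\text{ is negative for all distinct }\ell_1,\ell_2\in\Lambda_\Gamma}
\]
for the set appearing on the right-hand side. The first step is an algebraic description of negativity. Fix $\ell=[y]\in\bbH^{p,q}$, so $\eklm{y,y}_{p,q+1}<0$, and distinct $\ell_i=[x_i]\in\Lambda_\Gamma$. By Lemma~\ref{lem limit set of projective Anosov subgroup of SO(p,q+1)} the points $\ell_i$ are null, and the intertwining $\xi^\ast=\Phi_{p,q+1}\circ\xi$ together with transversality of the limit maps (Proposition~\ref{prop boundary maps}\ref{propenumi boundary maps transversality}) gives $\eklm{x_1,x_2}_{p,q+1}\neq0$; normalise it to $1$. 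Computing the Gram matrix of $(y,x_1,x_2)$, one finds that $W=\mathrm{Span}(y,x_1,x_2)$ has signature $(2,1)$ --- equivalently $(\ell,\ell_1,\ell_2)$ is negative --- exactly when
\[
2\,\eklm{y,x_1}_{p,q+1}\,\eklm{y,x_2}_{p,q+1}<\eklm{y,y}_{p,q+1}\ (<0),
\]
an inequality homogeneous of degree $2$ in $y$ and of degree $0$ in each $x_i$, hence well posed on $\bbP(V)$. (The degenerate locus $\dim W\leq 2$, where $\ell$ lies on the geodesic through $\ell_1,\ell_2$, is dealt with separately and avoided by points in the interior of $\Omega^{\max}_\Gamma$.) Since this condition is open and strict and $\Lambda_\Gamma=\xi(\partial_\infty\Gamma)$ is compact, $\Omega^{\max}_\Gamma$ is a $\Gamma$-invariant open subset of $\bbH^{p,q}$.

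Next I would produce the domain. By $\bbH^{p,q}$-convex cocompactness, $\Gamma$ acts properly discontinuously and cocompactly on a properly convex closed $C\subset\bbH^{p,q}$ with non-empty interior whose ideal boundary contains no non-trivial projective segment and satisfies $\Lambda_\Gamma\subset\partial_i C$. The central point is the inclusion $\mathrm{int}(C)\subset\Omega^{\max}_\Gamma$: for $\ell\in\mathrm{int}(C)$ and $\ell_1,\ell_2\in\Lambda_\Gamma$, the trace $\overline C\cap\bbP(W)$ is a properly convex planar region containing $\ell$ in its interior, containing the null points $\ell_1,\ell_2$ on its boundary, and contained in $\set{[w]}{\eklm{w,w}_{p,q+1}\leq 0}$; a case analysis on the signature of $\eklm{\cdot,\cdot}_{p,q+1}\vert_W$, using proper convexity of the trace and the absence of segments in $\partial_i C$, rules out every possibility except $(2,1)$. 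Granting this, set $\Omega_\Gamma^{\mathrm{DGK}}:=\mathrm{int}(C)$: it is non-empty, open, properly convex, $\Gamma$-invariant, contained in $\Omega^{\max}_\Gamma$, and $\Gamma$ acts properly discontinuously on it by restriction, which establishes the first assertion.

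For the refinements I would argue as follows. If $\Omega^{\max}_\Gamma$ is itself properly convex, the closed subgroup $\mathrm{Aut}(\Omega^{\max}_\Gamma)<\SL(V)$ acts properly on it by the standard theory of properly convex domains (Benoist), and $\Gamma$, being discrete in $\SL(V)$ and preserving $\Omega^{\max}_\Gamma$, is discrete in $\mathrm{Aut}(\Omega^{\max}_\Gamma)$; hence $\Gamma\curvearrowright\Omega^{\max}_\Gamma$ is properly discontinuous and one may take $\Omega_\Gamma^{\mathrm{DGK}}=\Omega^{\max}_\Gamma$. Finally, when $\Gamma$ is irreducible, $\mathrm{Span}(\Lambda_\Gamma)$ is a non-zero $\Gamma$-invariant subspace, hence all of $V$ (as in the Remark following Corollary~\ref{cor span of infinitesimal limit set forms a vector bundle}); this forces $\bigcap_{[x]\in\Lambda_\Gamma}x^{\perp}=\{0\}$, which yields a hyperplane $H$ with $\overline{\Omega^{\max}_\Gamma}\cap H=\emptyset$, so that $\Omega^{\max}_\Gamma$ is bounded in the affine patch $\mathbb{A}_H$, and --- combined with the convexity step below --- properly convex.

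The hard part will be the two convexity statements: the inclusion $\mathrm{int}(C)\subset\Omega^{\max}_\Gamma$ and, in the irreducible case, the genuine \emph{convexity} of $\Omega^{\max}_\Gamma$. The subtlety is that the individual regions $\set{[y]\in\bbH^{p,q}}{2\eklm{y,x_1}_{p,q+1}\eklm{y,x_2}_{p,q+1}<\eklm{y,y}_{p,q+1}}$ are \emph{not} properly convex once $q\geq1$, and only their intersection over a sufficiently rich (e.g.\ spanning) limit set becomes so. Convexity rests on the fact that every triple of points of $\Lambda_\Gamma$ is already negative --- a consequence of $\bbH^{p,q}$-convex cocompactness (cf.\ \cite[Thm.~1.24]{dgk18a}) --- which controls the signs of the pairings $\eklm{y,x_i}_{p,q+1}$ coherently along $\Lambda_\Gamma$; and it is precisely the no-segment hypothesis on $\partial_i C$ that upgrades ``$\Omega_\Gamma^{\mathrm{DGK}}\subset\Omega^{\max}_\Gamma$'' to the equality $\Omega_\Gamma^{\mathrm{DGK}}=\Omega^{\max}_\Gamma$ in the properly convex case.
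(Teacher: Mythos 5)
A preliminary remark on the comparison you were asked to be graded against: the paper does not prove this proposition at all — it is imported verbatim from Danciger--Gu\'eritaud--Kassel \cite[Prop.~8.1]{dgk18a} and used as a black box — so there is no in-paper argument to measure your sketch against; it can only be judged on its own terms.

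On its own terms it is an outline rather than a proof, and the missing pieces are precisely the content of the proposition. What you do carry out is fine: the Gram-matrix computation (with $\eklm{x_1,x_2}_{p,q+1}\neq 0$ supplied by transversality and $\xi^\ast=\Phi_{p,q+1}\circ\xi$, and with the signature $(0,3)$ case excluded because the span of two transverse isotropic lines already has signature $(1,1)$) correctly translates negativity of $([y],[x_1],[x_2])$ into the stated open inequality; and deducing proper discontinuity of $\Gamma$ on $\Omega^{\max}_\Gamma$ from its proper convexity, via the Hilbert metric and discreteness of $\Gamma$ in $\mathrm{Aut}(\Omega^{\max}_\Gamma)$, is standard and correct. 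But the two statements you explicitly label ``the hard part'' are exactly what the proposition asserts and they are not proved: (i) that $\mathrm{int}(C)\subset\Omega^{\max}_\Gamma$, i.e.\ that every triple with vertex in the interior of the convex set $C$ and the other two points on $\Lambda_\Gamma$ is negative — your case analysis on the signature of $W=\mathrm{Span}(y,x_1,x_2)$ is announced, not performed, and this is where the full strength of $\bbH^{p,q}$-convex cocompactness (no segments in $\partial_i C$, coherent signs of the pairings along the limit set) must actually be deployed; and (ii) the proper convexity of $\Omega^{\max}_\Gamma$ in the irreducible case. For (ii) the one step you do write down is a non sequitur: from $\mathrm{Span}(\Lambda_\Gamma)=V$ you get $\bigcap_{[x]\in\Lambda_\Gamma}x^{\perp}=\{0\}$, but that only says the dual hyperplanes $\bbP(x^{\perp})$ have no common point; it does not produce a single hyperplane $H$ with $\overline{\Omega^{\max}_\Gamma}\cap H=\emptyset$, which is what boundedness in an affine patch requires (each point of $\Omega^{\max}_\Gamma$ avoids every $\bbP(x^{\perp})$, but the closure could still meet each of them), and the convexity of $\Omega^{\max}_\Gamma$ itself is only motivated, never argued. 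Finally, the inclusion $\Lambda_\Gamma\subset\partial_i C$, on which your construction of the domain leans, is asserted without proof; it is true but needs the short argument identifying $\Lambda_\Gamma$ with accumulation points of $\Gamma$-orbits of points of $C$. So the proposal is a plausible reconstruction of the strategy of \cite{dgk18a}, but the decisive geometric lemmas are missing, and as written it does not establish the statement.
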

Naturally, one can also consider the unit spacelike tangent bundle $T^1\Omega_\Gamma^\mathrm{DGK}\subset T^1\bbH^{p,q}$ on which $\Gamma$ acts properly discontinuously. Our approach produces a larger discontinuity domain for the $\Gamma$-action on  $T^1\bbH^{p,q}$:
\begin{lem}\label{sl con}
If $\Gamma< \SO(p,q+1)$ is $\bbH^{p,q}$-convex cocompact, then
\[ \tilde\cK_\Gamma\subset \Phi^{p,q+1}_\partial\left(T^1 \Omega_\Gamma^{\mathrm{DGK}}\right)\subset \tilde\cM_{\Gamma,\partial}. \]
\end{lem}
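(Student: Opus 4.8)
The plan is to prove the two inclusions separately, working in the quadric model of $\bbL$. Write $\langle\cdot,\cdot\rangle=\langle\cdot,\cdot\rangle_{p,q+1}$. By Lemma~\ref{lem limit set of projective Anosov subgroup of SO(p,q+1)} we have $\Lambda_\Gamma\subset\partial\bbH^{p,q}$ and $\xi^\ast=\Phi^{p,q+1}\circ\xi$, so under the identification $\bbL\cong\bbL^{p,q+1}$ the lifted basic set becomes
\[
\tilde\cK_\Gamma=\set{[v_1:v_2]\in\bbL^{p,q+1}_\partial}{[v_1],[v_2]\in\Lambda_\Gamma},
\]
transversality $\langle v_1,v_2\rangle\neq 0$ of a pair of distinct limit lines being automatic from Proposition~\ref{prop boundary maps}\ref{propenumi boundary maps transversality}. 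Since $\tilde\cK_\Gamma\subset\bbL^{p,q+1}_\partial=\Phi^{p,q+1}_\partial(T^1\bbH^{p,q})$, the first inclusion is equivalent to the statement that the base point of $(\Phi^{p,q+1}_\partial)^{-1}([v_1:v_2])$ lies in $\Omega_\Gamma^{\mathrm{DGK}}$ for every $[v_1:v_2]\in\tilde\cK_\Gamma$. Writing $[\ell:w]=(\Phi^{p,q+1}_\partial)^{-1}([v_1:v_2])$, the explicit inverse formula gives $\ell=\langle v_1,v_2\rangle v_1-v_2\in\mathrm{span}(v_1,v_2)$, and the computations $\langle\ell,\ell\rangle=-2\langle v_1,v_2\rangle^2<0$ and $[\ell:w]_\pm=\{[v_1],[v_2]\}$ identify the base point $[\ell]$ as a point on the spacelike geodesic of $\bbH^{p,q}$ whose ideal endpoints are $[v_1],[v_2]\in\Lambda_\Gamma$. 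Thus the first inclusion reduces to the assertion that $\Omega_\Gamma^{\mathrm{DGK}}$ contains every spacelike geodesic joining two distinct points of $\Lambda_\Gamma$; such a geodesic lies in the convex hull of $\Lambda_\Gamma$ inside $\bbH^{p,q}$, which by the DGK construction \cite{dgk18a} is contained in $\Omega_\Gamma^{\mathrm{DGK}}$, and when $\Gamma$ is irreducible this may be seen directly from $\Omega_\Gamma^{\mathrm{DGK}}=\set{\ell\in\bbH^{p,q}}{(\ell,\ell_1,\ell_2)\text{ negative }\forall\,\ell_1,\ell_2\in\Lambda_\Gamma}$ (\cite[Prop.~8.1]{dgk18a}) together with negativity of the limit set. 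I expect this to be the main obstacle: one must make precise that the geodesic meets the \emph{open} domain $\Omega_\Gamma^{\mathrm{DGK}}$ and not merely its closure, and cover the non-irreducible case in which $\Omega_\Gamma^{\mathrm{DGK}}$ may be strictly smaller than the right-hand side above.

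For the second inclusion I would argue by contradiction. Let $[x:w]\in T^1\Omega_\Gamma^{\mathrm{DGK}}$, so $[x]\in\Omega_\Gamma^{\mathrm{DGK}}\subset\bbH^{p,q}$ is timelike with $\langle x,w\rangle=0$, $\langle w,w\rangle=-\langle x,x\rangle>0$, and put $[v_1:v_2]:=\Phi^{p,q+1}_\partial([x:w])=[x+w:x-w]\in\bbL^{p,q+1}_\partial$. If $[v_1:v_2]\notin\tilde\cM_{\Gamma,\partial}$, there is $[v]\in\Lambda_\Gamma$ with $\langle v_1,v\rangle=\langle v_2,v\rangle=0$, and adding and subtracting these yields $\langle x,v\rangle=\langle w,v\rangle=0$; in particular $v\perp x$. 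Choose $\ell_2\in\Lambda_\Gamma$ with $\ell_2\neq[v]$ (possible since $\Lambda_\Gamma$ has at least two points, the case $\Lambda_\Gamma=\emptyset$ being vacuous). As $[x]\in\Omega_\Gamma^{\mathrm{DGK}}$, the triple $([x],[v],\ell_2)$ is negative by \cite[Prop.~8.1]{dgk18a}, so $x$, $v$ and a lift of $\ell_2$ span a subspace $W$ contained in a three-dimensional subspace of signature $(2,1)$. Since $W$ contains the timelike vector $x$, it has exactly one negative direction, hence $x^\perp\cap W$ is positive definite; but $0\neq v\in x^\perp\cap W$ is isotropic, a contradiction. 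Therefore $[v_1:v_2]\in\tilde\cM_{\Gamma,\partial}$, and so $\Phi^{p,q+1}_\partial(T^1\Omega_\Gamma^{\mathrm{DGK}})\subset\tilde\cM_{\Gamma,\partial}$, completing the proof modulo the DGK input flagged above.

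In short, the second inclusion is an elementary signature computation that rests only on the negativity characterisation of $\Omega_\Gamma^{\mathrm{DGK}}$ already quoted in the excerpt, whereas the first inclusion is where an additional fact about the DGK domain — that it swallows the spacelike geodesics between limit points, equivalently contains the convex core — has to be invoked, and that is the step I would expect to require the most care.
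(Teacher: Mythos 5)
Your proposal is correct and follows essentially the same route as the paper: the first inclusion via the observation that the relevant points lie on spacelike geodesics between limit points, hence in the convex hull of $\Lambda_\Gamma$, which the DGK domain contains by convexity, and the second via the fact that $\eklm{x,w}_{p,q+1}\neq 0$ for $[x]\in\Omega_\Gamma^{\mathrm{DGK}}$ and $[w]\in\Lambda_\Gamma$, so that $\eklm{x+v,w}_{p,q+1}$ and $\eklm{x-v,w}_{p,q+1}$ cannot both vanish. The only difference is that you spell out the signature $(2,1)$ argument behind $\eklm{x,w}_{p,q+1}\neq 0$, which the paper leaves implicit in the citation of the negativity characterisation of $\Omega_\Gamma^{\mathrm{DGK}}$.
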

\begin{proof}
The set $(\Phi^{p,q+1}_\partial)^{-1}(\tilde\cK_\Gamma)$ is included in the spacelike unit tangent bundle over the convex hull of $\Lambda_\Gamma$ in $\bbH^{p,q}$, which is itself included in $T^1\Omega_\Gamma^{\mathrm{DGK}}$ (because $\Omega_\Gamma^{\mathrm{DGK}}$ is convex).

Now let $[x:v]\in \Omega_\Gamma^{\mathrm{DGK}}$ and $[w]\in \Lambda_\Gamma$. Since $[x]\in \Omega_\Gamma^{\mathrm{DGK}}$, we have $\eklm{x,w}_{p,q+1}\neq 0$, so either $\eklm{x+v,w}_{p,q+1}\neq 0$ or $\eklm{x-v,w}_{p,q+1}\neq 0$, i.e. $[x+v:x-v]=\Phi^{p,q+1}_\partial([x:v])\in \tilde\cM_{\Gamma,\partial}$.
\end{proof}

Note that if $q\neq 0$ and $\Gamma$ is infinite, we have $\Phi^{p,q+1}_\partial\left(T^1 \Omega_\Gamma^{\mathrm{DGK}}\right)\neq \tilde\cM_{\Gamma,\partial}$ since $T^1 \Omega_\Gamma^{\mathrm{DGK}}$ is not invariant under the geodesic flow. This means that the discontinuity domain of $\Gamma$ $$\left(\Phi^{p,q+1}_\partial\right)^{-1}\big(\tilde\cM_{\Gamma,\partial}\big)\subset T^1\bbH^{p,q}$$ is larger than $T^1 \Omega_\Gamma^{\mathrm{DGK}}$.

We can now give the full statement and proof of Theorem \ref{introthm:pseudoRiem} from the introduction.
\begin{thm}\label{THM CCHPQ}
Suppose $\Gamma<\mathrm{SO}(p, q+1)$ is a non-trivial torsion-free $\bbH^{p,q}$-convex cocompact subgroup.  The (possibly incomplete) space-like geodesic flow
$$
\phi^{t}: T^{1}\left(\Gamma\backslash \Omega_{\Gamma}^{\mathrm{DGK}}\right)\rightarrow T^{1}\left(\Gamma\backslash \Omega_{\Gamma}^{\mathrm{DGK}}\right)
$$
is the restriction of a complete Axiom A flow
$$
\phi^{t}: \mathcal{M}_{\Gamma}\rightarrow \mathcal{M}_{\Gamma}
$$
with a unique basic hyperbolic set $\mathcal{K}_{\Gamma}$ such that
$\mathcal{K}_{\Gamma}\subset T^{1}\left(\Gamma\backslash \Omega_{\Gamma}^{\mathrm{DGK}}\right)\subset \mathcal{M}_{\Gamma}.$
Moreover,
\begin{enumerate}
\item If $\Gamma$ is irreducible, the restriction of the space-like geodesic flow $\phi^{t}$ to the basic hyperbolic set $\mathcal{K}_{\Gamma}\subset T^{1}(\Gamma\backslash \Omega_{\Gamma}^{\mathrm{DGK}})$  mixes exponentially for all H\"{o}lder observables with respect to every Gibbs equilibrium state with H\"{o}lder potential.

\item The Ruelle zeta function $\zeta_{\Gamma}$ constructed using the periods of the space-like geodesic flow admits global meromorphic continuation to $\mathbb{C}$ with a simple pole at $h_{\mathrm{top}}(\phi^{t})$, and assuming $\Gamma$ is irreducible, $\zeta_{\Gamma}$ is nowhere vanishing and analytic in a strip $h_{\mathrm{top}}(\phi^{t})-\varepsilon< \mathrm{Re}(z)< h_{\mathrm{top}}(\phi^{t})$ for some $\varepsilon>0.$

\item There exist Ruelle-Pollicott resonances of the generator $X$ of the flow $\phi^t$ with associated (co-)resonant states and invariant Ruelle distributions. In particular, $h_{\mathrm{top}}(\phi^{t})$ is a  resonance and the associated invariant Ruelle distribution is the Bowen-Mar\-gu\-lis measure of maximal entropy.

\item If $\Gamma$ is irreducible, the spacelike geodesic flow satisfies the prime orbit theorem with exponentially decaying error term:
$$
N_{\Gamma}(\tau)=\mathrm{Li}\big(e^{h_{\mathrm{top}}(\phi^{t})\tau} \big)\Big(1+ O\big(e^{-(c-h_{\mathrm{top}}(\phi^{t}))\tau}\big)\Big)
$$
for some $0<c<h_{\mathrm{top}}(\phi^{t}).$
\end{enumerate}
 \end{thm}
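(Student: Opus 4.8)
The plan is to deduce every assertion from the general results of Sections \ref{AnosovImpliesAxiomA}--\ref{sec:Resonances}, transported through the geometric dictionary established above. First, since an $\mathbb{H}^{p,q}$-convex cocompact subgroup $\Gamma<\SO(p,q+1)$ is projective Anosov as a subgroup of $\SL(V)$ with $V=\R^{p+q+1}$ (\cite[Thm.~1.24]{dgk18a}) and is torsion-free by hypothesis, I would invoke Theorem \ref{THM A} to produce the real analytic manifold $\mathcal{M}_\Gamma=\Gamma\backslash\widetilde{\mathcal{M}}_\Gamma$, the complete real analytic contact Axiom A flow $\phi^t$, and its unique basic hyperbolic set $\mathcal{K}_\Gamma=\Gamma\backslash\widetilde{\mathcal{K}}_\Gamma$.

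Next I would identify the space-like geodesic flow as a subsystem. The $\SO(p,q+1)$-equivariant embedding $\Phi^{p,q+1}_\partial:T^1\mathbb{H}^{p,q}\hookrightarrow\mathbb{L}$ intertwines $\varphi^t$ with $\phi^t$, and by Lemma \ref{sl con} its restriction carries $T^1\Omega_\Gamma^{\mathrm{DGK}}$ into a $\Gamma$-invariant open subset of $\widetilde{\mathcal{M}}_\Gamma$ that contains $\widetilde{\mathcal{K}}_\Gamma$. Since the $\Gamma$-action on $\widetilde{\mathcal{M}}_\Gamma$ is free and properly discontinuous (Theorem \ref{thm: prop disc}), passing to quotients yields an open embedding $T^1(\Gamma\backslash\Omega_\Gamma^{\mathrm{DGK}})\hookrightarrow\mathcal{M}_\Gamma$ with $\mathcal{K}_\Gamma\subset T^1(\Gamma\backslash\Omega_\Gamma^{\mathrm{DGK}})$, under which $\phi^t$ restricts to the (generally incomplete) space-like geodesic flow; completeness of $\phi^t$ on the ambient $\mathcal{M}_\Gamma$ is part of Theorem \ref{THM A}. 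This settles the first paragraph of the statement.

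For the dynamical items I would first record that the periods defining the Ruelle zeta function and the counting function $N_\Gamma$ of the space-like geodesic flow coincide with those of Section \ref{sec:Resonances}: since $\Phi^{p,q+1}_\partial$ is $\Gamma$-equivariant and flow-intertwining, the periodic orbits in $\mathcal{K}_\Gamma$ are precisely those of $\phi^t$, and by Lemma \ref{lem periodic points in discontinuity set} the orbit lifting to $[v:\alpha]$ with $\gamma\cdot[v:\alpha]=\phi^T([v:\alpha])$ has period $T=\lambda_1(\gamma)$; in particular $h_{\mathrm{top}}(\phi^t)=h_{\mathrm{top}}(\Gamma)$. Then item (1) is Theorem \ref{thm: cor exp decay}, item (2) combines Theorem \ref{thm:thmzetasmooth} (meromorphic continuation, no irreducibility) with Theorem \ref{RH} (spectral gap, for irreducible $\Gamma$), item (3) is Theorems \ref{Resonant states precise} and \ref{Resonant states precise2} together with the identification of the invariant Ruelle distribution at $h_{\mathrm{top}}(\Gamma)$ with the Bowen--Margulis measure, and item (4) is Theorem \ref{thm:orbitcounting}. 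Throughout, ``irreducible'' is read as irreducibility of $\Gamma$ on the standard representation $V=\R^{p+q+1}$, and for such $\Gamma$ the set $\Omega_\Gamma^{\mathrm{DGK}}$ is the full negative locus by \cite[Prop.~8.1]{dgk18a}.

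The main obstacle is not a deep difficulty but a careful reconciliation: the results of Sections \ref{ExponentialMixing}--\ref{sec:Resonances} are formulated for the basic hyperbolic set of an Axiom A flow on a complete manifold, whereas the geodesic flow on $T^1(\Gamma\backslash\Omega_\Gamma^{\mathrm{DGK}})$ is incomplete and its domain is not $\phi^t$-invariant. I would handle this by observing that $T^1(\Gamma\backslash\Omega_\Gamma^{\mathrm{DGK}})$ is \emph{open} in the complete manifold $\mathcal{M}_\Gamma$ and contains $\mathcal{K}_\Gamma$, so that (via Theorem \ref{thm - dynamical subsets coincide}) $\mathcal{K}_\Gamma$ is genuinely the basic set of $\phi^t$ on $\mathcal{M}_\Gamma$, the isolating block $\mathcal{B}_\Gamma$ of Theorem \ref{Resonant states precise} may be chosen inside the image of $T^1(\Gamma\backslash\Omega_\Gamma^{\mathrm{DGK}})$, and every statement localizes there. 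Beyond this bookkeeping the argument is a direct transport of the general theory through $\Phi^{p,q+1}_\partial$.
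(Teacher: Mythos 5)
Your route is the same as the paper's: Lemma \ref{sl con} provides the chain $\tilde\cK_\Gamma\subset \Phi^{p,q+1}_\partial(T^1\Omega_\Gamma^{\mathrm{DGK}})\subset\tilde\cM_{\Gamma,\partial}\subset\tilde\cM_\Gamma$, and items (1)--(4) are then direct applications of Theorems \ref{thm: cor exp decay}, \ref{thm:thmzetasmooth}, \ref{RH}, \ref{Resonant states precise}, \ref{Resonant states precise2} and \ref{thm:orbitcounting}, which is exactly how the paper argues (your extra check that the periods are $\lambda_1(\gamma)$, via Lemma \ref{lem periodic points in discontinuity set}, is a reasonable addition).

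One claim in your ``reconciliation'' paragraph is wrong, though not fatally: $T^1\left(\Gamma\backslash\Omega_\Gamma^{\mathrm{DGK}}\right)$ is \emph{not} open in $\cM_\Gamma$. The embedding $\Phi^{p,q+1}_\partial$ sends $T^1\bbH^{p,q}$ onto $\bbL^{p,q+1}_\partial$, the locus where both projective coordinates lie on the null cone $\partial\bbH^{p,q}$, which is a flow-invariant submanifold of codimension two in $\bbL$ (compare dimensions: $2(p+q)-1$ versus $2(p+q)+1$). Consequently you cannot choose the isolating block $\mathcal{B}_\Gamma$ of Theorem \ref{Resonant states precise} inside the image of $T^1\left(\Gamma\backslash\Omega_\Gamma^{\mathrm{DGK}}\right)$: an isolating block is a relatively compact \emph{open} neighborhood of $\cK_\Gamma$ in $\cM_\Gamma$. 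This does not damage the theorem, because none of the cited results is applied to the incomplete geodesic flow on $T^1\left(\Gamma\backslash\Omega_\Gamma^{\mathrm{DGK}}\right)$ itself: the mixing, zeta, counting and resonance statements all concern $\cK_\Gamma$ viewed inside the complete ambient system $(\cM_\Gamma,\phi^t)$, with $\mathcal{B}_\Gamma\subset\cM_\Gamma$, and the unit tangent bundle only needs to be an intermediate invariant-under-nothing subset squeezed between $\cK_\Gamma$ and $\cM_\Gamma$, which is what Lemma \ref{sl con} gives. Drop the openness claim and the localization of $\mathcal{B}_\Gamma$, and your argument coincides with the paper's.
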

\begin{proof}[Proof of Theorem \ref{THM CCHPQ}]
That the space-like geodesic flow on $T^{1}(\Gamma\backslash \Omega_\Gamma^{\mathrm{DGK}})$ is the restriction of an Axiom A flow on $\mathcal{M}_{\Gamma}$ is a consequence of Lemma \ref{sl con}.  The statements (1)-(4) are direct applications of Theorem \ref{thm: cor exp decay} on exponential mixing,  Theorems \ref{thm:thmzetasmooth}, \ref{RH} on zeta functions, Theorems \ref{Resonant states precise}, \ref{Resonant states precise2} on resonances, and Theorem \ref{thm:orbitcounting} on orbit counting.
\end{proof}

\begin{rem}[$\bbH^{p,q}$ and $\bbS^{p,q}$] 
One can also consider the pseudo-spherical space
\[ \bbS^{p,q}=\set{[x]\in\bbP(V)}{\eklm{x,x}_{p,q+1}>0}.\]
It also comes with a pseudo-Riemannian metric and a spacelike unit tangent bundle
\[ T^1\bbS^{p,q}=\set{[x:v]\in \bbP(V\times V)}{\eklm{x,x}_{p,q+1}>0, \eklm{x,v}_{p,q+1}=\eklm{v,v}_{p,q+1}+\eklm{x,x}_{p,q+1}=0}. \]
The map $[x:v]\to [v:x]$ is an $\SO(p,q+1)$-equivariant  diffeomorphism between  $T^1\bbH^{p,q}$ and $T^1\bbS^{p,q}$ that intertwines the geodesic flows, so the study of the spacelike geodesic flow does not differentiate between $\bbH^{p,q}$ and $\bbS^{p,q}$.
\end{rem}

\subsection{Benoist subgroups and the Benoist-Hilbert flow}

\subsubsection{Strictly convex divisible domains}

\begin{definition} \label{BenSub}
A discrete subgroup $\Gamma<\SL(V)$ \emph{divides} a properly convex open subset $\cC\subset \bbP(V)$ if the action of $\Gamma$ on $\bbP(V)$ preserves $\cC$ and $\Gamma\curvearrowright\cC$ is properly discontinuous and cocompact.

A \emph{strictly convex domain} $\cC\subset\bbP(V)$ is a properly convex open subset whose boundary $\partial\cC$ does not contain any non-trivial projective segment.

A discrete subgroup $\Gamma<\SL(V)$ is called a \emph{Benoist subgroup} if it divides a non-empty strictly convex domain.
\end{definition}

\begin{prop}[{\cite[Théorème 1.1]{BEN04}}]
Let $\Gamma<\SL(V)$ be a Benoist sugroup. Then $\Gamma$ is projective Anosov, it divides a unique strictly convex domain $\cC_\Gamma\subset\bbP(V)$ and one has
\[ \Lambda_\Gamma=\partial\cC_\Gamma. \]
Furthermore, there exists $0<\alpha<1$ such that the boundary $\partial\cC_\Gamma\subset \bbP(V)$ is a $C^{1, \alpha}$ submanifold, and  the limit maps $\xi,\xi^*$ are related by
\[ T_{\xi(t)}\partial\cC_\Gamma=d_x\pi(\xi^*(t)),\quad \forall t\in\partial_\infty\Gamma,\forall x\in \xi(t). \]
\end{prop}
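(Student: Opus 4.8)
The plan is to treat this as a translation of Benoist's theorem \cite[Th\'eor\`eme~1.1]{BEN04} (combined with \cite{dgk18a} for the projective Anosov property) into the language of limit maps: I would cite Benoist for the two deep inputs and supply short arguments for the rest. The deep inputs are (i) a Benoist subgroup is Gromov hyperbolic and satisfies the eigenvalue-gap estimate of Definition~\ref{def projective Anosov subgroup}, hence is projective Anosov; and (ii) $\partial\cC_\Gamma$ is a $C^{1,\alpha}$ hypersurface for some $\alpha\in(0,1)$. Re-deriving (ii) — which quantitatively compares the modulus of strict convexity of $\partial\cC_\Gamma$ near a periodic point $\xi(\gamma_+)$ with the spectral gap $\lambda_1(\gamma)-\lambda_2(\gamma)$ — is the genuinely hard step and is the main obstacle in any self-contained proof; I would not attempt it. Granting (i), so that $\Gamma$ is projective Anosov with limit maps $\xi,\xi^\ast$ as in Proposition~\ref{prop boundary maps}, it remains to prove the uniqueness of $\cC_\Gamma$, the identity $\Lambda_\Gamma=\partial\cC_\Gamma$, and the tangency formula, all of which are soft.

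For uniqueness and $\Lambda_\Gamma=\partial\cC_\Gamma$: suppose $\Gamma$ divides a non-empty strictly convex domain $\cC$. Each infinite-order $\gamma\in\Gamma$ acts on $\cC$ as a hyperbolic isometry for the Hilbert metric, hence is biproximal in $\SL(V)$ with attracting eigenline on $\partial\cC$; by Proposition~\ref{prop boundary maps}\ref{propenumi boundary maps elements} that eigenline is $\xi(\gamma_+)$, so $\xi(\gamma_+)\in\partial\cC$ for all such $\gamma$. Since the poles of $\Gamma$ are dense in $\partial_\infty\Gamma^{(2)}$ (Proposition~\ref{prop hyperbolic groups elementary properties}\ref{propenumi hyp groups density poles}), the set $\{\xi(\gamma_+)\}$ is dense both in the closed set $\Lambda_\Gamma=\xi(\partial_\infty\Gamma)$ and, because periodic Hilbert geodesics are dense, in the closed set $\partial\cC$; hence $\partial\cC=\Lambda_\Gamma$. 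A properly convex open subset of $\bbP(V)$ equals the interior of the convex hull of its boundary, so any two strictly convex domains divided by $\Gamma$ share the boundary $\Lambda_\Gamma$ and therefore coincide; write $\cC_\Gamma$ for this domain.

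For the tangency formula, note that since $\partial\cC_\Gamma$ is a $C^1$ strictly convex hypersurface it admits at each boundary point a unique supporting hyperplane, equal to its tangent hyperplane; let $g(t)\in\bbP(V^\ast)$ denote this hyperplane at the point $\xi(t)\in\partial\cC_\Gamma$. Then $g:\partial_\infty\Gamma\to\bbP(V^\ast)$ is continuous (Gauss map of a $C^1$ convex hypersurface), $\Gamma$-equivariant (as $\Gamma$ preserves $\cC_\Gamma$ and $\xi$ is equivariant), injective (a supporting hyperplane of a strictly convex set has a unique contact point), and transverse to $\xi$: one has $\xi(t)\subset g(t)$, while $\xi(s)\in g(t)$ with $s\neq t$ would put the segment $[\xi(s),\xi(t)]\subset\overline{\cC_\Gamma}$ inside the supporting hyperplane $g(t)$, contradicting strict convexity. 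To identify $g$ with $\xi^\ast$ I would check agreement on the dense set of attracting fixed points: for infinite-order $\gamma$, pick a supporting hyperplane $H$ of $\cC_\Gamma$ whose contact point $p$ differs from $\xi(\gamma_-)$; then $\xi(\gamma_-)\pitchfork H$ since $H\cap\overline{\cC_\Gamma}=\{p\}$, so Proposition~\ref{prop conv action} gives $\gamma^n\cdot H\to\xi^\ast(\gamma_+)$, while $\gamma^n\cdot H$ is the supporting hyperplane at $\gamma^n p\to\xi(\gamma_+)$ and hence $\gamma^n\cdot H\to g(\gamma_+)$ by continuity of the Gauss map, whence $g(\gamma_+)=\xi^\ast(\gamma_+)$. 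As $\{\gamma_+\}$ is dense in $\partial_\infty\Gamma$ and $g,\xi^\ast$ are continuous, $g\equiv\xi^\ast$ (alternatively, $(\xi,g)$ satisfies all the properties listed in Proposition~\ref{prop boundary maps}, using the cited $C^{1,\alpha}$-regularity for the H\"older statement, so $g=\xi^\ast$ by the uniqueness asserted there). Finally, because $\xi(t)\subset\xi^\ast(t)$ the kernel $\R x$ of $d_x\pi$ lies in the hyperplane $\xi^\ast(t)\subset V$, so $d_x\pi(\xi^\ast(t))$ is a hyperplane of $T_{[x]}\bbP(V)$, and by construction of $g$ it is exactly $T_{\xi(t)}\partial\cC_\Gamma$, which is the claimed identity.
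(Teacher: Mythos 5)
The paper gives no proof of this proposition at all: it is imported wholesale from Benoist \cite{BEN04} (with the translation into the projective-Anosov language taken as understood, cf.\ also \cite{dgk18a}), so there is no argument of the paper to match yours against step by step. Your proposal is compatible with that and goes further: you cite Benoist only for the two hard inputs (hyperbolicity plus the eigenvalue gap, and the $C^{1,\alpha}$ regularity of $\partial\cC_\Gamma$) and then reconstruct the dictionary — uniqueness of $\cC_\Gamma$, the identity $\Lambda_\Gamma=\partial\cC_\Gamma$, and the identification of $\xi^\ast$ with the Gauss map, whence the tangency formula — from Proposition \ref{prop boundary maps}, Proposition \ref{prop conv action} and density of poles. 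Those soft derivations are essentially correct: the fixed-point argument putting $\xi(\gamma_+)$ on $\partial\cC$, the limit $\gamma^n\cdot H\to\xi^\ast(\gamma_+)$ versus $\gamma^n\cdot H\to g(\gamma_+)$, and the final identification $T_{\xi(t)}\partial\cC_\Gamma=d_x\pi(\xi^\ast(t))$ all check out. Two points deserve tightening. First, the uniqueness one-liner (``a properly convex open set is the interior of the convex hull of its boundary'') is chart-dependent as stated; one should first show that any properly convex open set with boundary $\Lambda_\Gamma$ misses the hyperplane at infinity of an affine chart containing $\overline{\cC_\Gamma}$, after which both domains are bounded convex sets of a common chart with equal boundaries and the argument closes. (This step uses $\dim V\geq 3$, or non-elementarity: for $\dim V=2$ a cyclic group divides both intervals bounded by its two fixed points, so uniqueness as literally stated degenerates — an edge case the paper's formulation also ignores.) Second, the density of the attracting fixed points in $\partial\cC_\Gamma$ is justified by ``periodic Hilbert geodesics are dense,'' which is itself part of Benoist's theorem (topological transitivity plus the Anosov property of $\phi^t_{\mathrm{BH}}$); either list this among the cited inputs or replace it by a direct dynamical argument. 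Neither point is a genuine gap given that the entire statement is in any case attributed to \cite{BEN04}; your write-up simply makes explicit the routine translation the paper leaves inside the citation.
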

Benoist also proved in \cite{BEN04} that the boundary $\partial\cC_\Gamma$ is never $C^2$, unless $\Gamma$ is conjugate to a uniform lattice in $\SO(d-1,1)$ (in which case $\cC_\Gamma$ is an ellipsoid).

\subsubsection{The Benoist-Hilbert geodesic flow} The cross-ratio of four points $\ell_1,\ell_2,\ell_3,\ell_4\in \bbP(V)$ lying on a common projective line $L\subset \bbP(V)$ is defined as
\[ [\ell_1,\ell_2,\ell_3,\ell_4] = \frac{t_1-t_3}{t_1-t_2}\cdot\frac{t_2-t_4}{t_3-t_4}, \]
where $t_1,t_2,t_3,t_4\in \R\cup\{\infty\}$ are preimages of $\ell_1,\ell_2,\ell_3,\ell_4$ under any projective parametrization $\R\bbP^1\to L$ (i.e. of the form $t\mapsto [v+tw]$ for some linearly independent vectors $v,w\in V$).

\begin{definition} Let $\cC\subset \bbP(V)$ be a properly convex open subset. The \emph{Hilbert distance} between distinct points $x,y\in \cC$ is defined as
\[ d_\cC(x,y)=\frac{1}{2}\log [x^-,x,y,y^+], \]
where $x^-$ and $y^+$ are the intersections of $\partial\cC$ with the projective line through $x$ and $y$, with $x^-,x,y,y^+$ in this order.
\end{definition}
The Hilbert distance makes $(\cC, d_{\cC})$ a geodesic metric space. If $\cC$ is strictly convex, then geodesics are intersections of projective lines with $\cC$. In order to define a geodesic flow, we will work with \emph{sphere bundles} of manifolds, i.e.  the fiber bundle $\bbS M$ over a manifold $M$ with fiber $\bbS_xM= \left(T_xM\setminus\{0\}\right) / \R^*_+$ over $x\in \M$, where $\R^*_+$ acts by multiplication. The ray spanned by a non-zero tangent vector $\nu\in T_xM$ will be denoted by $[\nu)\in \bbS_xM$.

If $\cC\subset\bbP(V)$ is a properly convex open subset, a pair $(\ell,[\nu))\in \bbS\cC$ defines a parametrization $c_{\ell,[\nu)}:\R\to \cC$ of the intersection of $\cC$ with the projective line going through $\ell$ and tangent to $\nu$, satisfying $d_\cC(\ell,c_{\ell,[\nu)}(t))=\vert t\vert$ for any $t\in\R$, and $\dot c_{\ell,[\nu)}(0)\in\R^*_+\cdot \nu$.

\begin{definition}
The \emph{Benoist-Hilbert flow} is the flow $\phi_{\mathrm{BH}}^t:\bbS\cC\to\bbS\cC$ defined by 
\[ \phi_{\mathrm{BH}}^t(\ell,[\nu)):=(c_{\ell,[\nu)}(t),[\dot c_{\ell,[\nu)}(t))),\quad \forall (\ell,[\nu])\in \bbS\cC.\]
\end{definition}
Let us now focus on the case of a torsion-free Benoist subgroup $\Gamma<\SL(V)$, and denote by $\cN_\Gamma=\Gamma\backslash\cC_\Gamma$ the quotient manifold. As the action $\Gamma\curvearrowright\bbS\cC_\Gamma$ commutes with the Benoist-Hilbert flow, we can also define a flow $\phi_{\mathrm{BH}}^t$ on the quotient manifold $\bbS\cN_\Gamma=\Gamma\backslash\bbS\cC_\Gamma$.

Note that $\bbS\cN_\Gamma$ is a smooth manifold, but the regularity of $\phi_{\mathrm{BH}}^t$ is exactly that of $\partial\cC_\Gamma$.

\begin{prop}[{\cite[Théorème 1.1, Théorème 1.2]{BEN04}}]
Let $\Gamma<\SL(V)$ be a torsion-free Benoist subgroup. There exists $0<\alpha<1$ such that the Benoist-Hilbert flow $\phi_{\mathrm{BH}}^t:\bbS\cN_\Gamma\to\bbS\cN_\Gamma$ is a topologically transitive $C^{1+\alpha}$ Anosov flow.
\end{prop}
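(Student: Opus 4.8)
The plan is to follow the strategy of Benoist \cite{BEN04}, which splits into three pieces: the $C^{1+\alpha}$-regularity of $\partial\cC_\Gamma$, the construction of a uniformly hyperbolic splitting from the convex geometry, and transitivity via density of closed geodesics.

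First I would record the metric structure and invoke the regularity input. Since $\Gamma$ divides the strictly convex domain $\cC_\Gamma\subset\bbP(V)$, the Hilbert distance $d_{\cC_\Gamma}$ turns $\cC_\Gamma$ into a proper geodesic metric space on which $\Gamma$ acts by isometries, properly discontinuously and cocompactly; a \v{S}varc--Milnor argument identifies $\Gamma$ with $(\cC_\Gamma,d_{\cC_\Gamma})$ up to quasi-isometry, while strict convexity forces Gromov hyperbolicity, so that $\xi$ realizes the homeomorphism $\partial_\infty\Gamma\cong\partial\cC_\Gamma$ and $\partial\cC_\Gamma$ is a $C^1$ hypersurface. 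The decisive point is that $\partial\cC_\Gamma$ is in fact $C^{1+\alpha}$ for some $\alpha=\alpha(\Gamma)\in(0,1)$: this is a renormalization argument using cocompactness of the $\Gamma$-action on the flag space $\{(x,T_x\partial\cC_\Gamma):x\in\partial\cC_\Gamma\}$, the idea being that a quantitative failure of H\"older control of the tangent hyperplane at one boundary point could be transported by the group action to a genuine point of non-differentiability, contradicting the $C^1$ property just established. I would cite this from \cite{BEN04} rather than reprove it; it also accounts for the failure of $C^2$-regularity away from the ellipsoid case.

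Second I would build the splitting. Identify $\bbS\cC_\Gamma$ with the space of parametrized Hilbert geodesics of $\cC_\Gamma$: a ray $(\ell,[\nu))$ determines an ordered pair of distinct endpoints $(\ell_-,\ell_+)\in\partial\cC_\Gamma^{(2)}$ together with the signed arclength position of $\ell$ along the geodesic, which gives a $\Gamma$- and flow-equivariant homeomorphism $\bbS\cC_\Gamma\cong\partial\cC_\Gamma^{(2)}\times\R$ under which $\phi_{\mathrm{BH}}^t$ is translation in the $\R$-factor. Take $E^0$ to be the line field spanned by the (nowhere vanishing, $C^{1+\alpha}$) generator of $\phi_{\mathrm{BH}}^t$, let $E^{\mathrm{s}}_{(\ell,[\nu))}$ be the tangent space to the set of geodesics sharing the forward endpoint $\ell_+$, and $E^{\mathrm{u}}_{(\ell,[\nu))}$ the tangent to those sharing the backward endpoint $\ell_-$. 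These are $d\phi_{\mathrm{BH}}^t$-invariant, since the two endpoint projections $\partial\cC_\Gamma^{(2)}\to\partial\cC_\Gamma$ are $\phi^t_{\mathrm{BH}}$-invariant, and they vary continuously with the point, with H\"older regularity inherited from $\partial\cC_\Gamma$. Uniform contraction on $E^{\mathrm{s}}$ and expansion on $E^{\mathrm{u}}$ then follow from a quantitative convexity estimate --- two Hilbert geodesics with a common forward endpoint converge to each other exponentially fast as $t\to+\infty$, with rate bounded below by a positive constant governed by the $\alpha$-modulus of regularity of $\partial\cC_\Gamma$ --- which cocompactness of $\Gamma\curvearrowright\bbS\cC_\Gamma$ upgrades to the uniform constants $C,c>0$ of Definition \ref{def:hyperbolicset}. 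Passing to the compact quotient $\bbS\cN_\Gamma=\Gamma\backslash\bbS\cC_\Gamma$ and using that $\phi^t_{\mathrm{BH}}$ is fixed-point free then yields the $C^{1+\alpha}$ Anosov property.

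Finally, topological transitivity follows as in the proof of Lemma \ref{lem basic set is basic}: closed $\phi^t_{\mathrm{BH}}$-orbits on $\bbS\cN_\Gamma$ correspond to conjugacy classes of non-trivial $\gamma\in\Gamma$, the closed geodesic being the projection of the axis of $\gamma$ joining the fixed points $\xi(\gamma_-),\xi(\gamma_+)\in\partial\cC_\Gamma$; by Proposition \ref{prop hyperbolic groups elementary properties}\ref{propenumi hyp groups dense orbits product} there is $(a,b)\in\partial_\infty\Gamma^{(2)}$ with dense $\Gamma$-orbit, and the corresponding geodesic projects to a dense $\phi^t_{\mathrm{BH}}$-orbit by openness of the endpoint projection $p$. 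The hard part is the $C^{1+\alpha}$-regularity of $\partial\cC_\Gamma$ together with the exponential-convergence estimate for Hilbert geodesics with a shared endpoint; once these are in hand the rest is formal, which is why I would simply invoke \cite{BEN04}. Attempting instead to transport the real analytic splitting of Theorem \ref{THM A}.(\ref{real analytic fol}) to $\bbS\cN_\Gamma$ after embedding it in $\cM_\Gamma$ does not help: the regularity of that embedding is itself controlled by the regularity of $\partial\cC_\Gamma$.
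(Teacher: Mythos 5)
The paper offers no proof of this proposition at all: it is imported verbatim from Benoist (Théorèmes 1.1 and 1.2 of \cite{BEN04}), and its only role here is as an external input later combined with Lemma \ref{lem embedding of Benoist-Hilbert flow}. So your decision to cite \cite{BEN04} for the $C^{1+\alpha}$ regularity of $\partial\cC_\Gamma$ and for the exponential-convergence estimate is consistent with what the paper does, and your closing remark that one cannot shortcut this by pulling back the real analytic splitting of Theorem \ref{THM A} through the embedding into $\cM_{\Ad(\Gamma)}$ is correct: that embedding is only H\"older/Lipschitz, precisely because its regularity is governed by $\partial\cC_\Gamma$.

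There is, however, a concrete flaw in your sketch of the splitting. Defining $E^{\mathrm{s}}_{(\ell,[\nu))}$ as the tangent space to the set of geodesics sharing the forward endpoint $\ell_+$ (and $E^{\mathrm{u}}$ via the backward endpoint) produces the \emph{weak} (central) stable and unstable distributions: in the Hopf parametrization $\bbS\cC_\Gamma\cong\partial\cC_\Gamma^{(2)}\times\R$ these sets are $\{\ell_+\ \mathrm{fixed}\}$ and $\{\ell_-\ \mathrm{fixed}\}$, each of dimension $d-1$, and each contains the flow direction $E^0$. Hence $E^0\oplus E^{\mathrm{s}}\oplus E^{\mathrm{u}}$ is not a direct sum ($1+(d-1)+(d-1)>2d-3=\dim\bbS\cC_\Gamma$), and uniform contraction on your $E^{\mathrm{s}}$ is false, since the flow direction inside it is neither contracted nor expanded. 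The Anosov splitting requires the \emph{strong} stable/unstable distributions, cut out by additionally fixing the horosphere based at $\ell_\pm$ (equivalently, prescribing the $\R$-coordinate through a Busemann-type cocycle, which is exactly the extra structure that the affine line bundle $\bbL$ and the splitting \eqref{eqn splitting TL} encode in the projective model); the contraction estimate must then be proved transversally to the flow, which is where Benoist's convexity and regularity analysis actually enters. With that correction, the remainder of your outline (uniform constants from cocompactness, transitivity from a dense $\Gamma$-orbit in $\partial_\infty\Gamma^{(2)}$ as in Lemma \ref{lem basic set is basic}) is the standard route, and the hard content remains the boundary regularity you rightly leave to \cite{BEN04}.
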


\subsubsection{Integration into our setting}

The Benoist-Hilbert flow $\phi_{\mathrm{BH}}^t:\bbS\cN_\Gamma\to\bbS\cN_\Gamma$ of a torsion-free Benoist subgroup $\Gamma<\SL(V)$ cannot be directly related to our flow space $(\bbL,\phi^t)$. Indeed, each non-trivial element $\gamma\in\Gamma$  corresponds to a periodic orbit of both flows, but they have different periods: $\lambda_1(\gamma)$ for the flow $\phi^t$ on $\cM_\Gamma$ and $\frac{1}{2}(\lambda_1(\gamma)-\lambda_d(\gamma))$ for the Benoist-Hilbert flow.

In order to make the two coincide, we will work with the adjoint representation $\Ad:\SL(V)\to \SL(W)$, where $W=\mathfrak{sl}(V)\subset \mathrm{End}(V)=V\otimes V^\ast$, as it satisfies $\lambda_1(\Ad(g))=\lambda_1(g)-\lambda_d(g)$ for any $g\in \SL(V)$ (meaning that $\lambda_1-\lambda_d$ is the highest weight of the adjoint representation).

\begin{lem}[{\cite[Prop.~4.3]{GW12}}] \label{lem adjoint of projective Anosov subgroup}
If $\Gamma<\SL(V)$ is a projective Anosov subgroup, then so is $\Ad(\Gamma)<\SL(W)$, and its limit map  is given by $\xi_\Ad(t)=[v\otimes\alpha]\in \mathbb P(W)$ at $t\in\partial_\infty\Gamma$, where $[v]=\xi(t)$ and $[\alpha]=\xi^*(t)$.
\end{lem}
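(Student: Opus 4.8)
The plan is to verify the two clauses of Definition \ref{def projective Anosov subgroup} for $\Ad(\Gamma)<\SL(W)$ and then to pin down the limit map by a density argument; everything reduces to elementary eigenvalue bookkeeping together with facts about $\Gamma$ already recorded above. First I would dispose of Gromov-hyperbolicity: the kernel of $\Ad\colon\SL(V)\to\SL(W)$ is the finite center of $\SL(V)$, so $\Gamma\to\Ad(\Gamma)$ has finite kernel, whence $\Ad(\Gamma)$ is quasi-isometric to $\Gamma$, in particular Gromov-hyperbolic, with a canonical $\Gamma$-equivariant homeomorphism $\partial_\infty\Ad(\Gamma)\cong\partial_\infty\Gamma$ and with stable lengths $|\cdot|_\infty$ bi-Lipschitz comparable to those of $\Gamma$.

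Next, the singular value gap. Writing $\mathrm{End}(V)=V\otimes V^\ast$ with $\Ad(g)A=gAg^{-1}$, the log-moduli of the eigenvalues of $\Ad(g)$ on $\mathrm{End}(V)$ are the numbers $\lambda_i(g)-\lambda_j(g)$, $1\le i,j\le d$, and restricting to the invariant subspace $W=\mathfrak{sl}(V)$ only deletes one eigenvalue of modulus one, which cannot affect the largest two. A short comparison of these numbers then gives, for all $g\in\SL(V)$,
\begin{multline*}
\lambda_1(\Ad(g))-\lambda_2(\Ad(g))=\min\bigl(\lambda_1(g)-\lambda_2(g),\ \lambda_{d-1}(g)-\lambda_d(g)\bigr)\\
=\min\bigl(\lambda_1(g)-\lambda_2(g),\ \lambda_1(g^{-1})-\lambda_2(g^{-1})\bigr),
\end{multline*}
using $\lambda_i(g^{-1})=-\lambda_{d+1-i}(g)$. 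Applying the estimate of Definition \ref{def projective Anosov subgroup} for $\Gamma$ to both $\gamma$ and $\gamma^{-1}$, and using $|\gamma|_\infty=|\gamma^{-1}|_\infty$, bounds both arguments of the minimum below by $c|\gamma|_\infty-c'$; converting $|\gamma|_\infty$ to the stable length in $\Ad(\Gamma)$ by the previous step yields the required linear gap, so $\Ad(\Gamma)$ is projective Anosov.

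For the limit map I would check that $t\mapsto[v\otimes\alpha]$, with $[v]=\xi(t)$ and $[\alpha]=\xi^\ast(t)$, is a well-defined continuous $\Gamma$-equivariant map into $\bbP(W)$: well-defined because rescaling $v$ or $\alpha$ merely rescales $v\otimes\alpha$; valued in $W$ because $\mathrm{tr}(v\otimes\alpha)=\alpha(v)=0$, which is exactly the transversality $\xi(t)\subset\xi^\ast(t)$ from Proposition \ref{prop boundary maps}\ref{propenumi boundary maps transversality}; continuous because $([v],[\alpha])\mapsto[v\otimes\alpha]$ is continuous on the locus $\alpha(v)=0$; and equivariant because $\Ad(g)(v\otimes\alpha)=(g\cdot v)\otimes(g\cdot\alpha)$. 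By continuity and the density in $\partial_\infty\Gamma$ of the attracting fixed points $\gamma_+$ of infinite-order elements $\gamma\in\Gamma$ (Proposition \ref{prop hyperbolic groups elementary properties}\ref{propenumi hyp groups density poles}), it then suffices to match $\xi_{\Ad}$ at each such $\gamma_+$. Taking $v_1$ to span the dominant (real, simple) eigenline $\xi(\gamma_+)$ of $\gamma$ on $V$ and $\alpha$ to span the attracting fixed point $\xi^\ast(\gamma_+)$ of $\gamma$ on $\bbP(V^\ast)$, so that $\gamma v_1=\mu_1 v_1$ and $\gamma\cdot\alpha=\mu_d^{-1}\alpha$ with $|\mu_1|=e^{\lambda_1(\gamma)}$, $|\mu_d|=e^{\lambda_d(\gamma)}$, one computes $\Ad(\gamma)(v_1\otimes\alpha)=\mu_1\mu_d^{-1}(v_1\otimes\alpha)$, an eigenvalue of $\Ad(\gamma)|_W$ of modulus $e^{\lambda_1(\gamma)-\lambda_d(\gamma)}=e^{\lambda_1(\Ad(\gamma))}$ which is real and simple; Proposition \ref{prop boundary maps}\ref{propenumi boundary maps elements} applied to $\Ad(\Gamma)$ then identifies $\xi_{\Ad}(\gamma_+)=[v_1\otimes\alpha]$, which is the claimed formula at $t=\gamma_+$.

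I expect the only real care to be bookkeeping: reconciling the dual-eigenvector conventions of Proposition \ref{prop boundary maps} with the explicit rank-one tensor $v\otimes\alpha$, and confirming that passing from $\mathrm{End}(V)$ to $\mathfrak{sl}(V)$ leaves the top of the spectrum untouched. There is no genuine analytic difficulty; this is essentially \cite[Prop.~4.3]{GW12} specialised to the adjoint representation.
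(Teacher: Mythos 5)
Your argument is correct, but it is worth noting that the paper does not prove this lemma at all: it simply invokes Guichard--Wienhard's general result \cite[Prop.~4.3]{GW12}, which handles post-composition of an Anosov representation with any irreducible representation whose highest weight pairs nontrivially with the first simple root (the adjoint representation of $\SL(V)$, with highest weight $\lambda_1-\lambda_d$, being one instance). Your route is a self-contained verification inside the paper's own framework: you check the Kassel--Potrie eigenvalue-gap definition directly, via the identity $\lambda_1(\Ad(g))-\lambda_2(\Ad(g))=\min\bigl(\lambda_1(g)-\lambda_2(g),\,\lambda_{d-1}(g)-\lambda_d(g)\bigr)$ (which is correct, including the edge cases where the restriction from $\mathrm{End}(V)$ to $\mathfrak{sl}(V)$ deletes one eigenvalue $1$, and where the top difference has multiplicity) together with the gap estimate applied to both $\gamma$ and $\gamma^{-1}$ and the finite-kernel comparison of stable lengths; and you then pin down $\xi_{\Ad}$ by matching it with $t\mapsto[v\otimes\alpha]$ on the dense set of attracting fixed points, using that $\Ad(\gamma)(v_1\otimes\alpha)=\mu_1\mu_d^{-1}\,v_1\otimes\alpha$ is the unique (real, simple) eigenvalue of modulus $e^{\lambda_1(\Ad\gamma)}$ — here the simplicity follows because $\mu_1$ and $\mu_d$ are each the unique eigenvalues of their moduli, and the inclusion $v_1\otimes\alpha\in W$ is exactly the transversality $\alpha(v_1)=0$. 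What each approach buys: the citation to \cite{GW12} is shorter and covers all representations and all Anosov subgroups uniformly, while your computation is elementary, stays entirely within the definitions the paper actually uses (Definition of projective Anosov via $\lambda_1-\lambda_2$ and Proposition on limit maps), and makes the formula for $\xi_{\Ad}$ transparent rather than extracting it from the general equivariance statement in \cite{GW12}.
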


Note that since the adjoint representation of $\SL(V)$ preserves the non-degenerate symmetric bilinear form $(\varphi,\psi)\mapsto \Tr(\varphi\circ\psi)$ (which is a multiple of the Killing form of $\mathfrak{sl}(V)$), we can identify $W$ with $W^*$ through the map 
\[ \Phi_\Ad:\map{W}{W^*}{\varphi}{(\psi\mapsto\Tr(\varphi\circ\psi))}. \]
As in the study of $\bbH^{p,q}$-convex cocompact subgroups, we will replace the flow space
\[\bbL=\set{[v:\alpha]\in\bbP(W\times W^*)}{\alpha(v)>0} \]
and the flow
\[ \phi^t([v:\alpha])=[e^tv:e^{-t}\alpha] \]
with the flow space
\[
\bbL_\Ad=\set{[\varphi:\psi]\in \mathbb{P}(W\times W)}{\Tr(\varphi\circ\psi)>0 }
\]
equipped with the flow
 \[ \phi^t([\varphi:\psi])=([e^t\varphi:e^{-t}\psi]),\qquad t\in \R. \]
There is a difference in regularities between the Benoist-Hilbert flow $\phi^t_{\mathrm{BH}}$ on $\bbS\cN_\Gamma$ and the flow $\phi^t$ on $\cM_{\Ad(\Gamma)}$, as the basic set $\cK_{\Ad(\Gamma)}$ is not a $C^1$ submanifold of $\cM_{\Ad(\Gamma)}$. It is however better than the expected H\"older regularity.

\begin{lem}\label{lem basic set Ad(Benoist subgroup) is Lipschitz}
Let $\Gamma<\SL(V)$ be a Benoist subgroup. The basic set $\cK_{\Ad(\Gamma)}$ is a Lipschitz submanifold of $\cM_{\Ad(\Gamma)}$.
\end{lem}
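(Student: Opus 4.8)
The claim is that $\cK_{\Ad(\Gamma)}\subset \cM_{\Ad(\Gamma)}$ is a Lipschitz submanifold, and the key point is that the limit set $\Lambda_\Gamma = \partial\cC_\Gamma$ of a Benoist subgroup is a $C^{1,\alpha}$ (in particular Lipschitz) hypersurface in $\bbP(V)$, while the dual limit set $\Lambda^*_\Gamma$ is obtained from $\Lambda_\Gamma$ by the tangent-hyperplane Gauss map $\xi(t)\mapsto T_{\xi(t)}\partial\cC_\Gamma$, which is of class $C^\alpha$ but, more importantly, is at least Lipschitz as a map from the $C^{1,\alpha}$ hypersurface to $\bbP(V^*)$ (the Gauss map of a $C^{1,\alpha}$ strictly convex hypersurface is $C^\alpha$, and here one only needs Lipschitz regularity of the graph of the pair). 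I would first recall from the description \eqref{eqn splitting TL}, \eqref{eqn projection of L onto transverse product} that $\widetilde\cK_{\Ad(\Gamma)} = p^{-1}(\Lambda^\transverse_{\Ad(\Gamma)})$, where $p:\bbL_\Ad\to \trprod$ is a real analytic principal $\R$-bundle, and $\Lambda^\transverse_{\Ad(\Gamma)} = \sigma_\Ad(\partial_\infty\Gamma^{(2)})$ with $\sigma_\Ad = \xi_\Ad\times\xi_\Ad^*$. So the Lipschitz regularity of $\widetilde\cK_{\Ad(\Gamma)}$ reduces to the Lipschitz regularity of $\Lambda^\transverse_{\Ad(\Gamma)}$ inside $\trprod$, and then passing to the quotient by the properly discontinuous, cocompact, real analytic $\Gamma$-action (Theorem \ref{thm: prop disc}) preserves this regularity since locally $\cM_{\Ad(\Gamma)}\to$ quotient is a real analytic diffeomorphism onto its image.

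\textbf{Key steps.} (1) By Lemma \ref{lem adjoint of projective Anosov subgroup}, $\xi_\Ad(t) = [v\otimes\alpha]$ where $[v] = \xi(t)\in\partial\cC_\Gamma$ and $[\alpha] = \xi^*(t)$; under $\Phi_\Ad$ one identifies $\xi_\Ad^*(t) = \Phi_\Ad(\xi_\Ad(t))$, just as in the $\bbH^{p,q}$ case via Lemma \ref{lem limit set of projective Anosov subgroup of SO(p,q+1)}. Therefore the transverse limit set $\Lambda^\transverse_{\Ad(\Gamma)}$ is the image of the graph map $\partial_\infty\Gamma^{(2)}\ni (s,t)\mapsto (\xi_\Ad(s), \Phi_\Ad(\xi_\Ad(t)))$, and it suffices to prove that the map $t\mapsto \xi_\Ad(t)$ is a bi-Lipschitz homeomorphism onto its image in $\bbP(W)$. (2) Write $\xi_\Ad(t) = [\xi(t)\otimes\xi^*(t)]$ and recall Benoist's theorem: $\partial\cC_\Gamma$ is a $C^{1,\alpha}$ hypersurface, $\xi$ parametrizes it, and $\xi^*(t) = $ the projective hyperplane tangent to $\partial\cC_\Gamma$ at $\xi(t)$, i.e. $\xi^* = \mathrm{Gauss}\circ\xi$ where the Gauss map of a $C^{1,\alpha}$ strictly convex hypersurface is $C^\alpha$ — however, to get \emph{Lipschitz} regularity one argues differently: I would show that the composite map $x\mapsto (x, T_x\partial\cC_\Gamma)$ from $\partial\cC_\Gamma$ to the flag variety is a Lipschitz embedding. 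This follows from strict convexity together with a standard convexity estimate: for a bounded strictly convex body with $C^1$ boundary, the supporting hyperplane map is Lipschitz on the boundary if and only if the boundary is $C^{1,1}$; since $\partial\cC_\Gamma$ is only $C^{1,\alpha}$, one instead works on the \emph{lifted basic set level}, using the explicit formulas \eqref{eqn strong (un)stable foliation}–\eqref{eq:exactlyonepoint} together with \eqref{eqn formula stable holonomy} to realize $\widetilde\cK_{\Ad(\Gamma)}$ locally as a graph over the unstable leaf $W^\mathrm{u}$. (3) More precisely: fix a lift $[\varphi:\psi]\in\widetilde\cK_{\Ad(\Gamma)}$; the local unstable leaf $W^\mathrm{u}([\varphi:\psi])$ is parametrized by $\ker\psi\subset W$, and by \eqref{eq:iniffplus} the infinitesimal unstable limit set $\Lambda^\mathrm{u}$ is exactly the set of $w\in\ker\psi$ with $[\varphi+w]\in\Lambda_{\Ad(\Gamma)}$; since $\Lambda_{\Ad(\Gamma)} = \xi_\Ad(\partial_\infty\Gamma)$ and $\xi_\Ad = (\xi\otimes\xi^*)$ is built from the $C^{1,\alpha}$ data of $\partial\cC_\Gamma$, one checks that $\Lambda^\mathrm{u}$ is locally the graph of a $C^{1,\alpha}$ (hence Lipschitz) map over a linear subspace, and that the stable and flow directions attach Lipschitz-continuously (via \eqref{eqn formula stable holonomy}, \eqref{eqn time separation and projection}, which are real analytic in the lifts). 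Assembling the three transverse directions — unstable (Lipschitz graph), stable (determined Lipschitz-continuously by the unstable via the tangent hyperplane), and the real analytic flow direction — exhibits $\widetilde\cK_{\Ad(\Gamma)}$ as a Lipschitz submanifold of $\bbL_\Ad$, and hence $\cK_{\Ad(\Gamma)}$ as a Lipschitz submanifold of $\cM_{\Ad(\Gamma)}$.

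\textbf{Main obstacle.} The delicate point is step (2)–(3): establishing genuine Lipschitz (not merely H\"older) regularity of the pair $t\mapsto (\xi(t),\xi^*(t))$, equivalently of the correspondence between a point of $\partial\cC_\Gamma$ and its tangent hyperplane. A priori Benoist only gives $C^{1,\alpha}$, which would naively yield a $C^\alpha$ tangent map and hence only H\"older regularity of $\cK_{\Ad(\Gamma)}$. The resolution is that one does not need the tangent map on all of $\partial\cC_\Gamma$ to be Lipschitz; one needs the \emph{joint graph} $\{(\xi(t),\xi^*(t))\}$ to be a Lipschitz submanifold of $\trprod$, and this is governed by the \emph{dual} convex body. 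Indeed, $\Lambda^*_{\Gamma} = \partial\cC^*_\Gamma$ where $\cC^*_\Gamma\subset\bbP(V^*)$ is the dual convex domain, which is \emph{also} divided by $\Gamma$ and hence also has $C^{1,\alpha}$ boundary by Benoist's theorem; the maps $\xi$ and $\xi^*$ are mutually inverse duality maps between $\partial\cC_\Gamma$ and $\partial\cC^*_\Gamma$, and each is a Lipschitz map because its inverse is $C^1$ with non-degenerate derivative (strict convexity gives the inverse $C^1$-submersion property). This symmetric use of convexity for $\cC_\Gamma$ and its dual is what upgrades H\"older to Lipschitz, and is the crux of the argument; the remaining bookkeeping (transversality of the three distributions, real analyticity of the remaining coordinate changes, passage to the $\Gamma$-quotient) is routine given Lemma \ref{lem: scale setting} and the explicit formulas already established.
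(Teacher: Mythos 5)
Your reduction to the regularity of the symmetric/transverse limit set, followed by pulling back along the analytic $\R$-bundle $p$ and descending to the $\Gamma$-quotient, is the right skeleton and matches the paper's. But the crux of your argument fails. You assert that the duality map $\partial\cC_\Gamma\to\partial\cC_\Gamma^\ast$, $x\mapsto T_x\partial\cC_\Gamma$, and its inverse are Lipschitz ``because the inverse is $C^1$ with non-degenerate derivative''. Neither ingredient is available: the dual domain $\cC_\Gamma^\ast$ is again a divided strictly convex domain, so by Benoist its boundary is only $C^{1,\alpha}$ with $\alpha<1$, hence its tangent-hyperplane map (which is the inverse duality map) is merely H\"older, not $C^1$; and at $C^{1,\alpha}$ regularity there is no second-order data, so ``non-degenerate derivative'' has no content — strict convexity only gives that the dual boundary is $C^1$, not that any Gauss map is differentiable or non-degenerate. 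Worse, a Lipschitz tangent-hyperplane map is equivalent to $C^{1,1}$ regularity of $\partial\cC_\Gamma$, which Benoist's theorem does not provide and which fails outside the hyperbolic case (the optimal H\"older exponent of the derivative is strictly less than $1$ for Zariski-dense Benoist groups). You in fact flag this obstruction yourself in step (2), but the proposed resolution does not remove it. The fallback in step (3) — that the unstable slices $\Lambda^{\mathrm u}$ are locally $C^{1,\alpha}$ graphs — is likewise unsupported: $\Lambda_{\Ad(\Gamma)}$ is the image of the graph of the Gauss map, and there is no reason for it to be $C^1$. (Also, ``$\xi_{\Ad}$ is bi-Lipschitz onto its image'' is not a meaningful reduction, since $\partial_\infty\Gamma$ carries no canonical Lipschitz structure.)

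What is true, and what suffices, is exactly the weaker statement you isolate in passing: only the joint graph $\Lambda^{\mathrm{Sym}}_\Gamma=\{(\xi(t),\xi^\ast(t))\,:\,t\in\partial_\infty\Gamma\}$ needs to be a Lipschitz submanifold of $\bbP(V)\times\bbP(V^\ast)$, and this follows not from Lipschitz regularity of either duality map but from convexity/monotonicity: in an affine chart $\partial\cC_\Gamma$ is the graph of a convex function $u$, the pair (point, tangent hyperplane) becomes $(x,\nabla u(x))$, and the graph of the monotone map $\nabla u$ is a Lipschitz submanifold by Minty's parametrization — after the linear change of variables $(x,y)\mapsto(x+y,x-y)$ it is the graph of a $1$-Lipschitz map, regardless of how poor the regularity of $\nabla u$ is. This is the content of the result the paper invokes, \cite[Thm.~2.55]{GMT}; the paper's proof then simply transports it through the smooth embedding $([v],[\alpha])\mapsto[v\otimes\alpha]$ of Lemma \ref{lem adjoint of projective Anosov subgroup} to conclude that $\Lambda_{\Ad(\Gamma)}$, hence $\widetilde{\cK}_{\Ad(\Gamma)}$ and $\cK_{\Ad(\Gamma)}$, are Lipschitz. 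To repair your write-up, replace the duality-map claim by this monotone-graph argument (or by the citation); the remaining bookkeeping you describe is then fine.
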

\begin{proof}
By \cite[Thm.~2.55]{GMT}, the set
\[ \Lambda_{\Gamma}^\mathrm{Sym}=\set{ (\xi(t),\xi^*(t))\in \bbP(V)\times \bbP(V^*)}{ t\in \partial_\infty\Gamma}\]
is a Lipschitz submanifold of $\bbP(V)\times \bbP(V^*)$. It follows by Lemma \ref{lem adjoint of projective Anosov subgroup} that the limit set
\[\Lambda_{\Ad(\Gamma)}=\set{[v\otimes\alpha]\in\bbP(W)}{([v],[\alpha])\in \Lambda_{\Gamma}^\mathrm{Sym}} \]
is a Lipschitz submanifold of $\bbP(W)$, therefore 
 \[\tilde\cK_{\Ad(\Gamma)}=\set{[\varphi:\psi]\in \bbP(W\times W)}{[\varphi],[\psi]\in \Lambda_{\Ad(\Gamma)},\, \Tr(\varphi\circ\psi)=1}\]
 is a Lipschitz submanifold of $\bbL_\Ad$.
\end{proof}
The following result allows us to apply our work to the study of the Benoist-Hilbert flow.
\begin{lem} \label{lem embedding of Benoist-Hilbert flow} Let $\Gamma<\SL(V)$ be a torsion-free Benoist subgroup. 
There is a H\"older homeomorphism $\Psi:\bbS\cN_\Gamma\to \cK_{\Ad(\Gamma)}$ with Lipschitz inverse such that $\Psi\circ \phi_{\mathrm{BH}}^{t}=\phi^{2t}\circ\Psi$ for all $t\in\R$.
\end{lem}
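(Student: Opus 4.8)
\textbf{Proof proposal for Lemma \ref{lem embedding of Benoist-Hilbert flow}.}

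The plan is to build $\Psi$ by first constructing a $\Gamma$-equivariant map on the universal covers and then descending to the quotients. The guiding principle is that the Benoist-Hilbert flow on $\bbS\cC_\Gamma$ is, up to a time-change by a factor of $2$, the geodesic flow for the Hilbert metric, whose oriented geodesics are parametrized by pairs of distinct points $(\eta_-,\eta_+)\in(\partial\cC_\Gamma)^{(2)}$ together with a time parameter. Since $\partial\cC_\Gamma=\Lambda_\Gamma=\xi(\partial_\infty\Gamma)$ by Benoist's theorem, the space of geodesics of $(\cC_\Gamma,d_{\cC_\Gamma})$ is $\Gamma$-equivariantly identified with $\partial_\infty\Gamma^{(2)}$ via $(\xi\times\xi)^{-1}$. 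First I would make this precise: define $\widetilde\Psi_0:\bbS\cC_\Gamma\to\partial_\infty\Gamma^{(2)}\times\R$ by sending $(\ell,[\nu))$ to $((x_-,x_+),s)$, where $x_\pm\in\partial_\infty\Gamma$ are the backward/forward endpoints of the Hilbert geodesic $c_{\ell,[\nu)}$ (pulled back by $\xi$), and $s$ is the signed Hilbert-arclength coordinate of $\ell$ along that geodesic measured from some equivariantly chosen basepoint section. This map is a homeomorphism onto its image, is $\Gamma$-equivariant, and conjugates $\phi^t_{\mathrm{BH}}$ to the translation flow $(x,y,s)\mapsto(x,y,s+t)$; its regularity is exactly that of $\partial\cC_\Gamma$, i.e.\ $C^{1+\alpha}$ in the leaf directions, hence at worst Lipschitz, and Hölder overall — here one must be careful since $\xi$ is only bi-Hölder in general but on $\partial\cC_\Gamma$ we additionally know the boundary is a $C^{1,\alpha}$-submanifold.

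Next I would relate the translation flow on $\partial_\infty\Gamma^{(2)}\times\R$ to the lifted basic set $\tilde\cK_{\Ad(\Gamma)}$ via the Hopf parametrization of Lemma \ref{lem - hopf parametrization projective Anosov} applied to $W=\mathfrak{sl}(V)$. By Lemma \ref{lem adjoint of projective Anosov subgroup}, the limit map of $\Ad(\Gamma)$ is $\xi_{\Ad}(t)=[v\otimes\alpha]$ with $[v]=\xi(t)$, $[\alpha]=\xi^*(t)$, so $\sigma_{\Ad}=\xi_{\Ad}\times\xi^*_{\Ad}:\partial_\infty\Gamma^{(2)}\to \trprodH$ (in the $W$-version, using $\Phi_{\Ad}$ to identify $W\simeq W^*$) has image $\Lambda^{\pitchfork}_{\Ad(\Gamma)}$, and $\cH_W\circ$ composed with $\sigma_{\Ad}\times\id$ gives a $\Gamma$-equivariant homeomorphism $\partial_\infty\Gamma^{(2)}\times\R\to\tilde\cK_{\Ad(\Gamma)}\subset\bbL_{\Ad}$ conjugating the translation flow to $\phi^t$ — this is exactly the content of the proof of Theorem \ref{SAMHYP} transplanted to $\Ad(\Gamma)$. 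The only subtlety is bookkeeping of the normalization of the time coordinate: the refraction-flow period associated to $\gamma$ in $\cM_{\Ad(\Gamma)}$ is $\lambda_1(\Ad(\gamma))=\lambda_1(\gamma)-\lambda_d(\gamma)$, whereas the Benoist-Hilbert period of the corresponding closed geodesic is $\tfrac12(\lambda_1(\gamma)-\lambda_d(\gamma))$, so the flow-time in the Hopf coordinate of $\bbL_{\Ad}$ is twice the Hilbert arclength parameter $s$. This forces the factor $2$ in the relation $\Psi\circ\phi^t_{\mathrm{BH}}=\phi^{2t}\circ\Psi$: one builds the identification so that the $\R$-coordinate $s$ of $\widetilde\Psi_0$ maps to $2s$ in $\cH_W$-coordinates.

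Composing the two $\Gamma$-equivariant maps and passing to quotients yields $\Psi:\bbS\cN_\Gamma\to\cK_{\Ad(\Gamma)}$ with the desired intertwining property $\Psi\circ\phi^t_{\mathrm{BH}}=\phi^{2t}\circ\Psi$. Bijectivity and continuity are inherited from the two constituent homeomorphisms. For the regularity statements: $\Psi^{-1}$ is Lipschitz because $\cH_W^{-1}$ is real analytic, $\sigma_{\Ad}$ and hence $\Lambda^{\pitchfork}_{\Ad(\Gamma)}$ is a Lipschitz submanifold (Lemma \ref{lem basic set Ad(Benoist subgroup) is Lipschitz}, which rests on \cite[Thm.~2.55]{GMT}), so $\cK_{\Ad(\Gamma)}\to\bbS\cN_\Gamma$ — being locally the Hilbert-geodesic endpoint-and-arclength map composed with these Lipschitz identifications — is Lipschitz; and $\Psi$ itself is Hölder since $\xi$ is only bi-Hölder in the worst directions. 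I expect the main obstacle to be the careful verification that the composite map has exactly the asserted one-sided regularity (Hölder one way, Lipschitz the other) rather than merely bi-Hölder: this requires combining Benoist's $C^{1,\alpha}$ regularity of $\partial\cC_\Gamma$ with the Lipschitz regularity of the symmetric limit set $\Lambda^{\mathrm{Sym}}_\Gamma$ from \cite{GMT}, and checking that the change-of-variables between Hilbert arclength and the Hopf time coordinate (which involves $\log$ of a smooth positive function of the endpoints) does not degrade Lipschitz control. Everything else is a matter of assembling already-established equivariant homeomorphisms.
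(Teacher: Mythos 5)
There is a genuine gap, and it sits exactly where your sketch leans on the phrase ``equivariantly chosen basepoint section.'' No $\Gamma$-equivariant section of basepoints on Hilbert geodesics exists: an element $\gamma\neq e$ acts on its axis by a nontrivial translation, so an equivariant choice of point on that geodesic is impossible. Consequently your intermediate map $\widetilde\Psi_0:\bbS\cC_\Gamma\to\partial_\infty\Gamma^{(2)}\times\R$ cannot be equivariant for the product action; in any (necessarily non-equivariant) arclength parametrization the $\Gamma$-action on the $\R$-factor is twisted by a cocycle (a Busemann/Gromov-product type cocycle of the Hilbert metric). On the other hand, the identification $\partial_\infty\Gamma^{(2)}\times\R\to\tilde\cK_{\Ad(\Gamma)}$ coming from the Hopf parametrization (as in the proof of Theorem \ref{SAMHYP} applied to $\Ad(\Gamma)$) is equivariant for the action twisted by the HBI cocycle $\mathscr H(\Ad(\gamma),\xi_{\Ad}(x),\xi^*_{\Ad}(y))$. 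So the composite you propose is only $\Gamma$-equivariant (hence only descends to $\bbS\cN_\Gamma$) if you prove that the Hilbert arclength cocycle equals one half of the HBI cocycle of the adjoint representation along the limit maps. That identity is the actual content of the lemma; matching the periods $\tfrac12(\lambda_1(\gamma)-\lambda_d(\gamma))$ on conjugacy classes does not establish it. Period-matching plus Ledrappier/Liv\v{s}ic-type arguments would only give cohomologous cocycles, i.e.\ an abstract H\"older conjugacy after moving along flow lines, and that route forfeits precisely the one-sided Lipschitz regularity you are asked to prove.

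For comparison, the paper's proof avoids all cocycle bookkeeping by writing $\Psi$ explicitly: with $v^\pm,\alpha^\pm$ lifting $\xi(t^\pm),\xi^*(t^\pm)$ for the endpoints $t^\pm$ of the geodesic through $([x],[\nu))$, it sets
\[
\Psi([x],[\nu))=\left[\frac{\alpha^-(x)}{\alpha^+(x)}\,\frac{v^+\otimes\alpha^+}{\alpha^-(v^+)}\;:\;\frac{\alpha^+(x)}{\alpha^-(x)}\,\frac{v^-\otimes\alpha^-}{\alpha^+(v^-)}\right]\in\bbL_\Ad,
\]
and the factor $2$ is obtained from a direct cross-ratio computation showing $\frac{\alpha^-(x_t)}{\alpha^+(x_t)}=e^{2t}\frac{\alpha^-(x)}{\alpha^+(x)}$ along the Hilbert geodesic; equivalently, $\tfrac12\log\frac{\alpha^-(x)}{\alpha^+(x)}$ \emph{is} the arclength coordinate, which is the cocycle identity your sketch assumes. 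The inverse is likewise given by a concrete formula ($x=\varphi(v_0)+\psi\circ\varphi(v_0)$, $\nu=d_x\pi(\varphi(v_0))$) that is the restriction of a smooth map to the Lipschitz set $\tilde\cK_{\Ad(\Gamma)}$, which is how the Lipschitz claim is actually secured; your argument for Lipschitzness of $\Psi^{-1}$ again routes through the unproved time-coordinate identification. To repair your approach you would need to prove the cocycle identity (essentially reproducing the paper's cross-ratio computation) and then still exhibit enough explicit structure of the inverse to get Lipschitz, at which point you have rederived the paper's proof.
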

\begin{proof}
In this construction we will consider a connected component $\widehat\cC_\Gamma\subset V$ of the cone $\{v\in V\setminus\{0\}\,|\,[v]\in\cC_\Gamma\}$ and the corresponding component $\widehat \cC_\Gamma^*=\{\alpha\in V\,|\,\alpha\vert_{\widehat\cC_\Gamma}>0\}$ of the dual convex set $\cC_\Gamma^*=\set{[\alpha]\in\bbP(V^*)}{\bbP(\ker\alpha)\cap\cC_\Gamma=\emptyset}$. 

Let $([x],[\nu))\in \bbS\cC_\Gamma$, and consider $t^\pm\in\partial_\infty\Gamma$ such that $\lim_{t\to\pm\infty}c_{[x],[\nu)}(t)=\xi(t^\pm)\in\partial\cC_\Gamma$. Consider vectors $v^\pm\in V$ such that $[v^\pm]=\xi(t^\pm)$ and linear forms $\alpha^\pm\in V^*$ such that $[\alpha^\pm]=\xi^*(t^\pm)$. Let
\[ \Psi([x],[\nu)):=\left[ \frac{\alpha^-(x)}{\alpha^+(x)}\frac{v^+\otimes\alpha^+}{\alpha^-(v^+)} : \frac{\alpha^+(x)}{\alpha^-(x)}\frac{v^-\otimes\alpha^-}{\alpha^+(v^-)} \right]\in \bbL_\Ad. \]
This expression defines a map $\Psi:\bbS\cC_\Gamma\to\bbL_\Ad$. The description of the limit set $\Lambda_{\Ad(\Gamma)}$ in Lemma \ref{lem adjoint of projective Anosov subgroup} shows that $\Psi(\bbS\cC_\Gamma)\subset \tilde\cK_{\Ad(\Gamma)}$.

Let $[\varphi:\psi]\in\tilde\cK_{\Ad(\Gamma)}$. Consider a vector $v_0\notin \ker\varphi$, then write 
\[ \Phi([\varphi:\psi])=\left([x],[\nu)\right)\in\bbS(\bbP(V)) \]
where $x=\varphi(v_0)+\psi\circ\varphi(v_0)$ and $\nu(x)=d_x\pi(\varphi(v_0))$.  Since the expression does not depend on the choice of $v_0$, it defines a map $\Phi:\tilde\cK_{\Ad(\Gamma)}\to\bbS(\bbP(V))$ such that $\Phi\circ\Psi=\mathrm{id}$.

By Lemma \ref{lem adjoint of projective Anosov subgroup} and Lemma \ref{lem limit set of projective Anosov subgroup of SO(p,q+1)} we can write $\varphi=v^+\otimes\alpha^+$ and $\psi=v^-\otimes\alpha^-$  where $[v^\pm]=\xi(t^\pm)$ and $[\alpha^\pm]=\xi^*(t^\pm)$ for a pair of distinct points $t^+,t^-\in \partial_\infty\Gamma$. We can also choose these lifts so that $v^\pm\in\partial\widehat\cC_\Gamma$ and $\alpha^\pm\in\partial\widehat\cC_\Gamma^*$. Then $x=\alpha^+(v^-)v^++v^-\in \widehat\cC_\Gamma$, which shows that $[x]\in\cC_\Gamma$, i.e. we have constructed a function $\Phi:\tilde\cK_{\Ad(\Gamma)}\to\bbS\cC_\Gamma$. The fact that $[\nu)=[d_x\pi(v^+))=[-d_x\pi(v^-))$ shows that $\lim_{t\to\pm\infty}c_{[x],[\nu)}(t)=[v^\pm]$, therefore $[\varphi:\psi]=\Psi([x],[\nu))$, i.e. $\Psi\circ\Phi=\mathrm{id}$.

For $t\in \R$, consider $x_t\in V$ and $\nu_t\in T_{[x_t]}\bbP(V)$ such that $\phi_{\mathrm{BH}}^t([x],[v))=([x_t],[\nu_t))$. There are numbers $x^\pm_t\in\R$ such that $x_t=x^+_tv^++x^-_tv^-$. From the cross-ratio computation
\begin{align*} [[v^-],[x],[x_t],[v^+]]&=\left[ 0,\frac{x^+_0}{x^-_0},\frac{x^+_t}{x^-_t},\infty\right] \\
& =\frac{x^+_t}{x^-_t}\frac{x^-_0}{x^+_0} 
\end{align*}
we find that
\begin{align*} \frac{\alpha^-(x_t)}{\alpha^+(x_t)}&=\frac{x^+_t}{x^-_t}\frac{\alpha^-(v^+)}{\alpha^+(v^-)}\\
&=[[v^-],[x],[x_t],[v^+]] \frac{x^+_0}{x^-_0} \frac{\alpha^-(v^+)}{\alpha^+(v^-)}\\
&= e^{2t}\frac{x^+_0}{x^-_0}  \frac{\alpha^-(v^+)}{\alpha^+(v^-)}\\
&= e^{2t}\frac{\alpha^-(x)}{\alpha^+(x)}.
\end{align*}
This shows that $\Psi\circ \phi_{\mathrm{BH}}^t=\phi^{2t}\circ\Psi$.  As the bijection $\Psi:\bbS\cC_\Gamma\to\tilde\cK_{\Ad(\Gamma)}$ is $\Gamma$-equivariant, it induces a bijection $\Psi:\bbS\cN_\Gamma\to\cK_{\Ad(\Gamma)}$ also satisfying $\Psi\circ \phi_{\mathrm{BH}}^t=\phi^{2t}\circ\Psi$. The construction of $\Psi$ involves the H\"older maps $\xi$ and $\xi^*$, so it is H\"older, but $\Phi$ is constructed as the restriction to a Lipschitz submanifold of a smooth map, which makes it Lipschitz.
\end{proof}
The loss of regularity from $C^{1,\alpha}$ to Lipschitz when going from $\phi^t_\mathrm{BH}$ to $\phi^t\vert_{\cK_{\Ad(\Gamma)}}$ is compensated by the gain of regularity in the stable/unstable foliations.  We are now ready for the full statement and proof of Theorem \ref{introthm:BenoistHilbert} from the introduction.
\begin{thm}\label{THM BHG}
If $\Gamma<\mathrm{SL}(V)$ is a non-trivial torsion-free Benoist subgroup and $W=\mathfrak{sl}(V),$ then $\mathrm{Ad}(\Gamma)<\mathrm{SL}(W)$ is projective Anosov with associated Axiom A system $(\mathcal{M}_{\mathrm{Ad}(\Gamma)},\phi^t)$ and basic hyperbolic set $\mathcal{K}_{\mathrm{Ad}(\Gamma)}$ which is a Lipschitz submanifold of $\mathcal{M}_{\mathrm{Ad}(\Gamma)}.$
Additionally, there is a H\"{o}lder continuous homeomorphism
$$
\Psi: \mathbb{S}\mathcal{N}_{\Gamma}\rightarrow \mathcal{K}_{\mathrm{Ad}(\Gamma)}
$$
with Lipschitz inverse such that $\Psi\circ \phi_{BH}^{t}=\phi^{2t}\circ \Psi.$
Moreover,
\begin{enumerate}
\item $\phi_{BH}^{t}$ mixes exponentially for all H\"{o}lder observables with respect to every Gibbs equilibrium state with H\"{o}lder potential.

\item The Ruelle zeta function $\zeta_{BH, \Gamma}$ constructed using the periods of $\phi_{BH}^{t}$ admits global meromorphic continuation to $\mathbb{C}$ with a simple pole at $h_{\mathrm{top}}(\phi_{BH}^{t})$, and $\zeta_{BH, \Gamma}$ is nowhere vanishing and analytic in a strip $h_{\mathrm{top}}(\phi_{BH}^{t})-\varepsilon< \mathrm{Re}(z)< h_{\mathrm{top}}(\phi_{BH}^{t})$ for some $\varepsilon>0.$

\item \label{BHG RES} There exist Ruelle-Pollicott resonances of the generator $X$ of the flow $\phi^t_{BH}$ with associated (co-)resonant states and invariant Ruelle distributions. In particular, $h_{\mathrm{top}}(\phi^{t}_{BH})$ is a  resonance and the associated invariant Ruelle distribution is the Bowen-Margulis measure of maximal entropy.

\item $\phi_{BH}^{t}$ satisfies the prime orbit theorem with exponentially decaying error term:
$$
N_{BH, \Gamma}(\tau)=\mathrm{Li}\big(e^{h_{\mathrm{top}}(\phi_{BH}^{t})\tau} \big)\Big(1+ O\big(e^{-(c-h_{\mathrm{top}}(\phi_{BH}^{t}) )\tau}\big)\Big)
$$
for some $0<c<h_{\mathrm{top}}(\phi_{BH}^{t}).$
\end{enumerate}
\end{thm}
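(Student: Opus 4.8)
The plan is to deduce everything by transporting along the conjugacy of Lemma~\ref{lem embedding of Benoist-Hilbert flow} the results already established for the analytic model $\cM_{\Ad(\Gamma)}$. First I would record the structural claims. Since $\Gamma$ is torsion-free, $-\id\notin\Gamma$, so $\Ad\colon\Gamma\to\SL(W)$ is injective and $\Ad(\Gamma)<\SL(W)$ is torsion-free; by Lemma~\ref{lem adjoint of projective Anosov subgroup} it is projective Anosov, so Theorem~\ref{THM A} supplies the real analytic contact Axiom~A system $(\cM_{\Ad(\Gamma)},\phi^t)$ with basic hyperbolic set $\cK_{\Ad(\Gamma)}$; Lemma~\ref{lem basic set Ad(Benoist subgroup) is Lipschitz} makes $\cK_{\Ad(\Gamma)}$ a Lipschitz submanifold, and Lemma~\ref{lem embedding of Benoist-Hilbert flow} produces the H\"older homeomorphism $\Psi\colon\bbS\cN_\Gamma\to\cK_{\Ad(\Gamma)}$ with Lipschitz inverse and $\Psi\circ\phi_{BH}^t=\phi^{2t}\circ\Psi$. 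Since $\cN_\Gamma$ is closed, $\bbS\cN_\Gamma$ is compact and $\phi_{BH}^t$ is a transitive Anosov flow on a closed manifold, so via $\Psi$ it is topologically conjugate, up to the affine time change $t\mapsto 2t$, to $\phi^t|_{\cK_{\Ad(\Gamma)}}$. By Lemma~\ref{lem correspondence periodic orbits conjugacy classes} the orbit labelled by $[\gamma]$ has period $\lambda_1(\Ad(\gamma))=\lambda_1(\gamma)-\lambda_d(\gamma)$ under $\phi^t|_{\cK_{\Ad(\Gamma)}}$, hence $\tfrac12(\lambda_1(\gamma)-\lambda_d(\gamma))$ under $\phi_{BH}^t$; consequently $\zeta_{BH,\Gamma}(s)=\zeta_{\Ad(\Gamma)}(s/2)$, $N_{BH,\Gamma}(\tau)=N_{\Ad(\Gamma)}(2\tau)$, and $h_{\mathrm{top}}(\phi_{BH}^t)=2\,h_{\mathrm{top}}(\Ad(\Gamma))$.

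Next I would supply the irreducibility input, which is the one thing not already handed to us. By Benoist's structure theory of strictly convex divisible domains, either $\cC_\Gamma$ is an ellipsoid --- so $\Gamma$ is conjugate to a torsion-free uniform lattice in $\SO(d-1,1)$ and the Hilbert metric on $\cC_\Gamma$ is a constant multiple of the hyperbolic metric --- or $\Gamma$ is Zariski-dense in $\SL(V)$. In the Zariski-dense case $\Ad(\Gamma)$ is Zariski-dense in the image of $\Ad\colon\SL(V)\to\SL(W)$, which acts irreducibly on $W=\mathfrak{sl}(V)$ because $\mathfrak{sl}(V)$ is simple; hence $\Ad(\Gamma)<\SL(W)$ is irreducible, and Theorems~\ref{thm: cor exp decay}, \ref{contracting ruelle}, \ref{RH}, \ref{thm:orbitcounting} apply to $\cM_{\Ad(\Gamma)}$. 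In the ellipsoid case $\phi_{BH}^t$ is, up to smooth conjugacy and constant time change, the geodesic flow of the closed hyperbolic manifold $\Gamma\backslash\bbH^{d-1}$, for which exponential mixing of Gibbs equilibrium states, the zero-free strip and analyticity of the Ruelle zeta, and the prime orbit theorem with exponentially decaying error are classical, and the discrete resonance spectrum with (co-)resonant states and invariant Ruelle distributions is again given by Theorems~\ref{Resonant states precise}, \ref{Resonant states precise2} applied through $\cM_{\Ad(\Gamma)}$.

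Finally I would transport the conclusions. As $\Psi$ is bi-H\"older (its inverse being Lipschitz), $F\mapsto F\circ\Psi^{-1}$ maps $C^\alpha(\bbS\cN_\Gamma,\R)$ into $C^{\alpha'}(\cK_{\Ad(\Gamma)},\R)$ for a suitable $\alpha'\in(0,1)$, preserves flow-invariance of measures and topological entropy, and hence carries Gibbs equilibrium states of $\phi_{BH}^t$ with H\"older potential to Gibbs equilibrium states of $\phi^t|_{\cK_{\Ad(\Gamma)}}$ with H\"older potential. The correlation function of $(F,G,\mu)$ under $\phi_{BH}^t$ at time $t$ then equals that of $(F\circ\Psi^{-1},G\circ\Psi^{-1},\Psi_*\mu)$ under $\phi^t|_{\cK_{\Ad(\Gamma)}}$ at time $2t$, which decays exponentially by Theorem~\ref{thm: cor exp decay}: this is (1). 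For (2), $\zeta_{BH,\Gamma}(s)=\zeta_{\Ad(\Gamma)}(s/2)$ inherits meromorphic continuation to $\C$ with a simple pole at $h_{\mathrm{top}}(\phi_{BH}^t)=2\,h_{\mathrm{top}}(\Ad(\Gamma))$ from Theorem~\ref{thm:thmzetasmooth} (with $U=0$) and a zero-free strip $h_{\mathrm{top}}(\phi_{BH}^t)-\varepsilon<\mathrm{Re}(z)<h_{\mathrm{top}}(\phi_{BH}^t)$ from Theorem~\ref{RH}, after rescaling $s\mapsto s/2$. For (4), $N_{BH,\Gamma}(\tau)=N_{\Ad(\Gamma)}(2\tau)$ combined with Theorem~\ref{thm:orbitcounting} gives the prime orbit theorem with exponentially decaying error term after the substitution $\tau\mapsto 2\tau$. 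For (3), the Ruelle--Pollicott resonances, (co-)resonant states and invariant Ruelle distributions of the generator of $\phi_{BH}^t$ are, via the identification with $\phi^{2t}$ near $\cK_{\Ad(\Gamma)}$, those of $X$ on $\cM_{\Ad(\Gamma)}$ produced by Theorems~\ref{Resonant states precise}, \ref{Resonant states precise2} (up to the scaling by $2$); in particular $h_{\mathrm{top}}(\phi_{BH}^t)$ is a resonance whose invariant Ruelle distribution is a constant multiple of the Bowen--Margulis measure of maximal entropy, which pushes forward to $\bbS\cN_\Gamma$ under $\Psi^{-1}$ as a genuine measure. The main obstacle is not computational but conceptual: one must (a) invoke Benoist's dichotomy to guarantee the irreducibility demanded by Stoyanov's estimates, since $\Ad(\Gamma)$ need not inherit irreducibility formally from $\Gamma$, and (b) note that $\Psi$ is only bi-H\"older, so distributions do not pull back along it --- harmless for (1), (2), (4), but it forces statement (3) to be read on the real analytic model $\cM_{\Ad(\Gamma)}$ rather than intrinsically on $\bbS\cN_\Gamma$.
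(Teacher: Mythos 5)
Your proposal is correct and follows essentially the same route as the paper: Lemma \ref{lem embedding of Benoist-Hilbert flow} plus Lemma \ref{lem basic set Ad(Benoist subgroup) is Lipschitz} for the structural part, Benoist's dichotomy (lattice in $\SO(d-1,1)$ versus Zariski density, hence irreducibility of $\Ad(\Gamma)$) to unlock Stoyanov's estimates, and then Theorems \ref{thm: cor exp decay}, \ref{thm:thmzetasmooth}, \ref{RH}, \ref{Resonant states precise}, \ref{Resonant states precise2}, \ref{thm:orbitcounting} transported through the bi-H\"older conjugacy and constant time change. Your explicit bookkeeping of the factor $2$ (periods, $\zeta_{BH,\Gamma}(s)=\zeta_{\Ad(\Gamma)}(s/2)$, $h_{\mathrm{top}}(\phi_{BH}^t)=2h_{\mathrm{top}}$) and the remark that statement (3) must be read on the analytic model since distributions do not pull back along a merely H\"older map are slightly more careful than the paper's terse wording, but they do not change the argument.
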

\begin{proof}[Proof of Theorem \ref{THM BHG}]
That the Benoist-Hilbert geodesic flow admits a H\"{o}lder conjugacy and constant speed reparameterization onto a Lipschitz basic hyperbolic set $\mathcal{K}_{\Ad(\Gamma)}\subset \mathcal{M}_{\Ad(\Gamma)}$ is the content of Lemma \ref{lem embedding of Benoist-Hilbert flow}.  By a result of Benoist \cite[Théorème 1.2]{BenoistCD2}, either $\Gamma<\SL(V)$ is conjugate to a uniform lattice in $\SO(d-1,1)$, in which case the flow $\phi_{\mathrm{BH}}^t$ is the geodesic flow of the hyperbolic manifold $\Gamma\backslash\bbH^{d-1},$ or $\Gamma$ is Zariski-dense in $\SL(V)$; in particular $\Ad(\Gamma)$ is irreducible.  Hence, the exponential mixing result follows from Theorem \ref{thm: cor exp decay} as exponential mixing for H\"{o}lder observables with respect to Gibbs states is invariant under H\"older conjugacy and constant speed reparametrizations.  Since the periods are preserved by the conjugacy, the definition of the zeta function is unchanged and therefore the results concerning its analytic continuation, spectral gap and zero-free strip are a consequence of Theorems \ref{thm:thmzetasmooth} and \ref{RH}.  The results on (co-)resonant states are an application of Theorems \ref{Resonant states precise} and \ref{Resonant states precise2}.  Finally, the prime orbit theorem with exponentially decaying error term, again only depending on the periods, follows by appealing to Theorem \ref{thm:orbitcounting} which completes the proof.
\end{proof}

\providecommand{\bysame}{\leavevmode\hbox to3em{\hrulefill}\thinspace}
\providecommand{\MR}{\relax\ifhmode\unskip\space\fi MR }
\providecommand{\MRhref}[2]{%
  \href{http://www.ams.org/mathscinet-getitem?mr=#1}{#2}
}
\providecommand{\href}[2]{#2}

\bigskip
\end{document}